\renewcommand{\bar}[1]{\overline{#1}}
\newcommand{\NN}{\mathbb{N}}
\newcommand{\osh}[1][]{\sigma_{#1}}
\newcommand{\dom}{\operatorname{dom}}
\newcommand{\ran}{\operatorname{ran}}
\newcommand{\ideG}{\mathbf{1}_G}
\newcommand{\ideQ}{\mathbf{1}_Q}
\newcommand{\G}{\mathcal{G}}
\newcommand{\HG}{\mathcal{H}}
\newcommand{\Mtop}{\mathcal{M}}
\newcommand{\Gt}{\mathcal{G}_{tight}}
\newcommand{\Cat}{\Lambda}
\newcommand{\Obj}{\Lambda^\circ}
\newcommand{\Inv}{\mathcal{I}}
\newcommand{\Semi}{\mathcal{S}}
\newcommand{\SemiT}{\mathcal{T}}
\newcommand{\Zig}[1][\Lambda]{\mathcal{Z}_{#1}}
\newcommand{\ZigM}[1][\Lambda]{\mathcal{Z}(#1)}
\newcommand{\Idem}[1][\Semi]{\mathcal{E}(#1)}
\newcommand{\Idemp}{\mathcal{E}}
\newcommand{\Ope}{\mathcal{U}}
\newcommand{\Filt}{\hat{\mathcal{E}}_0}
\newcommand{\FiltT}{\hat{\mathcal{E}}_{tight}}
\newcommand{\UlFilt}{\hat{\mathcal{E}}_{\infty}}
\newcommand{\dmap}{\mathbf{d}}
\newcommand{\tmap}{\mathbf{t}}
\newcommand{\dbar}{\bar{\mathbf{d}}}
\newcommand{\elmap}[2]{
	\tau^{#1}\osh^{#2}}
\newtheorem{lemma}{Lemma}[section]
\newtheorem{corollary}[lemma]{Corollary}
\newtheorem{theorem}[lemma]{Theorem}
\newtheorem{proposition}[lemma]{Proposition}
\theoremstyle{definition}
\newtheorem{definition}[lemma]{Definition}
\newtheorem{example}[lemma]{Example}
\newtheorem{notation}[lemma]{Notation}
\newtheorem{remark}[lemma]{Remark}
\begin{document}

\title[Tight groupoids of LCSC]{The tight groupoid of the inverse semigroups of left cancellative small categories.}

\author{Eduard Ortega}
\address{Department of Mathematical Sciences\\
NTNU\\
NO-7491 Trondheim\\
Norway } \email{Eduardo.Ortega@ntnu.no}

\author{Enrique Pardo}
\address{Departamento de Matem\'aticas, Facultad de Ciencias\\ Universidad de C\'adiz, Campus de
Puerto Real\\ 11510 Puerto Real (C\'adiz)\\ Spain.}
\email{enrique.pardo@uca.es}\urladdr{https://sites.google.com/a/gm.uca.es/enrique-pardo-s-home-page/}


\thanks{The second-named author was partially supported by PAI III grant FQM-298 of the Junta de Andaluc\'{\i}a, and by the DGI-MINECO and European Regional Development Fund, jointly, through grant MTM2017-83487-P}

\subjclass[2010]{Primary: 46L05; Secondary: 46L80, 46L55, 20L05}

\keywords{Left cancellative small category, inverse semigroup, tight representation, tight groupoid, groupoid $C^*$-algebra}


\maketitle
 
 \begin{abstract}
We fix a path model for the space of filters of the inverse semigroup $\mathcal{S}_\Lambda$ associated to a left cancellative small category $\Lambda$. Then, we compute its tight groupoid, thus giving a representation of its $C^*$-algebra as a (full) groupoid algebra. Using it, we characterize when these algebras are simple. Also, we determine amenability of the tight groupoid under mild, reasonable hypotheses.
 \end{abstract}
 
 \maketitle
 
\section*{Introduction}

In \cite{S1}, Spielberg described a new method of defining $C^*$-algebras from oriented combinatorial data, generalizing the construction of algebras from directed graphs, higher-rank graphs, and (quasi-)ordered groups. To this end, he introduced \emph{categories of paths} --i.e. cancellative small categories with no (nontrivial) inverses-- as a generalization of higher rank graphs, as well as ordered groups.The idea is to start with a suitable combinatorial object and define a $C^*$-algebra directly from what might be termed the generalized symbolic dynamics that it induces. Associated to the underlying symbolic dynamics, he present a natural groupoid derived from this structure. The construction also gives rise to a presentation by generators and relations, tightly related to the groupoid presentation. In \cite{S2} he showed that most of the results hold when relaxing the conditions, so that right cancellation or having no (nontrivial) inverses are taken out of the picture. 

In \cite{BKQS}, B\'edos, Kaliszewski, Quigg and Spielberg use Spielberg's construction to extend the notion of self-similar graph introduced in \cite{EP2} --they termed it as ``Exel-Pardo systems''-- to the context of actions of group (potentially, of groupoids) on left cancellative small categories. To this end, they use a  Zappa-Sz\'ep product construction, and studied the representation theory for the Spielberg algebras of the new left cancellative small category associated to this Zappa-Sz\'ep product.

In the present paper, we study Spielberg construction, using a groupoid approach based in the Exel's tight groupoid construction \cite{E1}. To this end, we study various inverse semigroups associated to a left cancellative small category (see e.g. \cite{DM}), we compute a ``path-like'' model for their tight groupoids, and we study the basic properties of its tight groupoid. Also, we show that the tight groupoid for these inverse semigroups coincide with Spielberg's groupoid \cite{S-LMS}. With this tools at hand, we are able to characterize simplicity for the algebras associated to finitely aligned left cancellative small categories, and in particular in the case of Exel-Pardo systems. Finally, we give, under mild and necessary hypotheses, a characterization of amenability for such groupoid.

The contents of this paper can be summarized as follows: In Section 1 we recall some known facts about small categories and inverse semigroups. In Section 2 we study basic properties of the inverse semigroups $\Semi_\Lambda$ and $\SemiT_\Lambda$ associated to a left cancellative small category $\Lambda$. Section 3 is devoted to study filters on a left cancellative small category and their path models. Section 4 deals with defining actions of $\Semi_\Lambda$ on filter spaces, and we picture their tight groupoids. In Section 5 we show that the tight groupoid of $\Semi_\Lambda$ is isomorphic (as topological groupoid) to the Spielberg's groupoid on $\Lambda$. Groupoid properties characterizing simplicity on the associated algebras are stated in Section 6. Section 7 is centered in analyzing Zappa-Sz\'ep products, introduced in \cite{BKQS} to generalize self-similar graphs of \cite{EP2}, from our particular perspective. We close the paper studying, in Section 8, the amenability of the tight groupoid of Zappa-Sz\'ep products.

\section{Basic facts.}

In this section we collect all the background we need for the rest of the paper. 

\subsection{Small categories}
Given a small category $\Cat$, we will denote by $\Obj$ the class of its objects, and we will identify $\Obj$ with the identity morphisms, so that $\Obj\subseteq \Cat$. Given $\alpha\in\Cat$, we will denote by $s(\alpha):=\dom(\alpha)\in \Obj$ and $r(\alpha):=\ran(\alpha)\in \Obj$. The invertible elements of $\Cat$ are 
$$\Cat^{-1}:=\{\alpha\in\Cat: \exists \beta\in\Cat\text{ such that }\alpha\beta=s(\beta)\}\,.$$
\begin{definition}
	Given a small category $\Cat$, and let $\alpha,\beta,\gamma\in\Cat$:
	\begin{enumerate}
		\item $\Cat$ is \emph{left cancellative} if $\alpha\beta=\alpha\gamma$ then $\beta=\gamma$,
		\item $\Cat$ is \emph{right cancellative} if $\beta\alpha=\gamma\alpha$ then $\beta=\gamma$,
		\item $\Cat$ \emph{has no inverses} if $\alpha\beta=s(\beta)$ then $\alpha=\beta=s(\beta)$.
	\end{enumerate}

A \emph{category of paths} is a small category that is right and left cancellative and has no inverses. 
\end{definition}

Notice that if $\Cat$ is either left or right cancellative, then the only idempotents in $\Cat$ are $\Obj$. Indeed, if $\alpha\alpha=\alpha$, since $\alpha=r(\alpha)\alpha=\alpha s(\alpha)$ we have that $\alpha=s(\alpha)$ or $\alpha=r(\alpha)$. 

\begin{definition}\label{defi1_1_1_2}
	Let $\Cat$ be a small category. Given $\alpha,\beta\in\Cat$, we say that $\beta$ \emph{extends $\alpha$} (equivalently \emph{$\alpha$ is an initial segments of $\beta$}) if there exists $\gamma\in\Lambda$ such that $\beta=\alpha\gamma$. We denote by $[\beta]=\{\alpha\in\Cat:\alpha\text{ is an initial segment of }\beta\}$. We write $\alpha\leq \beta$ if $\alpha\in[\beta]$.
\end{definition}

\begin{lemma}\label{lemma1_1_1_3}
Let $\Cat$ be a small category. Then
\begin{enumerate}
	\item the relation $\leq$ is reflexive and transitive,
	\item if $\Cat$ is left cancellative with no inverses, then $\leq$ is a partial order. 
\end{enumerate}
\end{lemma}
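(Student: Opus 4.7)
The plan is routine verification of the three order axioms, using only the defining composition law of a small category plus the hypotheses of part (2).

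For reflexivity, I would note that $\alpha = \alpha\, s(\alpha)$ is a valid composition (since $r(s(\alpha))=s(\alpha)=s(\alpha)$), which shows $\alpha\in[\alpha]$. For transitivity, suppose $\alpha\leq\beta\leq\delta$, so that $\beta=\alpha\gamma_1$ and $\delta=\beta\gamma_2$ for some $\gamma_1,\gamma_2\in\Cat$. Before combining, I would check the source/range compatibilities: from $\beta=\alpha\gamma_1$ one has $r(\gamma_1)=s(\alpha)$ and $s(\gamma_1)=s(\beta)$; from $\delta=\beta\gamma_2$ one has $r(\gamma_2)=s(\beta)=s(\gamma_1)$, so the composite $\gamma_1\gamma_2$ makes sense, and then $r(\gamma_1\gamma_2)=r(\gamma_1)=s(\alpha)$, so $\alpha(\gamma_1\gamma_2)$ makes sense too. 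Associativity gives $\delta=\alpha(\gamma_1\gamma_2)$, so $\alpha\leq\delta$.

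For part (2), the only remaining axiom is antisymmetry. Assume $\alpha\leq\beta$ and $\beta\leq\alpha$, and write $\beta=\alpha\gamma_1$, $\alpha=\beta\gamma_2$. Substituting the first into the second yields
\[
\alpha \;=\; \alpha\gamma_1\gamma_2,
\]
and we also have $\alpha=\alpha\,s(\alpha)$. Left cancellativity applied to these two expressions gives $\gamma_1\gamma_2 = s(\alpha)$. Now track sources: from $\alpha=\beta\gamma_2$ we obtain $s(\gamma_2)=s(\alpha)$, and therefore $\gamma_1\gamma_2 = s(\gamma_2)$. The no-inverses hypothesis, applied to the identity $\gamma_1\gamma_2=s(\gamma_2)$, forces $\gamma_1=\gamma_2=s(\gamma_2)=s(\alpha)$. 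Hence $\beta=\alpha\gamma_1=\alpha\,s(\alpha)=\alpha$.

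The step that requires the most care is the antisymmetry argument: one must be disciplined about keeping track of the objects $s(\cdot)$ and $r(\cdot)$ so that the no-inverses hypothesis can be invoked in exactly the form in which it was stated, namely as an identity $\xi\eta=s(\eta)$ rather than the superficially similar $\xi\eta=r(\xi)$. The rest of the proof is purely formal manipulation with the composition law.
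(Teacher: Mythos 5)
Your proof is correct and follows essentially the same route as the paper's: reflexivity via $\alpha=\alpha\,s(\alpha)$, transitivity by composing the two witnesses, and antisymmetry by left-cancelling $\alpha=\alpha\gamma_1\gamma_2$ against $\alpha=\alpha\,s(\alpha)$ and then invoking the no-inverses hypothesis. Your extra bookkeeping of sources and ranges only makes explicit what the paper leaves implicit.
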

\begin{proof}
(1) Clearly $\alpha=\alpha s(\alpha)$, so $\alpha$ extends itself. If $\beta=\alpha\alpha'$ ($\alpha\leq \beta$) and $\gamma=\beta \beta'$ ($\beta\leq \gamma$), then $\gamma=\alpha\alpha'\beta'$ ($\alpha\leq \gamma$). 

(2) Suppose that $\alpha=\beta\beta'$ ($\beta \leq \alpha$) and $\beta=\alpha\alpha'$ ($\alpha\leq \beta$). Then,
$$\alpha s(\alpha)=\alpha=\beta\beta '=\alpha\alpha'\beta'\,.$$
Thus, by left cancellation we have that $s(\alpha)=\alpha'\beta'$, whence since $\Cat$ has no inverses it follows that $\alpha',\beta'\in\Obj$.
\end{proof}

\begin{lemma}[{\cite[Lemma 2.3]{S2}}]\label{lemma1_1_1_4} 
	Let $\Cat$ be a LCSC (Left cancellative small category), and let $\alpha,\beta\in\Cat$. Then, $\alpha\leq \beta$ and $\beta\leq \alpha$ if and only if $\beta\in\alpha\Cat^{-1}=\{\alpha\gamma:\gamma\in \Cat^{-1}\text{ with }r(\gamma)=s(\alpha)\}$.
\end{lemma}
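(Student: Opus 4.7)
The plan is to prove both directions by chaining together the two extension identities and then applying left cancellation, very much in the spirit of the proof of Lemma~\ref{lemma1_1_1_3}(2).

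For the forward direction, I would write $\beta = \alpha \gamma_1$ and $\alpha = \beta \gamma_2$ using the two hypotheses. Substituting one into the other gives $\alpha = \alpha \gamma_1 \gamma_2$, and comparing this with $\alpha = \alpha s(\alpha)$, left cancellation yields $\gamma_1 \gamma_2 = s(\alpha)$. The key observation is that this equation is essentially the invertibility condition from the definition of $\Lambda^{-1}$: I need to check that $s(\gamma_2)$ equals $s(\alpha)$, which follows by reading off sources in $\gamma_1 \gamma_2 = s(\alpha)$. Hence $\gamma_1 \gamma_2 = s(\gamma_2)$, so $\gamma_1 \in \Lambda^{-1}$, and since $\alpha \gamma_1 = \beta$ is a defined composition we automatically have $r(\gamma_1) = s(\alpha)$. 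This places $\beta$ in $\alpha \Lambda^{-1}$.

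For the reverse direction, suppose $\beta = \alpha \gamma$ with $\gamma \in \Lambda^{-1}$ and $r(\gamma) = s(\alpha)$. Then $\alpha \leq \beta$ is immediate from the definition. To get $\beta \leq \alpha$, pick $\delta \in \Lambda$ witnessing invertibility, i.e.\ $\gamma \delta = s(\delta)$. Reading sources and ranges from this identity forces $r(\gamma) = s(\delta)$, hence $s(\delta) = s(\alpha)$. Then $\beta \delta = \alpha \gamma \delta = \alpha s(\delta) = \alpha s(\alpha) = \alpha$, so $\beta \leq \alpha$.

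There is no serious obstacle here; the argument is a direct computation. The only thing to be a bit careful about is the bookkeeping of sources and ranges, since the definition of $\Lambda^{-1}$ is phrased with a one-sided identity $\alpha \beta = s(\beta)$ rather than the more symmetric two-sided invertibility, so I would make sure to verify that the relevant objects match in each application (in particular that $s(\gamma_2) = s(\alpha)$ in the forward direction and that $s(\delta) = s(\alpha)$ in the reverse direction). Left cancellation is used only once, in the forward implication, and the reverse implication does not even need it.
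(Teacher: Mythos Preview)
Your proof is correct. The paper does not supply its own proof of this lemma; it is stated with a citation to \cite[Lemma~2.3]{S2} and no argument is given in the text, so there is nothing to compare against directly. Your argument is the natural one and mirrors the computation in Lemma~\ref{lemma1_1_1_3}(2): chain the two extension identities, apply left cancellation once, and then read off the source/range conditions needed to match the (one-sided) definition of $\Lambda^{-1}$. The only delicate point is exactly the one you flagged, namely verifying $s(\gamma_2)=s(\alpha)$ in the forward direction and $s(\delta)=s(\alpha)$ in the reverse direction from the identity equations; both follow immediately by applying $s$ and $r$ to $\gamma_1\gamma_2=s(\alpha)$ and $\gamma\delta=s(\delta)$ respectively, as you indicated.
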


We denote by $\alpha\approx\beta $ if $\beta \in\alpha\Cat^{-1}$. This is an equivalence relation. 
\begin{lemma}[{\cite[Lemma 2.5(ii)]{S2}}]\label{lemma1_1_1_5} 
	Let $\Cat$ be a LCSC, and let $\alpha,\beta\in\Cat$. Then the following are equivalent:\begin{enumerate}
		\item $\alpha\approx \beta$,
		\item $\alpha\Cat=\beta\Cat$,
		\item $[\alpha]=[\beta]$.
	\end{enumerate}
\end{lemma}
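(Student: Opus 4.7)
The plan is to close a cyclic chain of implications (1)$\Rightarrow$(2)$\Rightarrow$(3)$\Rightarrow$(1), leaning on Lemma \ref{lemma1_1_1_4} to translate between $\approx$ and the mutual extension relation $\leq$.

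For (1)$\Rightarrow$(2), I start from $\beta=\alpha\gamma$ with $\gamma\in\Cat^{-1}$, $r(\gamma)=s(\alpha)$, and I use the identity $\alpha=\alpha\gamma\gamma^{-1}=\beta\gamma^{-1}$, which expresses $\alpha$ as a right multiple of $\beta$ by an invertible element. A typical element of $\beta\Cat$ has the form $\beta\delta=\alpha(\gamma\delta)\in\alpha\Cat$, and a typical element of $\alpha\Cat$ has the form $\alpha\delta'=\beta(\gamma^{-1}\delta')\in\beta\Cat$; so $\alpha\Cat=\beta\Cat$. For (2)$\Rightarrow$(3), assume $\alpha\Cat=\beta\Cat$. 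Since $\beta=\beta s(\beta)\in\beta\Cat=\alpha\Cat$, I can write $\beta=\alpha\mu$, i.e.\ $\alpha\leq\beta$, and symmetrically $\beta\leq\alpha$. Now any $\eta\leq\alpha$, say $\alpha=\eta\nu$, yields $\beta=\alpha\mu=\eta(\nu\mu)$, hence $\eta\leq\beta$; this shows $[\alpha]\subseteq[\beta]$, and the symmetric argument gives the reverse inclusion. For (3)$\Rightarrow$(1), the key observation is that $\beta\in[\beta]$ (via $\beta=\beta s(\beta)$) and $\alpha\in[\alpha]$; the assumption $[\alpha]=[\beta]$ then forces $\alpha\leq\beta$ and $\beta\leq\alpha$, and Lemma \ref{lemma1_1_1_4} converts this into $\beta\in\alpha\Cat^{-1}$, i.e.\ $\alpha\approx\beta$.

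I do not expect a genuine obstacle here: the equivalence $(1)\Leftrightarrow$(``$\alpha\leq\beta$ and $\beta\leq\alpha$'') is already encoded in Lemma \ref{lemma1_1_1_4}, and both (2) and (3) trivially entail this pair of inequalities because $\alpha,\beta$ lie in their own $\alpha\Cat$ and $[\alpha]$, while conversely multiplying by $\gamma$ and $\gamma^{-1}$ preserves both extensions and initial segments. The only mild care required is to track the source/range constraints so that each composition $\gamma\delta$ or $\gamma^{-1}\delta'$ is actually defined in $\Cat$.
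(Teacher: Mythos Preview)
Your proof is correct. Note that the paper does not actually supply its own proof of this lemma: it is stated with a citation to \cite[Lemma 2.5(ii)]{S2} and left without argument, so there is nothing to compare your approach against beyond observing that your cyclic chain (1)$\Rightarrow$(2)$\Rightarrow$(3)$\Rightarrow$(1), together with the appeal to Lemma~\ref{lemma1_1_1_4} for the last step, is the natural elementary route and matches the spirit of Spielberg's original argument.
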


\begin{notation}
	Let $\Cat$ be a LCSC. Given $\alpha,\beta\in\Cat$, we say :
	\begin{enumerate}
		\item $\alpha\Cap \beta$ if and only if $\alpha\Cat\cap\beta \Cat\neq\emptyset$,
		\item $\alpha\perp \beta$ if and only if $\alpha\Cat\cap\beta \Cat=\emptyset$.
	\end{enumerate}
\end{notation}

\begin{definition}
	Let $\Cat$ be a LCSC, and let $F\subset \Cat$. The elements of $\bigcap_{\gamma\in F}\gamma\Cat$ are \emph{the common extensions of $F$}. A common extension $\varepsilon$ of $F$  is \emph{minimal} if for any common extension $\gamma$ with $\varepsilon\in \gamma\Cat$ we have that $\gamma\approx \varepsilon$.
\end{definition}

When $\Cat$ has no inverses, given $F\subseteq \Cat$ and given any minimal common extension $\varepsilon$ of $F$, if $\gamma$ is common extension of  $F$ with $\varepsilon\in \gamma\Cat$ then $\gamma= \varepsilon$. We will denote by 
$$\alpha\vee \beta:=\{\text{the minimal extensions of }\alpha\text{ and }\beta\}\,.$$
Notice that if $\alpha\vee \beta\neq \emptyset$ then $\alpha\Cap \beta$, but the converse fails in general. 

\begin{definition}
	A LCSC $\Lambda$ is \emph{finitely aligned} if for every $\alpha,\beta\in\Lambda$ there exists a finite subset $\Gamma\subset \Lambda$ such that $\alpha\Lambda\cap\beta\Lambda=\bigcup_{\gamma\in \Gamma}\gamma\Lambda$.
\end{definition}

When $\Lambda$ is a finitely aligned LCSC, we can always assume that $\alpha\vee\beta=\Gamma$ where $\Gamma$ is a finite set of minimal common extensions of  $\alpha$ and $\beta$.

\subsection{Inverse semigroups}
\begin{definition}
	A semigroup $\Semi$ is an \emph{inverse semigroup} if for every $s\in\Semi$ there exists a unique $s^*\in\Semi$ such that $s=ss^*s$ and $s^*=s^*ss^*$.
\end{definition}

Equivalently, $\Semi$ is an inverse semigroup if and only if the subsemigroup $\Idem:=\{e\in\Semi:e^2=e\}$ of idempotents of $\Semi$ is commutative \cite[Theorem 1.1.3]{L}.

A \emph{monoid} is a semigroup with unit. We say that a semigroup $\Semi$ has zero if there exists $0\in\Semi$ such that $0s=s0=0$ for every $s\in\Semi$.

\begin{definition}
	Given a set $X$, we define the (symmetric) inverse semigroup on $X$ as
	$$\Inv(X):=\{f:Y\to Z:Y,Z\subseteq X\text{ and }f \text{ is a bijection }\}\,,$$
	endowed with operation 
	$$g\circ f:f^{-1}(\ran(f)\cap \dom(g))\longrightarrow g(\ran(f)\cap \dom(g))\,,$$
	and involution 
	$$f^*:=f^{-1}:\ran(f)\longrightarrow \dom(f)\,.$$
	
	Notice that $\Inv(X)$ has unit $\text{Id}_X:X\to X$ and zero being the empty map $0:\emptyset\to \emptyset$.
\end{definition}

The Wagner-Preston Theorem \cite[Theorem 1.5.1]{L} guarantees that every inverse semigroup is a $*$-subsemigroup of $\Inv(X)$ for some suitable set $X$.

\begin{definition}
	Let $\Semi$ be an inverse semigroup, and let $\Idem$ be its subsemigroup of idempotents. Given $e,f\in\Idem$, we say that $e\leq f$ if and only if $e=ef$. We extend this relation to a partial order as follows: given $s,t\in\Semi$, we say that $s\leq t$ if and only if $s=ss^*t=ts^*s$.
\end{definition}

\begin{definition}
	We say that $s,t\in \Semi$ are compatible, denoted by $s\sim t$, if both $s^*t$ and $st^*$ belong to $\Idem$.
\end{definition}

This concept will be essential to understand various properties. 

\begin{lemma}[{\cite[Lemma 1.4.16]{L}}]\label{lemma1_1_2_5} Let $\Sigma\subseteq \Semi$. If $\bigvee_{\alpha\in \Sigma}\alpha\in\Semi$, then the elements of $\Sigma$ are pairwise compatible.
\end{lemma}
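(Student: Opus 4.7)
The plan is to reduce the statement to the pointwise fact: whenever two elements $s,t$ of an inverse semigroup lie below a common element $u$ in the natural partial order, they are compatible. Since $u:=\bigvee_{\alpha\in\Sigma}\alpha$ is by definition a least upper bound in $(\Semi,\leq)$, every $\alpha\in\Sigma$ satisfies $\alpha\leq u$; so it suffices to prove pairwise compatibility of any two elements below $u$.

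I would begin by unfolding the definition of the natural partial order. Using the two equivalent descriptions $s=ss^*u=us^*s$ (and analogously for $t$), I would compute
\[
s^*t \;=\; (us^*s)^*(ut^*t) \;=\; (s^*s)(u^*u)(t^*t).
\]
The three factors $s^*s$, $u^*u$, $t^*t$ all lie in $\Idem$, and idempotents in an inverse semigroup commute (this is the characterization used right after the definition of inverse semigroup, via \cite[Theorem 1.1.3]{L}). A product of commuting idempotents is again an idempotent, so $s^*t\in\Idem$. A symmetric computation using the other presentation $s=ss^*u$ and $t=tt^*u$ (together with its dual for $t^*$) gives
\[
st^* \;=\; (ss^*u)(tt^*u)^* \;=\; (ss^*)(uu^*)(tt^*),
\]
which is again a product of commuting idempotents, hence lies in $\Idem$. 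Therefore $s\sim t$ by the definition of compatibility.

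Applying this to $s=\alpha$ and $t=\beta$ for arbitrary $\alpha,\beta\in\Sigma$ (both $\leq u$) gives the conclusion. I do not expect any genuine obstacle here; the only subtlety is picking the \emph{correct} presentation of $s\leq u$ for each of the two products $s^*t$ and $st^*$, so that the factors line up and one can invoke commutativity of idempotents. Everything else is a direct manipulation from the definitions recalled in Section~1.2.
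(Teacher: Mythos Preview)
Your argument is correct. The paper does not give its own proof of this lemma---it simply quotes the result from Lawson \cite[Lemma 1.4.16]{L}---so there is nothing to compare against; the direct computation you give (reducing to ``two elements below a common $u$ are compatible'' and then using $s=us^*s=ss^*u$ to rewrite $s^*t$ and $st^*$ as products of commuting idempotents) is exactly the standard proof found in Lawson's book.
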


We say that $\Semi$ is \emph{(finitely) distributive} if whenever $\Sigma$ is a (finite) subset of $\Semi$ and $s\in\Semi$, if $\bigvee_{\alpha\in \Sigma}\alpha\in\Semi$ then $\bigvee_{\alpha\in \Sigma} s\alpha\in\Semi$ and $s\left(\bigvee_{\alpha\in \Sigma} \alpha\right)=\bigvee_{\alpha\in \Sigma}s\alpha$.

We will say that $\Semi$ is \emph{(finitely) complete} if for every (finite) subset $\Sigma\subseteq \Semi$ of pairwise compatible elements we have that $\bigvee_{\alpha\in \Sigma}\alpha\in\Semi$.

The symmetric inverse monoid $\Inv(X)$ is complete and distributive \cite[Proposition 1.2.1(i-ii)]{L}. But this property is not necessarily inherited by its inverse subsemigroups. Indeed, the point is that given $\Sigma\subseteq \Semi$ a set of pairwise compatible elements, and $s\in\Semi$:
\begin{enumerate}
	\item Not necessarily $\bigvee_{\alpha\in \Sigma}\alpha \in\Semi$,
	\item even if $\bigvee_{\alpha\in \Sigma} \alpha\in\Semi$, it can happen that $\bigvee_{\alpha\in \Sigma}s\alpha\notin \Semi$.
\end{enumerate}

To understand when $f,g\in\Inv(X)$ are compatible elements, and describe who is $f\vee g\in\Inv(X)$, we address the reader to Lawson's monograph \cite[Proposition 1.2.1]{L}. 

\section{The semigroups $\Semi_\Lambda$ and $\SemiT_\Lambda$}

Given a LCSC $\Lambda$, we will define some inverse semigroups associated to $\Lambda$.

\begin{definition}
	Let $\Lambda$ be a LCSC. For any $\alpha\in\Lambda$, we define two elements of $\Inv(\Lambda)$:\begin{enumerate}
		\item  $\osh^\alpha:\alpha\Lambda\to s(\alpha)\Lambda$ given by $\alpha\beta \mapsto \beta\,,$
		\item  $\tau^\alpha:s(\alpha)\Lambda\to \alpha\Lambda$ given by $\beta \mapsto \alpha\beta\,.$
	\end{enumerate}
\end{definition}

Clearly $\osh^\alpha$ is injective, and since $\Lambda$ is left cancellative so is $\tau^\alpha$.
Moreover, 
$$\osh^\alpha=\osh^\alpha\tau^\alpha\osh^\alpha\qquad\text{and}\qquad \tau^\alpha=\tau^\alpha\osh^\alpha\tau^\alpha\,,$$
for every $\alpha\in\Lambda$.

\begin{definition}
	Given a LCSC $\Lambda$, we define the semigroup
	$$\Semi_\Lambda:=\left\langle \osh^\alpha,\tau^\alpha:\alpha\in \Lambda\right\rangle \,.$$
\end{definition} 

\begin{lemma}\label{lemma1_2_3} Let $\Lambda$ be a LCSC. Then $\Semi_\Lambda$ is an inverse semigroup.
\end{lemma}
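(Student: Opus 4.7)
The plan is to exploit the fact that $\Semi_\Lambda$ sits inside the symmetric inverse monoid $\Inv(\Lambda)$, which is already an inverse semigroup. A subsemigroup of an inverse semigroup is itself inverse as soon as it is closed under the star-operation of the ambient semigroup (because the star there satisfies the two defining relations, and uniqueness of the inverse is automatic). So the whole problem reduces to checking that $\Semi_\Lambda$ is a $*$-subsemigroup of $\Inv(\Lambda)$.

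First, I would verify on the generators that $(\osh^\alpha)^* = \tau^\alpha$ and $(\tau^\alpha)^* = \osh^\alpha$. This is direct: $\osh^\alpha: \alpha\Lambda \to s(\alpha)\Lambda$ sends $\alpha\beta$ to $\beta$, and $\tau^\alpha: s(\alpha)\Lambda \to \alpha\Lambda$ sends $\beta$ to $\alpha\beta$, so the two maps have matching domains/ranges and are mutually inverse bijections. (Left cancellation is precisely what is needed to know that $\tau^\alpha$ is injective, a point already noted just before the statement.) Consequently the set of generators $\{\osh^\alpha, \tau^\alpha : \alpha \in \Lambda\}$ is closed under the involution of $\Inv(\Lambda)$.

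Now any $s \in \Semi_\Lambda$ can be written as a finite product $s = x_1 x_2 \cdots x_n$ of generators. Using the antimultiplicative law $(xy)^* = y^* x^*$ in $\Inv(\Lambda)$, one gets $s^* = x_n^* \cdots x_1^*$, which is again a word in the generators, hence in $\Semi_\Lambda$. Therefore $\Semi_\Lambda$ is closed under $*$, and the restriction of the $\Inv(\Lambda)$-involution provides an element $s^* \in \Semi_\Lambda$ satisfying $s = s s^* s$ and $s^* = s^* s s^*$. Uniqueness of such $s^*$ is inherited from $\Inv(\Lambda)$.

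There is no real obstacle here; the only thing to keep straight is that the definition of $\Semi_\Lambda$ as a semigroup generated by $\{\osh^\alpha, \tau^\alpha\}$ (rather than as a $*$-subsemigroup) is harmless precisely because this generating set is already self-adjoint. Once that observation is recorded, the lemma is immediate from the Wagner--Preston picture of inverse semigroups as $*$-subsemigroups of symmetric inverse monoids.
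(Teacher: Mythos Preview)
Your argument is correct and is exactly the approach taken in the paper: the authors simply observe that $\Semi_\Lambda\subseteq\Inv(\Lambda)$ is closed under composition and inverses, which is precisely what you verify in detail by noting the generating set $\{\osh^\alpha,\tau^\alpha\}$ is self-adjoint. You have written out explicitly what the paper's one-line proof leaves implicit.
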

\begin{proof}
	It is clear, since $\Inv(\Lambda)$  is an inverse semigroup, and $\Semi_\Lambda\subseteq \Inv(\Lambda)$ is closed under composition and inverses. 
\end{proof}

In order to better understand its structure, we will need to consider finite aligned LCSC. First, we introduce a definition.

\begin{definition}\label{Def:Cond U}
Let $\Lambda$ be a finitely aligned LCSC, and let $s\in \Semi_\Lambda$. We say that a presentation $s=\bigvee_{i=1}^n\tau^{\alpha_i}\sigma^{\beta_i}$ is \emph{irredundant} if for all $1\leq i\ne j\leq n$ we have $\alpha_i\not\in [\alpha_j]$ and $\beta_i\not\in [\beta_j]$.
\end{definition}

\begin{remark}\label{irredundant}
	Let  $s=\bigvee_{i=1}^n\tau^{\alpha_i}\sigma^{\beta_i}\in \Semi_\Lambda$, and suppose that for all $1\leq i\ne j\leq n$ we have $\beta_i\not\in [\beta_j]$. 
Now suppose that there exists $1\leq i\neq j\leq n$ with $\alpha_i\leq \alpha_j$, so there exists $\gamma\in \Lambda$ such that $\alpha_i\gamma=\alpha_j$.	
Then we have that 
$$s(\beta_i\gamma)=\alpha_i\gamma=\alpha_j=s(\beta_j)\,.$$	
But since $s:\bigcup_{i=1}^n \dom(\sigma^{\beta_i})\to \bigcup_{i=1}^n\ran(\tau^{\alpha_i})$ is a bijection, we have that $\beta_i\gamma=\beta_j$, so $\beta_i\leq \beta_j$, a contradiction. Thus, $s$ is irredundant. Similarly it can be proved that $s$ is irredundant if and only if  for all $1\leq i\ne j\leq n$ we have $\alpha_i\not\in [\alpha_j]$.
\end{remark}

\begin{lemma}[{\cite[Lemma 3.3 \& Theorem 6.3]{S1}}]\label{lemma1_2_4}
If $\Lambda$ is a  finite aligned LCSC, then every $f\in \Semi_\Lambda$ is the supremum of a finite family of elements of the form $\elmap{\alpha}{\beta}$ with $\alpha,\beta\in \Lambda$ and $s(\alpha)=s(\beta)$. Moreover, if such a decomposition is irredundant, then is unique (up to permutation).
\end{lemma}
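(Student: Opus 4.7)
The plan is to prove existence and uniqueness separately, with the existence part reduced to a single key rewriting formula and the uniqueness part pinned down by analyzing domains and ranges of partial bijections.

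For existence, I would induct on the length of a word in the generators $\sigma^\alpha, \tau^\alpha$. The base cases are immediate since $\sigma^\alpha = \tau^{s(\alpha)}\sigma^\alpha$ and $\tau^\alpha = \tau^\alpha\sigma^{s(\alpha)}$ (treating objects as identity morphisms). The inductive step amounts to showing that a product $(\tau^{\alpha_1}\sigma^{\beta_1})(\tau^{\alpha_2}\sigma^{\beta_2})$ can again be rewritten as a finite join of elements of the normal form $\tau^\gamma \sigma^\delta$ with $s(\gamma)=s(\delta)$, and the only nontrivial step is handling the middle piece $\sigma^{\beta_1}\tau^{\alpha_2}$. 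This is where finite alignment enters: writing $\alpha_2\vee\beta_1 = \{\gamma_1,\dots,\gamma_k\}$ (possibly empty) and choosing $x_i, y_i\in\Lambda$ with $\gamma_i = \alpha_2 x_i = \beta_1 y_i$, a direct check on points shows
\[\sigma^{\beta_1}\tau^{\alpha_2} = \bigvee_{i=1}^{k}\tau^{y_i}\sigma^{x_i},\]
the join being defined in $\Inv(\Lambda)$ precisely because the domains $x_i\Lambda$ cover $\{z\in s(\alpha_2)\Lambda:\alpha_2 z\in\beta_1\Lambda\}$, which is the domain of $\sigma^{\beta_1}\tau^{\alpha_2}$. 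Combined with the fact that $\Inv(\Lambda)$ is finitely distributive, this lets me collapse the composition back into a finite join of $\tau\sigma$-terms. By iterating, any word of generators is rewritten in the desired form, and then I may discard any term $\tau^{\alpha_i}\sigma^{\beta_i}$ for which there exists $j\neq i$ with $\beta_j\leq \beta_i$, yielding an irredundant presentation by Remark~1.2.4.

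For uniqueness, suppose $s = \bigvee_{i=1}^n \tau^{\alpha_i}\sigma^{\beta_i} = \bigvee_{j=1}^m\tau^{\alpha'_j}\sigma^{\beta'_j}$ are both irredundant. The domain of $s$, viewed as an element of $\Inv(\Lambda)$, equals $\bigcup_i \beta_i\Lambda = \bigcup_j \beta'_j\Lambda$. The irredundancy condition $\beta_i\notin[\beta_k]$ for $i\neq k$ translates, via Lemma~1.1.5, into the statement that the sets $\beta_i\Lambda$ are pairwise incomparable under inclusion, and likewise for the $\beta'_j\Lambda$. Thus each $\beta_i\Lambda$ must equal some $\beta'_{\pi(i)}\Lambda$ (matching up the maximal principal right ideals in the union), so $\beta_i\approx\beta'_{\pi(i)}$ for a permutation $\pi$. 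Evaluating $s$ at the element $\beta_i\in\beta_i\Lambda$ gives $s(\beta_i)=\alpha_i$ on one side and $\tau^{\alpha'_{\pi(i)}}\sigma^{\beta'_{\pi(i)}}(\beta_i) = \alpha'_{\pi(i)}\cdot\sigma^{\beta'_{\pi(i)}}(\beta_i)$ on the other; since $\beta_i\approx\beta'_{\pi(i)}$ means $\beta_i = \beta'_{\pi(i)} u$ for some $u\in\Lambda^{-1}$, one concludes $\alpha_i = \alpha'_{\pi(i)}u$, hence $\alpha_i\approx\alpha'_{\pi(i)}$ as well. This gives the uniqueness up to permutation (and the $\approx$-ambiguity, which is invisible because $\tau^{\alpha}\sigma^{\beta} = \tau^{\alpha u}\sigma^{\beta u}$ for $u\in\Lambda^{-1}$).

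The main obstacle I expect is the rewriting identity for $\sigma^{\beta_1}\tau^{\alpha_2}$: one must verify carefully that the join on the right is indeed defined in $\Inv(\Lambda)$ (pairwise compatibility, which follows from $\alpha_2 x_i = \beta_1 y_i$ forcing the partial bijections to agree on overlapping domains) and that its domain covers the entire domain of the left-hand side (which is exactly the content of finite alignment $\alpha_2\Lambda\cap\beta_1\Lambda = \bigcup_i \gamma_i\Lambda$). Everything else in the argument is bookkeeping with distributivity and compatibility in $\Inv(\Lambda)$, plus the elementary order-theoretic observation that the minimal generators of a union of principal ideals $\bigcup \beta_i\Lambda$ in a LCSC are determined up to $\approx$.
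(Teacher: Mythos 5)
The paper does not actually prove this lemma itself --- it is quoted from Spielberg \cite[Lemma 3.3 and Theorem 6.3]{S1} --- and your argument is correct and is essentially the standard proof given there: finite alignment yields the rewriting of $\sigma^{\beta_1}\tau^{\alpha_2}$ as a join of $\tau^{y_i}\sigma^{x_i}$ over minimal common extensions, distributivity of $\Inv(\Lambda)$ collapses arbitrary words into finite joins of $\tau\sigma$-terms, and irredundancy pins the terms down up to permutation, with the representing pairs $(\alpha_i,\beta_i)$ determined only up to simultaneous right multiplication by an invertible (which, as you note, does not change the partial map). The one step to phrase more carefully is the passage to an irredundant presentation: discarding \emph{every} term $i$ for which some $\beta_j\leq\beta_i$ with $j\neq i$ would delete both members of an $\approx$-equivalent pair, so the removal should be performed one term at a time (or after first merging $\approx$-duplicate terms).
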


Notice that, given any finite family $\{\alpha_1,\ldots,\alpha_n\}\subset \Lambda$, the elements $\{\elmap{\alpha_i}{\alpha_i}\}_{i=1}^n\subset \Semi_\Lambda$ are pairwise compatible, so that $\bigvee_{i=1}^n\elmap{\alpha_i}{\alpha_i}\in \Inv(\Lambda)$, but not necessarily to $\Semi_\Lambda$. Thus, in order to do some essential arguments we need to consider a new object. 

\begin{definition}
	Let $\Lambda$ be  a finitely aligned LCSC. We define 
	$$\SemiT_\Lambda=\left\lbrace\bigvee_{i=1}^n \elmap{\alpha_i}{\beta_i} : \{\elmap{\alpha_i}{\beta_i}\}_{i=1}^n\subset \Semi_\Lambda\text{ are pairwise compatible}\right\rbrace\,.$$
\end{definition}

Clearly, by  Lemma \ref{lemma1_2_4} $\Semi_\Lambda\subseteq \SemiT_\Lambda\subset \Inv(\Lambda)$. Moreover, by \cite[Proposition 1.4.20 \& Proposition 1.4.17]{L}, $\SemiT_\Lambda$ is closed by composition and inverses, and moreover, is finitely distributive. Thus,
\begin{lemma}
	Let $\Lambda$ be a finitely aligned LCSC. Then, $\SemiT_\Lambda$ is an inverse semigroup containing $\Semi_\Lambda$. Moreover, $\SemiT_\Lambda$ is the smallest finitely complete, finitely distributive, inverse semigroup containing $\Semi_\Lambda$.  
\end{lemma}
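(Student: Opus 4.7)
The plan is to verify each assertion in order, leaning on the Lawson references cited just above the statement.

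First I would observe that $\SemiT_\Lambda$ is a $*$-closed subsemigroup of $\Inv(\Lambda)$. Closure under involution is immediate, since if $s=\bigvee_{i=1}^n \elmap{\alpha_i}{\beta_i}$ with the summands pairwise compatible, then $s^*=\bigvee_{i=1}^n \elmap{\beta_i}{\alpha_i}$, and the compatibility relation is preserved under $*$ (as $s\sim t$ iff $t\sim s$ iff $s^*\sim t^*$). For closure under composition, take $s=\bigvee_{i=1}^n a_i$ and $t=\bigvee_{j=1}^m b_j$ in $\SemiT_\Lambda$ with $a_i, b_j\in\Semi_\Lambda$. Using finite distributivity of $\Inv(\Lambda)$ (\cite[Proposition 1.2.1]{L}), $st=\bigvee_{i,j} a_i b_j$. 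Each $a_ib_j$ lies in $\Semi_\Lambda$, and the family $\{a_ib_j\}$ is pairwise compatible (if $x\sim x'$ and $y\sim y'$, then $xy\sim x'y'$, again \cite[\S 1.4]{L}). Hence $st\in\SemiT_\Lambda$, and $\SemiT_\Lambda$ is an inverse subsemigroup of $\Inv(\Lambda)$. The containment $\Semi_\Lambda\subseteq\SemiT_\Lambda$ follows from Lemma \ref{lemma1_2_4}: every element of $\Semi_\Lambda$ is already a finite join of pairwise compatible elements of the form $\elmap{\alpha}{\beta}$.

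Next I would check finite completeness and finite distributivity of $\SemiT_\Lambda$. If $s_1,\dots,s_k\in\SemiT_\Lambda$ are pairwise compatible, write each $s_\ell=\bigvee_{i\in I_\ell} a_i^{(\ell)}$ with $a_i^{(\ell)}\in\Semi_\Lambda$ and the family within each $s_\ell$ pairwise compatible. By Lemma \ref{lemma1_1_2_5}, pairwise compatibility of the $s_\ell$ forces pairwise compatibility of the merged family $\{a_i^{(\ell)}\}_{\ell,i}$ (this uses that $s_\ell\wedge s_{\ell'}$ exists as a meet of idempotents and that components of compatible joins are compatible, cf.\ \cite[Proposition 1.4.17]{L}). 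Hence $\bigvee_\ell s_\ell=\bigvee_{\ell,i} a_i^{(\ell)}\in\SemiT_\Lambda$. For finite distributivity, given $s\in\SemiT_\Lambda$ and a pairwise compatible finite family $\Sigma\subseteq\SemiT_\Lambda$ whose join lies in $\SemiT_\Lambda$, we have just shown this join $\bigvee_{\alpha\in\Sigma}\alpha$ sits inside $\SemiT_\Lambda\subseteq\Inv(\Lambda)$; since $\Inv(\Lambda)$ is distributive we get $s\bigl(\bigvee_{\alpha\in\Sigma}\alpha\bigr)=\bigvee_{\alpha\in\Sigma}s\alpha$ in $\Inv(\Lambda)$, and the argument of the previous paragraph shows the right-hand side belongs to $\SemiT_\Lambda$.

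Finally, minimality. Let $\mathcal{R}\subseteq\Inv(\Lambda)$ be a finitely complete, finitely distributive inverse semigroup containing $\Semi_\Lambda$. I claim $\SemiT_\Lambda\subseteq\mathcal{R}$. Indeed, any $s\in\SemiT_\Lambda$ has the form $s=\bigvee_{i=1}^n \elmap{\alpha_i}{\beta_i}$ with the summands pairwise compatible. Each summand lies in $\Semi_\Lambda\subseteq\mathcal{R}$, so by finite completeness of $\mathcal{R}$ the join $\bigvee_{i=1}^n \elmap{\alpha_i}{\beta_i}$ exists in $\mathcal{R}$; since $\mathcal{R}\subseteq\Inv(\Lambda)$ and joins in $\Inv(\Lambda)$ are unique, this join agrees with $s$. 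Therefore $s\in\mathcal{R}$.

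The only step that requires care is the compatibility bookkeeping in the second paragraph, where one must combine distinct families $\{a_i^{(\ell)}\}_i$ coming from compatible joins $s_\ell$ into a single pairwise compatible family: this is exactly where the cited Lawson propositions on compatibility and joins in inverse semigroups are used, and all other steps are routine consequences of the definitions.
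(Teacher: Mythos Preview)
Your proof is essentially correct and follows the same approach as the paper, which simply cites \cite[Propositions 1.4.17 and 1.4.20]{L} for closure under composition, inverses, and finite distributivity, together with Lemma~\ref{lemma1_2_4} for the containment $\Semi_\Lambda\subseteq\SemiT_\Lambda$; you have merely unpacked these citations and added the (straightforward) minimality argument that the paper leaves implicit.

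One small correction: the claim ``if $x\sim x'$ and $y\sim y'$, then $xy\sim x'y'$'' is not valid in general inverse semigroups. The correct route---which you in fact use in the next paragraph---is: since $st=\bigvee_{i,j}a_ib_j$ exists in $\Inv(\Lambda)$ by distributivity, Lemma~\ref{lemma1_1_2_5} forces the family $\{a_ib_j\}$ to be pairwise compatible, whence $st\in\SemiT_\Lambda$. The same remark applies to your finite-completeness step: first use completeness of $\Inv(\Lambda)$ to know $\bigvee_\ell s_\ell=\bigvee_{\ell,i}a_i^{(\ell)}$ exists there, and only then invoke Lemma~\ref{lemma1_1_2_5} to deduce pairwise compatibility of the merged family.
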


Now, we will proceed to understand who are the elements in $\Idem[\SemiT_\Lambda]$ and the order relation.

\begin{lemma}\label{lemma1_2_6}
	Let $\Lambda$ be a finitely aligned LCSC. Then $e\in \Idem[\SemiT_\Lambda]$ if and only if $e=\bigvee_{i=1}^n\elmap{\alpha_i}{\alpha_i}$ for some $\alpha_1,\ldots,\alpha_n\in\Lambda$.
\end{lemma}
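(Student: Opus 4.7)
The plan is to prove both implications directly, using the description of $\SemiT_\Lambda$ as an inverse subsemigroup of $\Inv(\Lambda)$. The crucial facts are that idempotents in $\Inv(\Lambda)$ are precisely partial identities, and that the join of a compatible family $\{f_i\}$ in $\Inv(\Lambda)$ acts as $f_i$ on $\dom(f_i)$ (well-defined on overlaps by compatibility).

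For the ($\Leftarrow$) direction, observe that each $\elmap{\alpha_i}{\alpha_i}$ restricts to the identity on $\alpha_i\Lambda$ and is therefore an idempotent in $\Inv(\Lambda)$. Since idempotents in any inverse semigroup commute, any two of them $e,f$ are automatically compatible ($e^*f=ef$ and $ef^*=ef$ both lie in $\Idem$). Hence the family $\{\elmap{\alpha_i}{\alpha_i}\}_{i=1}^n$ is pairwise compatible and its join lies in $\SemiT_\Lambda$. Viewed in $\Inv(\Lambda)$, the join is simply the identity on $\bigcup_{i=1}^n\alpha_i\Lambda$, so it is idempotent.

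For the ($\Rightarrow$) direction, take $e\in\Idem[\SemiT_\Lambda]$ and write $e=\bigvee_{i=1}^n\elmap{\alpha_i}{\beta_i}$ with the summands pairwise compatible (so in particular $s(\alpha_i)=s(\beta_i)$ for each $i$, which makes the composition well-defined). Since $e$ is a partial bijection in $\Inv(\Lambda)$ and $e^2=e$, the map $e$ must be the identity on $\dom(e)=\bigcup_i\beta_i\Lambda$. By the compatibility of the summands, $e$ restricted to $\beta_i\Lambda$ coincides with $\elmap{\alpha_i}{\beta_i}$. Evaluating at $\beta_i=\beta_is(\beta_i)\in\beta_i\Lambda$ yields
$$\beta_i=e(\beta_i)=\elmap{\alpha_i}{\beta_i}(\beta_i)=\alpha_is(\alpha_i)=\alpha_i,$$
so $\alpha_i=\beta_i$ for each $i$, and $e=\bigvee_{i=1}^n\elmap{\alpha_i}{\alpha_i}$.

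I do not expect a serious obstacle. The only delicate point is to justify, in the ($\Rightarrow$) direction, that $e$ agrees with $\elmap{\alpha_i}{\beta_i}$ on $\beta_i\Lambda$ (even when this set overlaps with other $\beta_j\Lambda$), which follows from compatibility of partial bijections rather than from any appeal to the uniqueness of an irredundant decomposition (Lemma \ref{lemma1_2_4}); so the argument works for any representation of $e$ as a join of pairwise compatible elementary summands, not only for an irredundant one.
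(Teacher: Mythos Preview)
Your proof is correct and follows essentially the same approach as the paper: both arguments write $e=\bigvee_{i=1}^n\elmap{\alpha_i}{\beta_i}$, recognize that $e$ must act as the identity on its domain (the paper via the computation $e=e^*e=\bigvee_i\elmap{\beta_i}{\beta_i}$, you by directly observing that idempotents in $\Inv(\Lambda)$ are partial identities), and then evaluate at $\beta_i$ to conclude $\alpha_i=\beta_i$. Your remark that no irredundancy assumption is needed is correct and worth noting, though the paper's argument does not in fact invoke Lemma~\ref{lemma1_2_4} either.
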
 
\begin{proof}
	Let  $e\in \Idem[\SemiT_\Lambda]$, then   $e=\bigvee_{i=1}^n\elmap{\alpha_i}{\beta_i}$.  
	By \cite[Proposition 1.4.17]{L}, $e^*=e=\bigvee_{i=1}^n\elmap{\beta_i}{\alpha_i}$, and 
	$$e=e^*e=\bigvee_{i=1}^n\elmap{\beta_i}{\alpha_i}\elmap{\alpha_i}{\beta_i}=\bigvee_{i=1}^n\elmap{\beta_i}{\beta_i}\,.$$
	Thus, $\bigvee_{i=1}^n\elmap{\beta_i}{\beta_i}=\bigvee_{i=1}^n\elmap{\alpha_i}{\beta_i}$. But then given  $1\leq i\leq n$, we have that 
	$$\beta_i=\left( \bigvee_{i=1}^n\elmap{\beta_i}{\beta_i}\right) (\beta_i)=\left( \bigvee_{i=1}^n\elmap{\alpha_i}{\beta_i}\right) (\beta_i)=\alpha_i\,,$$
	as desired.
\end{proof}

\begin{proposition}\label{propo1_2_9}
	Let $\Lambda$ be a finitely aligned LCSC, and let $e=\bigvee_{i=1}^n\elmap{\alpha_i}{\alpha_i}$, $f=\bigvee_{j=1}^m\elmap{\beta_j}{\beta_j}$ be idempotents of either $\Semi_\Lambda$ or $\SemiT_\Lambda$ (written in irredundant form). Then, the following are equivalent:
	\begin{enumerate}
		\item $e\leq f$,
		\item for each $1\leq k\leq n$, there exists $1\leq l\leq m$ such that $\beta_l\leq \alpha_k$.
	\end{enumerate}
\end{proposition}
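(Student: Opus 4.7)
My plan is to reduce both conditions to a set-theoretic inclusion between the images of $e$ and $f$ viewed as partial bijections of $\Lambda$. The first step is to recall that, for any $\alpha\in\Lambda$, the element $\elmap{\alpha}{\alpha}=\tau^\alpha\osh^\alpha$ acts as the identity on $\alpha\Lambda$ and is undefined elsewhere, so it is the partial identity on $\alpha\Lambda$. By the general description of joins in $\Inv(\Lambda)$ (as invoked earlier via Lawson), a finite join of pairwise compatible partial identities is again a partial identity, with domain equal to the union of the individual domains. Hence
\[
\dom(e)=\bigcup_{i=1}^n \alpha_i\Lambda, \qquad \dom(f)=\bigcup_{j=1}^m \beta_j\Lambda,
\]
and $e,f$ are both partial identities on these sets. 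Note that the irredundancy hypothesis plays no role in this identification; it is only relevant to the uniqueness of the decomposition.

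Next, I would translate the relation $e\leq f$ into this language. For idempotents, $e\leq f$ means $e=ef$; since both are partial identities, the composite $ef$ is the partial identity on $\dom(e)\cap\dom(f)$. So $e=ef$ is equivalent to $\dom(e)\subseteq\dom(f)$, i.e.\
\[
\bigcup_{i=1}^n \alpha_i\Lambda \;\subseteq\; \bigcup_{j=1}^m \beta_j\Lambda.
\]

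With this reformulation, the equivalence with (2) becomes almost immediate. For the direction (2)$\Rightarrow$(1): if for every $k$ there is some $l$ with $\beta_l\leq\alpha_k$, then $\alpha_k\in\beta_l\Lambda$, hence $\alpha_k\Lambda\subseteq\beta_l\Lambda$, and taking the union over $k$ gives the required inclusion. For (1)$\Rightarrow$(2): fix $k$ and observe that $\alpha_k=\alpha_k s(\alpha_k)\in\alpha_k\Lambda\subseteq\bigcup_j\beta_j\Lambda$, so there exists $l$ with $\alpha_k\in\beta_l\Lambda$; writing $\alpha_k=\beta_l\gamma$ for some $\gamma\in\Lambda$ gives $\beta_l\leq\alpha_k$, as desired.

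There is no real obstacle here; the only subtlety is making sure the identification of the supremum in $\SemiT_\Lambda$ with the partial identity on the union of the $\alpha_i\Lambda$'s is legitimate. This is exactly what the finite distributivity / completeness inherited from $\Inv(\Lambda)$ guarantees, and it is already the content of the remarks preceding Lemma~\ref{lemma1_2_6}. After that, the rest of the argument is just the straightforward order-theoretic content of the partial-order $\leq$ on $\Lambda$ together with the observation that each $\alpha_k$ lies trivially in its own cone $\alpha_k\Lambda$.
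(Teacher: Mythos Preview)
Your proof is correct and takes a genuinely different, more elementary route than the paper's. The paper argues internally in $\SemiT_\Lambda$: for $(1)\Rightarrow(2)$ it fixes $k$, uses finite distributivity to expand $\elmap{\alpha_k}{\alpha_k}=\elmap{\alpha_k}{\alpha_k}\cdot f$ as a join over minimal common extensions, reduces to irredundant form, and then invokes the uniqueness part of Lemma~\ref{lemma1_2_4} to identify one term with $\elmap{\alpha_k}{\alpha_k}$, forcing some $\beta_l$ to be an initial segment of $\alpha_k$. By contrast, you step out to the ambient symmetric inverse monoid $\Inv(\Lambda)$, identify $e$ and $f$ with the partial identities on $\bigcup_i\alpha_i\Lambda$ and $\bigcup_j\beta_j\Lambda$, and translate $e\leq f$ into the set-theoretic inclusion of domains; then the equivalence with (2) is immediate from the trivial fact $\alpha_k\in\alpha_k\Lambda$. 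Your approach avoids the machinery of irredundant decompositions and the uniqueness lemma entirely (as you correctly note, irredundancy is not needed), and is shorter. The paper's approach has the advantage of staying within the abstract semigroup framework, which is the style used in the companion Proposition~\ref{propo1_2_10} for non-idempotent elements, where the concrete domain picture is less immediately decisive.
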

\begin{proof}
For $(1)$ implies $(2)$, let $e,f\in \Idem[\SemiT_\Lambda]$ with $e\leq f$. Then, $e=\bigvee_{i=1}^n\elmap{\alpha_i}{\alpha_i}$ and $f=\bigvee_{j=1}^m\elmap{\beta_j}{\beta_j}$. Fix any $1\leq k\leq n$. Then,
$\elmap{\alpha_k}{\alpha_k}\leq e\leq f$ if and only if 
$$\elmap{\alpha_k}{\alpha_k}=\elmap{\alpha_k}{\alpha_k} \left( \bigvee_{j=1}^m\elmap{\beta_j}{\beta_j}\right) \,.$$
Since $\SemiT_\Lambda$ is finitely distributive, we have that
$$\elmap{\alpha_k}{\alpha_k}=\bigvee_{j=1}^m\elmap{\alpha_k}{\alpha_k}\elmap{\beta_j}{\beta_j}=\bigvee_{j=1}^m\left( \bigvee_{\varepsilon\in \alpha_k\vee \beta_j}\elmap{\alpha_k\osh^{\alpha_k}(\varepsilon)}{\beta_j\osh^{\beta_j}(\varepsilon)}\right) \,.$$
Without loss of generality, we can assume that the decomposition is irredundant (by using the reduction argument in the proof of \cite[Theorem 6.3]{S1}. By Lemma \ref{lemma1_2_4}, there exist $1\leq l\leq m$ and $\hat{\varepsilon}\in \alpha_k\vee\beta_l$ such that $\elmap{\alpha_k}{\alpha_k}=\elmap{\alpha_k\osh^{\alpha_k}(\hat{\varepsilon})}{\beta_l\osh^{\beta_l}(\hat{\varepsilon})}$, whence $\alpha_k=\alpha_k\osh^{\alpha_k}(\hat{\varepsilon})=\beta_l\osh^{\beta_l}(\hat{\varepsilon})$. Thus, $\beta_l$ is an initial segment of $\alpha_k$ if and only if $\beta_l\leq \alpha_k$ if and only if $\beta_l\in[\alpha_k]$.

For $(2)$ implies $(1)$, if $\beta_l\leq \alpha_k$, then $\elmap{\alpha_k}{\alpha_k}\leq\elmap{\beta_l}{\beta_l}\leq f$. Since this is true for all $1\leq k\leq n$, we have that $e\leq f$, as desired.

Notice that, even we need $\SemiT_\Lambda$ to argue, the conclusion works for $\Semi_\Lambda$ too.
 \end{proof}

By an analog argument, we have the following result, extending Proposition \ref{propo1_2_9} to any couple of elements of $\Semi_\Lambda$.

\begin{proposition}\label{propo1_2_10}
Let $\Lambda$ be a finitely aligned LCSC, and let $s=\bigvee_{i=1}^n\elmap{\alpha_i}{\beta_i}$, $t=\bigvee_{j=1}^m\elmap{\gamma_j}{\delta_j}$ be elements of either $\Semi_\Lambda$ or $\SemiT_\Lambda$ (written in irredundant form). Then, the following are equivalent:
\begin{enumerate}
\item $s\leq t$,
\item for each $1\leq k\leq n$, there exists $1\leq l\leq m$ such that $\alpha_k=\gamma_l \epsilon$ and $\beta_k=\delta_l \epsilon$ for some $\epsilon \in s(\gamma_l)\Lambda$.
\end{enumerate}
\end{proposition}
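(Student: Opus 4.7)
The plan is to use the realization of $\Semi_\Lambda$ and $\SemiT_\Lambda$ as $*$-subsemigroups of the symmetric inverse monoid $\Inv(\Lambda)$, so that the natural order coincides with restriction of partial bijections: $f\leq g$ means $\dom(f)\subseteq\dom(g)$ and $g|_{\dom(f)}=f$. Writing $s$ and $t$ as suprema of pairwise compatible families in $\Inv(\Lambda)$, one reads off $\dom(s)=\bigcup_{i}\beta_i\Lambda$ with $s(\beta_i\mu)=\alpha_i\mu$ for $\mu\in s(\beta_i)\Lambda$, and similarly $\dom(t)=\bigcup_{j}\delta_j\Lambda$ with $t(\delta_j\nu)=\gamma_j\nu$; both formulas follow from the standard description of suprema of compatible families in $\Inv(\Lambda)$ (see \cite[Proposition 1.2.1]{L}). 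With this in hand the verification becomes a pointwise computation, and one can avoid the distributive product manipulation used in the proof of Proposition \ref{propo1_2_9}.

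For $(2)\Rightarrow(1)$, fix $k$ and choose $l$ together with $\epsilon\in s(\gamma_l)\Lambda$ such that $\alpha_k=\gamma_l\epsilon$ and $\beta_k=\delta_l\epsilon$. Then for any $\mu\in s(\beta_k)\Lambda$ we have $\beta_k\mu=\delta_l(\epsilon\mu)\in\delta_l\Lambda$ and
$$\elmap{\gamma_l}{\delta_l}(\beta_k\mu)=\gamma_l\epsilon\mu=\alpha_k\mu=\elmap{\alpha_k}{\beta_k}(\beta_k\mu),$$
so $\elmap{\alpha_k}{\beta_k}$ is a restriction of $\elmap{\gamma_l}{\delta_l}$, i.e.\ $\elmap{\alpha_k}{\beta_k}\leq\elmap{\gamma_l}{\delta_l}\leq t$. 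Taking the supremum over $k$ gives $s\leq t$.

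For $(1)\Rightarrow(2)$, assume $s\leq t$ and fix $k$. Since $\elmap{\alpha_k}{\beta_k}\leq s\leq t$, both $s$ and $t$ are defined at $\beta_k$ and take the value $\alpha_k$ there. Because $\beta_k\in\dom(t)=\bigcup_l\delta_l\Lambda$, pick $l$ with $\beta_k\in\delta_l\Lambda$ and write $\beta_k=\delta_l\epsilon$ for some $\epsilon\in s(\gamma_l)\Lambda$; then $\alpha_k=t(\beta_k)=\elmap{\gamma_l}{\delta_l}(\delta_l\epsilon)=\gamma_l\epsilon$, which is exactly $(2)$. The one delicate point, and the place where I expect most care will be needed, is justifying the explicit form of the join $\bigvee_l\elmap{\gamma_l}{\delta_l}$ as a partial bijection with domain $\bigcup_l\delta_l\Lambda$ acting by $\delta_l\nu\mapsto\gamma_l\nu$: compatibility of the family is what forces these local formulas to agree on overlaps $\delta_l\Lambda\cap\delta_{l'}\Lambda$, so this is no more than the standard description of joins in $\Inv(\Lambda)$ already used implicitly in the preceding results.
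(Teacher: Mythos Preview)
Your proof is correct, and it takes a genuinely different route from the one the paper (implicitly) has in mind. The paper does not spell out a proof here, but signals that it is ``by an analog argument'' to Proposition~\ref{propo1_2_9}: there one fixes $k$, expands the product $\elmap{\alpha_k}{\alpha_k}\cdot f$ using finite distributivity in $\SemiT_\Lambda$, reduces to an irredundant expression, and then invokes the uniqueness clause of Lemma~\ref{lemma1_2_4} to match up the pieces. The analogue for general $s,t$ would expand $\elmap{\alpha_k}{\beta_k}=\elmap{\alpha_k}{\beta_k}\,t^*t$ (or $s=s\,t^*t$) via distributivity and again appeal to uniqueness of irredundant form.

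You bypass all of that by working concretely in $\Inv(\Lambda)$: the natural partial order there is restriction of partial bijections, and the join of a compatible family is just the union of their graphs, so $\dom(t)=\bigcup_l\delta_l\Lambda$ with $t(\delta_l\nu)=\gamma_l\nu$. Evaluating $t$ at the single point $\beta_k$ immediately yields the index $l$ and the element $\epsilon$. This is more elementary and avoids both the distributive expansion and the uniqueness of irredundant decompositions; the price is that it relies on the concrete realization inside $\Inv(\Lambda)$ rather than on the intrinsic semigroup structure, which is perfectly fine here since $\Semi_\Lambda$ and $\SemiT_\Lambda$ are by definition subsemigroups of $\Inv(\Lambda)$. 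The point you flag as delicate---that compatibility makes the local formulas for $t$ agree on overlaps---is exactly the content of \cite[Proposition~1.2.1]{L}, and you handle it correctly.
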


Now, we will connect $\Semi_\Lambda$ with the semigroups appearing in \cite{DM,S1,S2}.

\begin{definition}
	Let $\Lambda$ be a small category. A \emph{zigzag} is an even tuple of the form 
	$$\xi=(\alpha_1,\beta_1,\alpha_2,\beta_2,\ldots,\alpha_n,\beta_n)$$
	with $\alpha_i,\beta_i\in\Lambda$, $r(\alpha_i)=r(\beta_i)$ for every $1\leq i\leq n$ and $s(\alpha_{i+1})=s(\beta_{i})$ of every $1\leq i<n$. We will denote by $\Zig$ the set of zigzags of $\Lambda$. Given $\xi\in\Zig$, we define $s(\xi)=s(\beta_n)$, $r(\xi)=s(\alpha_1)$ and $\bar{\xi}=(\beta_n,\alpha_n,\ldots,\beta_1,\alpha_1)$.
\end{definition}

Every $\xi\in \Zig$ defines a zigzag map $\varphi_\xi\in \Inv(\Lambda)$ by
$$\varphi_\xi=\osh^{\alpha_1}\tau^\beta_1\cdots \osh^{\alpha_n}\tau^{\beta_n}\,.$$
We will denote $\ZigM=\{\varphi_\xi:\xi\in\Zig \}$.  

\begin{remark}\label{rema1_3_2} $\mbox{ }$

	\begin{enumerate}
		\item For every $\alpha\in\Lambda$ we can define $\xi_\alpha:=(r(\alpha),\alpha)$. Notice that $\varphi_{\xi_\alpha}=\tau^\alpha$ and $\varphi_{\bar{\xi}_\alpha}=\osh^\alpha$.
		\item $\ZigM$ is closed by concatenation, and $\varphi_{\xi_1}\circ\varphi_{\xi_2}=\varphi_{\xi_1\xi_2}$.
		\item For every $\xi\in \Zig$, then $\varphi_{\bar{\xi}}=\varphi_\xi^{-1}$. 
		\end{enumerate}
\end{remark}

Thus,
\begin{lemma}[{\cite[Section 7.2]{BKQS}}]\label{zig_zag}
	If $\Lambda$ is a LCSC, then $\ZigM$ is an inverse semigroup. Moreover, $\ZigM=\Semi_\Lambda$.
\end{lemma}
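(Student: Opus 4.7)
My plan is to prove the two assertions of the lemma by directly exploiting the three observations already collected in Remark~\ref{rema1_3_2}, with no calculations beyond repeatedly translating between a zigzag tuple and its associated partial map.

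First I would establish that $\ZigM$ is an inverse semigroup. Since $\ZigM \subseteq \Inv(\Lambda)$ and $\Inv(\Lambda)$ is itself an inverse semigroup, it suffices to verify that $\ZigM$ is a subsemigroup closed under involution. Closure under composition is given by Remark~\ref{rema1_3_2}(2): the concatenation of two zigzags is again a zigzag, and the associated map is the composition of the two zigzag maps. Closure under involution is Remark~\ref{rema1_3_2}(3): reversing a zigzag $\xi$ to $\bar{\xi}$ produces the partial map inverse $\varphi_\xi^{-1}$. These two facts together make $\ZigM$ an inverse subsemigroup of $\Inv(\Lambda)$.

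For the equality $\ZigM=\Semi_\Lambda$, I would argue both inclusions separately. The inclusion $\Semi_\Lambda \subseteq \ZigM$ follows from Remark~\ref{rema1_3_2}(1): each generator $\tau^\alpha$ and $\sigma^\alpha$ of $\Semi_\Lambda$ is already a zigzag map, namely $\varphi_{\xi_\alpha}$ and $\varphi_{\bar{\xi}_\alpha}$, respectively. Since $\ZigM$ is an inverse semigroup containing the generating set of $\Semi_\Lambda$ and $\Semi_\Lambda$ is by definition the inverse semigroup generated by those elements inside $\Inv(\Lambda)$, the inclusion is immediate. Conversely, given any $\xi=(\alpha_1,\beta_1,\dots,\alpha_n,\beta_n)\in\Zig$, the very definition
\[
\varphi_\xi=\sigma^{\alpha_1}\tau^{\beta_1}\cdots\sigma^{\alpha_n}\tau^{\beta_n}
\]
expresses $\varphi_\xi$ as a finite product of the generators of $\Semi_\Lambda$, so $\varphi_\xi\in\Semi_\Lambda$; this yields $\ZigM\subseteq\Semi_\Lambda$.

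There is essentially no obstacle here beyond the bookkeeping encapsulated in Remark~\ref{rema1_3_2}; the only point that might warrant a brief comment is that the concatenation of two zigzags remains a zigzag even when the intermediate composition $\tau^{\beta_n}\sigma^{\gamma_1}$ has empty intersection of domain and range, in which case both sides of the identity $\varphi_{\xi_1}\circ\varphi_{\xi_2}=\varphi_{\xi_1\xi_2}$ equal the zero element (the empty partial map) of $\Inv(\Lambda)$, which we tacitly regard as lying in $\ZigM$.
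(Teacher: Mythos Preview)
Your proposal is correct and follows exactly the paper's own argument: the paper's proof simply cites Remark~\ref{rema1_3_2}(2--3) for the inverse semigroup structure and Remark~\ref{rema1_3_2}(1--3) together with the definition of $\varphi_\xi$ for the two inclusions, which is precisely what you spell out. Your final remark about the empty partial map is a fair observation of a technicality the paper leaves implicit, but it does not alter the approach.
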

\begin{proof}
	First part is a consequence of Remark \ref{rema1_3_2}(2-3). For the second part, Remark \ref{rema1_3_2}(1-3) implies that $\Semi_\Lambda\subseteq \ZigM$. On the other side, for every $\xi\in \Zig$ we have that $\varphi_\xi\in \Semi_\Lambda$, so that $\ZigM\subseteq \Semi_\Lambda$.
\end{proof}

Hence, when working with $\ZigM$, we benefit of results in previous sections.

\section{Filters on LCSC}

Let $\Lambda$ be a LCSC. We denote by $\Idemp:=\Idem[\Semi_\Lambda]$ the semilattice of idempotents of the inverse semigroup $\Semi_\Lambda$.

\begin{definition}
	A nonempty subset $\eta$ of $\Idemp$ is a \emph{filter} if:
	\begin{enumerate}
		\item $e\in\eta$, $f\in\Idemp$ and $e\leq f$, then $f\in\eta$,
		\item $e,f\in\eta$ then $ef\in\eta$. 
	\end{enumerate}
\end{definition}

The set of filters of $\Idemp$ is denoted by $\Filt$. We can endow $\Filt$ with a topology, as follows.
\begin{definition}\label{Def:TopologyFilters}
	For any $X,Y\subset \Idemp$ finite subsets, define 
	$$\Ope(X,Y):=\{\eta\in\Filt: X\subseteq \eta\text{ and }Y\cap \eta=\emptyset  \}\,.$$
	Then 
	$$\mathcal{T}_\Idemp=\{\Ope(X,Y): X,Y\subseteq \Idemp \text{ finite}\}\,,$$
	is a basis for a topology of $\Filt$, under which $\Filt$ is Hausdorff and locally compact space (See e.g.\cite{EP}).
	\end{definition}

\begin{definition}
	A filter $\eta\in\Filt$ is an \emph{ultrafilter} if it is not properly contained in another filter. Equivalently, $\eta$ is maximal among the filters, partially ordered by inclusion.
\end{definition}

A useful characterization is the following. 

\begin{lemma}[{\cite[Lemma 12.3]{E1}}]\label{lemma2_1_4} A filter $\eta\in \Filt$ is an ultrafilter if and only if $e\in \Idemp$ and $ef\neq 0$ for every $f\in\eta$ implies $e\in\eta$. 
\end{lemma}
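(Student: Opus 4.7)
The plan is to prove the two directions separately by a standard filter-enlargement argument.

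For the forward implication, I assume $\eta$ is an ultrafilter and that $e\in\Idemp$ satisfies $ef\neq 0$ for every $f\in\eta$, and I aim to exhibit a filter $\eta'$ with $\eta\cup\{e\}\subseteq\eta'$; maximality then forces $\eta'=\eta$ and hence $e\in\eta$. The natural candidate is
$$\eta':=\{\,g\in\Idemp\setminus\{0\}\,:\,g\geq ef\text{ for some }f\in\eta\,\}.$$
It contains $\eta$ (for $g\in\eta$, take $f=g$ and note $g\geq eg$, with $g\neq 0$ since $\eta$ is proper) and it contains $e$ (for any $f\in\eta$, $e\geq ef$, and $e\neq 0$ since $ef\neq 0$). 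Upward closure is built into the definition. For closure under meets, if $g_1\geq ef_1$ and $g_2\geq ef_2$ with $f_1,f_2\in\eta$, then commutativity of $\Idemp$ together with $e^2=e$ gives $g_1g_2\geq ef_1\cdot ef_2=e(f_1f_2)$; since $f_1f_2\in\eta$ (as $\eta$ is a filter) and $e(f_1f_2)\neq 0$ by hypothesis, we conclude $g_1g_2\in\eta'$. Thus $\eta'$ is a (proper) filter, and maximality yields $e\in\eta$.

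For the reverse implication, I assume the stated property and argue by contradiction: if $\eta$ is not maximal, pick a filter $\mu$ with $\eta\subsetneq\mu$ and an element $e\in\mu\setminus\eta$. For every $f\in\eta\subseteq\mu$ the product $ef$ lies in $\mu$, and $ef\neq 0$ since $\mu$ is proper. The hypothesis on $\eta$ then forces $e\in\eta$, contradicting the choice of $e$. Hence $\eta$ is an ultrafilter.

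The only mildly delicate point is the verification that $\eta'$ is a \emph{proper} filter in the forward direction, which is precisely what the assumption $ef\neq 0$ for all $f\in\eta$ guarantees; everything else is bookkeeping using the commutativity and idempotency of elements of $\Idemp$.
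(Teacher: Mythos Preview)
Your argument is correct and is precisely the standard filter-enlargement proof of this characterization. Note that the paper itself does not supply a proof of this lemma at all: it is quoted verbatim from \cite[Lemma 12.3]{E1} and used as a black box, so there is no ``paper's own proof'' to compare against. Your write-up would serve perfectly well as a self-contained justification; the only cosmetic point is that the inclusion $\eta\subseteq\eta'$ relies on the (implicit) convention that filters are proper, i.e.\ $0\notin\eta$, which you use when asserting $g\neq 0$ for $g\in\eta$.
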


We denote by $\UlFilt$ the subspace of ultrafilters of $\Filt$. Usually, $\UlFilt$ is not closed in $\Filt$.

\begin{definition}\label{Def:ClosureFilters}
	We define $\FiltT$ as the closure of $\UlFilt$ in $\Filt$. A filter in $\FiltT$ is called \emph{tight filter}.
\end{definition}  

In order to characterize tight filters,we need to introduce some known concepts. 

\begin{definition}
	Given $X,Y\subset \Idemp$ finite sets, we define
	$$\Idemp^{X,Y}=\{e\in\Idemp: e\leq x \text{ for every }x\in X\text{ and }ey=0\text{ for every }y\in Y \}\,.$$
\end{definition}

\begin{definition}
	Given a subset $F$ of $\Idemp$, a \emph{outer cover for $F$} is a  subset $Z\subset \Idemp$ such that  for every $f\in F$ there exists $z\in Z$ such that $zf\neq 0$. Moreover, $Z$ is a \emph{cover for $F$} if $Z$ is an outer cover for $F$ with  $Z\subseteq F$. 
\end{definition}

Given an idempotent $e\in \Idemp$, we say that $Z\subseteq \Idemp$ is a \emph{cover for $e$} if $Z$ is a cover for the set $\{f\in \Idemp:f\leq e \}$. 

\begin{lemma}[{\cite[Theorem 12.9]{E1}}]\label{lemma2_1_8}
A filter $\eta\in\Filt$ is tight if and only if for every $X,Y\subset \Idemp$ finite sets and for every finite cover $Z$ of $\Idemp^{X,Y}$, $\eta\in\Ope(X,Y)$ implies $Z\cap \eta\neq \emptyset$.  
\end{lemma}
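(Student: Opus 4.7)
The plan is to prove each implication separately, appealing to the definition $\FiltT = \overline{\UlFilt}$ and the ultrafilter criterion of Lemma~\ref{lemma2_1_4}. Throughout I adopt the standing convention (implicit in the setup) that filters never contain $0$, so that any finite product of elements of a filter is non-zero, and that the cover condition ignores the zero element.

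For the forward direction, I would suppose $\eta$ is tight and fix finite $X, Y \subset \Idemp$ with $\eta \in \Ope(X, Y)$ together with a finite cover $Z$ of $\Idemp^{X, Y}$, then argue by contradiction. If $Z \cap \eta = \emptyset$, then $\eta$ lies in the basic open set $\Ope(X, Y \cup Z)$, so by density of $\UlFilt$ in $\FiltT$ there is an ultrafilter $\mu \in \Ope(X, Y \cup Z)$. Since no $y \in Y$ and no $z \in Z$ belongs to $\mu$, Lemma~\ref{lemma2_1_4} supplies $f_y, g_z \in \mu$ with $y f_y = 0$ and $z g_z = 0$. The product
\[
h \;:=\; \left(\prod_{x \in X} x\right)\left(\prod_{y \in Y} f_y\right)\left(\prod_{z \in Z} g_z\right)
\]
lies in $\mu$, hence is non-zero; it satisfies $h \leq x$ for each $x \in X$ and $h y = 0$ for each $y \in Y$, so $h \in \Idemp^{X, Y}$; and $h \leq g_z$ gives $h z = 0$ for each $z \in Z$. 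But since $Z$ covers $\Idemp^{X, Y}$, some $z \in Z$ must satisfy $z h \neq 0$, a contradiction.

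For the reverse direction, I assume $\eta$ satisfies the cover condition and fix a basic open neighborhood $\Ope(X, Y) \ni \eta$; the goal is to exhibit an ultrafilter inside $\Ope(X, Y)$. Set $e := \prod_{x \in X} x \in \eta$. The task reduces to the claim that $\Idemp^{X, Y}$ contains a non-zero element $f$: any such $f$ extends by Zorn's lemma to an ultrafilter $\mu$, and then $f \leq x$ for each $x \in X$ forces $X \subseteq \mu$ by upward closure, while $y f = 0$ with $f \in \mu$ forces $y \notin \mu$ by Lemma~\ref{lemma2_1_4}, so $\mu \in \Ope(X, Y)$. To prove the claim, suppose for contradiction that every non-zero $f \leq e$ satisfies $f y \neq 0$ for some $y \in Y$. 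Then $Z := \{e y : y \in Y\}$ is a finite subset of $\Idemp^{\{e\}, \emptyset} = \{f \in \Idemp : f \leq e\}$ which covers it: for every non-zero $f \leq e$, a choice of $y \in Y$ with $f y \neq 0$ yields $(e y) f = f y \neq 0$. Applying the cover hypothesis at $(X', Y') = (\{e\}, \emptyset)$ (for which $\eta \in \Ope(\{e\}, \emptyset)$ since $e \in \eta$) with this cover yields $Z \cap \eta \neq \emptyset$, so $e y \in \eta$ for some $y \in Y$, whence $y \in \eta$ by upward closure, contradicting $Y \cap \eta = \emptyset$.

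The main difficulty, in my view, lies in the reverse direction: one must recognize the right cover to feed into the hypothesis. The naive choice $Z = \emptyset$ degenerates once $0 \in \Idemp$, and the correct move is to build the cover from the forbidden idempotents $\{e y : y \in Y\}$, thereby transferring information about $Y$ inside a $\Ope(\{e\}, \emptyset)$-style hypothesis into a membership statement about $\eta$. The forward direction is more mechanical once one observes that enlarging $Y$ by $Z$ keeps $\eta$ in an open set from which an ultrafilter can be extracted.
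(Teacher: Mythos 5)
Your proof is correct. The paper itself offers no argument for this lemma --- it is imported wholesale as a citation of Exel's Theorem 12.9 --- so there is nothing internal to compare against; what you have written is a self-contained filter-theoretic derivation of that result, and both directions check out. The forward direction (enlarge $Y$ to $Y\cup Z$, pull an ultrafilter $\mu$ out of $\Ope(X,Y\cup Z)$ by density, and use the ultrafilter criterion of Lemma \ref{lemma2_1_4} to manufacture a nonzero $h\in\Idemp^{X,Y}$ killed by every element of $Z$) is the standard argument. The reverse direction correctly isolates the one nontrivial point, namely producing a nonzero element of $\Idemp^{X,Y}$, and your cover $Z=\{ey:y\in Y\}$ of $\Idemp^{\{e\},\emptyset}=\{f:f\leq e\}$ is exactly the right instantiation of the hypothesis: it is genuinely a cover in the paper's sense (each $ey\leq e$, and any nonzero $f\leq e$ with $fy\neq 0$ satisfies $(ey)f=fy\neq 0$), and $Z\cap\eta\neq\emptyset$ then contradicts $Y\cap\eta=\emptyset$ by upward closure. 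Two housekeeping remarks, neither of which is a gap: your standing convention that $0\notin\eta$ is indeed implicit in the paper (its Definition of filter would otherwise admit the improper filter $\Idemp$, which breaks Lemma \ref{lemma2_1_4}) and matches Exel's setup; and the products $\prod_{x\in X}x$ should be read as ``any element of $\eta$'' when $X=\emptyset$, and the totally empty product in the forward direction never occurs because $\emptyset$ cannot cover $\Idemp^{\emptyset,\emptyset}=\Idemp$, which contains the nonzero elements of $\eta$. It is also worth noting that your reverse direction only invokes the hypothesis for covers of single idempotents $e\in\eta$, so in passing you have shown that tightness may be tested on covers of $\Idemp^{\{e\},\emptyset}$ alone --- a useful strengthening consistent with Exel's treatment.
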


\subsection{Path models} Viewing  some examples of LCSC, as graphs and $k$-graphs, we are interested in obtain practical models of ultrafilters and tight filters. These models should behave, somehow, as paths in a graph.

To guarantee that every filter has a such a path model, we introduce a restriction on $\Lambda$. 

\begin{definition}
	Let $\Lambda$ be a finitely aligned LCSC. We say that a filter $\eta \in \widehat{\mathcal{E}}_0$ enjoys condition $(*)$ if given $\bigvee_{i=1}^n\elmap{\alpha_i}{\alpha_i}\in\eta$,   then there exists $1\leq j\leq n$ such that $\elmap{\alpha_j}{\alpha_j}\in\eta$.
\end{definition} 

Notice that, if $\Lambda$ is singly aligned, then every filter enjoy condition $(*)$ (see e.g. \cite[Proposition 3.5]{DGKMW}).

Before showing how to construct the path model of $\eta$, let us show that there exist filters where this property always holds. 
\begin{lemma}\label{lemma2_2_2} 
Let $\Lambda$ be a finitely aligned LCSC. Then, any $\eta\in\FiltT$ satisfies condition $(*)$.
\end{lemma}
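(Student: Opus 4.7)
The plan is to produce, for a given $e = \bigvee_{i=1}^n \elmap{\alpha_i}{\alpha_i} \in \eta$, a finite cover satisfying the hypotheses of the tightness criterion in Lemma \ref{lemma2_1_8}, and to choose it so that the forced intersection with $\eta$ delivers exactly one of the summands $\elmap{\alpha_j}{\alpha_j}$.

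Concretely, set $e_i := \elmap{\alpha_i}{\alpha_i} \in \Idemp$ for $1 \le i \le n$. I take $X = \{e\}$, $Y = \emptyset$, and $Z = \{e_1,\ldots,e_n\} \subseteq \Idemp$. Note $\eta \in \Ope(X,Y)$ by the assumption $e \in \eta$, and each $e_i$ lies in $\Idemp = \Idem[\Semi_\Lambda]$ since $e_i \in \Semi_\Lambda$. The main technical step is to check that $Z$ is a cover for $\Idemp^{X,Y} = \{f \in \Idemp : f \le e\}$. Let $f \in \Idemp^{X,Y}$ with $f \neq 0$; viewing everything inside the complete, distributive inverse monoid $\Inv(\Lambda)$ (where joins of pairwise compatible elements always exist and composition distributes over them by \cite[Proposition 1.2.1]{L}), I compute
\[
f \;=\; f e \;=\; f \Bigl(\bigvee_{i=1}^n e_i\Bigr) \;=\; \bigvee_{i=1}^n f e_i.
\]
If $f e_i = 0$ for every $i$, then $f = 0$, a contradiction. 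Hence some $f e_j \neq 0$, which establishes that $Z$ is a cover of $\Idemp^{X,Y}$.

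Now Lemma \ref{lemma2_1_8} applies: since $\eta$ is tight and $\eta \in \Ope(X,Y)$, we must have $Z \cap \eta \neq \emptyset$, i.e.\ there exists $1 \le j \le n$ with $\elmap{\alpha_j}{\alpha_j} \in \eta$, which is exactly condition $(*)$.

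The only subtle point I expect is the double role of $\Semi_\Lambda$ versus $\SemiT_\Lambda$: the supremum $e$ is an element of $\Semi_\Lambda$ by hypothesis (since $\eta$ is a filter in $\Idemp = \Idem[\Semi_\Lambda]$), but the distributive identity $f = \bigvee f e_i$ used in the cover verification is not guaranteed inside $\Semi_\Lambda$. The observation that resolves this is that all of $f$, the $e_i$, and $e$ sit inside the ambient semigroup $\Inv(\Lambda)$, where the identity is automatic, and the conclusion $f = 0$ we need is a statement about maps $\Lambda \to \Lambda$, not about membership in $\Semi_\Lambda$. Thus no appeal to finite alignment is actually needed beyond what is already packaged into $\Idemp$, and the argument works verbatim.
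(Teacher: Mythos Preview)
Your proof is correct and follows essentially the same approach as the paper: take $X=\{e\}$, $Y=\emptyset$, $Z=\{\elmap{\alpha_i}{\alpha_i}\}_{i=1}^n$, verify that $Z$ is a finite cover of $\Idemp^{X,Y}$ via the distributive identity $f=\bigvee_i f e_i$, and apply Lemma~\ref{lemma2_1_8}. The only minor omission is that you do not explicitly record $Z\subseteq \Idemp^{X,Y}$ (i.e.\ $e_i\le e$), which is required for $Z$ to be a cover rather than merely an outer cover, but this is immediate from $e=\bigvee_i e_i$; your closing remark about working in $\Inv(\Lambda)$ to justify distributivity is a useful clarification that the paper leaves implicit.
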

\begin{proof}
	Let $e\in\eta$. By Lemmas \ref{lemma1_2_4} and \ref{lemma1_2_6}, $e=\bigvee_{i=1}^n \elmap{\alpha_i}{\alpha_i}$. Define $X=\{e\}$, $Y=\emptyset$ and $Z=\{\elmap{\alpha_i}{\alpha_i}\}_{i=1}^n$. Since $e=\bigvee_{i=1}^n\elmap{\alpha_i}{\alpha_i}\geq \elmap{\alpha_j}{\alpha_j}$ for every $1\leq j\leq n$, it is clear that $Z\subset \Idemp^{X,Y}$. Also, if $0\neq f\leq e=\bigvee_{i=1}^n\elmap{\alpha_i}{\alpha_i}$, then $f\elmap{\alpha_j}{\alpha_j}=0$ for every $1\leq j\leq n$ will imply that 
	$$0=\bigvee_{i=1}^nf\elmap{\alpha_i}{\alpha_i}=f\left( \bigvee_{i=1}^n\elmap{\alpha_i}{\alpha_i}\right) =fe=f\,,$$
	a contradiction. Hence, $Z$ is a finite cover of $\Idemp^{X,Y}$. Clearly, $\eta\in\mathcal{U}(X,Y)$.
	
	Thus, $\eta\in \FiltT$ implies $\eta\cap Z\neq\emptyset$ by Lemma \ref{lemma2_1_8}, i.e. there exists $1\leq j\leq n$ such that $\elmap{\alpha_j}{\alpha_j}\in\eta$.
\end{proof}

\begin{corollary}\label{corol2_2_3}
If $\Lambda$ is a finitely aligned LCSC, then every $\eta\in\UlFilt$ satisfies condition $(*)$.
\end{corollary}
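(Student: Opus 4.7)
The plan is to observe that Corollary \ref{corol2_2_3} is essentially an immediate consequence of the preceding Lemma \ref{lemma2_2_2}, combined with the definition of $\FiltT$ as the closure of $\UlFilt$ in $\Filt$ (Definition \ref{Def:ClosureFilters}).

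First I would note that, directly from Definition \ref{Def:ClosureFilters}, we have the inclusion $\UlFilt \subseteq \FiltT$, since every set is contained in its closure. Hence any ultrafilter $\eta \in \UlFilt$ is in particular a tight filter.

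Then I would invoke Lemma \ref{lemma2_2_2}, which asserts that every tight filter on a finitely aligned LCSC $\Lambda$ enjoys condition $(*)$. Applying this lemma to the ultrafilter $\eta$, we conclude that $\eta$ also enjoys condition $(*)$, which is exactly the statement of the corollary.

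There is no real obstacle here: the work was already carried out in Lemma \ref{lemma2_2_2}, where the covering argument using $X=\{e\}$, $Y=\emptyset$, and $Z=\{\elmap{\alpha_i}{\alpha_i}\}_{i=1}^n$ together with the characterization of tight filters in Lemma \ref{lemma2_1_8} did the heavy lifting. Accordingly, the proof of the corollary should be just a one- or two-line deduction recording the inclusion $\UlFilt \subseteq \FiltT$ and citing Lemma \ref{lemma2_2_2}.
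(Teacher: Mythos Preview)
Your proposal is correct and matches the paper's approach exactly: the corollary is stated without proof in the paper, being an immediate consequence of Lemma \ref{lemma2_2_2} together with the inclusion $\UlFilt \subseteq \FiltT$ from Definition \ref{Def:ClosureFilters}.
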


Now, we proceed to introduce a set of paths.

\begin{definition}
	Let $\Lambda$ be a finitely aligned LCSC. A nonempty subset $F$ of $\Lambda$ is:\begin{enumerate}
		\item \emph{Hereditary}, if $\alpha\in\Lambda$, $\beta\in F$ and $\alpha\leq \beta$ implies $\alpha\in F$,
		\item \emph{(upwards) directed},  $\alpha,\beta\in F$ implies that there exists $\gamma\in F$ with $\alpha,\beta\leq \gamma$.
	\end{enumerate}
We denote $\Lambda^*$ the set of nonempty, hereditary, directed subsets of $\Lambda$.
 \end{definition}

Notice that, if $F\in\Lambda^*$, there exists a unique $v\in \Lambda^0$ such that $F\subset v\Lambda$. Indeed, given any $\alpha,\beta \in F$, there exists $\gamma\in F$ with $\gamma\geq \alpha,\beta$, i.e., $\gamma=\alpha\hat{\alpha}=\beta\hat{\beta}$ for some $\hat{\alpha},\hat{\beta}\in\Lambda$. Thus, $v=r(\alpha)$ for any $\alpha\in F$ is the desired element.

\begin{definition}
	 Given $\eta\in\Filt$, we define $$\Delta_\eta:=\{\alpha\in\Lambda: \elmap{\alpha}{\alpha}\in\eta\}\,.$$
\end{definition}

\begin{lemma}\label{lemma2_2_6}
	Let $\Lambda$ be a finitely aligned LCSC. For every $\eta\in \Filt$  satisfying condition $(*)$ we have that $\Delta_\eta\in \Lambda^*$.
\end{lemma}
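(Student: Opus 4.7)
The plan is to verify the three defining properties of $\Lambda^*$ for $\Delta_\eta$, translating each into a statement about the diagonal idempotents $\elmap{\alpha}{\alpha}$ in $\Semi_\Lambda$ and invoking condition $(*)$ whenever one needs to pass from an idempotent presented as a join to a single summand that still lies in the filter. Nonemptiness and hereditariness are almost immediate: for nonemptiness, any $e\in\eta$ can be written by Lemmas \ref{lemma1_2_4} and \ref{lemma1_2_6} as $e=\bigvee_{i=1}^n\elmap{\alpha_i}{\alpha_i}$, and condition $(*)$ then furnishes some $j$ with $\elmap{\alpha_j}{\alpha_j}\in\eta$, whence $\alpha_j\in\Delta_\eta$; for hereditariness, if $\beta\in\Delta_\eta$ and $\alpha\le\beta$, then Proposition \ref{propo1_2_9} applied to the singleton decompositions records this precisely as $\elmap{\beta}{\beta}\le\elmap{\alpha}{\alpha}$, so upward closure of $\eta$ forces $\elmap{\alpha}{\alpha}\in\eta$ and hence $\alpha\in\Delta_\eta$.

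The substantive step is directedness. For $\alpha,\beta\in\Delta_\eta$, closure under products puts $\elmap{\alpha}{\alpha}\elmap{\beta}{\beta}$ in $\eta$. Evaluating this product inside $\SemiT_\Lambda$ using finite distributivity --- and using $\alpha\osh^\alpha(\varepsilon)=\varepsilon=\beta\osh^\beta(\varepsilon)$ for each $\varepsilon\in\alpha\vee\beta$ --- yields
\[
\elmap{\alpha}{\alpha}\elmap{\beta}{\beta}=\bigvee_{\varepsilon\in\alpha\vee\beta}\elmap{\varepsilon}{\varepsilon},
\]
which by finite alignment and Lemma \ref{lemma1_2_4} is in fact an element of $\Semi_\Lambda$. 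Since it lies in the proper filter $\eta$ it is nonzero, so $\alpha\vee\beta\ne\emptyset$, and condition $(*)$ produces some $\varepsilon\in\alpha\vee\beta$ with $\elmap{\varepsilon}{\varepsilon}\in\eta$; such a $\varepsilon$ belongs to $\Delta_\eta$ and is a common extension of both $\alpha$ and $\beta$, giving directedness.

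The only delicate point, and the place where the hypotheses really bite, is the bookkeeping for this last step: one must first evaluate the product of the two diagonal idempotents in the enlarged semigroup $\SemiT_\Lambda$ (where finite distributivity is available), then use finite alignment to recognise the resulting join as a bona fide element of $\Semi_\Lambda$ expressed as a supremum of diagonal terms indexed by $\alpha\vee\beta$, and only then apply condition $(*)$ to extract a summand still in $\eta$.
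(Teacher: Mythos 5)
Your proof is correct and follows essentially the same route as the paper's: nonemptiness and hereditariness via the order on diagonal idempotents, and directedness by computing $\elmap{\alpha}{\alpha}\elmap{\beta}{\beta}=\bigvee_{\varepsilon\in\alpha\vee\beta}\elmap{\varepsilon}{\varepsilon}\in\eta$ and applying condition $(*)$ to extract $\varepsilon\in(\alpha\vee\beta)\cap\Delta_\eta$. The extra bookkeeping about $\SemiT_\Lambda$ is harmless but unnecessary, since the product of two elements of $\Semi_\Lambda$ already lies in $\Semi_\Lambda$.
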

\begin{proof}
	By condition $(*)$, $\Delta_\eta\neq \emptyset$. Set $\alpha\in\Lambda$, $\beta\in\Delta_\eta$ such that $\alpha\leq \beta$. Then $\elmap{\beta}{\beta}\leq \elmap{\alpha}{\alpha}$. Since $\elmap{\beta}{\beta}\in\eta$ and $\eta$ is a filter, we have that $\elmap{\alpha}{\alpha}\in\eta$, whence $\alpha\in\Delta_\eta$.
	
	Finally, suppose $\alpha,\beta\in \Delta_\eta$. Then $\elmap{\alpha}{\alpha},\elmap{\beta}{\beta}\in \eta$, so that $\elmap{\alpha}{\alpha}\elmap{\beta}{\beta}=\bigvee_{\varepsilon\in\alpha\vee\beta}\elmap{\varepsilon}{\varepsilon}\in\eta$. By condition $(*)$ there exists $\delta\in (\alpha\vee \beta)\cap \Delta_\eta$, and since $\elmap{\delta}{\delta}\leq \elmap{\alpha}{\alpha},\elmap{\beta}{\beta}$ we have that $\alpha,\beta\leq \delta\in \Delta_\eta$, as desired.  
\end{proof}

\begin{definition}\label{Def: Cond_Ast}
If $\Lambda$ is a finitely aligned LCSC, we define
$$\widehat{\mathcal{E}}_{*}=\{ \eta \in \Filt : \eta \text{ satisfies condition } (*)\}.$$
\end{definition}

Thus,
\begin{corollary}\label{corol2_2_7}
If $\Lambda$ is a finitely aligned LCSC, then 
$$\begin{array}{rl} \Phi:\widehat{\mathcal{E}}_{*} & \longrightarrow \Lambda^* \\
\eta & \longmapsto \Delta_\eta\,,
\end{array}$$
is a well-defined map.
\end{corollary}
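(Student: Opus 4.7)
The plan is to observe that this corollary is just Lemma~\ref{lemma2_2_6} recast in the language of maps. The assignment $\eta \mapsto \Delta_\eta$ is manifestly single-valued, since $\Delta_\eta = \{\alpha \in \Lambda : \elmap{\alpha}{\alpha} \in \eta\}$ depends only on $\eta$ (membership of specific idempotents in $\eta$ is a yes/no question with no choices involved). Hence the only substantive content to verify is that the image of $\Phi$ actually lands in $\Lambda^*$, i.e.\ that $\Delta_\eta$ is a nonempty, hereditary, upwards directed subset of $\Lambda$ for every $\eta \in \widehat{\mathcal{E}}_{*}$.

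But this is precisely the statement of Lemma~\ref{lemma2_2_6}, which already handles all three defining clauses of $\Lambda^*$ using condition $(*)$: non-emptiness by decomposing an arbitrary $e \in \eta$ as $\bigvee_{i=1}^n \elmap{\alpha_i}{\alpha_i}$ (via Lemmas~\ref{lemma1_2_4} and~\ref{lemma1_2_6}) and invoking $(*)$ to extract some $\alpha_j \in \Delta_\eta$; hereditariness from the order-reversal $\alpha \leq \beta \Rightarrow \elmap{\beta}{\beta} \leq \elmap{\alpha}{\alpha}$ (Proposition~\ref{propo1_2_9}) together with the upward closure of $\eta$ as a filter; and directedness from the product formula $\elmap{\alpha}{\alpha}\cdot \elmap{\beta}{\beta} = \bigvee_{\varepsilon \in \alpha\vee\beta}\elmap{\varepsilon}{\varepsilon}$ combined with $(*)$ to extract a common extension $\delta \in \Delta_\eta$ dominating both $\alpha$ and $\beta$.

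Consequently, no new argument is needed: the corollary is a reformulation of the previous lemma, and I expect the proof to amount to a one-line citation of Lemma~\ref{lemma2_2_6}. There is no genuine obstacle here; the only thing worth noting is that ``well-defined'' in this context means nothing more than ``lands in the stated codomain'', since the formula defining $\Phi$ is unambiguous.
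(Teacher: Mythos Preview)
Your proposal is correct and matches the paper's approach exactly: the paper states this corollary with no proof at all, treating it as an immediate consequence of Lemma~\ref{lemma2_2_6}. Your observation that ``well-defined'' here just means ``lands in the codomain'' and that this is precisely the content of the preceding lemma is spot on.
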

Now, we will construct an inverse for this map.
\begin{definition}
	Given $F\in\Lambda^*$, we define $$\eta_F:=\{f\in\Idemp: f\geq \elmap{\alpha}{\alpha}\text{ for some }\alpha\in F\}\,.$$
\end{definition}

\begin{lemma}\label{lemma2_2_9} If $\Lambda$ is a finitely aligned LCSC, then for every $F\in\Lambda^*$ we have that $\eta_F\in\widehat{\mathcal{E}}_{*}$.
\end{lemma}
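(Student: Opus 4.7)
The plan is to verify the three filter axioms and then condition $(*)$ for $\eta_F$, drawing on Proposition \ref{propo1_2_9} to translate the order relation on sup-idempotents into initial-segment relations in $\Lambda$.

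First I would check that $\eta_F$ is a filter on $\Idemp$. Nonemptyness is immediate: pick any $\alpha \in F$ (which exists since $F\in\Lambda^*$ is nonempty), and observe that $\elmap{\alpha}{\alpha} \in \eta_F$ trivially (it is $\geq$ itself). Upward closure in $\Idemp$ is tautological from the very definition of $\eta_F$: if $f \in \eta_F$ is witnessed by $\alpha \in F$ with $\elmap{\alpha}{\alpha}\leq f$, and $f \leq g$ with $g\in\Idemp$, then $\elmap{\alpha}{\alpha}\leq g$, so $g\in\eta_F$. For closure under products, given $f,g\in\eta_F$ witnessed by $\alpha,\beta\in F$, use directedness of $F$ to find $\gamma \in F$ with $\alpha,\beta \leq \gamma$; then $\elmap{\gamma}{\gamma} \leq \elmap{\alpha}{\alpha}\cdot \elmap{\beta}{\beta} \leq fg$, whence $fg \in \eta_F$.

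The key step is condition $(*)$. Suppose $e := \bigvee_{i=1}^n \elmap{\alpha_i}{\alpha_i} \in \eta_F$, written in irredundant form. By the definition of $\eta_F$ there is $\gamma \in F$ with $\elmap{\gamma}{\gamma} \leq e$. Now apply Proposition \ref{propo1_2_9} to the pair $\elmap{\gamma}{\gamma} \leq \bigvee_{i=1}^n \elmap{\alpha_i}{\alpha_i}$ (the left-hand side being trivially irredundant): this yields some $1 \leq j \leq n$ with $\alpha_j \leq \gamma$. Since $F$ is hereditary and $\gamma \in F$, we conclude $\alpha_j \in F$, so $\elmap{\alpha_j}{\alpha_j} \in \eta_F$ by taking the very element $\alpha_j$ as witness. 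This establishes $(*)$.

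I expect no real obstacle: the only subtle point is invoking Proposition \ref{propo1_2_9} correctly, noting that even when $e \in \Idemp \subseteq \Idem[\SemiT_\Lambda]$, the order relation under consideration is the same whether read in $\Semi_\Lambda$ or $\SemiT_\Lambda$ (as remarked at the end of that proposition). Every other step reduces to the definitions of hereditary, directed, and of the natural order on idempotents.
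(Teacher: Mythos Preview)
Your proof is correct and essentially identical to the paper's own argument: both verify the three filter axioms using directedness of $F$ for closure under products, and both establish condition $(*)$ by invoking Proposition~\ref{propo1_2_9} to extract an index $j$ with $\alpha_j\leq\gamma$. The only cosmetic difference is that where you conclude $\alpha_j\in F$ via hereditariness and then $\elmap{\alpha_j}{\alpha_j}\in\eta_F$ by definition, the paper phrases the same step as $\elmap{\alpha_j}{\alpha_j}\geq\elmap{\gamma}{\gamma}$ and then invokes upward closure of the filter $\eta_F$.
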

\begin{proof}
	Since $F\neq\emptyset$, the set $\{\elmap{\alpha}{\alpha}:\alpha\in F \}\subseteq \eta_F$, whence $\eta_F\neq\emptyset$.  Set $e\in\eta_F$, $f\in \Idemp$ such that $e\leq f$. By hypothesis there exists $\alpha\in F$ such that $\elmap{\alpha}{\alpha}\leq e\leq f$ then $f\in\eta_F$. 
	
	Now set $e,f\in \eta_F$. Then, there exists $\alpha,\beta\in F$ such that $\elmap{\alpha}{\alpha}\leq e$, $\elmap{\beta}{\beta}\leq f$. Since $F$ is directed, there exists $\gamma\in F$ with $\alpha,\beta\leq \gamma$. Thus, $\elmap{\gamma}{\gamma}\leq \elmap{\alpha}{\alpha}\elmap{\beta}{\beta}\leq ef$, whence $ef\in \eta_F$.
	
	Finally, if $f\in \eta_F$, then there exists $\alpha \in F$ such that $f\geq \tau^\alpha \sigma^\alpha$. If $f=\bigvee_{j=1}^m\elmap{\beta_j}{\beta_j}$ (written in irredundant form), by Proposition \ref{propo1_2_9} there exists $1\leq i\leq m$ such that $\tau^{\beta_i}\sigma^{\beta_i}\geq \tau^\alpha \sigma^\alpha$. Since $\eta_F\in \Filt$, we have that $\tau^{\beta_i}\sigma^{\beta_i}\in \eta_F$. Hence, $\eta_F$ satisfies condition $(*)$, so we are done.
\end{proof}

\begin{corollary}\label{corol2_2_10}
	If $\Lambda$ is a finitely aligned LCSC, then 
	$$\begin{array}{rl} \Psi:\Lambda^* & \longrightarrow \widehat{\mathcal{E}}_{*} \\
	F & \longmapsto \eta_F\,,
	\end{array}\,,$$
	is a well-defined map.
\end{corollary}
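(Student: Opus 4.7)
The corollary is a direct repackaging of the preceding Lemma \ref{lemma2_2_9} in functional language, so the proof plan is essentially bookkeeping rather than substantive mathematics. My plan is as follows.

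First, I would observe that the assignment $F \mapsto \eta_F$ depends only on $F$, since the defining formula
$$\eta_F = \{f \in \Idemp : f \geq \elmap{\alpha}{\alpha} \text{ for some } \alpha \in F\}$$
is a single-valued set-theoretic recipe given any subset $F \subseteq \Lambda$. In particular, distinct choices of $F \in \Lambda^*$ may conceivably produce equal $\eta_F$'s, but that does not affect well-definedness of $\Psi$ as a function.

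Second, I would verify that the output always lies in the declared codomain $\widehat{\mathcal{E}}_{*}$. This is exactly the content of Lemma \ref{lemma2_2_9}: for every $F \in \Lambda^*$ the set $\eta_F$ is a filter in $\Idemp$ (i.e.\ nonempty, upward closed, and closed under products) and furthermore satisfies condition $(*)$. Since $\widehat{\mathcal{E}}_{*}$ is defined in Definition \ref{Def: Cond_Ast} as the set of filters satisfying condition $(*)$, this yields $\eta_F \in \widehat{\mathcal{E}}_{*}$ for every $F \in \Lambda^*$.

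There is no real obstacle here; the corollary is stated separately only to emphasize that Lemma \ref{lemma2_2_9} produces a map in the opposite direction to the map $\Phi$ of Corollary \ref{corol2_2_7}, which will be used in the next step (presumably a proof that $\Phi$ and $\Psi$ are mutually inverse bijections). The proof itself is simply a one-line appeal to Lemma \ref{lemma2_2_9}.
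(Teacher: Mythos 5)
Your proposal is correct and matches the paper exactly: the paper gives no separate argument for this corollary, treating it precisely as the functional restatement of Lemma \ref{lemma2_2_9}, which already establishes that $\eta_F\in\widehat{\mathcal{E}}_{*}$ for every $F\in\Lambda^*$. Your two observations (single-valuedness of the set-theoretic recipe, and membership of the output in the declared codomain via the lemma) are all that is needed.
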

\begin{lemma}\label{lemma2_2_11}
If $\Lambda$ is a finitely aligned LCSC, then $\Phi$ and $\Psi$ are naturally inverse bijections. 
\end{lemma}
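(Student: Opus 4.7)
The plan is to verify the two composite identities $\Phi\circ\Psi=\mathrm{id}_{\Lambda^*}$ and $\Psi\circ\Phi=\mathrm{id}_{\widehat{\mathcal{E}}_*}$ separately. Both are by direct unwinding of definitions, using Lemma \ref{lemma1_2_6} (irredundant decomposition of idempotents), Proposition \ref{propo1_2_9} (characterisation of the order between idempotents), and condition $(*)$.

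For $\Phi\circ\Psi=\mathrm{id}_{\Lambda^*}$, fix $F\in\Lambda^*$ and show $\Delta_{\eta_F}=F$. The inclusion $F\subseteq \Delta_{\eta_F}$ is immediate: for any $\alpha\in F$ we have $\elmap{\alpha}{\alpha}\ge \elmap{\alpha}{\alpha}$, so $\elmap{\alpha}{\alpha}\in\eta_F$, hence $\alpha\in\Delta_{\eta_F}$. For the reverse inclusion, take $\alpha\in\Delta_{\eta_F}$, so $\elmap{\alpha}{\alpha}\in\eta_F$; by definition of $\eta_F$ there is $\beta\in F$ with $\elmap{\alpha}{\alpha}\ge\elmap{\beta}{\beta}$, and Proposition \ref{propo1_2_9} applied to these two irredundant single-term idempotents gives $\alpha\le\beta$. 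Hereditarity of $F$ then forces $\alpha\in F$.

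For $\Psi\circ\Phi=\mathrm{id}_{\widehat{\mathcal{E}}_*}$, fix $\eta\in\widehat{\mathcal{E}}_*$ and show $\eta_{\Delta_\eta}=\eta$. The inclusion $\eta_{\Delta_\eta}\subseteq \eta$ is the easy direction: any $f\in\eta_{\Delta_\eta}$ dominates some $\elmap{\alpha}{\alpha}$ with $\alpha\in\Delta_\eta$, i.e. $\elmap{\alpha}{\alpha}\in\eta$, and upward closure of the filter $\eta$ gives $f\in\eta$. For the opposite inclusion, take $f\in\eta$ and use Lemma \ref{lemma1_2_6} to write $f=\bigvee_{j=1}^m\elmap{\beta_j}{\beta_j}$ in irredundant form. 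Condition $(*)$ applied to $\eta$ produces some index $j$ with $\elmap{\beta_j}{\beta_j}\in\eta$, and hence $\beta_j\in\Delta_\eta$. Since $f\ge \elmap{\beta_j}{\beta_j}$ by construction, $f\in\eta_{\Delta_\eta}$ as required.

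No genuine obstacle is expected; the argument is a bookkeeping check. The one point demanding a little care is the backward direction of the first bijection, where one must explicitly invoke Proposition \ref{propo1_2_9} to translate $\elmap{\alpha}{\alpha}\ge\elmap{\beta}{\beta}$ into the order relation $\alpha\le\beta$ on $\Lambda$ (rather than merely asserting it), and the backward direction of the second, where condition $(*)$ is the only thing preventing a general idempotent in $\eta$ from failing to sit above a single $\elmap{\alpha}{\alpha}$ with $\alpha\in F$; this is precisely why the domain $\widehat{\mathcal{E}}_*$ was defined the way it was in Definition \ref{Def: Cond_Ast}.
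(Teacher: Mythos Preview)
Your proof is correct and follows essentially the same approach as the paper: both verify the two composite identities $\Phi\circ\Psi=\mathrm{id}$ and $\Psi\circ\Phi=\mathrm{id}$ by double inclusion, using hereditarity of $F$ and upward closure of filters for the easy directions, and condition $(*)$ together with the order characterisation of Proposition~\ref{propo1_2_9} for the nontrivial ones. If anything, you are slightly more explicit than the paper in citing Proposition~\ref{propo1_2_9} for the step $\elmap{\alpha}{\alpha}\ge\elmap{\beta}{\beta}\Rightarrow\alpha\le\beta$.
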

\begin{proof}
	Let $\eta\in\Filt$, and compute 
	\begin{align*}
	\Psi\circ\Phi (\eta)& =\Psi(\Phi(\eta))=\Psi(\Delta_\eta)= \\ 
	& = \{e\in\Idemp:e\geq \elmap{\alpha}{\alpha}\text{ for some }\alpha\in \Delta_\eta \}  \\
	& =\{e\in\Idemp:e\geq \elmap{\alpha}{\alpha}\in\eta \}\,.	 
	\end{align*}
	
	Thus, $\Psi\circ \Phi(\eta)\subseteq \eta$. On the reverse sense, if $e\in \eta$ and $e=\bigvee_{i=1}^n\elmap{\alpha_i}{\alpha_i}$, by condition $(*)$ there exists $1\leq j\leq n$ such that $\elmap{\alpha_j}{\alpha_j}\in\eta$. Thus, $\alpha_j\in \Delta_\eta$ and $e\geq \elmap{\alpha_j}{\alpha_j}$, whence $e\in \Psi\circ\Phi(\eta)$, and so $\Psi\circ\Phi(\eta)\supseteq\eta$. 
	
	Conversely, given $F\in\Lambda^*$, compute
	\begin{align*}
	\Phi\circ\Psi(F) & =\Phi(\Psi(F))=\Psi(\eta_F)=\Delta_{\eta_F} \\
	& = \{\alpha\in \Lambda:\elmap{\alpha}{\alpha}\geq \elmap{\beta}{\beta}\text{ for some }\beta\in F \} \\
	& = \{\alpha\in\Lambda: \alpha\leq \beta\text{ for some }\beta\in F \}\,.
	\end{align*}
Clearly, $\Phi\circ\Psi(F)\subseteq F$. On the reverse sense, if $\alpha\in F$ then $\elmap{\alpha}{\alpha}\in\eta_F$, and thus $\alpha\in\Lambda$ and $\elmap{\alpha}{\alpha}\geq \elmap{\beta}{\beta}$ for some $\beta\in F$, whence $\alpha\in \Phi\circ\Psi(F)$. Thus, $F=\Phi\circ\Psi(F)$. 

\end{proof}

\subsection{Topology of $\Lambda^*$}
Before tracking $\UlFilt$ and $\FiltT$ through $\Psi$, we need to consider a suitable topology defined on $\Lambda^*$.
\begin{definition}\label{top_sets}
	Let $\Lambda$ be a finitely aligned LCSC. Then given $X,Y\subset \Lambda$ finite sets, we define 
	$$\Mtop^{X,Y}=\{F\in\Lambda^*: X\subseteq F\text{ and }Y\cap F=\emptyset \}\,.$$
	We will endow a topology on $\Lambda^*$, with a basis of open sets
	$$\{\Mtop^{X,Y}: X,Y\subseteq \Lambda\text{ finite sets}\}\,.$$
\end{definition}

On the other side, since $\widehat{\mathcal{E}}_{*}\subseteq \Filt$, we can equip $\widehat{\mathcal{E}}_{*}$ with the induced topology. To simplify, we also use $\mathcal{U}(X,Y)$ (see Definition \ref{Def:TopologyFilters}) to denote the basic open sets of the topology for $\widehat{\mathcal{E}}_{*}$. Since both $\widehat{\mathcal{E}}_{\infty}$ and $\widehat{\mathcal{E}}_{\text{tight}}$ are subspaces of $\widehat{\mathcal{E}}_{*}$, in particular the closure of $\widehat{\mathcal{E}}_{\infty}$ in $\Filt$ coincides with the closure of $\widehat{\mathcal{E}}_{\infty}$ in $\widehat{\mathcal{E}}_{*}$. 

We will show that $\Phi$ and $\Psi$ are continuous (and thus homeomorphism) with these topologies on $\widehat{\mathcal{E}}_{*}$ and $\Lambda^*$.
\begin{lemma}\label{lemma2_3_2}
Let $\Lambda$ be a finitely aligned LCSC. Then,
$$\{\Ope(X,Y): X=\{\elmap{\alpha}{\alpha}\}\,,Y=\{\elmap{\beta_i}{\beta_i}\}_{i=1}^n \}\,,$$
is a basis for the topology of $\widehat{\mathcal{E}}_{*}$.
\end{lemma}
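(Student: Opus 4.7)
The plan is to take an arbitrary basic open set $\Ope(X,Y) \cap \widehat{\mathcal{E}}_{*}$ of the induced topology and a filter $\eta \in \Ope(X,Y) \cap \widehat{\mathcal{E}}_{*}$, and then exhibit a special basic open set of the claimed form that contains $\eta$ and is contained in $\Ope(X,Y) \cap \widehat{\mathcal{E}}_{*}$. Throughout, the critical fact is that $\eta$, and every filter that we will test membership against, satisfies condition $(*)$.

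First I would use Lemma \ref{lemma1_2_4} (or equivalently Lemma \ref{lemma1_2_6}) to write each idempotent $e \in X$ in irredundant form as $e = \bigvee_{i=1}^{n_e} \elmap{\alpha^e_i}{\alpha^e_i}$, and each $f \in Y$ as $f = \bigvee_{i=1}^{n_f} \elmap{\beta^f_i}{\beta^f_i}$. Since $e \in \eta$ and $\eta$ satisfies $(*)$, for each $e \in X$ there is an index $j(e)$ so that $\elmap{\alpha^e_{j(e)}}{\alpha^e_{j(e)}} \in \eta$. Taking the (finite) product of these idempotents inside $\eta$, and repeatedly expanding $\elmap{\gamma}{\gamma}\elmap{\gamma'}{\gamma'} = \bigvee_{\varepsilon \in \gamma \vee \gamma'} \elmap{\varepsilon}{\varepsilon}$ (which is a finite join since $\Lambda$ is finitely aligned), we obtain a presentation $\bigvee_\varepsilon \elmap{\varepsilon}{\varepsilon} \in \eta$; a further application of condition $(*)$ produces a single $\alpha \in \Lambda$ with $\elmap{\alpha}{\alpha} \in \eta$ and $\elmap{\alpha}{\alpha} \leq \elmap{\alpha^e_{j(e)}}{\alpha^e_{j(e)}} \leq e$ for every $e \in X$.

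Now set $X' = \{\elmap{\alpha}{\alpha}\}$ and $Y' = \bigcup_{f \in Y} \{\elmap{\beta^f_i}{\beta^f_i} : 1 \leq i \leq n_f\}$, both finite. I claim that $\eta \in \Ope(X',Y') \cap \widehat{\mathcal{E}}_{*} \subseteq \Ope(X,Y) \cap \widehat{\mathcal{E}}_{*}$. Membership of $\eta$ in $X'$ is the construction of $\alpha$; for $Y'$, note that if some $\elmap{\beta^f_i}{\beta^f_i} \in \eta$ then, since $\elmap{\beta^f_i}{\beta^f_i} \leq f$ and $\eta$ is a filter, we would have $f \in \eta$, contradicting $f \in Y$, $Y \cap \eta = \emptyset$. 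Conversely, given any $\eta' \in \Ope(X',Y') \cap \widehat{\mathcal{E}}_{*}$, we have $\elmap{\alpha}{\alpha} \in \eta'$ and hence $e \in \eta'$ for all $e \in X$ (since $\elmap{\alpha}{\alpha} \leq e$ and $\eta'$ is a filter); and if some $f \in Y$ were in $\eta'$, then because $\eta'$ satisfies $(*)$ some $\elmap{\beta^f_i}{\beta^f_i}$ would lie in $\eta'$, contradicting $Y' \cap \eta' = \emptyset$. Hence $\eta' \in \Ope(X,Y) \cap \widehat{\mathcal{E}}_{*}$, and the special sets form a basis.

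The only potentially delicate step is the argument that produces a single $\alpha$ with $\elmap{\alpha}{\alpha} \leq e$ for \emph{all} $e \in X$: it is here that one must combine the filter property (closure under products), the finitely aligned hypothesis (to keep all relevant joins finite), and condition $(*)$ (to pass from a join of $\elmap{\varepsilon}{\varepsilon}$'s in $\eta$ to a single summand in $\eta$). Once this reduction is in place, the rest is a straightforward verification using the equivalence of $f \in \eta$ with ``some $\elmap{\beta^f_i}{\beta^f_i} \in \eta$'' provided by $(*)$.
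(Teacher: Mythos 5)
Your proof is correct and follows essentially the same approach as the paper's: decompose each idempotent in $X$ and $Y$ into an irredundant join of elements $\elmap{\gamma}{\gamma}$, use condition $(*)$ to select a single component lying in the filter, and use upward closure of filters for the reverse containments. The only difference is presentational (you verify the basis criterion pointwise and treat non-singleton $X$ explicitly by taking products and reapplying $(*)$, whereas the paper reduces to $X=\{e\}$ and writes $\Ope(X,Y)$ as a finite union of the special sets), which does not change the substance of the argument.
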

\begin{proof}
	Let $e,f_1,\ldots,f_n\in \Idemp$, and consider the basic open set $\Ope(X,Y)$ where $X=\{e\}$ and $Y=\{f_i\}_{i=1}^n$. Set $e=\bigvee_{i=1}^n\elmap{\alpha_i}{\alpha_i}$, $f_i=\bigvee_{j=1}^{m_i}\elmap{\beta_{i,j}}{\beta_{i,j}}$ for $1\leq i \leq m$. Define $\Sigma:=\{\elmap{\beta_{i,j}}{\beta_{i,j}}\}_{i=1,\ldots,m\, j=1,\ldots, m_i}$. Now, given $\eta\in\Ope(X,Y)$, we have that $e\in\eta$  and $\eta\cap Y=\emptyset$.
	
	Since $\eta$ enjoys condition $(*)$, there exists $1\leq j\leq n$ such that $\elmap{\alpha_j}{\alpha_j}\in\eta$. If $\eta\cap \Sigma\neq \emptyset$, then there exists $\elmap{\beta_{i,j}}{\beta_{i,j}}\in\eta$, whence $f_i\geq \elmap{\beta_{i,j}}{\beta_{i,j}}\in\eta$, and thus $\eta\cap Y\neq \emptyset$,  a contradiction.
	Hence, there exists $1\leq j\leq n$ such that $\eta\in \Ope(\{\elmap{\alpha_j}{\alpha_j}\},\Sigma)$. 
	
	Conversely, if there exists $1\leq j \leq n$ such that $\eta\in \Ope(\{\elmap{\alpha_j}{\alpha_j}\},\Sigma)$, then $\elmap{\alpha_j}{\alpha_j}\leq e$, and since  $\elmap{\alpha_j}{\alpha_j}\in\eta$ we have that $e\in\eta$. Also, if $\eta\cap Y\neq\emptyset$, then there exists $1\leq k\leq m$ such that $f_k\in\eta$. By condition $(*)$, there exists $1\leq l\leq m_k$ such that $\elmap{\beta_{k,l}}{\beta_{k,l}}\in\eta$, whence $\eta\cap \Sigma\neq \emptyset$, a contradiction.  Thus, $\eta\in \Ope(X,Y)$, and then $\Ope(X,Y)=\bigcup_{i=1}^n\Ope(\{\elmap{\alpha_i}{\alpha_i}\},\Sigma)$, so we are done. 
\end{proof}

As a consequence, we have 

\begin{lemma}\label{lemma2_3_3} Let  $\Lambda$ be a finitely aligned LCSC. Then $\Phi$ and $\Psi$ are homeomorphisms.
\end{lemma}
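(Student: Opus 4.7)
The plan is to use Lemma \ref{lemma2_2_11}, which already gives that $\Phi$ and $\Psi$ are mutually inverse bijections, and then just verify continuity in both directions by computing preimages of basic open sets. The work here is essentially to translate the defining conditions on $F \in \Lambda^*$ into conditions on the filter $\eta_F$, and conversely.

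The key elementary observation, which does the heavy lifting, is that for any $F \in \Lambda^*$ and any $\gamma \in \Lambda$,
\[
\tau^\gamma \sigma^\gamma \in \eta_F \iff \gamma \in F.
\]
The implication ``$\Leftarrow$'' is immediate from the definition of $\eta_F$. For ``$\Rightarrow$'', if $\tau^\gamma \sigma^\gamma \in \eta_F$ then there is $\alpha \in F$ with $\tau^\gamma \sigma^\gamma \geq \tau^\alpha \sigma^\alpha$; by Proposition \ref{propo1_2_9} (applied to one-term idempotents) this forces $\gamma \leq \alpha$, so that $\gamma \in F$ by the hereditary property.

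Given this, continuity of $\Psi$ is straightforward using the refined basis from Lemma \ref{lemma2_3_2}. First I would fix a basic open set of $\widehat{\mathcal{E}}_{*}$ of the form $\Ope(\{\tau^\alpha\sigma^\alpha\},\{\tau^{\beta_i}\sigma^{\beta_i}\}_{i=1}^n)$, and then observe, using the key equivalence above in both directions, that
\[
\Psi^{-1}\bigl(\Ope(\{\tau^\alpha\sigma^\alpha\},\{\tau^{\beta_i}\sigma^{\beta_i}\}_{i=1}^n)\bigr) \;=\; \Mtop^{\{\alpha\},\{\beta_1,\dots,\beta_n\}},
\]
which is basic open in $\Lambda^*$ by Definition \ref{top_sets}.

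For continuity of $\Phi$, I would take a basic open set $\Mtop^{X,Y}$ with $X=\{\alpha_1,\dots,\alpha_m\}$, $Y=\{\beta_1,\dots,\beta_n\}$ in $\Lambda^*$, and unwind the definition of $\Delta_\eta$: we have $\alpha_i \in \Delta_\eta \iff \tau^{\alpha_i}\sigma^{\alpha_i}\in \eta$ and $\beta_j \notin \Delta_\eta \iff \tau^{\beta_j}\sigma^{\beta_j}\notin \eta$, so
\[
\Phi^{-1}(\Mtop^{X,Y}) \;=\; \Ope\bigl(\{\tau^{\alpha_i}\sigma^{\alpha_i}\}_{i=1}^m,\{\tau^{\beta_j}\sigma^{\beta_j}\}_{j=1}^n\bigr),
\]
a basic open set of $\widehat{\mathcal{E}}_{*}$. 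Since $\Phi$ and $\Psi$ are mutually inverse continuous bijections between $\widehat{\mathcal{E}}_{*}$ and $\Lambda^*$, they are homeomorphisms.

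I do not anticipate a real obstacle; the only subtle point is making sure that ``$\tau^\gamma\sigma^\gamma \in \eta_F$'' really forces $\gamma \in F$ rather than merely $\gamma \leq \alpha$ for some $\alpha$ — but hereditarity closes this gap immediately, so everything reduces to bookkeeping with the two bases.
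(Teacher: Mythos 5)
Your proposal is correct and follows essentially the same route as the paper: both arguments invoke Lemma \ref{lemma2_2_11} for the mutually inverse bijections and Lemma \ref{lemma2_3_2} for the refined basis, and then translate basic open sets across the correspondence $\gamma\in F \Leftrightarrow \elmap{\gamma}{\gamma}\in\eta_F$ (the paper phrases this as openness of $\Phi$, you phrase it as continuity of $\Psi$ and $\Phi$, which for mutually inverse bijections is the same computation). Your explicit justification of the key equivalence via Proposition \ref{propo1_2_9} and hereditarity is already implicit in the proof of Lemma \ref{lemma2_2_11}, so nothing further is needed.
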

\begin{proof}
By Lemma \ref{lemma2_2_11}, both are injections. Since they are mutually inverses, it is enough to show that they are open maps. We will show that for $\Phi$ (the proof for $\Psi$ is analog). We denote by $B_\Lambda=\{\elmap{\alpha}{\alpha}:\alpha\in \Lambda\}\subset \Idemp$. By Lemma \ref{lemma2_3_2}, $\mathcal{T}_\Idemp=\{\Ope(X,Y): X,Y\subset B_\Lambda\text{ finite sets}\}$ is a basis for the topology of $\widehat{\mathcal{E}}_{*}$.  Now, given a finite set  $E\subset B_\Lambda$, we define $\hat{E}=\{\alpha\in \Lambda:\elmap{\alpha}{\alpha}\in E \}\subset \Lambda$. Fix $\Ope(X,Y)\in \mathcal{T}_\Idemp$ for some finite sets $X,Y\subset B_\Lambda$, and compute 
$$\Phi(\Ope(X,Y))=\Phi(\{\eta\in \widehat{\mathcal{E}}_{*}:X\subseteq \eta\text{ and }Y\cap \eta=\emptyset\})\,.$$
Since $\Phi$ is a bijection, $\eta\cap Y=\emptyset$ if and only if $\Delta_\eta\cap \hat{Y}=\emptyset$, and $\elmap{\alpha}{\alpha}\in\eta$ if and only if $\alpha\in \Delta_\eta$, whence 
$$\Phi(\Ope(X,Y))=\{\Delta_\eta\in \Lambda: \hat{X}\subseteq \Delta_\eta\text{ and }\hat{Y}\cap \Delta_\eta=\emptyset\}\,.$$
Since $\Phi$ is a bijection
$$\Phi(\Ope(X,Y))=\{C\in\Lambda^*:\hat{X}\subseteq C\text{ and }\hat{Y}\cap C=\emptyset \}=\Mtop^{\hat{X},\hat{Y}}\,.$$
Thus, $\Phi$ is open, as desired.
\end{proof}

Now, we will identify both $\Phi(\UlFilt)$ and $\Phi(\FiltT)$ in an intrinsic way. To this end, we will use a key result from \cite{S1}.

\begin{lemma}[{\cite[Lemma 7.3]{S1}}]\label{lemma2_3_4} Let $\Lambda$ be a countable finitely aligned LCSC. If $C\subset \Lambda$ is a directed subset and $\beta\in \Lambda$ is such that $\beta\Cap\alpha$ for every $\alpha\in C$, then there exists $\tilde{C}\subset \Lambda$ directed subset such that $\{\beta\}\cup C\subseteq \tilde{C}$. Moreover, if $C$ is hereditary, then so is $\tilde{C}$. 
\end{lemma}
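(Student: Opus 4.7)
The plan is to exploit the countability of $\Lambda$ and finite alignment to build an ascending chain of common extensions of $\beta$ with a cofinal sequence in $C$, and then take $\tilde{C}$ to be the hereditary closure of that chain.

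First, I would reduce to the case of a chain. Enumerate $C=\{c_n\}_{n\geq 1}$, set $\alpha_1:=c_1$, and inductively pick $\alpha_{n+1}\in C$ as a common upper bound of $\{\alpha_n,c_{n+1}\}$, available by directedness of $C$. This produces an increasing sequence $\alpha_1\leq\alpha_2\leq\cdots$ in $C$ that is cofinal, in the sense that every $c\in C$ satisfies $c\leq\alpha_k$ for some $k$.

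Next, set $T_n:=\beta\Lambda\cap\alpha_n\Lambda$; the hypothesis $\beta\Cap\alpha$ for all $\alpha\in C$ gives $T_n\ne\emptyset$, and $\alpha_n\leq\alpha_{n+1}$ makes $\{T_n\}$ decreasing. The goal is to construct inductively $\delta_n\in\Lambda$ with $\delta_{n-1}\leq\delta_n$ (setting $\delta_0:=\beta$), $\delta_n$ a minimal common extension of $\delta_{n-1}$ and $\alpha_n$, while maintaining the forward-looking property $(\star)$: $\delta_n\Lambda\cap T_m\neq\emptyset$ for every $m\geq n$. For $n=1$, finite alignment writes $T_1=\bigcup_{\delta\in\beta\vee\alpha_1}\delta\Lambda$, so each nonempty $T_m$ meets some $\delta\in\beta\vee\alpha_1$; the subsets $A_m:=\{\delta\in\beta\vee\alpha_1:\delta\Lambda\cap T_m\ne\emptyset\}$ are nonempty, decreasing in $m$, and lie in the finite set $\beta\vee\alpha_1$, so their intersection is nonempty and any element of it serves as $\delta_1$. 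For the inductive step, $\delta_n\in\beta\Lambda$ yields $\delta_n\Lambda\cap T_m=\delta_n\Lambda\cap\alpha_m\Lambda$ for $m\geq n$, and finite alignment again writes $\delta_n\Lambda\cap\alpha_{n+1}\Lambda$ as a finite union over $\delta_n\vee\alpha_{n+1}$; the same pigeonhole inside $\delta_n\vee\alpha_{n+1}$ selects $\delta_{n+1}\in\delta_n\vee\alpha_{n+1}$ satisfying $(\star)$.

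Once the chain $\delta_1\leq\delta_2\leq\cdots$ is in hand, I would define
$$\tilde{C}:=\{\gamma\in\Lambda:\gamma\leq\delta_n\text{ for some }n\geq 1\}.$$
This set is nonempty and hereditary by construction, and directed because for $\gamma_1\leq\delta_m$ and $\gamma_2\leq\delta_n$ in $\tilde{C}$ the element $\delta_{\max(m,n)}\in\tilde{C}$ dominates both. Moreover, $\beta\leq\delta_1$, so $\beta\in\tilde{C}$, and every $c\in C$ satisfies $c\leq\alpha_k\leq\delta_k$ for some $k$, so $C\subseteq\tilde{C}$. The ``moreover'' clause is automatic, since $\tilde{C}$ is hereditary regardless of whether $C$ is. The main obstacle is engineering the forward-looking property $(\star)$: a naive choice of $\delta_n$ could commit to a minimal common extension that has no compatible refinement accommodating $\alpha_m$ for some large $m$, stalling the induction. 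Finite alignment is precisely what keeps the relevant branching finite at every stage, allowing a ``nested-decreasing-nonempty-subsets-of-a-finite-set'' argument to sustain $(\star)$.
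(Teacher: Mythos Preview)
The paper does not supply its own proof of this lemma; it is quoted verbatim from Spielberg \cite[Lemma~7.3]{S1} and used as a black box. Your argument is correct and is essentially the proof given in that reference: reduce $C$ to a cofinal increasing sequence $(\alpha_n)$ using countability and directedness, then use finite alignment together with a K\"onig-type pigeonhole (nested nonempty subsets of a finite set) to thread an increasing chain $(\delta_n)$ of common extensions, and take $\tilde{C}$ to be its hereditary closure. The only cosmetic omission is the trivial case where $C$ is empty or finite, handled by taking $\tilde{C}=[\beta]$ or by repeating terms in the enumeration.
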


\begin{definition} Given $\Lambda$ a LCSC, we say that $C\in\Lambda^*$ is \emph{maximal} if whenever $C\subset D$ with $D\in \Lambda^*$ we have that $D=\Lambda$.  We will denote $\Lambda^{**}:=\{C\in\Lambda^*:C\text{ is maximal}\}$.
\end{definition}

\begin{lemma}\label{lemma2_3_6} Let $\Lambda$ be a countable, finitely aligned LCSC. Then given $\eta\in \Filt$ the following statements are equivalent:
\begin{enumerate}
\item $\eta\in\UlFilt$,
\item $\Delta_\eta\in \Lambda^{**}$. 
\end{enumerate}
\end{lemma}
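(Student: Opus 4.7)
The strategy is to combine the bijection $\Phi:\widehat{\mathcal{E}}_{*}\to \Lambda^*$ of Lemma~\ref{lemma2_2_11} with the characterization of ultrafilters in Lemma~\ref{lemma2_1_4} and the extension result for directed sets, Lemma~\ref{lemma2_3_4}. An ultrafilter automatically satisfies condition $(*)$ by Corollary~\ref{corol2_2_3}, so $\Delta_\eta\in\Lambda^*$ whenever $\eta\in\UlFilt$ and the two notions live in the same universe.

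For the direction $(1)\Rightarrow (2)$: assume $\eta\in\UlFilt$ and suppose, for contradiction, that some $D\in\Lambda^*$ satisfies $\Delta_\eta\subsetneq D$. Lemma~\ref{lemma2_2_9} gives $\eta_D\in\widehat{\mathcal{E}}_{*}$. First I would check $\eta\subseteq \eta_D$: for $f\in\eta$, write $f=\bigvee_{i=1}^n\elmap{\alpha_i}{\alpha_i}$ via Lemma~\ref{lemma1_2_4}; condition $(*)$ produces some $\alpha_j\in\Delta_\eta\subseteq D$, whence $\elmap{\alpha_j}{\alpha_j}\in\eta_D$ and $f\geq\elmap{\alpha_j}{\alpha_j}$ forces $f\in\eta_D$. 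Strict inclusion is immediate by picking $\alpha\in D\setminus\Delta_\eta$: then $\elmap{\alpha}{\alpha}\in\eta_D\setminus\eta$, contradicting that $\eta$ is not properly contained in another filter.

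For the direction $(2)\Rightarrow (1)$: assume $\Delta_\eta\in\Lambda^{**}$, so in particular it is nonempty, hereditary, and directed. I verify Lemma~\ref{lemma2_1_4}: given $e\in\Idemp$ with $ef\neq 0$ for every $f\in\eta$, show $e\in\eta$. Write $e=\bigvee_{i=1}^n \elmap{\gamma_i}{\gamma_i}$ in irredundant form. The pivotal step is the claim that some $i_0$ satisfies $\gamma_{i_0}\Cap \alpha$ for \emph{every} $\alpha\in\Delta_\eta$. If not, for each $i$ one picks $\alpha_i\in\Delta_\eta$ with $\gamma_i\perp \alpha_i$; directedness of $\Delta_\eta$ yields $\alpha\in\Delta_\eta$ dominating $\alpha_1,\dots,\alpha_n$, and $\alpha\Lambda\subseteq\alpha_i\Lambda$ forces $\gamma_i\perp\alpha$ for every $i$, so $e\,\elmap{\alpha}{\alpha}=\bigvee_i\bigvee_{\varepsilon\in\gamma_i\vee\alpha}\elmap{\varepsilon}{\varepsilon}=0$, contradicting $\elmap{\alpha}{\alpha}\in\eta$. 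Once $\gamma_{i_0}$ is produced, Lemma~\ref{lemma2_3_4} (where countability of $\Lambda$ enters) yields a hereditary directed $\tilde{C}\in\Lambda^*$ containing $\{\gamma_{i_0}\}\cup\Delta_\eta$. Maximality of $\Delta_\eta$ then forces $\gamma_{i_0}\in\Delta_\eta$, so $\elmap{\gamma_{i_0}}{\gamma_{i_0}}\in\eta$, and $e\geq\elmap{\gamma_{i_0}}{\gamma_{i_0}}$ gives $e\in\eta$ since $\eta$ is a filter.

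\textbf{Main obstacle.} The delicate point is extracting a single index $i_0$ that works uniformly across $\Delta_\eta$: a priori each $\alpha\in\Delta_\eta$ could pair with a different $\gamma_i$. Converting this pointwise information into a uniform choice is precisely the role played by the finiteness of the irredundant decomposition of $e$ together with directedness of $\Delta_\eta$, which allows one to dominate the finitely many potential incompatibility witnesses $\alpha_1,\dots,\alpha_n$ by a single element of $\Delta_\eta$.
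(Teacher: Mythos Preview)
Your proof is correct. For $(2)\Rightarrow(1)$ your argument is essentially identical to the paper's: both isolate a single component $\gamma_{i_0}$ of $e$ that intersects every element of $\Delta_\eta$ (via the finite-index/directedness trick you flag as the main obstacle), then invoke Lemma~\ref{lemma2_3_4} and maximality to force $\gamma_{i_0}\in\Delta_\eta$, and conclude with Lemma~\ref{lemma2_1_4}.

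For $(1)\Rightarrow(2)$ you take a genuinely different route. The paper argues via Lemma~\ref{lemma2_1_4}: it first shows that any $\beta$ with $\beta\Cap\alpha$ for all $\alpha\in\Delta_\eta$ must already lie in $\Delta_\eta$ (because $\elmap{\beta}{\beta}$ meets every idempotent of $\eta$ nontrivially, using condition~$(*)$ to reduce to basic idempotents), and then deduces maximality since any $\beta$ in a larger directed set automatically satisfies this intersection condition. You instead exploit the bijection of Lemma~\ref{lemma2_2_11}: a strictly larger $D\in\Lambda^*$ yields a strictly larger filter $\eta_D$ via $\Psi$, contradicting maximality of $\eta$ directly. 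Your route is shorter and makes transparent that this direction is really just the monotonicity of $\Psi$; the paper's route has the advantage of isolating an intermediate characterization of $\Delta_\eta$ (closure under ``uniform compatibility'') that it can reuse in the converse direction.
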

\begin{proof}
$(1)\Rightarrow(2)$. First, if $\beta\in \Lambda$ and $\beta\Cap \alpha$ for every $\alpha\in \Delta_\eta$, we will see that $\beta\in \Delta_\eta$. Notice that $\beta\Cap\alpha$ for every $\alpha\in \Delta_\eta$ if and only if $\elmap{\alpha}{\alpha}\elmap{\beta}{\beta}\neq 0$ for every $\alpha\in \Delta_\eta$. Now, let $e=\bigvee_{i=1}^n\elmap{\alpha_i}{\alpha_i}\in \eta$. By Corollary \ref{corol2_2_3}, there exists $1\leq j\leq n$ such that $\elmap{\alpha_j}{\alpha_j}\in\eta$, whence $\alpha_j\in\Delta_\eta$. Thus,
$\elmap{\beta}{\beta}  e\geq \elmap{\beta}{\beta}\elmap{\alpha_j}{\alpha_j}\neq 0$. Since $\eta\in \UlFilt$, Lemma \ref{lemma2_1_4} implies that $\elmap{\beta}{\beta}\in\eta$, and thus $\beta\in\Delta_\eta$. 

Now, suppose $C\in \Lambda^*$ and $\Delta_\eta\subset C$. If $\beta\in C\setminus \Delta_\eta$, since $C$ is directed we have that $\beta\Cap\alpha$ for every $\alpha\in\Delta_\eta$, whence by  Lemma \ref{lemma2_1_4} $\beta\in \Delta_\eta$. Thus, $\Delta_\eta$ is maximal. 

$(2)\Rightarrow (1)$. Set $F\in \Lambda^{**}$, and take $\eta_F\in \widehat{\mathcal{E}}_{*}$. First, pick $\beta\in \Lambda$ such that $\elmap{\beta}{\beta} e\neq 0$ for every $e\in \eta_F$. In particular, $\elmap{\beta}{\beta}\elmap{\alpha}{\alpha}\neq 0$ for every $\alpha\in F$, whence $\beta\Cap\alpha$ for every $\alpha\in F$. By Lemma \ref{lemma2_3_4} there exists $\tilde{F}\in \Lambda^*$ such that $F\cup\{\beta\}\subset \tilde{F}$. Since $F$ is maximal, $\beta\in F$, and thus $\elmap{\beta}{\beta}\in \eta_F$. Now, let $f\in \Idemp$ with $f  e\neq 0$ for every $e\in \eta_F$. By Lemmas  \ref{lemma1_2_4} and \ref{lemma1_2_6} we have $f=\bigvee_{i=1}^n\elmap{\beta_i}{\beta_i}$, whence for every $\alpha\in F$
\begin{align*}
0\neq f \elmap{\alpha}{\alpha} & = \left( \bigvee_{i=1}^n\elmap{\beta_i}{\beta_i}\right)   \elmap{\alpha}{\alpha}=\bigvee_{i=1}^n\elmap{\beta_i}{\beta_i}\elmap{\alpha}{\alpha}=\bigvee_{i=1}^n\bigvee_{\varepsilon\in\beta_i\vee \alpha}\elmap{\varepsilon}{\varepsilon}\,.
\end{align*}

Suppose that for each $1\leq i\leq n$ there exists $\alpha_i\in F$ such that $\alpha_i\vee \beta_i=\emptyset$. Define $g:=\elmap{\alpha_1}{\alpha_1}\cdots \elmap{\alpha_n}{\alpha_n}\in \eta_F$. Since $F$ is directed, there exists $\gamma\in F$ with $\{\alpha_1,\ldots,\alpha_n\}\leq \gamma$, whence $\elmap{\gamma}{\gamma}\leq g$.  Thus, 
\begin{align*}
0\neq \elmap{\gamma}{\gamma}  f & \leq g  f=(\elmap{\alpha_1}{\alpha_1}\cdots \elmap{\alpha_n}{\alpha_n}) \left(\bigvee_{i=1}^n\elmap{\beta_i}{\beta_i} \right) \\ & =\bigvee_{i=1}^n(\elmap{\alpha_1}{\alpha_1}\cdots\widehat{\elmap{\alpha_i}{\alpha_i}}\cdots \elmap{\alpha_n}{\alpha_n})\elmap{\alpha_i}{\alpha_i}\elmap{\beta_i}{\beta_i} =0\,,
\end{align*}
a contradiction. 
Thus, there exists $1\leq j\leq n$ such that $\alpha\vee\beta_j\neq\emptyset$ for every $\alpha\in F$. Thus, $\beta_j\in F$ by the previous argument, and hence $\eta_F \ni \elmap{\beta_j}{\beta_j}\leq f$, so that $f\in\eta_F$. Then, $\eta_F\in\UlFilt$, as desired.
\end{proof}

Notice that this means that $\Phi(\UlFilt)=\Lambda^{**}$. Since $\Phi$ is continuous and $\FiltT=\overline{\UlFilt}^{\|\cdot\|_{\widehat{\mathcal{E}}_{*}}}$ we have that
$$\Phi\left( \FiltT\right)  =\Phi\left( \overline{\UlFilt}^{\|\cdot\|_{\widehat{\mathcal{E}}_{*}}}\right)=\overline{\Phi(\UlFilt)}^{\|\cdot\|_{\Lambda^*}}= \overline{\Lambda^{**}}^{\|\cdot\|_{\Lambda^*}}=$$
$$=\{C\in\Lambda:\text{for every finite }X,Y\subset \Lambda\text{ with }C\in \Mtop^{X,Y},\text{ there exists }D\in \Lambda^{**}\cap \Mtop^{X,Y} \}.
$$
Now, we will introduce a couple of definitions for tight hereditary directed subsets of $\Lambda$, and we will show that they are equivalent. 

First definition is just the translation of Lemma \ref{lemma2_1_8} to the context of $\Lambda^*$, we need to recover, and extend, the concept from \cite{S1}.
\begin{definition}\label{definition2_3_7}
Let $\Lambda$ be a LCSC and $\alpha\in \Lambda$. A subset $F\subset r(\alpha)\Lambda$ is \emph{exhaustive with respect to $\alpha$} if for every $\gamma\in \alpha\Lambda$ there exists a $\beta\in F$ with $\beta\Cap\gamma$. We denote $\mathsf{FE}(\alpha)$ the collection of finite sets of $r(\alpha)\Lambda$ that are exhaustive with respect to $\alpha$.
\end{definition}

Notice that exhaustive sets corresponds to covers. 
\begin{definition}\label{definition2_3_8}
Let $\Lambda$ be a LCSC and $v\in\Lambda^0$. Then, $C\in v\Lambda^*$ is \emph{tight} if for every $\alpha\in C$, every $\{\beta_1,\ldots,\beta_n\}\cap C=\emptyset$ and any finite exhaustive set $Z$ of $\alpha\Lambda\setminus \bigcup_{i=1}^n\beta_i\Lambda$ we have that $C\cap Z\neq\emptyset$. We denote by $\Lambda_{tight}$ the set of tight hereditary directed sets.
\end{definition}

Now we introduce the new definition. 
\begin{definition}\label{definition2_3_9}
Let $\Lambda$ be a LCSC and $v\in\Lambda^0$. We say that $C\in v\Lambda^*$ is $\emph{E-tight}$ if  for every $\alpha\in C$  and every finite set $F$ of $\Lambda$ with $C\cap F=\emptyset$,  there exists $D\in\Lambda^{**}$ with $\alpha\in D$ and $D\cap F=\emptyset$. We denote by $\Lambda_{E-t}$ the set of $E$-tight hereditary directed sets.
\end{definition}

 We have that $\Lambda_{E-t}=\Phi(\FiltT)$. Now,
 \begin{lemma}\label{lemma2_3_10}
Let $\Lambda$ be  a LCSC. Then $\Lambda_{E-t}=\Lambda_{tight}$. 
 \end{lemma}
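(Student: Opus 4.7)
I would prove the two inclusions $\Lambda_{E-t}\subseteq\Lambda_{tight}$ and $\Lambda_{tight}\subseteq\Lambda_{E-t}$ separately.

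For $(\Lambda_{E-t}\subseteq\Lambda_{tight})$, fix $C\in\Lambda_{E-t}$ with data $\alpha\in C$, $\{\beta_1,\ldots,\beta_n\}\subseteq\Lambda\setminus C$, and a finite exhaustive $Z\subseteq\alpha\Lambda\setminus\bigcup_i\beta_i\Lambda$, and assume for contradiction $Z\cap C=\emptyset$. Setting $F:=\{\beta_1,\ldots,\beta_n\}\cup Z$ (still disjoint from $C$), $E$-tightness supplies $D\in\Lambda^{**}$ with $\alpha\in D$ and $D\cap F=\emptyset$. Hereditary-ness of $D$ puts every $\gamma\in D$ with $\gamma\geq\alpha$ into $\alpha\Lambda\setminus\bigcup_i\beta_i\Lambda$ (else $\beta_i\leq\gamma\in D$ forces $\beta_i\in D$), so by exhaustiveness of $Z$ each such $\gamma$ has some $z\in Z$ with $z\Cap\gamma$. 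Since $Z$ is finite and $D_\alpha:=\{\gamma\in D:\gamma\geq\alpha\}$ is upward directed, a pigeonhole-and-directedness argument yields a single $z\in Z$ satisfying $z\Cap\gamma$ for every $\gamma\in D$: otherwise, pick for each $z\in Z$ a defeating $\gamma_z\in D_\alpha$, take a common upper bound $\gamma^*\in D_\alpha\subseteq\alpha\Lambda\setminus\bigcup_i\beta_i\Lambda$ via directedness, and derive a contradiction with exhaustiveness applied to $\gamma^*$. Lemma~\ref{lemma2_3_4} then extends $D\cup\{z\}$ to a hereditary directed set, and maximality of $D$ in $\Lambda^*$ forces $z\in D$, contradicting $z\in F$ and $D\cap F=\emptyset$.

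For $(\Lambda_{tight}\subseteq\Lambda_{E-t})$, given $C\in\Lambda_{tight}$ and $(\alpha,F)$ with $\alpha\in C$, $F=\{\beta_1,\ldots,\beta_n\}\cap C=\emptyset$, apply Zorn's lemma to $\mathcal{D}:=\{E\in\Lambda^*:\alpha\in E,\ E\cap F=\emptyset\}$, which is nonempty because $[\alpha]\in\mathcal{D}$ (any $\beta_i\leq\alpha\in C$ would put $\beta_i\in C$ by hereditary-ness) and closed under unions of chains, to obtain a maximal element $D^*\in\mathcal{D}$. The goal is $D^*\in\Lambda^{**}$. If not, then by Lemma~\ref{lemma2_3_4} there exists $\gamma\in\Lambda\setminus D^*$ with $\gamma\Cap\delta$ for every $\delta\in D^*$, and any hereditary directed extension of $D^*\cup\{\gamma\}$ inside $\Lambda^*$ falls outside $\mathcal{D}$ by maximality, hence meets $F$. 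Using this together with the tight property of $C$, I would assemble a finite exhaustive set $Z\subseteq\alpha\Lambda\setminus\bigcup_i\beta_i\Lambda$ disjoint from $C$ from the finitely many minimal common extensions $\alpha\vee\beta_k$ (finite by the finitely-aligned structure implicitly in force from earlier in the section), iteratively refining those lying in $C$ by their further extensions, with termination guaranteed by a König-type argument.

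The main obstacle is the $(\supseteq)$ direction: translating the abstract non-maximality witnessed by $D^*$ into an explicit finite combinatorial cover disjoint from $C$. An alternative, possibly cleaner route is to verify $\eta_C\in\FiltT$ directly via Exel's cover characterization (Lemma~\ref{lemma2_1_8}): reducing by Lemma~\ref{lemma2_3_2} to $X=\{\tau^\alpha\sigma^\alpha\}$, $Y=\{\tau^{\beta_i}\sigma^{\beta_i}\}$, and expanding a finite cover $Z'$ of $\Idemp^{X,Y}$ into its atoms $\tau^{\gamma}\sigma^{\gamma}$ via Lemmas~\ref{lemma1_2_4} and \ref{lemma1_2_6} yields a finite subset $W\subseteq\alpha\Lambda\setminus\bigcup_i\beta_i\Lambda$ whose $\Cap$-exhaustiveness corresponds (modulo the $\Cap/\perp$ distinction, which is the delicate point) to Spielberg's exhaustiveness; then $W\cap C\neq\emptyset$ by $\Lambda_{tight}$ forces $Z'\cap\eta_C\neq\emptyset$, and the identification $\Phi(\FiltT)=\Lambda_{E-t}$ from the displayed equation preceding the lemma closes the argument.
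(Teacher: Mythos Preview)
Your argument for $\Lambda_{E-t}\subseteq\Lambda_{tight}$ is correct and is essentially the paper's proof in different clothing. The paper also applies $E$-tightness with $F=Y\cup Z$ to obtain $D\in\Lambda^{**}$ disjoint from both $Y$ and $Z$; it then invokes the contrapositive of Lemma~\ref{lemma2_3_4} to produce, for each $y_i$ and each $z_j$, an explicit witness in $D$ orthogonal to it, takes a common upper bound $w\in D$ above $\alpha$ and all witnesses, and contradicts exhaustiveness of $Z$ at $w$. Your pigeonhole-plus-directedness packaging arrives at the same $w$ (your $\gamma^*$) by the dual route.

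For $\Lambda_{tight}\subseteq\Lambda_{E-t}$, your primary Zorn's-lemma approach has a genuine gap, which you yourself flag. The maximal $D^*\in\mathcal{D}$ bears no relation to $C$ beyond the fact that $C\in\mathcal{D}$; when $D^*\notin\Lambda^{**}$ forces some $\beta_i$ into every hereditary directed proper extension of $D^*$, there is no mechanism to feed this back into the tight hypothesis on $C$, because that hypothesis requires a finite exhaustive subset of $\alpha\Lambda\setminus\bigcup_i\beta_i\Lambda$ \emph{disjoint from $C$}, and your sketch involving the sets $\alpha\vee\beta_k$ and a K\"onig argument does not produce one (those minimal common extensions lie in $\beta_k\Lambda$, not in the complement). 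The paper abandons this line entirely and does exactly what you describe as the ``alternative, possibly cleaner route'': it verifies $\eta_C\in\FiltT$ via Lemma~\ref{lemma2_1_8} by taking an arbitrary finite cover $Z=\{\tau^{\gamma_j}\sigma^{\gamma_j}\}_j$ of $\Idemp^{X,Y}$, asserting that $\{\gamma_j\}\subseteq\alpha\Lambda\setminus\bigcup_i\beta_i\Lambda$ is a finite exhaustive set, applying tightness of $C$ to get some $\gamma_k\in C$, and then using $\FiltT=\overline{\UlFilt}$ to extract $\xi\in\UlFilt\cap\Ope(X,Y)$, so that $D=\Delta_\xi\in\Lambda^{**}$ (Lemma~\ref{lemma2_3_6}) witnesses $E$-tightness. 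The paper states the cover-to-exhaustive correspondence without comment and does not pause over the $\Cap/\perp$ distinction you raise; so your alternative route coincides with the paper's argument, and your first route should be discarded.
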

\begin{proof}
First we prove $\Lambda_{E-t}\subseteq\Lambda_{tight}$. Let $C\in \Lambda_{E-t}$ and let $Y=\{y_1,\ldots,y_n\}\subset \Lambda$ be a finite set with $C\cap Y=\emptyset$. Let $\alpha\in C$ and take any finite exhaustive set $Z=\{z_1,\ldots,z_m\}\subset \alpha\Lambda\setminus \bigcup_{i=1}^ny_i\Lambda$. Suppose that $Z\cap C=\emptyset$. By assumption there exists $D\in \Lambda^{**}$ with $\alpha\in D$ and $D\cap \{Y\cup Z\}=\emptyset$. Since $Y\cap D=\emptyset$, $Z\cap D=\emptyset$ and $D$ is maximal, by Lemma \ref{lemma2_3_4} there exist $x_{y_1},\ldots, x_{y_n}\in D$ with $x_{y_i}\bot y_i$ for every $1\leq i\leq n$, and $x_{z_1},\ldots, x_{z_m}\in D$ with $x_{z_j}\bot z_j$ for every $1\leq j\leq m$. Therefore, since $D$ is a directed set, there exists $w\in D$ with $x_{y_i} \leq w$ for $1\leq i\leq n$,   $x_{z_j}\leq w$ for $1\leq j\leq m$ and $\alpha\leq w$. Observe that $w\notin y_i\Lambda$, because otherwise $y_i\Lambda\cap x_{y_i}\Lambda\neq \emptyset$ for every $1\leq i\leq n$, a contradiction. Thus, $w\in \alpha\Lambda\setminus \bigcup_{i=1}^ny_i\Lambda$, and since $Z$ is an exhaustive set of $\alpha\Lambda\setminus \bigcup_{i=1}^ny_i\Lambda$, there exists $z_k\in Z$ such that $w\Cap z_k$. But then $x_{z_k}\Cap z_k$, a contradiction.

Now we will prove $\Lambda_{E-t}\supseteq\Lambda_{tight}$. Let $C\in v\Lambda^*$ be tight, $\alpha\in C$, and let $F=\{\beta_1,\ldots,\beta_n\}\subset \Lambda$ such that $F\cap C=\emptyset$. Let $\Psi(C)=\eta_C=\{e\in\Idemp: \elmap{\gamma}{\gamma}\leq e\text{ for some }\gamma\in C\}$.  Then, $\elmap{\alpha}{\alpha}\in \eta_C$ and $\elmap{\beta_i}{\beta_i}\notin \eta_C$ for every $1\leq i\leq n$, so $\eta_C\in \Ope(X,Y)$ where   $X=\{\elmap{\alpha}{\alpha}\}$ and $Y=\{\elmap{\beta_i}{\beta_i}\}_{i=1}^n$. Let $Z=\{\elmap{\gamma_j}{\gamma_j}\}_{j=1}^m$ be any finite cover of $\Idemp^{X,Y}$, whence $\{\gamma_j\}_{j=1}^m\subseteq \alpha\Lambda \setminus \bigcup_{i=1}^n\beta_i\Lambda$ is a finite exhaustive set. Therefore, by hypothesis, there exists $1\leq k\leq m$ such that $\gamma_k\in C$, and hence $\elmap{\gamma_k}{\gamma_k}\in \eta_C$, so $Z\cap \eta_C\neq\emptyset$. But then, by Lemma \ref{lemma2_1_8}, it follows that $\eta_C$ is a tight filter. Since $\FiltT$ is the closure of $\UlFilt$, there exists $\xi\in \UlFilt$ such that $\xi\in \Ope(X,Y)$. But then $\Phi(\xi)=\Delta_\xi\in \Lambda^{**}$ by Lemma \ref{lemma2_3_6}, with $\alpha\in \Delta_\xi$ and $\Delta_\xi\cap F=\emptyset$, as desired.
\end{proof}

\begin{example}
Let $E=(E^0,E^1,r,s)$ be a directed graph. A finite path $\alpha$ of $E$ of length $n\geq 1$ is a sequence $\alpha_1\cdots \alpha_n$ where $\alpha_i\in E^1$ for $1\leq i\leq n$ such that $s(\alpha_i)=r(\alpha_{i+1})$ for $1\leq i\leq n-1$. Given a path of length $n$ we define $s(\alpha)=s(\alpha_n)$ and $r(\alpha)=r(\alpha_1)$. We denote by $E^n$ be the sets of paths of length $n$. If we define the paths of length $0$ by $E^0$, and  we denote by $E^*=\bigcup_{i=0}^\infty E^i$ the set of all finite paths of $E$.
	An infinite path $\alpha=\alpha_1\alpha_2\cdots$ of $E$ is an infinite sequence of edges $\alpha_i\in E^1$ such that $r(\alpha_{i+1})=s(\alpha_i)$ for every $i\geq 1$. We denote by $E^\infty$ the set of infinite paths. A singular vertex of $E$ is a vertex $v\in E^0$ such that $|r^{-1}(v)|\in\{0,\infty\}$. We denote by $E^0_{sing}$ the set of singular vertices of $E$,  we denote by $E^0_{source}=\{v\in E^0: r^{-1}(v)=\emptyset\}$ and by  $E^0_{inf}=\{v\in E^0:|r^{-1}(v)|=\infty \}$. Thus, $E^0_{sing}=E^0_{source}\cup E^0_{inf}$.

	 Then we define $\Lambda$ to be the singly aligned LCSC given by the set of finite paths $E^*$. Given a path $\alpha\in E^*$ of length $n$ we define $E_\alpha=\{\alpha_1\cdots \alpha_i: 1\leq i \leq n\}$, where $E_v=\{v\}$ for $v\in E^0$. Moreover given an infinite path $\alpha\in E^\infty$ we define $E_\alpha=\{\alpha_1\cdots \alpha_i: i\geq 1\}$.  It is straightforward to prove that 
	 $$\Lambda^*=\bigcup_{\alpha\in E^\infty}E_\alpha\cup \bigcup_{\alpha\in E^*}E_\alpha\,,\qquad\text{and}\qquad\Lambda^{**}=\bigcup_{\alpha\in E^\infty}E_\alpha\cup \bigcup_{\alpha\in E^*,\,r(\alpha)\in E^0_{sink}}E_\alpha\,.$$
	 
	 Now, given $\alpha\in E^*$ of length $n$ with $r(\alpha)\in E^0_{sing}$, let $k\leq n$ and $\beta_1,\ldots,\beta_m\subseteq E^*\setminus E_\alpha$. If $s(\alpha)\in E^0_{source}$, then $E_\alpha$  trivially belongs to $\Lambda_{E-t}$. Suppose that $s(\alpha)\in E^0_{inf}$. Since $s(\alpha)$ is an infinite emitter, we have that there exists $e\in r^{-1}(s(\alpha))$ such that $e$ is not contained in any path of $\beta_1,\ldots,\beta_m$. Now, let $\gamma$ be a path containing $\alpha e$ that is either infinite or $s(\gamma)\in E^0_{source}$. Then, $\alpha_1\cdots \alpha_k\in E_\gamma$ and $E_\gamma \cap\{\beta_1,\ldots,\beta_m,\}=\emptyset$. Thus, $E_\alpha\in \Lambda_{E-t}$. Conversely, given $\alpha\in E^*$ of length $n$ with $s(\alpha)\in E^0\setminus E^0_{sing}$, then $Y=\{\alpha e:e\in r^{-1}(s(\alpha))\}$ is a finite set, and given any $E_\gamma$ containing $\alpha$ must contain $\alpha e$ for some $e\in r^{-1}(s(\alpha))$. Thus, $E_\alpha\notin \Lambda_{E-t}$, and consequently $\Lambda^{**}\subsetneq \Lambda_{\text{tight}}$. 

\end{example}

\section{Actions of $\Semi_\Lambda$ }
\subsection{Basic definitions} We first recall the basic elements about partial actions of inverse semigroups on $\Filt$ and some subspaces of it. For further references see \cite{EP}.

\begin{definition}
	Let $\Semi$ be  an inverse semigroup, let $\Idemp:=\Idem$ be its semilattice of idempotents and let $\Filt$ be the locally compact Hausdorff space of filters on $\Idemp$. Given any $s\in \Semi$ and any $\eta\in \Filt$ with $s^*s\in \eta$, we define
	$$s\cdot \eta:=\{f\in \Idemp: ses^*\leq f\text{ for some }e\in \eta\}\,,$$
	which is a filter containing $ss^*$. This defines a partial action of $\Semi$ on $\Filt$. For each $s\in \Semi$, the domain of $s\cdot$ is
	$$D_{s^*s}:=\{\eta\in \Filt:s^*s\in \eta \}=\Ope(\{s^*s\},\emptyset)\,,$$
	and the range of $s\cdot$ is $D_{ss^*}:=\Ope(\{ss^*\},\emptyset)$. Thus, $s\cdot$ acts by local homeomorphisms. In particular $s\cdot$ is continuous.  
\end{definition}

Since $s\cdot \eta\in \UlFilt$ for every $\eta\in\UlFilt$ \cite[Proposition 3.5]{EP}, we have that $s\cdot \eta\in \FiltT$ for every $\eta\in \FiltT$ \cite[Proposition 12.11]{E1}.   

Now, we specialize to the case of $\Semi_\Lambda$, when $\Lambda$ is a finite aligned LCSC and $\eta \in \widehat{\mathcal{E}}_{*}$, the reason being that we are interested in define an action of $\Semi_\Lambda$ on $\Lambda^*$, using the homeomorphisms defined in the previous section. The essential step to be covered is to show that the action defined on $\Filt$ restricts to $\widehat{\mathcal{E}}_{*}$.

First, we need to prove a couple of results.

\begin{lemma}\label{lemma_compatibility}
Let $\Lambda$ be a finitely aligned LCSC. Let $s=\bigvee_{i=1}^n\elmap{\alpha_i}{\beta_i}\in \Semi_\Lambda$ be irredundant. Then, for any $1\leq i\ne j\leq n$ such that $\beta_i\Lambda \cap \beta_j\Lambda \ne \emptyset$ and for any $\eta \in \beta_i\Lambda \cap \beta_j\Lambda$ we have that $\elmap{\alpha_i}{\beta_i}(\eta)=\elmap{\alpha_j}{\beta_j}(\eta)$.
\end{lemma}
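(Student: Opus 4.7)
The plan is to reduce the claim to pairwise compatibility of the summands, combined with the standard description of compatibility inside the symmetric inverse monoid $\Inv(\Lambda)$. Since $s=\bigvee_{i=1}^n \elmap{\alpha_i}{\beta_i}$ exists in $\Semi_\Lambda \subseteq \Inv(\Lambda)$, Lemma~\ref{lemma1_1_2_5} immediately gives that the elements $\elmap{\alpha_i}{\beta_i}$ are pairwise compatible.

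Next, I would invoke the standard characterization (Lawson \cite[Proposition 1.2.1]{L}, which the authors themselves cite right after defining compatibility): two elements $f,g\in \Inv(X)$ are compatible if and only if they agree on $\dom(f)\cap\dom(g)$. Observing that
$$\dom\bigl(\elmap{\alpha_k}{\beta_k}\bigr)=\dom\bigl(\tau^{\alpha_k}\osh^{\beta_k}\bigr)=\beta_k\Lambda\,,$$
since $\osh^{\beta_k}$ is a bijection from $\beta_k\Lambda$ onto $s(\beta_k)\Lambda=s(\alpha_k)\Lambda$, which is the domain of $\tau^{\alpha_k}$, this gives exactly the desired conclusion on $\beta_i\Lambda \cap \beta_j\Lambda$.

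Alternatively, and more concretely, once one knows that $s$ is itself a well-defined (single-valued) partial bijection in $\Inv(\Lambda)$, the conclusion can be argued directly: given $\eta\in \beta_i\Lambda\cap\beta_j\Lambda$, write $\eta=\beta_i\gamma_i=\beta_j\gamma_j$ with $\gamma_i,\gamma_j\in s(\beta_i)\Lambda$ respectively $s(\beta_j)\Lambda$, and compute
$$\elmap{\alpha_i}{\beta_i}(\eta)=\alpha_i\gamma_i=s(\eta)=\alpha_j\gamma_j=\elmap{\alpha_j}{\beta_j}(\eta)\,,$$
the middle equalities holding because $s$ is a function and both summands are restrictions of $s$ to their respective domains.

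The main (small) obstacle is a clean passage from the semigroup-theoretic compatibility condition ($s^*t,st^*\in \Idemp$) to the set-theoretic statement that the maps agree on common domains; this is precisely what Lawson's cited proposition records, and it applies here because $\Semi_\Lambda$ is a sub-$*$-semigroup of $\Inv(\Lambda)$. Note that the irredundancy hypothesis plays no role in this argument: it will matter elsewhere (for uniqueness of decompositions via Lemma~\ref{lemma1_2_4}), but the statement at hand only uses that the supremum exists inside $\Inv(\Lambda)$.
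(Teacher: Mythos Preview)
Your argument is correct and is actually cleaner than the paper's. Both proofs start from the same observation (the summands are pairwise compatible because the join exists, Lemma~\ref{lemma1_1_2_5}), but they diverge from there. The paper unpacks compatibility by hand: it computes
\[
\elmap{\alpha_i}{\beta_i}\,\elmap{\beta_j}{\alpha_j}
=\bigvee_{\varepsilon\in\beta_i\vee\beta_j}\elmap{\alpha_i\sigma^{\beta_i}(\varepsilon)}{\alpha_j\sigma^{\beta_j}(\varepsilon)},
\]
invokes the explicit description of idempotents (Lemma~\ref{lemma1_2_6}) to force $\alpha_i\sigma^{\beta_i}(\varepsilon)=\alpha_j\sigma^{\beta_j}(\varepsilon)$ for every minimal common extension $\varepsilon$, and then evaluates at $\eta$ by factoring through such an $\varepsilon$. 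You bypass this computation entirely by working directly in $\Inv(\Lambda)$: since the join there is the set-theoretic union and $s$ is a single-valued partial bijection, both $\elmap{\alpha_i}{\beta_i}$ and $\elmap{\alpha_j}{\beta_j}$ are restrictions of $s$, hence agree wherever both are defined. Your alternative via \cite[Proposition~1.2.1]{L} is the same idea phrased through the standard characterisation of compatibility in $\Inv(X)$.

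What your route buys is brevity and the observation (which you correctly flag) that neither finite alignment nor irredundancy is actually needed for the conclusion. What the paper's route buys is the sharper intermediate identity $\alpha_i\sigma^{\beta_i}(\varepsilon)=\alpha_j\sigma^{\beta_j}(\varepsilon)$ for all $\varepsilon\in\beta_i\vee\beta_j$, which is exactly the form reused later (e.g.\ in the well-definedness check for the action on $\Lambda^*$ in Section~4.2). So the paper's more explicit computation is not wasted effort; it just front-loads a calculation that will recur.
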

\begin{proof}
Since $s\in \Semi_\Lambda$, $\elmap{\alpha_i}{\beta_i}$ and $\elmap{\alpha_j}{\beta_j}$ are compatible. Thus,
$$\elmap{\alpha_i}{\beta_i}\elmap{\beta_j}{\alpha_j}=\bigvee_{\varepsilon\in \beta_i\vee \beta_j}\elmap{\alpha_i\sigma^{\beta_i}(\varepsilon)}{\alpha_j\sigma^{\beta_j}(\varepsilon)},$$
is an idempotent, whence by Lemma \ref{lemma1_2_6} we have that $\alpha_i\sigma^{\beta_i}(\varepsilon)=\alpha_j\sigma^{\beta_j}(\varepsilon)$ for every $\varepsilon\in \beta_i\vee \beta_j$. Since $\eta \in \beta_i\Lambda \cap \beta_j\Lambda$, there exists $\varepsilon\in \beta_i\vee \beta_j$ such that 
$$\eta=\varepsilon\widehat{\eta}=\beta_i\widehat{\beta_i}\widehat{\eta}=\beta_j\widehat{\beta_j}\widehat{\eta}.$$

Hence,
$$\elmap{\alpha_i}{\beta_i}(\eta)=\alpha_i\sigma^{\beta_i}(\varepsilon)\widehat{\eta}=\alpha_j\sigma^{\beta_j}(\varepsilon)\widehat{\eta}=\elmap{\alpha_j}{\beta_j}(\eta)\,.$$

\end{proof}

\begin{lemma}\label{lemma_compresion}
Let $\Lambda$ be a finitely aligned LCSC. Let $s=\bigvee_{i=1}^n\elmap{\alpha_i}{\beta_i}\in \Semi_\Lambda$ be irredundant. If $e=\bigvee_{i=1}^k\elmap{\beta_i}{\beta_i}\in \Semi_\Lambda$ for any $1\leq k\leq n$, then $se=\bigvee_{i=1}^k\elmap{\alpha_i}{\beta_i}$ in $\Semi_{\Lambda}$.
\end{lemma}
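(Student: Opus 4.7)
The plan is to expand the product $se$ by finite distributivity in $\SemiT_\Lambda$ and then show each resulting term is dominated by $\elmap{\alpha_j}{\beta_j}$ for some $j\leq k$, using the compatibility already encoded in the irredundant presentation of $s$. Concretely, I would first write
$$se=\bigvee_{i=1}^n\bigvee_{j=1}^k \elmap{\alpha_i}{\beta_i}\elmap{\beta_j}{\beta_j},$$
which is legitimate since $\SemiT_\Lambda$ is finitely distributive and $\Semi_\Lambda\subseteq \SemiT_\Lambda$. For each pair $(i,j)$: if $\beta_i\perp\beta_j$ the factor vanishes; otherwise a direct computation of the composition on $\beta_i\Lambda\cap\beta_j\Lambda=\bigcup_{\varepsilon\in\beta_i\vee\beta_j}\varepsilon\Lambda$ gives
$$\elmap{\alpha_i}{\beta_i}\elmap{\beta_j}{\beta_j}=\bigvee_{\varepsilon\in\beta_i\vee\beta_j}\elmap{\alpha_i\sigma^{\beta_i}(\varepsilon)}{\varepsilon}.$$

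Next, I would split into the diagonal and off-diagonal cases. When $i=j\leq k$, the set $\beta_i\vee\beta_i$ reduces to $\{\beta_i\}$ and the formula yields exactly $\elmap{\alpha_j}{\beta_j}$; so the diagonal contribution is $\bigvee_{j=1}^k \elmap{\alpha_j}{\beta_j}$, giving one inequality. For the reverse inequality, whenever $i\neq j$ with $j\leq k$ and $\beta_i\Cap\beta_j$, I invoke Lemma~\ref{lemma_compatibility} applied to the irredundant presentation $s=\bigvee_{i=1}^n\elmap{\alpha_i}{\beta_i}$: for every $\varepsilon\in\beta_i\vee\beta_j$,
$$\alpha_i\sigma^{\beta_i}(\varepsilon)=\alpha_j\sigma^{\beta_j}(\varepsilon),$$
so the typical term rewrites as $\elmap{\alpha_j\sigma^{\beta_j}(\varepsilon)}{\beta_j\sigma^{\beta_j}(\varepsilon)}$, which by Proposition~\ref{propo1_2_10} satisfies $\elmap{\alpha_j\sigma^{\beta_j}(\varepsilon)}{\beta_j\sigma^{\beta_j}(\varepsilon)}\leq \elmap{\alpha_j}{\beta_j}$. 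Hence every term of the double join lies below $\bigvee_{j=1}^k \elmap{\alpha_j}{\beta_j}$, and combining both inequalities yields $se=\bigvee_{j=1}^k\elmap{\alpha_j}{\beta_j}$. Since $s,e\in\Semi_\Lambda$ forces $se\in\Semi_\Lambda$, this join a fortiori belongs to $\Semi_\Lambda$, as required.

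I expect the main subtlety to lie in the off-diagonal bookkeeping: one has to see that the cross terms with $i\in\{k+1,\dots,n\}$ (which are precisely those that would spoil the statement) get absorbed into a diagonal term with index $j\leq k$. This is where compatibility is essential---without Lemma~\ref{lemma_compatibility} we could not replace $\alpha_i\sigma^{\beta_i}(\varepsilon)$ by $\alpha_j\sigma^{\beta_j}(\varepsilon)$ and identify the cross term as an initial segment (in the sense of Proposition~\ref{propo1_2_10}) of $\elmap{\alpha_j}{\beta_j}$. The hypothesis that $e\in\Semi_\Lambda$ is used only implicitly, via the fact that it guarantees $se\in\Semi_\Lambda$, ensuring that the identified join, which lives a priori in $\SemiT_\Lambda$, actually realizes an element of $\Semi_\Lambda$.
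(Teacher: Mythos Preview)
Your argument is correct and uses the same key ingredient as the paper---Lemma~\ref{lemma_compatibility}---but the packaging differs. The paper sets $t=\bigvee_{i=1}^k\elmap{\alpha_i}{\beta_i}\in\SemiT_\Lambda$ and proves $se=t$ directly as partial bijections in $\Inv(\Lambda)$: it checks that $\dom(se)=\dom(t)=\bigcup_{i=1}^k\beta_i\Lambda$ and then, for each $\eta$ in the domain, uses Lemma~\ref{lemma_compatibility} to see that $se(\eta)=t(\eta)$. Since $se\in\Semi_\Lambda$, it follows that $t\in\Semi_\Lambda$.

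Your route is lattice-theoretic rather than function-theoretic: you expand the double join in $\SemiT_\Lambda$, recover $t$ from the diagonal, and use compatibility together with Proposition~\ref{propo1_2_10} to absorb every off-diagonal term below some $\elmap{\alpha_j}{\beta_j}$ with $j\leq k$. Both arguments are equally valid and of similar length; the paper's pointwise check is perhaps slightly more elementary (it avoids invoking Proposition~\ref{propo1_2_10}), while yours makes the role of the order structure in $\SemiT_\Lambda$ more explicit and would transfer more readily to settings where one cannot fall back on a concrete $\Inv(X)$ realization.
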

\begin{proof}
 By hypothesis, $\beta_i\nleq \beta_j$ whenever $i\neq j$. Fix $1\leq k\leq n$, and set $t=\bigvee_{i=1}^k\elmap{\alpha_i}{\beta_i}\in \mathcal{T}_\Lambda$. Now, $se\in \Semi_\Lambda$, and we have that 
 $$\dom(se)=\dom(t)=\bigcup_{i=1}^k\beta_i\Lambda\,.$$
 Let us prove that $se=t$ as function in $\Inv(\Lambda)$; if so, then  we conclude $se=t\in \Semi_\Lambda$. We will compute the image of any element in $\dom(se)$. Pick any $1\leq i\leq k$, $1\leq j\leq n$. Then, we have two options:
\begin{enumerate}
\item $\beta_i\Lambda \cap \beta_j\Lambda = \emptyset$: in this case, $\elmap{\beta_i}{\beta_j}=0$, and thus $\elmap{\alpha_i}{\beta_i}\elmap{\beta_j}{\beta_j}=\elmap{\alpha_i}{\beta_i}(\beta_j)=0$.
\item $\beta_i\Lambda \cap \beta_j\Lambda =\bigcup_{\varepsilon \in \beta_i\vee \beta_j}\varepsilon\Lambda$: in this case
$$\elmap{\alpha_i}{\beta_i}\elmap{\beta_j}{\beta_j}=\bigvee_{\varepsilon\in \beta_i\vee \beta_j}\elmap{\alpha_i\sigma^{\beta_i}(\varepsilon)}{\varepsilon}.$$
Thus, given any $\varepsilon\in \beta_i\vee \beta_j$ and any $\delta\in s(\varepsilon)\Lambda$, we have that $\elmap{\alpha_i}{\beta_i}\elmap{\beta_j}{\beta_j}(\varepsilon\delta)=\elmap{\alpha_i}{\beta_i}(\varepsilon\delta)$. 
\end{enumerate}
Applying Lemma \ref{lemma_compatibility}, we conclude that $se=t$, as desired.
\end{proof}

\begin{lemma}\label{Lem:RestrictionOK}
Let $\Lambda$ be a finite aligned LCSC, let $\eta \in \widehat{\mathcal{E}}_{*}$ and $s\in \Semi_\Lambda$. Then, $s\cdot \eta\in \widehat{\mathcal{E}}_{*}$.
\end{lemma}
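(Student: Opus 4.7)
The plan is to verify condition $(*)$ directly for $s\cdot\eta$, after first reducing to the case where $s$ has a single-term presentation.

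\emph{Reduction.} I would write $s=\bigvee_{i=1}^{n}\elmap{\alpha_i}{\beta_i}$ in irredundant form, so that $s^{*}s=\bigvee_{i=1}^{n}\elmap{\beta_i}{\beta_i}$ is the domain idempotent and lies in $\eta$. Condition $(*)$ on $\eta$ then produces some index $k$ with $\elmap{\beta_k}{\beta_k}\in\eta$, and (after reindexing so that $k=1$) Lemma \ref{lemma_compresion} gives $s\,\elmap{\beta_k}{\beta_k}=\elmap{\alpha_k}{\beta_k}$. For any $e\in\eta$, commutativity of idempotents together with this identity yields
\[s\bigl(e\elmap{\beta_k}{\beta_k}\bigr)s^{*}=\bigl(s\elmap{\beta_k}{\beta_k}\bigr)e\bigl(\elmap{\beta_k}{\beta_k}s^{*}\bigr)=\elmap{\alpha_k}{\beta_k}\,e\,\elmap{\beta_k}{\alpha_k},\]
and because $e\elmap{\beta_k}{\beta_k}\in\eta$, comparing the two definitions forces $s\cdot\eta=\elmap{\alpha_k}{\beta_k}\cdot\eta$. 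Thus it suffices to establish $(*)$ for $s\cdot\eta$ under the simplifying hypothesis that $s=\elmap{\alpha}{\beta}$ and $\elmap{\beta}{\beta}\in\eta$.

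\emph{Main step.} Fix $f=\bigvee_{j=1}^{m}\elmap{\gamma_j}{\gamma_j}\in s\cdot\eta$ and pick $e\in\eta$ with $ses^{*}\leq f$. Conjugating gives $s^{*}fs\geq s^{*}ses^{*}s=(s^{*}s)e\in\eta$, and so $s^{*}fs\in\eta$. I would then verify the product formula
\[\elmap{\beta}{\alpha}\,\elmap{\gamma_j}{\gamma_j}\,\elmap{\alpha}{\beta}=\bigvee_{\epsilon\in\alpha\vee\gamma_j}\elmap{\beta\sigma^{\alpha}(\epsilon)}{\beta\sigma^{\alpha}(\epsilon)}\]
from the definitions (following the pattern used in the proof of Proposition \ref{propo1_2_9}) and use finite distributivity in $\SemiT_\Lambda$ to rewrite $s^{*}fs=\bigvee_{j}\bigvee_{\epsilon\in\alpha\vee\gamma_j}\elmap{\beta\sigma^{\alpha}(\epsilon)}{\beta\sigma^{\alpha}(\epsilon)}$. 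Applying condition $(*)$ to $\eta$ produces indices $j_0$ and $\epsilon_0\in\alpha\vee\gamma_{j_0}$ with $\elmap{\mu}{\mu}\in\eta$, where $\mu:=\beta\sigma^{\alpha}(\epsilon_0)$.

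\emph{Conclusion.} With the certificate $e:=\elmap{\mu}{\mu}\in\eta$, an analogous product calculation collapses $s\,\elmap{\mu}{\mu}\,s^{*}$: since $\mu\in\beta\Lambda$ one has $\beta\vee\mu=\{\mu\}$ and $\sigma^{\beta}(\mu)=\sigma^{\alpha}(\epsilon_0)$, whence $s\,\elmap{\mu}{\mu}\,s^{*}=\elmap{\alpha\sigma^{\alpha}(\epsilon_0)}{\alpha\sigma^{\alpha}(\epsilon_0)}=\elmap{\epsilon_0}{\epsilon_0}$. Because $\gamma_{j_0}\leq\epsilon_0$, Proposition \ref{propo1_2_9} gives $\elmap{\epsilon_0}{\epsilon_0}\leq\elmap{\gamma_{j_0}}{\gamma_{j_0}}$, and therefore $\elmap{\gamma_{j_0}}{\gamma_{j_0}}\in s\cdot\eta$, as required. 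The main obstacle I anticipate is the bookkeeping of these product formulas; one must check that the joins computed through distributivity in $\SemiT_\Lambda$ indeed coincide with the $\Semi_\Lambda$-product $s^{*}fs$, so that condition $(*)$ on $\eta$ (which is a filter on $\Idemp[\Semi_\Lambda]$) can be legitimately applied.
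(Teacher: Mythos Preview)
Your argument is correct and uses the same toolkit as the paper (condition $(*)$ on $\eta$, Lemma~\ref{lemma_compresion}, Proposition~\ref{propo1_2_9}, and the standard product expansions), but it is organized differently. You first reduce $s$ to a single term $\elmap{\alpha}{\beta}$ and then, given $f\in s\cdot\eta$, pull $f$ back via $s^{*}fs\in\eta$, expand this as a join, and apply $(*)$ to $\eta$ to locate a single term, which you finally push forward again. The paper instead keeps $s$ general and starts on the $\eta$ side: it first uses $(*)$ to replace the witness $e\in\eta$ (with $ses^{*}\leq f$) by a single idempotent $\elmap{\gamma}{\gamma}\leq s^{*}s$, then observes via Proposition~\ref{propo1_2_10} that $\gamma=\beta_j\widehat{\beta_j}$ for some $j$, so that Lemma~\ref{lemma_compresion} gives $ses^{*}=\elmap{\alpha_j\widehat{\beta_j}}{\alpha_j\widehat{\beta_j}}$ directly as a single term; Proposition~\ref{propo1_2_9} applied to $f\geq ses^{*}$ then immediately yields some $\elmap{\delta_k}{\delta_k}\in s\cdot\eta$. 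The paper's route is a bit shorter because it avoids the back-and-forth conjugation and the second application of $(*)$; your route, on the other hand, makes the reduction to single-term $s$ explicit, which is exactly the move reused later in Remark~\ref{remark_top} and Lemma~\ref{lemma_simpli}. Your anticipated obstacle is not a real issue: $s^{*}fs\in\Semi_\Lambda$, and Lemma~\ref{lemma1_2_4} guarantees that its join presentation (obtained via distributivity and then made irredundant) is the one to which condition $(*)$ applies.
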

\begin{proof}
As noticed before, $s\cdot \eta\in \Filt$. Since $s$ acts on $\eta$, we have $s^*s\in \eta$ and $ss^*\in s\cdot \eta$. If $e\in \eta$, then $s^*s  e\in \eta$. Hence, without lost of generality, we can assume that $e=s^*se$, whence $s^*(ses^*)s=e$. Moreover, since $\eta \in \widehat{\mathcal{E}}_{*}$, we can assume that $e=\elmap{\gamma}{\gamma}$ for some $\gamma\in \Lambda$. 

Now, take $s=\bigvee_{i=1}^n\elmap{\alpha_i}{\beta_i}$.  By \cite[Proposition 1.4.17(1)]{L},
$$s^*s=\bigvee_{i=1}^n\elmap{\beta_i}{\beta_i}.$$
Since $e\leq s^*s$, by Proposition \ref{propo1_2_10} there exists $1\leq j\leq n$ such that $\beta_j\leq \gamma$, i.e. $\gamma=\beta_j\widehat{\beta_j}$. By Lemma \ref{lemma_compresion}, $se=\elmap{\alpha_j\widehat{\beta_j}}{\gamma}$, whence $ses^*=se(se)^*=\elmap{\alpha_j\widehat{\beta_j}}{\alpha_j\widehat{\beta_j}}$.

Now, given any idempotent $f=\bigvee_{k=1}^m\elmap{\delta_k}{\delta_k}$, by Proposition \ref{propo1_2_10}  we have that $s\elmap{\gamma}{\gamma}s^*\leq f$ if and only if there exists $1\leq k\leq m$ such that $\delta_k\leq \alpha_j\widehat{\beta_j}\in \Delta_{s\cdot \xi}$. Then, $\delta_k\in \Delta_{s\cdot \xi}$, and thus $\elmap{\delta_k}{\delta_k}\in s\cdot \xi$. Hence, $s\cdot \xi$ satisfies condition $(*)$, as desired.
\end{proof}

By restricting our attention to $\widehat{\mathcal{E}}_{*}$, we will use the notation $D_{s^*s}$ and $D_{ss^*}$ to refer to the domain and range of the action of an element $s\in \Semi_\Lambda$ on $\widehat{\mathcal{E}}_{*}$. Then, given $s\in \Semi_\Lambda$, we can write (in a unique way up to irredundacy) $s=\bigvee_{i=1}^n\elmap{\alpha_i}{\beta_i}$ by Lemma \ref{lemma1_2_4}, so that $s^*s=\bigvee_{i=1}^n\elmap{\beta_i}{\beta_i}$ by Lemma \ref{lemma1_2_6}, and thus $D_{s^*s}\subseteq \bigcup_{i=1}^nD_{\elmap{\beta_i}{\beta_i}}$. On the other side, if $\elmap{\beta_i}{\beta_i}\in\eta$ for some $1\leq i\leq n$, then $\elmap{\beta_i}{\beta_i}\leq s^*s$ implies that $s^*s\in \eta$, whence $\bigcup_{i=1}^nD_{\elmap{\beta_i}{\beta_i}}\subseteq D_{s^*s}$. Thus, $\bigcup_{i=1}^nD_{\elmap{\beta_i}{\beta_i}}= D_{s^*s}$. Analogously, $\bigcup_{i=1}^nD_{\elmap{\alpha_i}{\alpha_i}}= D_{ss^*}$.

\subsection{The partial action on $\Lambda^*$.}
Since we have an homeomorphism $\Psi:\widehat{\mathcal{E}}_{*}\to \Lambda^*$ with inverse $\Psi$ (Lemma \ref{lemma2_3_3}) we can transfer the action of $\Semi_\Lambda$ on $\widehat{\mathcal{E}}_{*}$ to $\Lambda^*$. First we will fix the domain and range.

\begin{definition}
	Let $s=\elmap{\alpha}{\beta}\in \Semi_\Lambda$. Then, we define 
	$$E_\alpha:=E_{ss^*}=\Phi(D_{ss^*})=\Phi(\Ope(\{\alpha\},\emptyset))=\{C\in\Lambda^*: \alpha\in C \}\,$$
	and
$$E_\beta:=E_{s^*s}=\Phi(D_{s^*s})=\Phi(\Ope(\{\beta\},\emptyset))=\{C\in\Lambda^*: \beta\in C \}\,.$$
	Given $s=\bigvee_{i=1}^n\elmap{\alpha_i}{\beta_i}$ we can define 
	$$E_{s^*s}=\bigcup_{i=1}^n E_{\beta_i}=\bigcup_{i=1}^n\Phi(D_{\elmap{\beta_i}{\beta_i}})=\Phi(D_{s^*s})\,.$$
\end{definition}

The sets $E_{s^*s}$ and $E_{ss^*}$ are the natural candidates for being the domain and range of the partial action of $\Semi_\Lambda$ on $\Lambda^*$. 

Next step is to define the action. We will start by defining the action in the particular case of $s=\elmap{\alpha}{\beta}$. In this case, $F\in E_\beta$ if and only if $\beta\in F$. Then we define 
$$\elmap{\alpha}{\beta}\cdot F=\bigcup_{\beta\leq \gamma,\,\gamma\in F}[\alpha\sigma^\beta (\gamma)]\,,$$
where $[\delta]$ is the set of initial segments of $\delta\in\Lambda$, who clearly belong to $\Lambda^*$. Indeed:
\begin{enumerate}
	\item $\elmap{\alpha}{\beta}(\beta)=\alpha$, thus, $\elmap{\alpha}{\beta}\cdot F\neq \emptyset$.
	\item Set $\eta_1,\eta_2\in \elmap{\alpha}{\beta}\cdot F$, this means that there exist $\gamma_1,\gamma_2$ extensions of $\beta$ with $\gamma_1,\gamma_2\in F$ such that $\eta_i\leq \alpha\sigma^\beta(\gamma_i)$ for $i=1,2$. Since $F$ is directed, $\beta\leq \gamma_1,\gamma_2\leq \delta$ for some $\delta\in F$. Thus, 
	$$\elmap{\alpha}{\beta}(\gamma_1),\elmap{\alpha}{\beta}(\gamma_2)\leq \elmap{\alpha}{\beta}(\delta)\,.$$
	\item If $\delta\in \elmap{\alpha}{\beta}\cdot F$ and $\eta\leq \delta$, then there exists $\gamma\geq \beta$ and $\gamma\in F$ such that $\eta\leq \delta \leq \alpha\sigma^\beta(\gamma)$, so that $\eta\in \elmap{\alpha}{\beta}\cdot F$.
	\end{enumerate}

Moreover, $\elmap{\alpha}{\beta}\cdot F\in E_\alpha$. Thus, in this case we have that
$$\elmap{\alpha}{\beta}\cdot :E_\beta\to E_\alpha\,,\qquad F\mapsto \elmap{\alpha}{\beta}\cdot F\,,$$
is a well-defined map. 

Now, set $s=\bigvee_{i=1}^n\elmap{\alpha_i}{\beta_i}\in \Semi_\Lambda$, with 
$$E_{s^*s}=\bigcup_{i=1}^n E_{\beta_i}\qquad\text{and}\qquad E_{ss^*}=\bigcup_{i=1}^n E_{\alpha_i}\,.$$
By Lemma \ref{lemma1_1_2_5}, $\elmap{\alpha_i}{\beta_i}$ and $\elmap{\alpha_j}{\beta_j}$ are compatible for $1\leq i,j\leq n$, and then $\elmap{\alpha_i}{\beta_i}\elmap{\beta_j}{\alpha_j}$ and $\elmap{\beta_i}{\alpha_i}\elmap{\alpha_j}{\beta_j}$ are idempotents in $\Semi_\Lambda$.

For every $1\leq i\leq n$  and every $F\in E_{\beta_i}$, we can define 
$$s\cdot F=\elmap{\alpha_i}{\beta_i}\cdot F\,.$$

The only point to be checked is that, if $F\in E_{\beta_i}\cap E_{\beta_j}$ for $1\leq i\ne j\leq n$, then $\elmap{\alpha_i}{\beta_i}\cdot F=\elmap{\alpha_j}{\beta_j}\cdot F$. To check this observe that, if $F\in E_{\beta_i}\cap E_{\beta_j}$, then we have that $\beta_i,\beta_j\in F\in \Lambda^*$. Thus, there exists $\gamma\in F$ with $\beta_i,\beta_j\leq \gamma$. Without lost of generality we can assume that $\gamma=\beta_i\vee \beta_j$. Then, $\gamma =\beta_i\sigma^{\beta_i}(\gamma)=\beta_j\sigma^{\beta_j}(\gamma)$. But 
$$\elmap{\alpha_i}{\beta_i}\elmap{\beta_j}{\alpha_j}=\bigvee_{\varepsilon\in \beta_i\vee \beta_j}\elmap{\alpha_i\sigma^{\beta_i}(\varepsilon)}{\alpha_j\sigma^{\beta_j}(\varepsilon)}\in \Idem[\Semi_\Lambda]\,,$$
so that for every $\varepsilon\in \beta_i\vee \beta_j$ we have that $\alpha_i\sigma^{\beta_i}(\varepsilon)=\alpha_j\sigma^{\beta_j}(\varepsilon)$. In particular, for the $\gamma$ above, we have that $\alpha_i\sigma^{\beta_i}(\gamma)=\alpha_j\sigma^{\beta_j}(\gamma)$. Hence,
$$\bigcup_{\beta_i,\beta_j\leq \gamma,\,\gamma\in F}[\alpha_i\sigma^{\beta_i}(\gamma)]=\bigcup_{\beta_i,\beta_j\leq \gamma,\,\gamma\in F}[\alpha_j\sigma^{\beta_j}(\gamma)]\,,$$
that is $\elmap{\alpha_i}{\beta_i}\cdot F=\elmap{\alpha_j}{\beta_j}\cdot F$, as desired. 

Because of this fact, we can define the map
$$s\cdot :E_{s^*s}\to E_{ss^*}\,,$$
as follows: given $F\in E_{s^*s}=\bigcup_{i=1}^n E_{\beta_i}$ we can assume, after re-indexing, that  $\beta_1,\ldots,\beta_k\in F$ and $\beta_{k+1},\ldots,\beta_{n}\notin F$. Thus,
$$s\cdot F=\bigcup_{\bigvee_{i=1}^k\beta_i\leq \gamma,\, \gamma \in F}[\alpha_i\sigma^{\beta_i}(\gamma)]\,.$$

\subsection{The Key Lemma.}
Now, we will prove a result, essential to fix the dictionary.
\begin{lemma}\label{lemma3_3_1}
Let $\Lambda$ be a finitely aligned LCSC. Then, for every $s\in \Semi_\Lambda$ and any $\eta\in D_{s^*s}\cap \widehat{\mathcal{E}_{*}}$ we have that $s\cdot \Delta_\eta=\Delta_{s\cdot \eta}$.
\end{lemma}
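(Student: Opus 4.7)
Write $s=\bigvee_{i=1}^{n}\elmap{\alpha_i}{\beta_i}$ in irredundant form (Lemma \ref{lemma1_2_4}), so that $s^{*}s=\bigvee_{i=1}^{n}\elmap{\beta_i}{\beta_i}$ by Lemma \ref{lemma1_2_6}. Since $\eta \in D_{s^{*}s}\cap \widehat{\mathcal E}_{*}$, condition $(*)$ provides at least one index $i$ with $\elmap{\beta_i}{\beta_i}\in\eta$, i.e.\ $\beta_i\in\Delta_\eta$. After reindexing, assume $\beta_1,\dots,\beta_k\in\Delta_\eta$ and $\beta_{k+1},\dots,\beta_n\notin\Delta_\eta$, which is exactly the setting in which $s\cdot \Delta_\eta$ was defined. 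The plan is to prove the two inclusions separately, using Lemma \ref{lemma_compresion} as the key computational tool to translate between the algebraic action on filters and the combinatorial action on $\Lambda^{*}$.

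For the inclusion $s\cdot\Delta_\eta\subseteq\Delta_{s\cdot\eta}$, take $\delta\in s\cdot\Delta_\eta$. By definition there exists $\gamma\in\Delta_\eta$ with $\bigvee_{i=1}^{k}\beta_i\le\gamma$ and some $1\le i\le k$ such that $\delta\le\alpha_i\sigma^{\beta_i}(\gamma)$. Since $\elmap{\gamma}{\gamma}\in\eta$ and $\beta_i\le\gamma$, Lemma \ref{lemma_compresion} yields $s\,\elmap{\gamma}{\gamma}=\elmap{\alpha_i\sigma^{\beta_i}(\gamma)}{\gamma}$, whence $s\,\elmap{\gamma}{\gamma}\,s^{*}=\elmap{\alpha_i\sigma^{\beta_i}(\gamma)}{\alpha_i\sigma^{\beta_i}(\gamma)}$. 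Because $\delta\le\alpha_i\sigma^{\beta_i}(\gamma)$, Proposition \ref{propo1_2_9} gives $s\,\elmap{\gamma}{\gamma}\,s^{*}\le\elmap{\delta}{\delta}$, so $\elmap{\delta}{\delta}\in s\cdot\eta$ by the definition of the partial action, i.e.\ $\delta\in\Delta_{s\cdot\eta}$.

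For the reverse inclusion $\Delta_{s\cdot\eta}\subseteq s\cdot\Delta_\eta$, take $\delta\in\Delta_{s\cdot\eta}$, so $\elmap{\delta}{\delta}\in s\cdot\eta$, and there exists $e\in\eta$ with $ses^{*}\le\elmap{\delta}{\delta}$. Mimicking the reduction in the proof of Lemma \ref{Lem:RestrictionOK}, we may replace $e$ by $s^{*}se$, and then use condition $(*)$ together with Lemma \ref{lemma2_2_6} to assume $e=\elmap{\gamma}{\gamma}$ with $\gamma\in\Delta_\eta$ and $e\le s^{*}s$; by Proposition \ref{propo1_2_10} there is some $j\le n$ with $\beta_j\le\gamma$, and hence $\beta_j\in\Delta_\eta$, so $j\le k$. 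Applying Lemma \ref{lemma_compresion} as above, $ses^{*}=\elmap{\alpha_j\sigma^{\beta_j}(\gamma)}{\alpha_j\sigma^{\beta_j}(\gamma)}$, and $ses^{*}\le\elmap{\delta}{\delta}$ translates (Proposition \ref{propo1_2_9}) to $\delta\le\alpha_j\sigma^{\beta_j}(\gamma)$.

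The last hurdle is that $\gamma$ need not dominate \emph{all} of $\beta_1,\dots,\beta_k$, which the description of $s\cdot\Delta_\eta$ requires. This is where the directedness of $\Delta_\eta\in\Lambda^{*}$ enters: choose $\gamma'\in\Delta_\eta$ with $\gamma,\beta_1,\dots,\beta_k\le\gamma'$, say $\gamma'=\gamma\gamma''$. Then $\sigma^{\beta_j}(\gamma')=\sigma^{\beta_j}(\gamma)\gamma''$, so $\alpha_j\sigma^{\beta_j}(\gamma)\le\alpha_j\sigma^{\beta_j}(\gamma')$, and hence $\delta\in[\alpha_j\sigma^{\beta_j}(\gamma')]\subseteq s\cdot\Delta_\eta$. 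The main obstacle in the whole argument is precisely this coordination step, together with the initial reduction to a ``nice'' idempotent $e=\elmap{\gamma}{\gamma}$ with $\gamma$ already located inside $\Delta_\eta$; once these are in place, the rest is a direct translation via Lemma \ref{lemma_compresion}.
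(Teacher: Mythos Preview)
Your argument is correct, but it is organized differently from the paper's proof. The paper begins with a reduction: since $\eta\in D_{\elmap{\beta_i}{\beta_i}}$ for some $i$ and $\elmap{\alpha_i}{\beta_i}\le s$, one has $s\cdot\eta=\elmap{\alpha_i}{\beta_i}\cdot\eta$ (a general fact for inverse semigroup actions), so it suffices to treat the single-term case $s=\elmap{\alpha}{\beta}$. The paper then computes $s\cdot\eta$ explicitly as $\{f:f\ge\elmap{\alpha\sigma^\beta(\gamma)}{\alpha\sigma^\beta(\gamma)}\text{ for some }\gamma\in\Delta_\eta,\ \beta\le\gamma\}$ and reads off $\Delta_{s\cdot\eta}$ in one stroke. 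You instead keep the full join $s=\bigvee_i\elmap{\alpha_i}{\beta_i}$ and prove two inclusions, using Lemma~\ref{lemma_compresion} to localize each time to a single term. Both routes arrive at the same core identity $s\,\elmap{\gamma}{\gamma}\,s^{*}=\elmap{\alpha_i\sigma^{\beta_i}(\gamma)}{\alpha_i\sigma^{\beta_i}(\gamma)}$; the paper's up-front reduction buys a cleaner computation, while your approach avoids invoking that general reduction at the cost of slightly more bookkeeping.

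Two small remarks. First, your appeal to Lemma~\ref{lemma_compresion} is a touch loose: as stated, the lemma gives $s\,\elmap{\beta_i}{\beta_i}=\elmap{\alpha_i}{\beta_i}$, and one then multiplies on the right by $\elmap{\gamma}{\gamma}$ and uses $\beta_i\le\gamma$ to get $s\,\elmap{\gamma}{\gamma}=\elmap{\alpha_i\sigma^{\beta_i}(\gamma)}{\gamma}$. Second, your ``last hurdle'' can be bypassed: the paper already establishes that $s\cdot F=\elmap{\alpha_j}{\beta_j}\cdot F$ for \emph{any} $j$ with $\beta_j\in F$, so once you have $\delta\le\alpha_j\sigma^{\beta_j}(\gamma)$ with $\beta_j\le\gamma\in\Delta_\eta$, you are done without needing $\gamma$ to dominate all of $\beta_1,\dots,\beta_k$.
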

\begin{proof}
	Given any $s\in \Semi_\Lambda$, we have that $s=\bigvee_{i=1}^n\elmap{\alpha_i}{\beta_i}$, so that $s^*s=\bigvee_{i=1}\elmap{\beta_i}{\beta_i}$ and $D_{s^*s}=\bigcup_{i=1}^nD_{\elmap{\beta_i}{\beta_i}}$.
	
	By the previous arguments, we can restrict the action of $s$ on $D_{s^*s}$ to an action of $\hat{s}_i:=\elmap{\alpha_i}{\beta_i}$ on $D_{\elmap{\beta_i}{\beta_i}}$ for each particular filter $\eta\in D_{\elmap{\beta_i}{\beta_i}}$. Hence, we can reduce the question to the case $s=\elmap{\alpha}{\beta}$, $s^*s=\elmap{\beta}{\beta}$ and $\eta\in D_{\elmap{\beta}{\beta}}$.
	
	First observe that
	\begin{align*}s\cdot\eta & =\{f\in \Idemp(\Semi_\Lambda): f\geq \elmap{\alpha}{\beta} e \elmap{\beta}{\alpha}\text{ for some }e\in\eta \}\,.
	\end{align*}
	Since $\eta$ satisfies condition $(*)$, if $e=\bigvee_{i=1}^n\elmap{\gamma_i}{\gamma_i}$, then  there exists $1\leq j\leq n$ such that $\elmap{\gamma_j}{\gamma_j}\in \eta$, whence
	$$s\cdot\eta=\{f\in \Idemp(\Semi_\Lambda): f\geq \elmap{\alpha}{\beta}\elmap{\gamma}{\gamma}\elmap{\beta}{\alpha} \text{ for some }\gamma\in \Delta_\eta\}\,.$$
	Now, $\elmap{\alpha}{\beta}\elmap{\gamma}{\gamma}\elmap{\beta}{\alpha} =\bigvee_{\epsilon\in \beta \vee \gamma} \elmap{\alpha\sigma^\beta(\epsilon)}{\alpha\sigma^\beta(\epsilon)}$. Since $\elmap{\beta}{\beta}\in \eta$, we have that $\beta \in \Delta_\eta$. Hence, there exists $\epsilon\in (\beta\vee \gamma)\cap \Delta_\eta$. Thus, 
	$$\elmap{\alpha}{\beta}\elmap{\epsilon}{\epsilon}\elmap{\beta}{\alpha}=\elmap{\alpha\sigma^\beta(\epsilon)}{\alpha\sigma^\beta(\epsilon)}\leq \elmap{\alpha}{\beta}\elmap{\gamma}{\gamma}\elmap{\beta}{\alpha}$$ 
	for some $\epsilon\in (\beta\vee \gamma)\cap \Delta_\eta$, and thus
	$$s\cdot\eta=\{f\in \Idemp(\Semi_\Lambda): f\geq \elmap{\alpha\sigma^\beta(\gamma)}{\alpha\sigma^\beta(\gamma)} \text{ for some }\gamma\in \Delta_\eta\text{ with }\beta\leq \gamma\}\,.$$
	Given $f=\bigvee_{i=1}^n\elmap{\delta_i}{\delta_i}$, since $s\cdot \eta \in \widehat{\mathcal{E}}_{*}$ by Lemma \ref{Lem:RestrictionOK}, then by  Proposition \ref{propo1_2_9} we have that $f\geq \elmap{\alpha\sigma^{\beta}(\gamma)}{\alpha\sigma^{\beta}(\gamma)}$  if and only if there exists $1\leq k\leq  n$ such that  $\delta_k\leq \alpha\sigma^\beta(\gamma)$ for some $\gamma\in \Delta_\eta$ with $\beta\leq \gamma$. Hence, we have that 
	$$s\cdot\eta=\left\lbrace \bigvee_{i=1}^n\elmap{\delta_i}{\delta_i}\in\Idemp(\Semi_\Lambda):\text{there is }1\leq i\leq n\text{ such that } \delta_i\leq \alpha\sigma^\beta(\gamma)\text{ for some }\gamma\in \Delta_\eta \right\rbrace \,.$$
	Thus, 
	$$\Delta_{s\cdot \eta}=\{\delta\in \Lambda: \delta\leq \alpha\sigma^\beta(\gamma)\text{ for some }\gamma\in \Delta_\eta\text{ with }\beta\leq \gamma \}=\bigcup_{\beta\leq \gamma,\,\gamma\in \Delta_\eta}[\alpha\sigma^\beta(\gamma)]=s\cdot \Delta_\eta\,,$$
	 as desired.
\end{proof}

\begin{corollary}\label{corol3_3_2}
Let $\Lambda$ be a countable, finite aligned  LCSC. Then given $s\in \Semi_\Lambda$:
\begin{enumerate}
\item $s\cdot$ restricts to an action on $\Lambda^{**}$,
\item $s\cdot$  restricts to an action on $\Lambda_{tight}$.
\end{enumerate}
\end{corollary}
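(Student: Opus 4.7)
The plan is to exploit the homeomorphism $\Phi\colon \widehat{\mathcal{E}}_{*}\to \Lambda^*$ from Lemma \ref{lemma2_3_3} together with the Key Lemma (Lemma \ref{lemma3_3_1}) to reduce each statement to a corresponding invariance property on $\widehat{\mathcal{E}}_{*}$ that is already known. The definitions of $s\cdot$ on $\Lambda^*$ and on $\widehat{\mathcal{E}}_{*}$ were set up so that $\Phi$ intertwines them on the relevant domains, i.e.\ $E_{s^*s}=\Phi(D_{s^*s})$ and $\Phi(s\cdot \eta)=s\cdot \Phi(\eta)=s\cdot \Delta_\eta$ whenever $\eta\in D_{s^*s}\cap\widehat{\mathcal{E}}_{*}$. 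Combined with the identifications $\Lambda^{**}=\Phi(\UlFilt)$ (Lemma \ref{lemma2_3_6}) and $\Lambda_{tight}=\Phi(\FiltT)$ (Lemma \ref{lemma2_3_10}), everything reduces to transporting known facts about the partial action of $\Semi_\Lambda$ on $\Filt$.

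For (1), I would take $F\in \Lambda^{**}\cap E_{s^*s}$ and set $\eta:=\Psi(F)=\eta_F\in \widehat{\mathcal{E}}_{*}$. Since $F\in E_{s^*s}=\Phi(D_{s^*s})$, we have $\eta\in D_{s^*s}$, and by Lemma \ref{lemma2_3_6} the hypothesis $F\in\Lambda^{**}$ translates to $\eta\in\UlFilt$. Invoking the fact recalled after the basic definition of the action (from \cite[Proposition 3.5]{EP}), $s\cdot \eta\in\UlFilt$, so another application of Lemma \ref{lemma2_3_6} gives $\Delta_{s\cdot \eta}\in\Lambda^{**}$. The Key Lemma now yields
\[
s\cdot F = s\cdot \Delta_{\eta} = \Delta_{s\cdot \eta}\in \Lambda^{**},
\]
which is exactly the required invariance.

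For (2), the argument runs in complete parallel. If $F\in \Lambda_{tight}\cap E_{s^*s}$, then by Lemma \ref{lemma2_3_10} the filter $\eta_F=\Psi(F)$ lies in $\FiltT\cap D_{s^*s}$. By \cite[Proposition 12.11]{E1} (again recalled after the definition of the action), the class $\FiltT$ is invariant under $s\cdot$, so $s\cdot \eta_F\in\FiltT$. The Key Lemma then gives $s\cdot F = \Delta_{s\cdot \eta_F}$, and since $\Phi(\FiltT)=\Lambda_{tight}$ we conclude $s\cdot F\in \Lambda_{tight}$.

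The proof is essentially a diagram chase, with no substantial obstacle: all the work was done in establishing Lemma \ref{lemma3_3_1} and in identifying $\Phi(\UlFilt)$ and $\Phi(\FiltT)$ intrinsically in $\Lambda^*$. The only point requiring care is that the domains match, i.e.\ that the restricted map $s\cdot\colon E_{s^*s}\to E_{ss^*}$ corresponds under $\Phi$ to $s\cdot\colon D_{s^*s}\to D_{ss^*}$; this was arranged by the very construction of $E_{s^*s}$ right before the corollary.
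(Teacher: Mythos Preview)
Your proof is correct and, for part~(1), identical in spirit to the paper's argument: both transport $F\in\Lambda^{**}$ back to $\eta_F\in\UlFilt$ via $\Psi$, use the known invariance of $\UlFilt$ under the action, and return via the Key Lemma.

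For part~(2) there is a small but genuine difference worth noting. The paper does \emph{not} invoke the identification $\Lambda_{tight}=\Phi(\FiltT)$ together with the invariance of $\FiltT$ from \cite{E1}; instead it argues purely topologically on the $\Lambda^*$ side: since $s\cdot$ is continuous (being conjugate via the homeomorphism $\Phi$ to a continuous map) and $\Lambda_{tight}=\overline{\Lambda^{**}}$, invariance of $\Lambda^{**}$ from part~(1) immediately gives invariance of its closure. Your approach is more uniform with part~(1) and makes the dependence on \cite[Proposition~12.11]{E1} explicit, while the paper's approach is slightly more economical in that it deduces~(2) directly from~(1) without an additional external citation. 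Both are short and routine.
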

\begin{proof}
Lemma \ref{lemma3_3_1} shows that for every $s\in \Semi_\Lambda$ and for every $\eta\in D_{s^*s}$ we have that $s\cdot\Phi(\eta)=\Phi(s\cdot \eta)$.

For $(1)$ since $\Phi(\UlFilt)=\Lambda^{**}$, for any $F\in\Lambda^{**}$ we have that $\eta_F=\Psi(F)\in\UlFilt$, so that
$$s\cdot F=s\cdot \Delta_{\eta_F }=s\cdot \Phi(\eta_F)=\Phi(s\cdot \eta_F)\in \Phi(\UlFilt)=\Lambda^{**}\,.$$ 
For $(2)$ since $s\cdot $ is continuous and $\Lambda_{tight}=\overline{\Lambda^{**}}^{\|\cdot\|_{\Lambda^*}}$ the result derives from $(1)$.
\end{proof}

\subsection{The tight groupoid.}
Given an action of an inverse semigroup $\Semi$ on a locally compact Hausdorff space $X$, we can associate to it a groupoid as follows: Consider $\Semi\times X:=\{(s,x):x\in D_{s^*s} \}$ with
\begin{enumerate}
\item $d(s,x)=x$ and $r(s,x)=s\cdot x$,
\item $(s,x)\cdot (t,y)$ is defined if $t\cdot y= x$, and then $(s,x)\cdot (t,y)=(st,y)$,
\item $(s,x)^{-1}=(s^*,s\cdot x)$\,.
\end{enumerate} 
We say that $(s,x)\sim (t,y)$ if and only if $x=y$ and there exists $e\in \Idem$ with $x\in D_e$ and $se=te$. This is an equivalence relation, compatible with the groupoid structure. Thus, we define $\Semi\rtimes X:=\Semi\times X/\sim$, with the induced operations defined above. Moreover, $(\Semi\rtimes X)^{(0)}=X$.
Now to define the topology on $\Semi\rtimes X$, given $s\in \Semi$ and $U\subseteq D_{s^*s}$ an open set, the subset 
$$\Theta(s,U)=\{[s,x]:x\in U\}\,,$$
gives us  a basis for $\Semi\rtimes X$under which it is a locally compact étale groupoid. 

When $X=\FiltT$, $\Semi\rtimes X$ is the tight groupoid  of the inverse semigroup, denote by $\G_{tight}(\Semi)$. For extra information see for example \cite{EP}.

We will show a nice description of $\G_{tight}(\Semi_\Lambda)$.

\begin{lemma}\label{lemma_simpli}
Let $\Lambda$ be a finitely aligned LCSC. Let $s=\bigvee_{i=1}^n\elmap{\alpha_i}{\beta_i}\in\Semi_{\Lambda}$ and let $\xi\in D_{s^*s}$. Suppose that $\beta_k\in \Delta_\xi$ for some $1\leq k\leq n$. Then $[\elmap{\alpha_k}{\beta_k},\xi]=[s,\xi]\in \G_{tight}(\Semi_\Lambda)$.  In particular 
$$\G_{tight}(\Semi_\Lambda)=\{[\elmap{\alpha}{\beta},\xi]: s(\alpha)=\beta,\, \beta\in \Delta_\xi \}\,.$$
\end{lemma}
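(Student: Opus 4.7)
My plan is to exploit the equivalence relation $(s,\xi)\sim(t,\xi)$ iff there exists $e\in\Idem[\Semi_\Lambda]$ with $\xi\in D_e$ and $se=te$, together with the compression Lemma~\ref{lemma_compresion}. The natural candidate for the witnessing idempotent is $e=\elmap{\beta_k}{\beta_k}$, which lies in $\xi$ precisely because $\beta_k\in\Delta_\xi$.

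For the first assertion, after re-indexing so that $\beta_k$ becomes $\beta_1$ (this is legitimate since the decomposition $s=\bigvee_{i=1}^n\elmap{\alpha_i}{\beta_i}$ is unordered), Lemma~\ref{lemma_compresion} applied with $k=1$ gives
\[
s\cdot \elmap{\beta_k}{\beta_k}=\elmap{\alpha_k}{\beta_k}\,.
\]
On the other hand, since $\elmap{\beta_k}{\beta_k}=(\elmap{\alpha_k}{\beta_k})^*(\elmap{\alpha_k}{\beta_k})$, a direct computation (or the same lemma applied to the single-term element $\elmap{\alpha_k}{\beta_k}$) yields
\[
\elmap{\alpha_k}{\beta_k}\cdot \elmap{\beta_k}{\beta_k}=\elmap{\alpha_k}{\beta_k}\,.
\]
Combining these equalities, we have $s\cdot e=\elmap{\alpha_k}{\beta_k}\cdot e$ with $\xi\in D_e$, so the pairs $(s,\xi)$ and $(\elmap{\alpha_k}{\beta_k},\xi)$ are equivalent in $\Semi_\Lambda\times\widehat{\mathcal{E}}_{tight}$, establishing the identity of classes in $\G_{tight}(\Semi_\Lambda)$.

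For the ``in particular'' part, take any arrow $[t,\xi]\in \G_{tight}(\Semi_\Lambda)$ with $t\in\Semi_\Lambda$ and $\xi\in\FiltT\cap D_{t^*t}$. Writing $t=\bigvee_{i=1}^n\elmap{\alpha_i}{\beta_i}$ in irredundant form (Lemma~\ref{lemma1_2_4}), Lemma~\ref{lemma1_2_6} gives $t^*t=\bigvee_{i=1}^n\elmap{\beta_i}{\beta_i}\in\xi$. Since $\xi$ is tight, by Lemma~\ref{lemma2_2_2} it satisfies condition $(*)$, so some index $k$ fulfills $\elmap{\beta_k}{\beta_k}\in\xi$, i.e. $\beta_k\in\Delta_\xi$. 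The first part then yields $[t,\xi]=[\elmap{\alpha_k}{\beta_k},\xi]$, and by construction $s(\alpha_k)=s(\beta_k)$, which is the required form.

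The proof is mostly bookkeeping; the only subtle point is the reduction that allows us to invoke Lemma~\ref{lemma_compresion} on the single index $k$. One needs to observe that the indexing of the irredundant decomposition is immaterial (the $\bigvee$ is an unordered supremum), so re-indexing is free, and that the statement of Lemma~\ref{lemma_compresion} in the case of a single $\beta_k$ does produce an element lying in $\Semi_\Lambda$ (not merely $\SemiT_\Lambda$), namely $\elmap{\alpha_k}{\beta_k}$, which is what we need to work inside $\G_{tight}(\Semi_\Lambda)$.
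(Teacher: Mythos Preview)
Your proof is correct and follows essentially the same approach as the paper's: both use the idempotent $e=\elmap{\beta_k}{\beta_k}\in\xi$ and invoke Lemma~\ref{lemma_compresion} to conclude $se=\elmap{\alpha_k}{\beta_k}$, then appeal to condition~$(*)$ for the ``in particular'' clause. Your version is slightly more explicit about the re-indexing needed to match the hypotheses of Lemma~\ref{lemma_compresion} and about verifying both sides of the germ relation, but the substance is identical.
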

\begin{proof}
Let $s=\bigvee_{i=1}^n\elmap{\alpha_i}{\beta_i}\in\Semi_{\Lambda}$ and let $[s,\xi]\in \G_{tight}(\Semi_\Lambda)$. Then by definition $[s,\xi]=[se,\xi]$ for any $e\in \xi$. Let $1\leq k\leq n$ such that $\beta_k\in \Delta_\xi$, whence $e=\elmap{\alpha_k}{\beta_k}\in\xi$. Then by Lemma \ref{lemma_compresion} we have that $se=\elmap{\alpha_k}{\beta_k}$. Thus, $[s,\xi]=[\elmap{\alpha_k}{\beta_k}]$. 

Finally, given any $[s,\xi]\in \G_{tight}(\Semi_\Lambda)$ with $s=\bigvee_{i=1}^n\elmap{\alpha_i}{\beta_i}$ and $\xi\in\FiltT$, there exists $\beta_k\in \Delta_\xi$,  since $\xi$ satisfies condition $(*)$. So by the above $[s,\xi]=[\elmap{\alpha_k}{\beta_k},\xi]$.
\end{proof}

Now we are ready to prove the following result.

\begin{lemma}\label{lemma3_4_1}
Let $\Lambda$ be a finite aligned LCSC. Then, $\G_{tight}(\Semi_\Lambda)$ is topologically isomorphic to $\Semi_\Lambda\rtimes \Lambda_{tight}$.
\end{lemma}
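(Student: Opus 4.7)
The plan is to lift the equivariant homeomorphism $\Phi:\FiltT\to\Lambda_{tight}$ (from Lemma \ref{lemma2_3_3}, restricted via Corollary \ref{corol3_3_2}, together with the intertwining property in Lemma \ref{lemma3_3_1}) to the level of the semidirect product. By construction $\G_{tight}(\Semi_\Lambda)=\Semi_\Lambda\rtimes\FiltT$, so it suffices to define
$$\tilde{\Phi}:\Semi_\Lambda\times\FiltT\longrightarrow\Semi_\Lambda\times\Lambda_{tight},\qquad (s,\eta)\longmapsto(s,\Phi(\eta)).$$
Because $\Phi$ restricts to a bijection of $D_{s^*s}\cap\FiltT$ onto $E_{s^*s}\cap\Lambda_{tight}$, this is a well-defined bijection of the underlying pre-groupoids.

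Second, one checks that $\tilde{\Phi}$ respects the defining equivalence relation. The relation $(s,\eta)\sim(t,\eta)$ demands an idempotent $e\in\Idemp$ with $\eta\in D_e$ and $se=te$; under $\tilde{\Phi}$ the first condition becomes $\Phi(\eta)\in E_e$, which is equivalent to $\eta\in D_e$ because $\Phi$ is a bijection carrying $D_e$ onto $E_e$, while $se=te$ is a statement purely in $\Semi_\Lambda$ and so is unchanged. Hence $\tilde{\Phi}$ descends to a bijection
$$\bar{\Phi}:\G_{tight}(\Semi_\Lambda)\longrightarrow\Semi_\Lambda\rtimes\Lambda_{tight},\qquad [s,\eta]\longmapsto[s,\Phi(\eta)].$$
To verify $\bar{\Phi}$ is a groupoid homomorphism one invokes the key equivariance $\Phi(s\cdot\eta)=s\cdot\Phi(\eta)$ of Lemma \ref{lemma3_3_1}: for source and range, $d[s,\Phi(\eta)]=\Phi(\eta)=\Phi(d[s,\eta])$ and $r[s,\Phi(\eta)]=s\cdot\Phi(\eta)=\Phi(s\cdot\eta)=\Phi(r[s,\eta])$; composition $[s,\eta]\cdot[t,\xi]=[st,\xi]$, defined when $t\cdot\xi=\eta$, is preserved since $t\cdot\Phi(\xi)=\Phi(t\cdot\xi)=\Phi(\eta)$; inversion $[s,\eta]^{-1}=[s^*,s\cdot\eta]$ is preserved for the same reason.

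Finally, for the topology, the basis of $\G_{tight}(\Semi_\Lambda)$ consists of sets $\Theta(s,U)$ with $s\in\Semi_\Lambda$ and $U\subseteq D_{s^*s}\cap\FiltT$ open, and analogously for $\Semi_\Lambda\rtimes\Lambda_{tight}$ using open $V\subseteq E_{s^*s}\cap\Lambda_{tight}$. From $\bar{\Phi}(\Theta(s,U))=\Theta(s,\Phi(U))$ and the fact that $\Phi$ is a homeomorphism one obtains a bijection of bases, hence a homeomorphism. The whole argument is essentially bookkeeping, and there is no genuine obstacle: the delicate work has already been carried out in Lemmas \ref{lemma2_3_3}, \ref{Lem:RestrictionOK}, and \ref{lemma3_3_1}, which jointly guarantee that the algebraic and topological data on $\FiltT$ transfer faithfully to $\Lambda_{tight}$. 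The only mildly subtle point is ensuring that the domains match correctly after restriction from $\widehat{\mathcal{E}}_*$ to $\FiltT$, but this is immediate from the continuity and equivariance of $\Phi$.
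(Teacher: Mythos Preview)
Your proposal is correct and follows essentially the same approach as the paper: both transport the action groupoid along the equivariant homeomorphism $\Phi$ by defining $(s,\eta)\mapsto(s,\Phi(\eta))$, checking compatibility with the germ relation via $\Phi(D_e)=E_e$, and then matching the basic open sets $\Theta(s,U)\mapsto\Theta(s,\Phi(U))$. If anything, your write-up is slightly more explicit in verifying the groupoid axioms (source, range, composition, inversion) than the paper's, but the argument is the same.
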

\begin{proof}
Since $\Phi:\FiltT\to \Lambda_{tight}$ is a homeomorphism and for every $s\in \Semi_\Lambda$ and  $\eta\in \FiltT$, we have that $s\cdot \Phi(\eta)=\Phi(s\cdot \eta)$, we conclude that the map
$$\rho:\Semi_\Lambda\times \FiltT\to \Semi_\Lambda\times \Lambda_{tight}\qquad \text{given by}\qquad (s,\eta)\mapsto (s,\Phi(\eta))\,,$$
is a groupoid isomorphism. 

Now set $s,t\in \Semi_\Lambda$, $\eta\in D_{s^*s}\cap D_{t^*t}$ and $e\in \Idemp(\Semi_\Lambda)$ with $\eta\in D_e$ and $se=te$. Then, $\Phi(\eta)\in E_{s^*s}\cap E_{t^*t}$ and $\Delta_\eta\in E_e$, so that 
$$(s,\eta)\sim(t,\eta)\qquad \text{if and only if}\qquad (s,\Delta_\eta)\sim (t,\Delta_\eta)\,.$$
Consequently, the isomorphism $\rho$ induces an isomorphism 
$$\hat{\rho}:\Semi_\Lambda\rtimes \FiltT\to \Semi_\Lambda\rtimes \Lambda_{tight}\qquad \text{given by}\qquad [s,\eta]\mapsto [s,\Phi(\eta)]\,.$$ 
Finally, given any $s\in \Semi_\Lambda$ and $U\subseteq D_{s^*s}$ open subset, we have that $\Phi(U)\subseteq E_{s^*s}$ is an open set and $\hat{\rho}(\Theta(s,U))=\Theta(s,\Phi(U))$. Thus, $\hat{\rho}$ is a homeomorphism.
\end{proof}

We are ready to show when this groupoid is Hausdorff. First, we need to recall some known facts.

\begin{definition}
	A poset is a \emph{weak semilattice} if the intersection of principal downsets is finitely generated as a downset.
\end{definition}
In the case of $\Lambda$ being right cancellative, $\Lambda$ can be seen as a subsemigroup of $\Semi_\Lambda$ via the natural map $\alpha\mapsto \tau^{\alpha}$. Hence, when $\Lambda$ is a \textbf{left and right cancellative small category}, we have the following result.

\begin{proposition}[{\cite[Proposition 3.6]{DM}}]\label{propo1_4_2}
	Let $\Lambda$ be a left and right cancellative small category. Then the following are equivalent:
	\begin{enumerate}
		\item $\Lambda$ is finitely aligned,
		\item $\ZigM$ is a weak semilattice. 
	\end{enumerate}
\end{proposition}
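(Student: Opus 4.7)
The plan is to translate the weak-semilattice property of $\Semi_\Lambda$ into the combinatorial finite alignment condition on $\Lambda$ via the irredundant decomposition of Lemmas \ref{lemma1_2_4} and \ref{lemma1_2_6} and the order characterisations in Propositions \ref{propo1_2_9} and \ref{propo1_2_10}.

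For the direction $(2)\Rightarrow(1)$, I would pick $\alpha,\beta\in\Lambda$ and consider the idempotents $e_\alpha=\tau^\alpha\sigma^\alpha$ and $e_\beta=\tau^\beta\sigma^\beta$. By Proposition \ref{propo1_2_10}, a basic idempotent $\tau^\eta\sigma^\eta$ lies in $\downarrow e_\alpha\cap\downarrow e_\beta$ exactly when $\eta\in\alpha\Lambda\cap\beta\Lambda$. Applying the weak-semilattice hypothesis, choose a finite generating family $g_1,\ldots,g_N$ of this downset. Each $g_k$ is an idempotent below $e_\alpha$, so by Lemma \ref{lemma1_2_6} it has the form $\bigvee_l\tau^{\zeta_{k,l}}\sigma^{\zeta_{k,l}}$ with $\zeta_{k,l}\in\alpha\Lambda\cap\beta\Lambda$. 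For an arbitrary $\eta\in\alpha\Lambda\cap\beta\Lambda$, the element $\tau^\eta\sigma^\eta$ sits below some $g_k$, and Proposition \ref{propo1_2_9} yields an index $l$ with $\zeta_{k,l}\leq\eta$. Thus $\alpha\Lambda\cap\beta\Lambda=\bigcup_{k,l}\zeta_{k,l}\Lambda$ with finitely many terms, giving finite alignment.

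For $(1)\Rightarrow(2)$, I would write $s,t\in\Semi_\Lambda$ in irredundant form as $s=\bigvee_i\tau^{\alpha_i}\sigma^{\beta_i}$ and $t=\bigvee_j\tau^{\gamma_j}\sigma^{\delta_j}$. For each pair $(i,j)$ with $\alpha_i\Cap\gamma_j$, finite alignment supplies the finite set $\alpha_i\vee\gamma_j$. I would call $\zeta\in\alpha_i\vee\gamma_j$ \emph{admissible} whenever the source-coordinate condition $\beta_i\sigma^{\alpha_i}(\zeta)=\delta_j\sigma^{\gamma_j}(\zeta)$ holds, and set $g_{i,j,\zeta}:=\tau^\zeta\sigma^{\beta_i\sigma^{\alpha_i}(\zeta)}$; by Proposition \ref{propo1_2_10} this is a basic element dominated by both $\tau^{\alpha_i}\sigma^{\beta_i}$ and $\tau^{\gamma_j}\sigma^{\delta_j}$. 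The key step is to show that any basic component $\tau^{\rho_k}\sigma^{\nu_k}$ of $r\leq s,t$ is bounded above by some $g_{i,j,\zeta}$: the equalities $\rho_k=\alpha_i\epsilon_k=\gamma_j\mu_k$ and $\nu_k=\beta_i\epsilon_k=\delta_j\mu_k$ furnished by Proposition \ref{propo1_2_10} together with finite alignment force $\rho_k=\zeta\lambda$ for some $\zeta\in\alpha_i\vee\gamma_j$; left cancellation then pins down $\epsilon_k=\sigma^{\alpha_i}(\zeta)\lambda$ and $\mu_k=\sigma^{\gamma_j}(\zeta)\lambda$; and right cancellation applied to the resulting identity $\beta_i\sigma^{\alpha_i}(\zeta)\lambda=\delta_j\sigma^{\gamma_j}(\zeta)\lambda$ produces the admissibility of $\zeta$ and the domination $\tau^{\rho_k}\sigma^{\nu_k}\leq g_{i,j,\zeta}$. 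Since the $g_{i,j,\zeta}$ are mutually compatible (all lie below $s$), finite subfamilies admit joins in $\Semi_\Lambda$, and the finitely many such joins constitute a finite downset-generating set; in fact their total join realises $s\wedge t$ inside $\Semi_\Lambda$.

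The main obstacle is the combinatorial interlock in $(1)\Rightarrow(2)$: finite alignment only controls the ``range'' intersection $\alpha_i\Lambda\cap\gamma_j\Lambda$, yet the downset condition simultaneously demands a ``source-side'' compatibility $\beta_i\epsilon=\delta_j\mu$. Locating right cancellation as the precise reason the source compatibility holds automatically whenever the range condition does is the heart of the argument, and is exactly why the proposition requires $\Lambda$ to be right cancellative rather than merely left cancellative.
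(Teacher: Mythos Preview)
The paper does not supply its own proof of this proposition; it is simply quoted from \cite{DM}. So there is nothing to compare against, and what follows is an assessment of your argument on its own merits.

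Your $(1)\Rightarrow(2)$ direction is sound and correctly isolates right cancellation as the mechanism that forces the ``source-side'' admissibility condition $\beta_i\sigma^{\alpha_i}(\zeta)=\delta_j\sigma^{\gamma_j}(\zeta)$ from the ``range-side'' alignment. One small point deserves care: you need the finite joins $\bigvee_{(i,j,\zeta)\in S} g_{i,j,\zeta}$ to lie in $\Semi_\Lambda$ (not merely in $\SemiT_\Lambda$) in order to serve as generators of the downset; this is not automatic from the paper's setup, though it can be arranged.

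Your $(2)\Rightarrow(1)$ direction, however, has a genuine gap. For any two idempotents $e_\alpha=\tau^\alpha\sigma^\alpha$ and $e_\beta=\tau^\beta\sigma^\beta$ in an inverse semigroup one always has
\[
\downarrow e_\alpha\cap\downarrow e_\beta=\downarrow(e_\alpha e_\beta),
\]
since any $r$ below both idempotents is itself idempotent with $r=re_\alpha=re_\beta=re_\alpha e_\beta$. The product $e_\alpha e_\beta=\tau^\alpha\sigma^\alpha\tau^\beta\sigma^\beta$ lies in $\Semi_\Lambda$ regardless of alignment, so this downset intersection is \emph{always} principal and the weak-semilattice hypothesis yields no information when applied to this particular pair. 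Your next move --- decomposing the generator via Lemma~\ref{lemma1_2_6} --- then becomes circular: that lemma assumes finite alignment, which is exactly what you are trying to prove, and indeed the identity on $\alpha\Lambda\cap\beta\Lambda$ admits a finite decomposition $\bigvee\tau^{\zeta}\sigma^{\zeta}$ \emph{only if} $\alpha\Lambda\cap\beta\Lambda$ is already a finite union of cylinders. To make this direction work you must apply the weak-semilattice hypothesis to a pair of non-idempotent elements whose common lower bounds genuinely encode $\alpha\Lambda\cap\beta\Lambda$; the argument in \cite{DM} proceeds along those lines.
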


An important point is that, when $\Lambda$ fails to be right cancellative, then Donsig \& Millan argument, fails.

\begin{remark}\label{rema1_4_3}
Steinberg \cite[page 1037]{St} says that an inverse semigroup $\mathcal{S}$ is \emph{Hausdorff} if it is a weak semilattice
\end{remark}
The following follows from \cite[Section 5]{St}.

\begin{corollary}\label{corol1_4_4}
Let $\mathcal{S}$ be a countable inverse semigroup. If $\mathcal{S}$ is Hausdorff, then so is $\Gt(\mathcal{S})$.
\end{corollary}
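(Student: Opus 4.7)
The plan is to invoke Steinberg's characterization of Hausdorffness for the universal groupoid of an inverse semigroup and then to transfer the property to the tight groupoid via a standard reduction argument.

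First, I would recall the construction of the universal groupoid $\G_u(\mathcal{S}) := \mathcal{S} \ltimes \Filt$ attached to $\mathcal{S}$, acting on the full filter space $\Filt$. By Steinberg \cite[Section 5]{St}, when $\mathcal{S}$ is a countable weak semilattice --- which is precisely the Hausdorff condition on $\mathcal{S}$ recalled in Remark \ref{rema1_4_3} --- the topological groupoid $\G_u(\mathcal{S})$ is Hausdorff. This is the key input, and it is essentially the only nontrivial ingredient; the weak semilattice hypothesis guarantees that any two germs $[s,\eta]$ and $[t,\eta]$ in the universal groupoid can be separated by basic open bisections of the form $\Theta(s,U)$.

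Second, I would observe that the tight groupoid $\Gt(\mathcal{S}) = \mathcal{S} \ltimes \FiltT$ is the reduction of $\G_u(\mathcal{S})$ to the subspace $\FiltT \subseteq \Filt$. Since $\FiltT$ is closed in $\Filt$ by its very definition (it is the closure of $\UlFilt$ in $\Filt$, see Definition \ref{Def:ClosureFilters}), and since the partial action of $\mathcal{S}$ restricts to $\FiltT$ (as recalled just before the specialization to $\Semi_\Lambda$), the reduction $\G_u(\mathcal{S})|_{\FiltT} = d^{-1}(\FiltT) \cap r^{-1}(\FiltT)$ is a closed subgroupoid of $\G_u(\mathcal{S})$, endowed with the subspace topology.

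Finally, I would conclude that closed subspaces of Hausdorff spaces are Hausdorff, so $\Gt(\mathcal{S})$ inherits the Hausdorff property from $\G_u(\mathcal{S})$. The main (minor) obstacle is the bookkeeping identification of $\Gt(\mathcal{S})$ with the reduction of $\G_u(\mathcal{S})$ to the closed invariant subset $\FiltT$, together with the verification that the basic topology $\Theta(s,U)$ on $\Gt(\mathcal{S})$ agrees with the subspace topology inherited from $\G_u(\mathcal{S})$; both are immediate from the definitions and the fact that open sets $U \subseteq D_{s^*s} \cap \FiltT$ are precisely traces on $\FiltT$ of open subsets of $D_{s^*s} \subseteq \Filt$.
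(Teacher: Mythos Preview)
Your argument is correct and coincides with the paper's approach, which consists solely of the citation to \cite[Section 5]{St}: Steinberg's weak-semilattice criterion yields Hausdorffness of the universal groupoid, and the tight groupoid inherits it as a subspace. One minor remark: you do not actually need $\FiltT$ to be closed in $\Filt$ for the last step, since the Hausdorff property is hereditary to \emph{arbitrary} subspaces; the closedness and invariance of $\FiltT$ are relevant only for identifying $\Gt(\mathcal{S})$ as a genuine reduction, not for transferring Hausdorffness.
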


Thus,

\begin{corollary}\label{corollary3_4_2}
If $\Lambda$ is a countable, finite aligned left and right cancellative small category, then $\Gt(\Semi_\Lambda)\cong \Semi_\Lambda\rtimes \Lambda_{tight}$ is Hausdorff.
\end{corollary}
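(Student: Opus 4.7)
The proof is essentially a bookkeeping exercise that assembles the pieces already proved in the excerpt; the challenge is ordering the implications correctly rather than producing any new mathematical content.

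The plan is to verify the hypotheses of Corollary \ref{corol1_4_4} for the inverse semigroup $\Semi_\Lambda$, and then transfer Hausdorffness along the isomorphism of Lemma \ref{lemma3_4_1}. First, I would observe that since $\Lambda$ is countable, the set of pairs $(\alpha,\beta)\in \Lambda\times \Lambda$ with $s(\alpha)=s(\beta)$ is countable, and by Lemma \ref{zig_zag} every element of $\Semi_\Lambda$ is a zigzag map $\varphi_\xi$ for some $\xi\in \Zig$; since each zigzag is a finite tuple from $\Lambda$, the set $\Zig$ is countable, and hence $\Semi_\Lambda$ is a countable inverse semigroup.

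Next, I would use the right cancellation hypothesis to invoke Proposition \ref{propo1_4_2}: since $\Lambda$ is a finitely aligned left and right cancellative small category, $\ZigM$ is a weak semilattice. By Lemma \ref{zig_zag} we have $\ZigM=\Semi_\Lambda$, so $\Semi_\Lambda$ itself is a weak semilattice, i.e.\ it is Hausdorff in the sense of Remark \ref{rema1_4_3}.

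Having both countability and the Hausdorff property for $\Semi_\Lambda$, Corollary \ref{corol1_4_4} yields that the tight groupoid $\Gt(\Semi_\Lambda)$ is Hausdorff as a topological groupoid. Finally, Lemma \ref{lemma3_4_1} gives a topological groupoid isomorphism $\Gt(\Semi_\Lambda)\cong \Semi_\Lambda\rtimes \Lambda_{tight}$, and since the Hausdorff property is preserved by such isomorphisms, $\Semi_\Lambda\rtimes \Lambda_{tight}$ is Hausdorff as well. The only genuine subtlety is making sure the right cancellation hypothesis is in force precisely where Proposition \ref{propo1_4_2} is applied (as flagged in the excerpt, Donsig--Milan's argument depends on it); everything else is routine.
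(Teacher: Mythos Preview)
Your proof is correct and follows exactly the same route as the paper, which simply cites Lemma \ref{lemma3_4_1} and Corollary \ref{corol1_4_4}; you have just unpacked the implicit steps (countability of $\Semi_\Lambda$ via zigzags, and the weak semilattice property via Proposition \ref{propo1_4_2} and Lemma \ref{zig_zag}) that the paper leaves to the reader.
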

\begin{proof}
The conclusion follows by Lemma \ref{lemma3_4_1} and Corollary \ref{corol1_4_4}.
\end{proof}

If $\Lambda$ fails to be right cancellative, Corollary \ref{corollary3_4_2} would fail in general.

\subsection{Universal tight representations.}

In this subsection we quickly revisit the results proved in \cite{DM}, just fixing the essential hypotheses required to guarantee that these results hold.

First, notice that the results of \cite[Sections 1.1 and 2]{DM} do not require $\Lambda$ to be other than LCSC. In particular, the key result is \cite[Proposition 3.4]{DM}, that works correctly for $\Semi_\Lambda$.

Second, to apply the results of \cite[Section 3]{DM}, we only need to fix the following facts:
\begin{enumerate}
\item As noticed in \cite[Remark before Theorem 10.10]{S2}, the result required to prove \cite[Theorem 3.7]{DM} (namely, \cite[Theorem 8.2]{S1}) do not depend on the amenability of Spielberg's groupoid $G_{\vert \partial\Lambda}$. Thus,
\item Given $\Lambda$ a (countable) finitely aligned LCSC, we define:
\begin{enumerate}
\item $C^*(\Lambda)$ is the universal $C^*$-algebra generated by a family $\{T_\alpha : \alpha \in \Lambda\}$ satisfying:
\begin{enumerate}
\item $T_\alpha^*T_\alpha=T_{s(\alpha)}$.
\item $T_\alpha T_\beta=T_{\alpha \beta}$ if $s(\alpha)=r(\beta)$.
\item $T_\alpha T_\alpha^* T_\beta T_\beta^*=\bigvee_{\gamma\in \alpha \vee \beta} T_\gamma T_\gamma^*$.
\item $T_v=\bigvee_{\alpha \in F} T_\alpha T_\alpha^*$ for every $v\in \Lambda^0$ and for all $F\subset v\Lambda$ finite exhaustive set.
\end{enumerate}
\item Given any unital commutative ring $R$, we define $R\Lambda$ the $R$-algebra generated by a family $\{T_\alpha : \alpha \in \Lambda\}$ satisfying:
\begin{enumerate}
\item $T_\alpha^*T_\alpha=T_{s(\alpha)}$.
\item $T_\alpha T_\beta=T_{\alpha \beta}$ if $s(\alpha)=r(\beta)$.
\item $T_\alpha T_\alpha^* T_\beta T_\beta^*=\bigvee_{\gamma\in \alpha \vee \beta} T_\gamma T_\gamma^*$.
\item $T_v=\bigvee_{\alpha \in F} T_\alpha T_\alpha^*$ for every $v\in \Lambda^0$ and for all $F\subset v\Lambda$ finite exhaustive set.
\end{enumerate}
\end{enumerate}
\end{enumerate}

In order to relate these algebras with the associated tight groupoid, we need to show that the natural representations $\pi : \Semi_\Lambda  \rightarrow  C^*(\Lambda)$ and $\pi :  \Semi_\Lambda  \rightarrow  R\Lambda$ are universal tight. With respect to its tighness, Donsig and Millan \cite[Theorem 3.7]{DM} showed that these representations are cover-to-joint, and the concluded that they are tight. As recently observed by Exel \cite{E2}, that could fail, so that there is an slight imprecision in the proof of \cite[Theorem 2.2]{DM}. Fortunately, Exel solved this problem \cite[Corollary 5.2, Theorem 6.1]{E2}, so that the conclusion remains true. Hence, by \cite[Theorem 10.15]{S2} and \cite[Theorem 3.7]{DM}, we have the following result.

\begin{proposition}\label{Prop:tighness}
Let $\Lambda$ be a (countable) finitely aligned LCSC. Then:
\begin{enumerate}
\item The natural semigroup homomorphism
$$
\begin{array}{cccc}
\pi : & \Semi_\Lambda  & \rightarrow  &  C^*(\Lambda) \\
 & \elmap{\alpha}{\beta} &  \mapsto &   T_\alpha T_\beta^*
\end{array}
$$
is a universal tight representation of $\Semi_\Lambda$ in the category of $C^*$-algebras.
\item For any unital commutative ring $R$, the natural semigroup homomorphism
$$
\begin{array}{cccc}
\pi : & \Semi_\Lambda  & \rightarrow  &  R\Lambda \\
 & \elmap{\alpha}{\beta} &  \mapsto &   T_\alpha T_\beta^*
\end{array}
$$
is a universal tight representation of $\Semi_\Lambda$ in the category of $R$-algebras.
\end{enumerate}
\end{proposition}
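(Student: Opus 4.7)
The plan is to verify, simultaneously for the $C^*$-algebra and the ring-theoretic target $A\in\{C^*(\Lambda),\,R\Lambda\}$, three separate claims packaged into the phrase ``universal tight representation'': that $\pi$ is a well-defined $*$-semigroup homomorphism, that $\pi$ is tight in the sense of Exel, and that every tight representation of $\Semi_\Lambda$ in the appropriate category factors uniquely through $\pi$.

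First I would check well-definedness. By Lemma \ref{lemma1_2_4} every $s\in\Semi_\Lambda$ has a unique irredundant decomposition $s=\bigvee_{i=1}^n\elmap{\alpha_i}{\beta_i}$, so $\pi$ is determined once the assignment $\elmap{\alpha}{\beta}\mapsto T_\alpha T_\beta^*$ is shown to respect the $*$-semigroup operations. Involution is automatic from $(\elmap{\alpha}{\beta})^{*}=\elmap{\beta}{\alpha}$. For multiplication, the essential computation is the identity $T_\beta^* T_\gamma=\bigvee_{\varepsilon\in\beta\vee\gamma} T_{\osh^\beta(\varepsilon)} T_{\osh^\gamma(\varepsilon)}^{*}$, which one derives from axioms (a)--(c) and which matches the product formula for $\elmap{\alpha}{\beta}\cdot\elmap{\gamma}{\delta}$ in $\Semi_\Lambda$ under the finite-alignment hypothesis.

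Second, tightness. By Lemmas \ref{lemma1_2_4} and \ref{lemma1_2_6} every idempotent of $\Semi_\Lambda$ is a join $e=\bigvee_{i=1}^n\elmap{\alpha_i}{\alpha_i}$, and the key observation is that finite covers of such an $e$ inside $\Idem[\Semi_\Lambda]$ correspond bijectively, via the embedding $\alpha\mapsto\elmap{\alpha}{\alpha}$, to finite exhaustive subsets of the sets $r(\alpha_i)\Lambda$. Relation (d) is precisely what forces $\pi$ to send these covers to suprema. Donsig and Millan \cite[Theorem 3.7]{DM} established that $\pi$ is cover-to-joint; as the paper itself remarks, the step from cover-to-joint to Exel-tight is not automatic, so here I would quote \cite[Corollary 5.2 and Theorem 6.1]{E2} to bridge the gap and conclude that $\pi$ is tight in both settings.

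Third, universality. Given a tight representation $\rho\colon\Semi_\Lambda\to B$ in the appropriate category, set $t_\alpha := \rho(\tau^\alpha)$. Relations (a) and (b) in the presentation of $A$ follow from the identities $\osh^\alpha\tau^\alpha=\elmap{s(\alpha)}{s(\alpha)}$ and $\tau^\alpha\tau^\beta=\tau^{\alpha\beta}$ that hold in $\Semi_\Lambda$ and which $\rho$ must preserve; relation (c) is the image under $\rho$ of the finite-alignment product formula for $\osh^\alpha\tau^\beta$; and relation (d) is exactly the image under $\rho$ of the tight cover condition on $\{\elmap{\alpha}{\alpha}:\alpha\in F\}$ in $\Idem[\Semi_\Lambda]$, whose translation into the exhaustive-set language is provided by Spielberg's \cite[Theorem 10.15]{S2}. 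The universal property of $A$ then yields a unique homomorphism $\tilde\rho\colon A\to B$ with $\tilde\rho\circ\pi=\rho$. The main obstacle is the second step, where the original argument of Donsig-Millan meets Exel's cover-to-joint-vs-tight gap; one must cite \cite[Corollary 5.2 and Theorem 6.1]{E2} carefully and verify that its hypotheses apply to $\Semi_\Lambda$ both in the $C^*$-algebraic and in the purely algebraic framework.
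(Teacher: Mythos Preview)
Your proposal is correct and follows essentially the same route as the paper: the paper's argument is little more than a string of citations---Donsig--Millan \cite[Theorem 3.7]{DM} for cover-to-joint, Exel \cite[Corollary 5.2, Theorem 6.1]{E2} to pass from cover-to-joint to tight, and Spielberg \cite[Theorem 10.15]{S2} for the dictionary between exhaustive sets and covers---and you have reproduced exactly these three ingredients while also spelling out the well-definedness and universality steps that the paper leaves implicit.
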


Hence, because of \cite[Theorem 13.3]{E1}, Proposition \ref{propo1_4_2} and \cite[Corollary 5.3]{St}, we have

\begin{theorem}\label{Th: GroupoidRep}
Let $\Lambda$ be a (countable) finitely aligned LCSC. Then:
\begin{enumerate}
\item $C^*(\Lambda)\cong C^*(\mathcal{G}_{\text{tight}}(\Semi_\Lambda))$.
\item For any unital commutative ring $R$, $R\Lambda\cong A_R(\mathcal{G}_{\text{tight}}(\Semi_\Lambda))$.
\end{enumerate}
\end{theorem}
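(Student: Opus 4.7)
The plan is to read both isomorphisms off from the universal property of tight representations of $\Semi_\Lambda$. Concretely, both $C^*(\Lambda)$ and $R\Lambda$ are presented as targets of universal tight representations (this is precisely the content of Proposition \ref{Prop:tighness}), and on the groupoid side, Exel's tight groupoid construction was designed so that its full $C^*$-algebra, and analogously Steinberg's $R$-algebra, also carries a universal tight representation of $\Semi_\Lambda$. Once both halves of each isomorphism are seen to corepresent the same functor, the isomorphism is obtained by the uniqueness of universal objects.

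For part (1), I would first invoke Proposition \ref{Prop:tighness}(1) to view $\pi:\Semi_\Lambda\to C^*(\Lambda)$, $\elmap{\alpha}{\beta}\mapsto T_\alpha T_\beta^*$, as the universal tight representation of $\Semi_\Lambda$ in the category of $C^*$-algebras. Next, I would apply \cite[Theorem 13.3]{E1}, which states that for any inverse semigroup $\Semi$ the natural homomorphism from $\Semi$ into $C^*(\mathcal{G}_{tight}(\Semi))$ is also a universal tight representation. Both $C^*(\Lambda)$ and $C^*(\mathcal{G}_{tight}(\Semi_\Lambda))$ therefore corepresent the same functor, yielding a canonical $*$-isomorphism between them that sends $T_\alpha T_\beta^*$ to the characteristic function of the associated compact open bisection $\Theta(\elmap{\alpha}{\beta},D_{\elmap{\beta}{\beta}})$. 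Part (2) is the algebraic analogue: combining Proposition \ref{Prop:tighness}(2) with Steinberg's \cite[Corollary 5.3]{St}, which identifies the Steinberg algebra $A_R(\mathcal{G}_{tight}(\Semi))$ as the universal $R$-algebra for tight representations of $\Semi$, produces the desired isomorphism $R\Lambda\cong A_R(\mathcal{G}_{tight}(\Semi_\Lambda))$. Proposition \ref{propo1_4_2} enters to justify that $\Semi_\Lambda=\ZigM$ has the weak-semilattice structure implicit in these universal characterisations.

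The main obstacle is essentially hidden inside the preliminary step, namely Proposition \ref{Prop:tighness}. The natural relations defining $C^*(\Lambda)$ and $R\Lambda$ are cover-to-joint, but as pointed out by Exel \cite{E2} a cover-to-joint representation need not be tight in general, so the direct argument from \cite[Theorem 2.2, Theorem 3.7]{DM} is insufficient. The fix uses \cite[Corollary 5.2, Theorem 6.1]{E2} together with \cite[Theorem 10.15]{S2} to upgrade the cover-to-joint property to tightness in the present finitely aligned setting. Once this technical tightness is in hand, the two isomorphisms are a purely formal consequence of the universal properties, with no need to invoke Hausdorffness of $\mathcal{G}_{tight}(\Semi_\Lambda)$ or amenability.
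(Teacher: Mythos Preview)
Your proposal is correct and follows essentially the same route as the paper: the paper's proof is the single sentence preceding the theorem, invoking Proposition~\ref{Prop:tighness} together with \cite[Theorem~13.3]{E1}, Proposition~\ref{propo1_4_2}, and \cite[Corollary~5.3]{St}. Your write-up simply unpacks those citations and correctly notes that the delicate point (cover-to-joint versus tight) has already been absorbed into Proposition~\ref{Prop:tighness}.
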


\section{Spielberg's groupoid}
In \cite{S-LMS} Spielberg defines a groupoid $\G_{|\partial \Lambda}$ for a category of paths $\Lambda$. We will show that this groupoid is topologically isomorphic to $\G_{tight}(\Semi_\Lambda)$. 

First, Spielberg defines a topology in  $\Lambda^*$ that coincides with the topology we introduce in the definition \ref{top_sets}. Indeed, for $\alpha\in\Lambda$ and $\beta_1,\ldots,\beta_n\in \alpha\Lambda\setminus\{\alpha\}$, and setting $E=\alpha\Lambda\setminus\bigcup_{i=1}^n\beta_i\Lambda$ we have that 
$$\hat{E}=\{C\in\Lambda^*: C\cap \gamma \Lambda \subseteq E \text{ for some }\gamma\in C \}=\Mtop^{\{\alpha\},\{\beta_1,\ldots,\beta_n\}}\,.$$
Therefore, we have that  $\partial \Lambda=\overline{\Lambda^{**}}=\Lambda_{\text{tight}}$. 

Next result is a refined version of \cite[Lemma 4.12]{S1}.

\begin{lemma}\label{lem_eq_sets}
	Let $\Lambda$ be a finitely aligned LCSC. Let $F,G\in\Lambda^*$ and $\alpha,\beta\in\Lambda$ such that $\tau^\alpha\cdot F=\tau^\beta\cdot G$. Then there exists $\delta\in F$ and $\gamma\in G$ such that $\alpha\delta=\beta\gamma$.
\end{lemma}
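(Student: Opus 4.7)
Unpacking the definitions, for any $F \in \Lambda^*$ with $s(\alpha) \in F$ (which is implicit in the hypothesis that $\tau^\alpha \cdot F$ is defined) the formula from Section~4.2 gives
$$\tau^\alpha \cdot F \;=\; \bigcup_{\gamma \in F}[\alpha\gamma],$$
and similarly for $\tau^\beta \cdot G$. The plan is to exploit the equality $\tau^\alpha \cdot F = \tau^\beta \cdot G$ by tracking how the ``base points'' $\alpha$ and $\beta$ sit inside both sides, then promote an a priori external common extension into an element of $G$ via hereditariness and left cancellation.

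First I would note that $s(\beta) \in G$ (since $G$ is hereditary and any $\gamma \in G$ has $r(\gamma) = s(\beta)$ as an initial segment), hence $\beta = \beta \cdot s(\beta) \in [\beta \cdot s(\beta)] \subseteq \tau^\beta \cdot G$. By the assumed equality, $\beta \in \tau^\alpha \cdot F$, so there exists $\delta \in F$ with $\beta \leq \alpha\delta$, i.e.\ there is some $\nu \in \Lambda$ with
$$\alpha\delta \;=\; \beta\nu.$$
If I could show $\nu \in G$, I would be done with $\gamma := \nu$.

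To that end, observe that $\alpha\delta \in [\alpha\delta] \subseteq \tau^\alpha \cdot F$, so again by the hypothesis $\alpha\delta \in \tau^\beta \cdot G$. Thus there is $\gamma'' \in G$ and $\mu \in \Lambda$ with $\beta\gamma'' = \alpha\delta\cdot\mu = \beta\nu\mu$. Left cancellation in $\Lambda$ then yields $\gamma'' = \nu\mu$, so $\nu \leq \gamma''$; since $G$ is hereditary, $\nu \in G$. Setting $\gamma := \nu$ produces the desired pair $(\delta,\gamma) \in F \times G$ with $\alpha\delta = \beta\gamma$.

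There is no real obstacle: the argument is a short ping-pong between $F$ and $G$ whose only nontrivial ingredient is left cancellation, which is part of the standing hypothesis on $\Lambda$. The one point that deserves care is to verify that the action is well-defined on both sides (i.e.\ $s(\alpha) \in F$ and $s(\beta) \in G$) so that the set-theoretic descriptions of $\tau^\alpha \cdot F$ and $\tau^\beta \cdot G$ that I use at the outset are valid; this follows immediately from the hypothesis that both expressions make sense, together with hereditariness of elements of $\Lambda^*$.
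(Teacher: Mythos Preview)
Your proof is correct and follows essentially the same ping-pong strategy as the paper's: bounce an element between $\tau^\alpha\cdot F$ and $\tau^\beta\cdot G$ twice, then use left cancellation and hereditariness to land the ``extra tail'' inside the appropriate set. The only cosmetic difference is the starting point: the paper begins with an arbitrary $\delta'\in F$, pushes $\alpha\delta'$ into $\tau^\beta\cdot G$ to find $\gamma'\in G$ with $\alpha\delta'\eta=\beta\gamma'$, and then bounces back to show $\delta'\eta\in F$; you instead begin with $\beta$ itself (via $s(\beta)\in G$), locate $\delta\in F$ with $\alpha\delta=\beta\nu$, and bounce back to show $\nu\in G$. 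Same mechanism, mirror-image execution.
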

\begin{proof}
	Let $\delta'\in F$, then $\alpha\delta'\in \tau^\beta\cdot G$. By definition, there exists $\gamma'\in G$ such that $\alpha\delta'\leq \beta\gamma'$. Then, there is $\eta\in\Lambda$ such that $\alpha\delta'\eta=\beta\gamma'$. Now, there exists $\xi\in F$ such that  $\beta\gamma'\leq \alpha\xi$, and hence there is $\eta'\in \Lambda$ such that $\beta\gamma'\eta'=\alpha\xi$, whence $\alpha\delta'\eta\eta'=\alpha\xi$. Now, by left cancellation, we have that $\delta'\eta\eta'=\xi$, so $\delta'\eta\leq\xi\in F$. Then, since $F$ is hereditary, it follows that $\delta'\eta\in F$ too. If we define $\delta:=\delta'\eta$ and $\gamma:=\gamma'$, we are done.
\end{proof}


Now, we recall the definition of Spielberg's groupoid associated to a small category (see e.g. \cite[pp. 729-730]{S-LMS}). We start defining an equivalence relation on $\Lambda \times\Lambda \times \Lambda^*$ by saying that  $(\alpha,\beta, F)\sim (\alpha',\beta',F')$ if there exist $G\in\Lambda^*$, $\gamma,\gamma'\in\Lambda$ such that $F=\tau^\gamma\cdot G$, $F'=\tau^{\gamma'}\cdot G$, $\alpha\gamma=\alpha'\gamma'$ and $\beta\gamma=\beta'\gamma'$. Denote $\mathcal{G}=\Lambda \times\Lambda \times \Lambda^*/\sim$. Now, we define a partial operation on $\mathcal{G}$. To this end, fix the set of composable pairs 
$$\G^{(2)}:=\{ ([\alpha,\beta,F],[\gamma,\delta,G]): \tau^\beta\cdot F=\tau^\gamma\cdot G)  \}\,,$$
and define $[\alpha,\beta, F]^{-1}=[\beta,\gamma, F]$. Given a pair $([\alpha,\beta, F],[\gamma,\delta, G])\in\G^{(2)}$ we define the multiplication by 
$$[\alpha,\beta, F][\gamma,\delta, G]=[\alpha \xi,\delta\eta, H]\,,$$
where $\xi\in F$ and $\eta\in G$ are the elements given in Lemma \ref{lem_eq_sets} such that $\beta \xi=\gamma \eta$, and $H=\sigma^\xi\cdot F=\sigma^\eta\cdot G$. Finally, the sets $[\alpha,\beta, U]:=\{[\alpha,\beta,F]:F\in U\}$ for $U$ an open subset of $\Lambda^*$ forms a basis for the topology of $\G$, under which $\G$ is an \'etale groupoid. By Corollary \ref{corol3_3_2}, we have that $\G_{|\partial \Lambda}=\{[\alpha,\beta,F]\in \G: F\in \Lambda_{tight} \}$.

\begin{proposition}\label{iso_group_sp}
Let $\Lambda$ be a countable, finitely aligned LCSC. Then the map 
$$\Phi : G_{\vert
	\partial\Lambda} \rightarrow \Semi_\Lambda\rtimes \Lambda_{\text{tight}}\,,\qquad [\alpha,\beta, F]\mapsto [\elmap{\alpha}{\beta}, \tau^\beta\cdot F]\,,$$
is an isomorphism of topological groupoids.
\end{proposition}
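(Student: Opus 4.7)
The plan is to verify in turn that $\Phi$ is well-defined, compatible with the groupoid structure, bijective, and a homeomorphism. Throughout I will rely on Lemma \ref{lemma_simpli} to pass freely between a joined element $\bigvee_{i=1}^n\elmap{\alpha_i}{\beta_i}$ and one of its ``branches'' $\elmap{\alpha_k}{\beta_k}$ whenever the base filter $C$ already contains $\beta_k$, since $\Lambda_{\text{tight}}$ satisfies condition $(*)$.

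First I would check well-definedness. If $(\alpha,\beta,F)\sim(\alpha',\beta',F')$ via data $G,\gamma,\gamma'$, then the identity $\tau^{\beta\gamma}=\tau^\beta\tau^\gamma$ shows at once that $\tau^\beta\cdot F=\tau^{\beta\gamma}\cdot G=\tau^{\beta'\gamma'}\cdot G=\tau^{\beta'}\cdot F'$; call this common filter $C$, and note that $\xi:=\beta\gamma=\beta'\gamma'\in C$. Taking $e=\elmap{\xi}{\xi}\in\Psi(C)$ as witnessing idempotent, the relations $\alpha\gamma=\alpha'\gamma'$ and $\beta\gamma=\beta'\gamma'$ force $\elmap{\alpha}{\beta}e=\elmap{\alpha\gamma}{\beta\gamma}=\elmap{\alpha'\gamma'}{\beta'\gamma'}=\elmap{\alpha'}{\beta'}e$, giving the required equivalence in $\Semi_\Lambda\rtimes\Lambda_{\text{tight}}$. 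Compatibility of $\Phi$ with inverses is immediate from the definitions. For composition, given composable triples, Lemma \ref{lem_eq_sets} supplies the elements $\xi\in F$, $\eta\in G$ with $\beta\xi=\gamma\eta$ used to define Spielberg's product $[\alpha\xi,\delta\eta,H]$; on the other side $\elmap{\alpha}{\beta}\elmap{\gamma}{\delta}=\bigvee_{\epsilon\in\beta\vee\gamma}\elmap{\alpha\sigma^\beta(\epsilon)}{\delta\sigma^\gamma(\epsilon)}$, and the unique $\epsilon_0\in\beta\vee\gamma$ with $\epsilon_0\leq\beta\xi$ gives a branch whose class over $C_2=\tau^\delta\cdot G$ agrees, via Lemma \ref{lemma_simpli}, with $\Phi([\alpha\xi,\delta\eta,H])$.

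For bijectivity, surjectivity is essentially Lemma \ref{lemma_simpli}: any $[s,C]\in\Semi_\Lambda\rtimes\Lambda_{\text{tight}}$ reduces to $[\elmap{\alpha}{\beta},C]$ with $\beta\in C$, and then $F:=\sigma^\beta\cdot C$ satisfies $\tau^\beta\cdot F=C$ and $\Phi([\alpha,\beta,F])=[\elmap{\alpha}{\beta},C]$. Injectivity is the central step. Suppose $\Phi([\alpha,\beta,F])=\Phi([\alpha',\beta',F'])$; then $\tau^\beta\cdot F=\tau^{\beta'}\cdot F'=:C$, and there is some $e\in\Psi(C)$ with $\elmap{\alpha}{\beta}\,e=\elmap{\alpha'}{\beta'}\,e$. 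By condition $(*)$ (Lemma \ref{lemma2_2_2}), $e$ dominates some $\elmap{\xi_0}{\xi_0}$ with $\xi_0\in C$, and directedness of $C$ lets us enlarge $\xi_0$ to $\xi\in C$ with $\xi\geq\beta,\beta'$. Writing $\xi=\beta\delta=\beta'\gamma$ with $\delta=\sigma^\beta(\xi)\in F$ and $\gamma=\sigma^{\beta'}(\xi)\in F'$, multiplying on the right by $\elmap{\xi}{\xi}$ collapses the idempotent equation to $\elmap{\alpha\delta}{\xi}=\elmap{\alpha'\gamma}{\xi}$, forcing $\alpha\delta=\alpha'\gamma$. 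Setting $G:=\sigma^\delta\cdot F$ and noting $\sigma^\delta\cdot F=\sigma^\delta\sigma^\beta\cdot C=\sigma^\xi\cdot C=\sigma^\gamma\cdot F'$ then yields $F=\tau^\delta\cdot G$, $F'=\tau^\gamma\cdot G$, exhibiting the Spielberg equivalence.

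For the topological statement, the image of a basic open $[\alpha,\beta,U]$ of $G_{\vert\partial\Lambda}$ is exactly $\Theta(\elmap{\alpha}{\beta},\tau^\beta\cdot U)$, which is open since the partial action of $\Semi_\Lambda$ is by local homeomorphisms; so $\Phi$ is open. Conversely, given a basic open $\Theta(s,V)$ with $s=\bigvee_{i=1}^n\elmap{\alpha_i}{\beta_i}$, Lemma \ref{lemma_simpli} decomposes it as $\bigcup_{i=1}^n\Theta(\elmap{\alpha_i}{\beta_i},V\cap E_{\beta_i})$, whose $\Phi$-preimages are the basic opens $[\alpha_i,\beta_i,\sigma^{\beta_i}\cdot(V\cap E_{\beta_i})]$. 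I expect the main obstacle to be the injectivity step: one must extract the concrete Spielberg data $(G,\gamma,\gamma')$ from the abstract quotient relation in $\Semi_\Lambda\rtimes\Lambda_{\text{tight}}$, and the linchpin is precisely the reduction of the witnessing idempotent to a single $\elmap{\xi}{\xi}$ with $\xi$ above both $\beta$ and $\beta'$, which is what makes the equation $\elmap{\alpha}{\beta}e=\elmap{\alpha'}{\beta'}e$ transparent enough to read off $\alpha\delta=\alpha'\gamma$ and $\beta\delta=\beta'\gamma$.
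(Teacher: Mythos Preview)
Your proof is correct and follows essentially the same line as the paper's: the same witness idempotent $\elmap{\beta\gamma}{\beta\gamma}$ for well-definedness, Lemma~\ref{lemma_simpli} for surjectivity, and the identification of basic opens for the topological part. The only organizational differences are in the homomorphism check (you expand the product $\elmap{\alpha}{\beta}\elmap{\gamma}{\delta}$ as a join and select a branch via Lemma~\ref{lemma_simpli}, whereas the paper first normalizes representatives so that $\beta=\gamma$ and multiplies directly) and in injectivity (you refine the germ-witnessing idempotent down to a single $\elmap{\xi}{\xi}$ with $\xi\geq\beta,\beta'$, while the paper applies Lemma~\ref{lem_eq_sets} first to manufacture $\beta\xi=\delta\eta$ and then uses that idempotent); on this last point your argument is in fact slightly more careful than the paper's.
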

\begin{proof}
 First, let $(\alpha,\beta, F)\sim (\alpha',\beta',F')$, that this, there exist $G\in\Lambda^*$, $\gamma,\gamma'\in\Lambda$ such that $F=\tau^\gamma\cdot G$, $F'=\tau^{\gamma'}\cdot G$, $\alpha\gamma=\alpha'\gamma'$ and $\beta\gamma=\beta'\gamma'$. Then 
 $$\tau^\beta\cdot F=\tau^\beta\cdot (\tau^\gamma\cdot G)=\tau^{\beta\gamma}\cdot G=\tau^{\beta'\gamma'}\cdot G=\tau^{\beta'}\cdot(\tau^{\gamma'}\cdot G)=\tau^{\beta'}\cdot F'.$$
 Now, $\beta\gamma \in \tau^\beta \cdot F = \tau^{\beta'} \cdot F'$, and 
 $$(\elmap{\alpha}{\beta})(\elmap{\beta\gamma}{\beta\gamma})=\elmap{\alpha\gamma}{\beta\gamma}=\elmap{\alpha'\gamma'}{\beta'\gamma'}=(\elmap{\alpha'}{\beta'})(\elmap{\beta'\gamma'}{\beta'\gamma'})=(\elmap{\alpha'}{\beta'})(\elmap{\beta\gamma}{\beta\gamma}).$$ 
 Hence, $(\tau^\alpha\sigma^\beta, \tau^\beta\cdot F)\sim (\tau^{\alpha'}\sigma^{\beta'}, \tau^{\beta'}\cdot F')$, and thus, $\Phi$ is a well-defined map.
 
 Suppose that $([\alpha, \beta, X], [\gamma, \delta, Y])$ is a composable pair in $\mathcal{G}_{\vert \partial\Lambda}$. Since $\tau^\beta \cdot X=\tau^\gamma\cdot Y$, by \cite[Lemma 4.12]{S1} there exist $\xi, \eta\in \Lambda$, and $Z\in \Lambda_{\text{tight}}$ such that $X=\tau^\xi \cdot Z$, $Y=\tau^\eta\cdot Z$ and $\beta\xi=\gamma\eta$. Then, $\Phi([\alpha, \beta, X])=[\tau^\alpha\sigma^\beta, \tau^\beta \cdot X]$, $\Phi([\gamma, \delta, Y])=[\tau^\gamma\sigma^\delta, \tau^\delta \cdot Y]$, and $\Phi([\alpha, \beta, X][\gamma, \delta, Y])=\Phi([\alpha\xi, \delta\eta, Z])=[\tau^{\alpha\xi}\sigma^{\delta\eta}, \tau^{\delta\eta}\cdot Z]$. Notice that, since $\elmap{\gamma}{\delta}\cdot (\tau^\delta \cdot Y)=\tau^\gamma\cdot Y=\tau^{\gamma\eta}\cdot Z=\tau^{\beta\xi}\cdot Z=\tau^\beta\cdot X$, we can compute
 $$[\tau^\alpha\sigma^\beta, \tau^\beta \cdot X] [\tau^\gamma\sigma^\delta, \tau^\delta \cdot Y]=[\tau^\alpha\sigma^\beta \tau^\gamma\sigma^\delta, \tau^\delta \cdot Y].$$
On one side, $\tau^\delta\cdot Y=\tau^{\delta\eta}\cdot Z$. On the other side, since $\beta\xi=\gamma\eta$, we have $[\tau^\alpha\sigma^\beta, \tau^\beta \cdot X]=[\tau^{\alpha\xi}\sigma^{\beta\xi}, \tau^{\beta\xi} \cdot Z]$, $[\tau^\gamma\sigma^\delta, \tau^\delta \cdot Y]=[\tau^{\gamma\eta}\sigma^{\delta\eta}, \tau^{\delta\eta} \cdot Z]$, and thus
 $$[\tau^\alpha\sigma^\beta, \tau^\beta \cdot X] [\tau^\gamma\sigma^\delta, \tau^\delta \cdot Y]=[\tau^{\alpha\xi}\sigma^{\beta\xi} \tau^{\gamma\eta}\sigma^{\delta\eta}, \tau^{\delta\eta} \cdot Z]=_{(1)}$$
Since $\tau^{\alpha\xi}\sigma^{\beta\xi} \tau^{\gamma\eta}\sigma^{\delta\eta}=\tau^{\alpha\xi}\sigma^{\gamma\eta} \tau^{\gamma\eta}\sigma^{\delta\eta}=\tau^{\alpha\xi}\sigma^{\delta\eta}$, we have
$${ }_{(1)}=[\tau^{\alpha\xi}\sigma^{\delta\eta}, \tau^{\delta\eta} \cdot Z].$$
 So, $\Phi$ is a groupoid homomorphism. 
 
Suppose that $[\alpha, \beta, X], [\gamma, \delta, Y]$ in $\mathcal{G}_{\vert \partial\Lambda}$ such that 
$$\Phi([\alpha, \beta, X])=[\tau^\alpha\sigma^\beta, \tau^\beta \cdot X]=[\tau^\gamma\sigma^\delta, \tau^\delta \cdot Y]=\Phi([\gamma, \delta, Y]).$$
Then, $\tau^\beta\cdot X=\tau^\delta\cdot Y$. By Lemma \ref{lem_eq_sets}, there exist $\xi\in X, \eta\in Y$ such that $\beta\xi=\delta\eta$. Then, the idempotent $e=\tau^{\beta\xi}\sigma^{\beta\xi}=\tau^{\delta\eta}\sigma^{\delta\eta}$ lies in the right domain, and since $[\tau^\alpha\sigma^\beta, \tau^\beta \cdot X]=[\tau^\gamma\sigma^\delta, \tau^\delta \cdot Y]$, left cancellation give us
$\tau^{\alpha\xi}\sigma^{\beta\xi}=\tau^{\alpha}\sigma^{\beta}\cdot e=\tau^{\gamma}\sigma^{\delta}\cdot e=\tau^{\gamma\eta}\sigma^{\gamma\eta}$. Thus, $\tau^{\alpha\xi}=\tau^{\gamma\eta}$, whence $\alpha\xi=\gamma\eta$, and hence $[\alpha, \beta, X]= [\gamma, \delta, Y]$. So, $\Phi$ is injective. 
 
Finally,  let $[\elmap{\alpha}{\beta}, F]\in \Semi_\Lambda\rtimes \Lambda_{\text{tight}}$. Then $\Phi([\alpha,\beta,\sigma^\beta\cdot F])=[\elmap{\alpha}{\beta},\tau^\beta\cdot (\sigma^\beta\cdot F)]=[\alpha,\beta,\elmap{\beta}{\beta}\cdot F]$. But since $\beta\in F$ it follows that $\elmap{\beta}{\beta}\cdot F=F$, so $\Phi$ is exhaustive, and hence $\Phi$ is a topological groupoid isomorphism.  
 
\end{proof}
 
\section{Simplicity}

In \cite[Section 10]{S1} are given conditions in a category of paths $\Lambda$ for $\G_{|\partial \Lambda}$ being topologically free, minimal and locally contractive, but right-cancellation of $\Lambda$ is crucial in the proofs therein. We are going to use  the isomorphism in Proposition \ref{iso_group_sp} and the characterization of these properties given in \cite{EP}, to extend Spielberg results in \cite[Section 10]{S1} to finitely aligned LCSC. 

\begin{definition}
	Let $\Semi$ be an inverse semigroup, and let $s\in \Semi$. Given an idempotent $e\in \Idemp$ such that $e\leq s^*s$, we will say that:
	\begin{enumerate}
		\item $e$ is \emph{fixed} under $s$, if $se=e$,
		\item $e$ is \emph{weakly-fixed} under $s$, if $(sfs^*) f\neq 0$, for every non-zero idempotent $f\leq e$.
	\end{enumerate}
\end{definition}


\begin{definition}
	Given an action $\alpha:\Semi\curvearrowright X$, let $s\in\Semi$, and let $x\in D_{s^*s}$.
	\begin{enumerate}
		\item $\alpha_s(x)=x$, we will say that $x$ is a \emph{fixed point} for $s$. We denote by $F_s$ the set of fix points for $s$.
		\item If there exists $e\in \Idemp$, such that $e\leq s$, and $x\in D_{e}$, we will say that $x$ is a \emph{trivially fixed point} for $s$.
		\item We say that $\alpha$ is a \emph{topologically free} action, if for every $s$ in $\Semi$, the interior of the set of fixed points for $s$ consists of trivial fixed points.  
	\end{enumerate}	 
\end{definition}

Given an action $\alpha:\Semi\curvearrowright X$, the groupoid $\Semi\rtimes X$ is effective if and only if the action $\alpha$ is topologically free \cite[Theorem 4.7]{EP}. 

\begin{remark}\label{remark_top}
	Let $\Lambda$ be a finitely aligned LCSC, and let $\Semi_\Lambda\curvearrowright \FiltT$ be the associated action. Let $s\in\Semi_\Lambda$, and $\xi\in D_{s^*s}\cap \FiltT$. If $s=\bigvee_{i=1}^n\elmap{\alpha_i}{\beta_i}$, then $s^*s=\bigvee_{i=1}^n\elmap{\beta_i}{\beta_i}\in\xi$. Since $\xi$ satisfy condition $(*)$, there exists $1\leq j\leq n$ such that $\elmap{\beta_j}{\beta_j}\in\xi$. Let $C\in \Lambda_{\text{tight}}$ such that $\xi=\eta_C$. Then we have that $\beta_j\in C$. By the definition of the action $\Semi_{\Lambda}\curvearrowright \Lambda_{\text{tight}}$ we have that $s\cdot C=\elmap{\alpha_j}{\beta_j}\cdot C$, and hence $s\cdot \xi=\elmap{\alpha_j}{\beta_j}\cdot \xi$. Thus, without lost of generality, we can assume that $s=\elmap{\alpha_j}{\beta_j}$.
\end{remark}

\begin{theorem}\label{eq_top_free}
	Let $\Lambda$ be a countable, finitely aligned LCSC. If either $\mathcal{G}_{\text{tight}}(\Semi_\Lambda)$ is Hausdorff or $\hat{\mathcal{E}}_\infty =\hat{\mathcal{E}}_{\text{tight}}$, then the following are equivalent:
	\begin{enumerate}
		\item $\G_{tight}(\Semi_\Lambda)$ is effective.
		\item For every $s\in \Semi_\Lambda$, and for every $e\in \Idemp_\Lambda$ which is weakly-fixed under $s$, there exists a finite cover for $e$ consisting of fixed idempotents. 
		\item Given $\alpha,\beta\in \Lambda$ with $r(\alpha)=r(\beta)$ and $s(\alpha)=s(\beta)$, if $\alpha\delta\Cap \beta\delta$ for every $\delta\in s(\alpha)\Lambda$ then there exists $F\in\mathsf{FE}(s(\alpha))$ such that $\alpha\gamma=\beta\gamma$ for every $\gamma\in F$.
	\end{enumerate}
\end{theorem}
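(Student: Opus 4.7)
The plan is to first reduce $(1)$ to topological freeness of the action $\Semi_\Lambda \curvearrowright \FiltT$ using Lemma \ref{lemma3_4_1} together with \cite[Theorem 4.7]{EP}, so that condition $(1)$ becomes the statement that for every $s \in \Semi_\Lambda$ the interior of the fixed-point set $F_s \subseteq \FiltT$ consists of trivially fixed points. With that identification in hand, I would prove $(1) \Leftrightarrow (2)$ as a general inverse-semigroup reformulation, and $(2) \Leftrightarrow (3)$ as an explicit computation in $\Lambda$ after restricting to simple elements of the form $\elmap{\alpha}{\beta}$.

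For $(1) \Rightarrow (2)$, I would argue by contrapositive. Suppose that some weakly-fixed $e \leq s^{*}s$ admits no finite cover by fixed idempotents of $s$. Using Lemma \ref{lemma2_1_8} and the weak-fixedness of $e$, the family $\Ope(\{e\}, Z) \cap \FiltT$, where $Z$ ranges over finite sets of fixed idempotents below $e$, has the finite intersection property and contains a tight filter $\xi$; the weak-fixedness translates into $s$ acting as the identity on the neighborhood carved out of $\FiltT$ by $e$, while the absence of a trivial witness below $e$ prevents $\xi$ from being trivially fixed. Conversely, for $(2) \Rightarrow (1)$, take $\xi$ in the interior of $F_s$ and pick $e \in \xi$ such that $\Ope(\{e\},\emptyset) \cap \FiltT \subseteq F_s$; then $e$ is weakly-fixed, so by $(2)$ one has a finite cover of $e$ by fixed idempotents, and using condition $(*)$ (Lemma \ref{lemma2_2_2}) together with the Hausdorff hypothesis on $\Gt(\Semi_\Lambda)$ or the equality $\UlFilt = \FiltT$, one extracts an element of the cover lying in $\xi$, exhibiting $\xi$ as a trivially fixed point.

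For $(2) \Leftrightarrow (3)$, I would first reduce $s = \bigvee_{i=1}^n \elmap{\alpha_i}{\beta_i}$ to a simple element $\elmap{\alpha}{\beta}$ via Remark \ref{remark_top} and Lemma \ref{lemma_compresion}: any weakly-fixed $e \leq s^{*}s = \bigvee_i \elmap{\beta_i}{\beta_i}$ decomposes, by Proposition \ref{propo1_2_9} and Lemma \ref{lemma1_2_6}, into pieces sitting under individual $\elmap{\beta_i}{\beta_i}$, and covers by fixed idempotents can be assembled piece by piece. In the simple case $s = \elmap{\alpha}{\beta}$ with $r(\alpha) = r(\beta)$ and $s(\alpha) = s(\beta)$, direct calculation with Lemma \ref{lemma_compresion} shows that a single-generator idempotent $\elmap{\beta\delta}{\beta\delta}$ is fixed under $s$ iff $\alpha\delta = \beta\delta$, and weakly-fixed iff $\alpha\delta\epsilon \Cap \beta\delta\epsilon$ for every $\epsilon \in s(\beta\delta)\Lambda$. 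Specializing to $e = \elmap{\beta}{\beta} = s^{*}s$, weak-fixedness becomes exactly $\alpha\delta \Cap \beta\delta$ for every $\delta \in s(\alpha)\Lambda$, and a finite cover of $\elmap{\beta}{\beta}$ by fixed idempotents $\{\elmap{\beta\gamma_i}{\beta\gamma_i}\}_{i=1}^n$ translates, via the correspondence between covers of $\elmap{\beta}{\beta}$ in $\Idemp$ and finite exhaustive sets in $s(\beta)\Lambda$, into a set $F = \{\gamma_i\}_{i=1}^n \in \mathsf{FE}(s(\alpha))$ with $\alpha\gamma_i = \beta\gamma_i$ for every $i$.

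I expect the main obstacle to be the implication $(2) \Rightarrow (1)$, and specifically the point where one turns an inverse-semigroup cover statement into a topological conclusion on $\FiltT$. Without either Hausdorffness of $\Gt(\Semi_\Lambda)$ or the coincidence $\UlFilt = \FiltT$, tight filters in the interior of $F_s$ could arise as limits of ultrafilters along which no uniform fixed cover exists, breaking the extraction of a trivially fixing idempotent; the extra hypothesis is precisely what closes this gap. A secondary technical point is handling general $s$ in $(2) \Leftrightarrow (3)$ when the irredundant decomposition of $s$ has length $n > 1$, where one must be careful that covers obtained separately on each $\elmap{\beta_i}{\beta_i}$-component glue into a genuine cover of the original $e$, using finite alignment of $\Lambda$.
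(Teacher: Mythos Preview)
Your proposal is correct and largely parallel to the paper's proof, but there is one structural difference worth noting. The paper does not argue $(1)\Leftrightarrow(2)$ from scratch: it simply invokes \cite[Theorems 4.7, 3.16 and 4.10]{EP}, which under either hypothesis (Hausdorff or $\UlFilt=\FiltT$) give exactly this equivalence. More importantly, for $(3)\Rightarrow(2)$ the paper does not attack $(2)$ for a general $s=\bigvee_i\elmap{\alpha_i}{\beta_i}$; instead it verifies condition~(iii) of \cite[Theorem 4.10]{EP} (every \emph{ultrafilter} in the interior of $F_s$ is trivially fixed) and then appeals to that theorem to conclude $(2)$. This route makes the reduction to a simple $s=\elmap{\alpha}{\beta}$ immediate via Remark~\ref{remark_top}, and completely sidesteps the gluing-of-covers issue you flag as a secondary obstacle. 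Your direct approach to $(3)\Rightarrow(2)$ can be made to work, but the bookkeeping for general $s$ is genuinely heavier than what the paper needs. The direction $(2)\Rightarrow(3)$ and the dictionary between fixed/weakly-fixed idempotents of $\elmap{\alpha}{\beta}$ and the conditions $\alpha\gamma=\beta\gamma$, $\alpha\delta\Cap\beta\delta$ are handled identically in both.
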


\begin{proof}
The equivalence of $(1)$ and $(2)$ follows from Lemma \ref{lemma3_4_1} and Corollary \ref{corollary3_4_2} and \cite[Theorem 4.10, Theorem 3.16 and Theorem 4.7]{EP}.

We assume $(3)$, and we will prove that condition $(iii)$ of \cite[Theorem 4.10]{EP} holds. Let $s=\bigvee_{i=1}^n\elmap{\alpha_i}{\beta_i}\in\Semi_\Lambda$,  and let $\xi\in \UlFilt\cap D_{s^*s}$ with $s\cdot \xi=\xi$ and $\xi\in (F_s)^\circ$. Let $s^*s=\bigvee_{i=1}^n\elmap{\beta_i}{\beta_i}$.  From Remark \ref{remark_top}, there exist $C\in \Lambda^{**}$ such that $\xi=\eta_C$, and $1\leq j\leq n$ such that $\xi\in D_{\elmap{\beta_j}{\beta_j}}$ and $s\cdot \xi=\elmap{\beta_j}{\beta_j}\cdot \xi$. Now, by \cite[Proposition 2.5]{EP} there exists $e=\bigvee_{k=1}^m \elmap{\gamma_k}{\gamma_k}\in \xi$ with $e\leq s^*s$ such that $\xi\in D_e\cap \UlFilt\subseteq(F_s)^\circ$.  Since $\xi$ satisfies condition $(*)$ there exists $1\leq k\leq m$ such that $\elmap{\gamma_k}{\gamma_k}\in\xi$, whence $\xi \in D_{\elmap{\gamma_k}{\gamma_k}}\cap\UlFilt\subseteq (F_s)^\circ$. Then, without lost of generality, we can assume that $s=\elmap{\alpha}{\beta}$, $s^*s=\elmap{\beta}{\beta}$, $\xi=\eta_C$ with $C\in\Lambda^{**}$, $\beta\in C$, and there exists $\gamma\in C$ with $\elmap{\gamma}{\gamma}\leq \elmap{\beta}{\beta}$ such that  $\xi\in D_{\elmap{\gamma}{\gamma}}\subseteq (F_s)^\circ$. Then $\gamma=\beta\hat{\gamma}$ for some $\hat{\gamma}\in\Lambda$, and since by hypothesis $\elmap{\alpha}{\beta}\cdot C=C$, we have that $\elmap{\alpha}{\beta}(\beta\hat{\gamma})=\alpha\hat{\gamma}\in C$ . Now, by \cite[Lemma 4.9]{EP}, $D_{\elmap{\gamma}{\gamma}}\subseteq F_s$ is equivalent to $\elmap{\gamma}{\gamma}$ being weakly fixed under $\elmap{\alpha}{\beta}$. But this means for every $\delta\in s(\hat{\gamma})\Lambda$ we have that
$$\alpha\hat{\gamma}\delta\Cap \beta\hat{\gamma}\delta\,.$$
By hypothesis, there exists $F\in \mathsf{FE}(s(\hat{\gamma}))$ such that $\alpha\hat{\gamma}\delta= \beta\hat{\gamma}\delta$ for every $\delta\in F$. We claim that there exists $\hat{\delta}\in F$ such that $\alpha\hat{\gamma}\hat{\delta}= \beta\hat{\gamma}\hat{\delta}\in C$. Indeed, since $\alpha\hat{\gamma}\in C$, we have that $E:=\sigma^{\alpha\hat{\gamma}}\cdot C\in \Lambda^{**} \subseteq \Lambda_{\text{tight}}$, $\eta_E\in \Ope(\elmap{s(\hat{\gamma})}{s(\hat{\gamma})},\emptyset)$ and $\{\elmap{\delta}{\delta}:\delta\in F \}$ is a cover of $\Ope(\elmap{s(\hat{\gamma})}{s(\hat{\gamma})},\emptyset)$.  Since $\eta_E$ is a tight filter (because $\UlFilt \subseteq \FiltT $) there exists $\hat{\delta}\in F$ with $\elmap{\hat{\delta}}{\hat{\delta}}\in \eta_E$. Then, $\hat{\delta}\in E=\sigma^{\alpha\hat{\gamma}}\cdot C$ and hence $\alpha\hat{\gamma}\hat{\delta}\in C$, as desired.

Now, we define $g:=\elmap{\alpha\hat{\gamma}\hat{\delta}}{\alpha\hat{\gamma}\hat{\delta}}\in \Idemp_\Lambda$. Then, $0\neq \elmap{\alpha\hat{\gamma}\hat{\delta}}{\alpha\hat{\gamma}\hat{\delta}}\leq \elmap{\alpha}{\beta}$ and $\xi=\eta_C\in D_{\elmap{\alpha\hat{\gamma}\hat{\delta}}{\alpha\hat{\gamma}\hat{\delta}}}$. Therefore, $\xi$ is trivially fixed by $s$. Thus, condition $(iii)$ of \cite[Theorem 4.10]{EP} is satisfied, and since either $\G_{tight}(\Semi_\Lambda)$ is Hausdorff or $\hat{\mathcal{E}}_\infty =\hat{\mathcal{E}}_{\text{tight}}$, then condition $(2)$ is satisfied by \cite[Theorem 4.10]{EP}, as desired.	
	
Finally, let us assume $(2)$. Let $\alpha,\beta\in\Lambda$ with $r(\alpha)=r(\beta)$ and $s(\alpha)=s(\beta)$ and satisfying that  $\alpha\delta\Cap \beta\delta$ for every $\delta\in s(\alpha)$, then the idempotent $e=\elmap{\beta}{\beta}$ is weakly fixed under $s:=\elmap{\alpha}{\beta}$, so by hypothesis there exists a finite cover $Z$ of $e$ consisting of fixed idempotents under $s$. Then there exists a finite set  $F\subseteq s(\beta)\Lambda$ such that $Z=\{\elmap{\beta\gamma}{\beta\gamma}\}$. Since the idempotents of $Z$ are fixed under $s$, we have that 
$$\elmap{\alpha}{\beta}\cdot \elmap{\beta\gamma}{\beta\gamma}=\elmap{\alpha\gamma}{\beta\gamma}=\elmap{\beta\gamma}{\beta\gamma}\,,$$
for every $\gamma\in F$. Thus, $\alpha\gamma=\beta\gamma$ for every $\gamma\in F$.
But $Z$ is a cover of $\elmap{\beta}{\beta}$, and hence for every $\delta\in s(\beta)\Lambda$ there exists $\gamma\in F$ such that $\elmap{\beta\delta}{\beta\delta}\cdot \elmap{\beta\gamma}{\beta\gamma}\neq 0$, but this means that $\beta\delta\Cap \beta\gamma$, and hence $\delta\Cap \gamma$ by left-cancellation. Thus, $F\in \mathsf{FE}(s(\beta))$, as desired.  
\end{proof} 

\begin{remark}
Observe that if $\Lambda$ has right cancellation, condition $(3)$ in Theorem \ref{eq_top_free} reduces to aperiodicity as defined in \cite[Definition 10.8]{S1}
\end{remark}
 
\begin{theorem}[{\cite[Theorem 10.14]{S1} \& \cite[Theorem 5.5]{EP}}]\label{eq_minimal}
If $\Lambda$ is a countable, finitely aligned LCSC, then the following statements are equivalent: 
\begin{enumerate}
	\item $\G_{tight}(\Semi_\Lambda)$ is minimal.
	\item For every nonzero $e,f\in \Idemp_\Lambda$, there are $s_1,\ldots,s_n\in \Semi_\Lambda$, such that $\{s_ifs_i^*\}_{i=1}^n$ is an outer cover for $e$.
	\item For every $\alpha,\beta\in \Lambda$ there exists $F\in \mathsf{FE}(\alpha)$ such that for each $\gamma\in F$, $s(\beta)\Lambda s(\gamma)\neq \emptyset$.
\end{enumerate}
\end{theorem}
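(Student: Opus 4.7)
The plan is to obtain $(1)\Leftrightarrow(2)$ for free from the general machinery of \cite[Theorem 5.5]{EP} applied to the action of $\Semi_\Lambda$ on $\FiltT$: composing with the topological isomorphism $\G_{tight}(\Semi_\Lambda)\cong \Semi_\Lambda\rtimes\Lambda_{tight}$ of Lemma \ref{lemma3_4_1} transports the minimality of the groupoid into the outer-cover condition on $\Idemp_\Lambda$. The substantive work is thus to show $(2)\Leftrightarrow(3)$, translating the abstract outer-cover condition into the concrete path-theoretic statement. Throughout, I will exploit the normal form of $\Semi_\Lambda$ given by Lemma \ref{lemma1_2_4} and the computation of idempotent products via minimal common extensions.

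For $(2)\Rightarrow(3)$, fix $\alpha,\beta\in\Lambda$ and apply the hypothesis to $e=\elmap{\alpha}{\alpha}$ and $f=\elmap{\beta}{\beta}$, producing $s_1,\ldots,s_n\in\Semi_\Lambda$ whose conjugates $s_if s_i^*$ outer cover $e$. Decompose each $s_i=\bigvee_l \elmap{\mu_{il}}{\nu_{il}}$ in irredundant form (Lemma \ref{lemma1_2_4}); then by finite distributivity and Lemma \ref{lemma1_2_6},
$$s_i f s_i^*=\bigvee_l\bigvee_{\varepsilon\in\beta\vee\nu_{il}}\elmap{\mu_{il}\sigma^{\nu_{il}}(\varepsilon)}{\mu_{il}\sigma^{\nu_{il}}(\varepsilon)}.$$
Testing the outer-cover condition against the idempotents $\elmap{\alpha\delta'}{\alpha\delta'}$ for each $\delta'\in s(\alpha)\Lambda$, we obtain some $\mu_{il}\sigma^{\nu_{il}}(\varepsilon)\Cap\alpha\delta'$. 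Let $F$ be the (finite, by finite alignment) set of all minimal common extensions of $\alpha$ with each $\mu_{il}\sigma^{\nu_{il}}(\varepsilon)$. Then $F\subset\alpha\Lambda\subset r(\alpha)\Lambda$. Any common extension of $\alpha\delta'$ and $\mu_{il}\sigma^{\nu_{il}}(\varepsilon)$ dominates both $\alpha\delta'$ and some $\omega\in F$, so $\omega\Cap\alpha\delta'$; hence $F\in\mathsf{FE}(\alpha)$. Finally, for $\omega=\mu_{il}\sigma^{\nu_{il}}(\varepsilon)\hat\omega\in F$, concatenation of $\sigma^\beta(\varepsilon):s(\beta)\to s(\varepsilon)$ with $\hat\omega$ yields an element of $s(\beta)\Lambda s(\omega)$, proving (3).

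For $(3)\Rightarrow(2)$, given idempotents $e=\bigvee_i\elmap{\alpha_i}{\alpha_i}$ and $f=\bigvee_j\elmap{\beta_j}{\beta_j}$ (Lemma \ref{lemma1_2_6}), fix any index $j_0$ and, for each $i$, apply $(3)$ to the pair $(\alpha_i,\beta_{j_0})$ to obtain $F_i\in\mathsf{FE}(\alpha_i)$ with $s(\beta_{j_0})\Lambda s(\gamma)\neq\emptyset$ for every $\gamma\in F_i$. Pick $\omega_{\gamma}\in s(\beta_{j_0})\Lambda s(\gamma)$ and set $s_{i,\gamma}:=\elmap{\gamma}{\beta_{j_0}\omega_\gamma}\in\Semi_\Lambda$. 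Since $\beta_{j_0}\leq\beta_{j_0}\omega_\gamma$, the compression lemma (Lemma \ref{lemma_compresion}) gives $\elmap{\beta_{j_0}}{\beta_{j_0}}\elmap{\beta_{j_0}\omega_\gamma}{\beta_{j_0}\omega_\gamma}=\elmap{\beta_{j_0}\omega_\gamma}{\beta_{j_0}\omega_\gamma}$, whence $s_{i,\gamma}f s_{i,\gamma}^*\geq\elmap{\gamma}{\gamma}$. It suffices to check that $\{\elmap{\gamma}{\gamma}:\gamma\in F_i,\,i\}$ is already an outer cover of $e$: for any nonzero idempotent $h\leq e$, Lemma \ref{lemma1_2_6} and Proposition \ref{propo1_2_9} provide some $\elmap{\delta}{\delta}\leq h$ with $\alpha_i\leq\delta$, and the exhaustiveness of $F_i$ yields $\gamma\in F_i$ with $\gamma\Cap\delta$, so $\elmap{\gamma}{\gamma}\elmap{\delta}{\delta}\neq 0$.

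The main obstacle will be the careful bookkeeping in $(2)\Rightarrow(3)$: one has to verify that the set of minimal common extensions is really exhaustive for $\alpha$, rather than merely for $\alpha$-extensions involved in the computation, and that the source-datum $s(\beta)\Lambda s(\omega)\neq\emptyset$ survives the passage from $\mu_{il}\sigma^{\nu_{il}}(\varepsilon)$ to its minimal common extension $\omega$ with $\alpha$. Both points reduce to routine manipulations of the relation $\Cap$ in finitely aligned LCSC, but they are where one must be most careful not to implicitly invoke right cancellation, which is available in \cite{S1} but forbidden here.
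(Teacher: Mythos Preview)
Your proposal is correct and follows essentially the same approach as the paper: both obtain $(1)\Leftrightarrow(2)$ from \cite[Theorem 5.5]{EP}, and both prove $(2)\Leftrightarrow(3)$ by computing $s_ifs_i^*$ via the normal form and reading off the exhaustive set. The only differences are cosmetic: in $(3)\Rightarrow(2)$ the paper reduces by ``without loss of generality'' to $e=\elmap{\alpha}{\alpha}$, $f=\elmap{\beta}{\beta}$ where you handle general joins explicitly; and in $(2)\Rightarrow(3)$ the paper takes $F=\{\gamma_i\sigma^{\delta_i}(\varepsilon_i)\}$ directly, whereas you take minimal common extensions with $\alpha$ to force $F\subset\alpha\Lambda$---your version is actually the more careful one with respect to the requirement $F\subset r(\alpha)\Lambda$ in Definition~\ref{definition2_3_7}.
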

\begin{proof}
	By \cite[Theorem 5.5]{EP} it is enough to prove the equivalence of conditions $(2)$ and $(3)$. 
	First, we will prove $(3)\Rightarrow(2)$. Without lost of generality, we can  assume that $e=\elmap{\alpha}{\alpha}$ and $f=\elmap{\beta}{\beta}$. By hypothesis there exists $F=\{\gamma_1,\ldots,\gamma_n\}\in \mathsf{FE}(\alpha)$ such that for each $i$ there exists $\delta_i\in s(\beta)\Lambda s(\gamma_i)$. If we define $s_i:=\elmap{\gamma_i}{\beta\delta_i}$ for every $1\leq i\leq n$, we have that $\{s_i fs_i^*\}_{i=1}^n=\{\elmap{\gamma_i}{\gamma_i}\}_{i=1}^n$, that is a cover for $e$.
	
	$(2)\Rightarrow(3)$. Let $e=\elmap{\alpha}{\alpha}$ and $f=\elmap{\beta}{\beta}$. By assumption there exist $s_1,\ldots, s_n$ such that $\{s_ifs^*_i\}_{i=1}^n$ is an outer cover of $e$. Without lost of generality we can assume that $n=1$, so $s:=s_1=\bigvee_{i=1}^m\elmap{\gamma_i}{\delta_i}$, and hence  $sfs^*$ is  an outer cover of $e$. But
	$$sfs^*=\bigvee_{i=1}^m\bigvee_{\varepsilon_i\in \beta \vee \delta_i}\elmap{\gamma_i\sigma^{\delta_i}(\varepsilon_i)}{\gamma_i\sigma^{\delta_i}(\varepsilon_i)}\,.$$
	Since $\Lambda$ is finitely aligned $\beta \vee \delta_i$ is finite for every $1\leq i\leq m$, and so the set $\{\gamma_i\sigma^{\delta_i}(\varepsilon_i): 1\leq i\leq m\,, \varepsilon_i\in \beta\vee \delta_i \}\in \mathsf{FE}(\alpha)$. Finally, observe that since $\varepsilon_i\in \beta\vee \delta_i$ it follows that $s(\beta)\Lambda s(\varepsilon_i)\neq \emptyset$, but $s(\varepsilon_i)=s(\gamma_i\sigma^{\delta_i}(\varepsilon_i))$.
 \end{proof}
 
Then, we have the following result

\begin{theorem}\label{theorem_simpleLambda}
Let $\Lambda$ be a countable, finitely aligned LCSC. If either $\mathcal{G}_{\text{tight}}(\Semi_\Lambda)$ is Hausdorff or $\hat{\mathcal{E}}_\infty =\hat{\mathcal{E}}_{\text{tight}}$, then the following statements are equivalent: 
\begin{enumerate}
\item $C^*(\Lambda)$ is simple.
\item For any field $K$, $K\Lambda$ is simple.
\item The following properties hold:
\begin{enumerate}
\item Given $\alpha,\beta\in \Lambda$ with $r(\alpha)=r(\beta)$ and $s(\alpha)=s(\beta)$, if $\alpha\delta\Cap \beta\delta$ for every $\delta\in s(\alpha)\Lambda$ then there exists $F\in\mathsf{FE}(s(\alpha))$ such that $\alpha\gamma=\beta\gamma$ for every $\gamma\in F$.
\item For every $\alpha,\beta\in \Lambda$ there exists $F\in \mathsf{FE}(\alpha)$ such that for each $\gamma\in F$, $s(\beta)\Lambda s(\gamma)\neq \emptyset$.
\end{enumerate}
\end{enumerate}
\end{theorem}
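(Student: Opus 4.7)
The approach is to reduce simplicity of both $C^*(\Lambda)$ and $K\Lambda$ to the corresponding question for the tight groupoid algebra of $\Semi_\Lambda$, and then to appeal to the already-proven translations of effectiveness and minimality into the intrinsic combinatorial conditions (3)(a) and (3)(b).

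First, by Theorem~\ref{Th: GroupoidRep}, there are natural isomorphisms
$C^*(\Lambda) \cong C^*(\G_{tight}(\Semi_\Lambda))$ and
$K\Lambda \cong A_K(\G_{tight}(\Semi_\Lambda))$.
Thus the simplicity of $C^*(\Lambda)$ (respectively $K\Lambda$) is equivalent to the simplicity of the corresponding full groupoid algebra of $\G_{tight}(\Semi_\Lambda)$, and the task reduces to characterizing when this groupoid algebra is simple.

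Next, I would invoke the standard simplicity theorems for groupoid algebras: for an \'etale groupoid $\G$ that is either Hausdorff or satisfies the alternative condition $\hat{\mathcal{E}}_\infty = \hat{\mathcal{E}}_{\text{tight}}$ placed on $\G_{tight}(\Semi_\Lambda)$, the full groupoid $C^*$-algebra is simple if and only if $\G$ is both minimal and effective; the analogous statement holds for the Steinberg algebra $A_K(\G)$ over any field $K$. These are standard results (e.g.\ Brown-Clark-Farthing-Sims for the $C^*$ setting and the Clark-Exel-Pardo uniqueness machinery for the algebraic setting), and the dichotomy in the hypothesis of the theorem is precisely what is required for these criteria to apply.

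Finally, Theorem~\ref{eq_top_free} identifies effectiveness of $\G_{tight}(\Semi_\Lambda)$ with condition~(3)(a), while Theorem~\ref{eq_minimal} identifies minimality of $\G_{tight}(\Semi_\Lambda)$ with condition~(3)(b). Assembling these two translations together with the groupoid representation of Theorem~\ref{Th: GroupoidRep} yields the triple equivalence $(1)\iff(2)\iff(3)$.

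The main obstacle to this plan is justifying that the simplicity criterion applies to the \emph{full} (rather than merely reduced) groupoid algebra in both branches of the dichotomy hypothesis. The Hausdorff case follows from classical simplicity theorems for \'etale groupoid algebras, but the non-Hausdorff case (handled via $\hat{\mathcal{E}}_\infty = \hat{\mathcal{E}}_{\text{tight}}$) requires a refined uniqueness theorem so that the ideal structure behaves as in the Hausdorff case. Once this step is in place, the remainder of the argument is just a matter of gluing together the equivalences previously established in the section.
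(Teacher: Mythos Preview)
Your proposal is correct and follows essentially the same approach as the paper: reduce to the groupoid picture via Theorem~\ref{Th: GroupoidRep}, translate effectiveness and minimality into conditions (3)(a) and (3)(b) via Theorems~\ref{eq_top_free} and~\ref{eq_minimal}, and then invoke standard simplicity criteria for \'etale groupoid algebras (the paper cites \cite[Theorem~5.1]{BCFS} for the $C^*$-case and \cite[Theorem~3.5]{St} for the Steinberg algebra case). Your caution about the non-Hausdorff branch is well placed---the paper simply cites \cite{BCFS} and \cite{St} without further comment, whereas you correctly note that the dichotomy hypothesis is what makes the effectiveness translation (Theorem~\ref{eq_top_free}) go through rather than being a separate ingredient in the simplicity criterion itself.
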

\begin{proof}
By Theorem \ref{Th: GroupoidRep}, $C^*(\Lambda)\cong C^*(\mathcal{G}_{\text{tight}}(\Semi_\Lambda))$, and for any field $K$, $K\Lambda\cong A_K(\mathcal{G}_{\text{tight}}(\Semi_\Lambda))$. By Theorem \ref{eq_top_free}, condition (2(a)) is equivalent to $\mathcal{G}_{\text{tight}}(\Semi_\Lambda)$ being effective, and by Theorem \ref{eq_minimal}, condition (2(b)) is equivalent to $\mathcal{G}_{\text{tight}}(\Semi_\Lambda)$ being minimal. Then, $(1)\Leftrightarrow (3)$ by \cite[Theorem 5.1]{BCFS}, while $(2)\Leftrightarrow (3)$ by \cite[Theorem 3.5]{St}.
\end{proof}

\section{Zappa-Sz\'ep products of LCSC categories}

In this section we will analyze the notion of Zappa-Sz\'ep products of LCSC categories, introduced in \cite{BKQS}, inspired in the construction of self-similar graphs defined in \cite{EP}. 

Let $\Lambda$ be  a finitely aligned LCSC and let $G$ be a discrete group (with unit $\ideG$). We will use multiplicative notation for the group operation.

We say that the group $G$ acts on $\Lambda$ by permutations when 
$$r(g\cdot\alpha)=g\cdot r(\alpha)\qquad\text{and}\qquad s(g\cdot \alpha)=g\cdot s(\alpha)\qquad\text{for every }\alpha\in\Lambda,\,g\in G\,.$$

For the rest of the section we will assume that $G$ acts by permutations on $\Lambda$.

A \emph{cocyle} for the action of $G$ on $\Lambda$ is a function $\varphi:G\times \Lambda \to G$ satisfying the cocyle identity
$$\varphi(gh,\alpha)=\varphi(g,h\cdot \alpha)\varphi(h,\alpha)\qquad\text{for all }g,h\in G,\,\alpha\in\Lambda\,.$$

In particular the cocycle identity says that $\varphi(\ideG,\alpha)=\ideG$ for every $\alpha\in\Lambda$. 

\begin{definition}\label{definition_CategoryCocycle1}
 A map $\varphi:G\times \Lambda \to \Lambda$ is a \emph{category cocycle} if for all $g\in G$, $v\in\Lambda^0$, and $\alpha,\beta\in\Lambda$ with $s(\alpha)=r(\beta)$ we have 
	\begin{enumerate}
		\item $\varphi(g,v)=g$,
		\item $\varphi(g,\alpha)\cdot r(\alpha)=g\cdot r(\alpha)$,
		\item $g\cdot(\alpha\beta)=(g\cdot \alpha)(\varphi(g,\alpha)\cdot \beta)$,
		\item $\varphi(g,\alpha\beta)=\varphi(\varphi(g,\alpha),\beta)$.
	\end{enumerate}
We call $(\Lambda,G,\varphi)$ a \emph{category system}.
\end{definition}

\begin{definition}\label{definition_zappa-szed1}
	Let $(\Lambda,G,\varphi)$ be a category system. We will denote by $\Lambda\rtimes^\varphi G$ the small category with 
	$$\Lambda\rtimes^\varphi G= \Lambda\times G\qquad\text{and}\qquad(\Lambda\rtimes^\varphi G)^0=\Lambda\times\{e\}\,,$$
	and $r,s:\Lambda\rtimes^\varphi G\to (\Lambda\rtimes^\varphi G)^0$ defined by 
	$$r(\alpha,g)=(r(\alpha),\ideG)\qquad \text{and}\qquad s(\alpha,g)=(g^{-1}\cdot s(\alpha),\ideG)\,.$$
	Moreover for $(\alpha,g),(\beta,h)$ with $s(\alpha,g)=r(\beta,h)$ we have that
	$$(\alpha,g)(\beta,h)=(\alpha(g\cdot\beta),\varphi(g,\beta)h)\,.$$
	We will call $\Lambda\rtimes^\varphi G$ the \emph{Zappa-Sz\'ep product of $(\Lambda,G,\varphi)$}.
\end{definition}

It was proved that $\Lambda\rtimes^\varphi G$ is left cancellative whenever $\Lambda$ is left cancellative \cite[Proposition 3.5]{BKQS}, and as observe in \cite[Remnark 3.9]{BKQS} the elements of the form $(v,g)$ where $v\in \Lambda^0$ and $g\in G$ are units of $\Lambda\rtimes^\varphi G$. Then given $(\alpha,g)\in\Lambda\rtimes^\varphi G$ and $h\in G$ we have that 
$$(\alpha,g)(g^{-1}\cdot s(\alpha), g^{-1}h)=(\alpha,h)\,,$$
so $(\alpha,g)\approx (\alpha,h)$. 
Moreover,  $\Lambda\rtimes^\varphi G$ is finitely aligned (singly aligned) whenever $\Lambda$ is finitely aligned (singly aligned) \cite[Proposition 3.12]{BKQS}. In particular, 
$$(\alpha,g)\vee (\beta,h)=(\alpha\vee \beta)\times\{\ideG\}\,.$$ 

\begin{definition}
A category system $(\Lambda,G,\varphi)$ is called \emph{pseudo free} if, whenever $g\cdot \alpha=\alpha$ and $\varphi(g,\alpha)=\ideG$, then $g=\ideG$. 
\end{definition}

\begin{proposition}[{\cite[Proposition 5.6]{EP2}}]\label{prop_pseudo}
	Let $(\Lambda,G,\varphi)$ be a pseudo free category system. Then, for all $g_1,g_2\in G$, and $\alpha\in\Lambda$, one has that 
	$$g_1\cdot \alpha=g_2\cdot \alpha\text{ and }\varphi(g_1,\alpha)=\varphi(g_2,\alpha)\qquad\Rightarrow\qquad g_1=g_2\,.$$ 
\end{proposition}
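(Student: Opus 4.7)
The strategy is to reduce the implication to a single application of the pseudo-free hypothesis by setting $g := g_1^{-1} g_2 \in G$. The goal becomes showing that $g = \ideG$, which is equivalent to the desired conclusion $g_1 = g_2$. To invoke pseudo-freeness, I must verify that both $g \cdot \alpha = \alpha$ and $\varphi(g,\alpha) = \ideG$ hold.

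The first identity is immediate from the group action: since $G$ acts by permutations (in particular as a genuine group action), applying $g_1^{-1}$ to both sides of $g_1\cdot\alpha = g_2\cdot\alpha$ yields
\[
\alpha \;=\; g_1^{-1}\cdot(g_2\cdot\alpha) \;=\; (g_1^{-1}g_2)\cdot\alpha \;=\; g\cdot\alpha.
\]
For the second identity, I would use the cocycle equation $\varphi(ab,\alpha) = \varphi(a, b\cdot\alpha)\,\varphi(b,\alpha)$ with $a = g_1$ and $b = g$, which gives
\[
\varphi(g_1 g,\alpha) \;=\; \varphi(g_1, g\cdot\alpha)\,\varphi(g,\alpha).
\]
Using Step~1, $g\cdot\alpha = \alpha$, and the fact that $g_1 g = g_2$, this simplifies to $\varphi(g_2,\alpha) = \varphi(g_1,\alpha)\,\varphi(g,\alpha)$. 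The hypothesis $\varphi(g_1,\alpha) = \varphi(g_2,\alpha)$ together with cancellation in the group $G$ then forces $\varphi(g,\alpha) = \ideG$.

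With both conditions verified, pseudo-freeness applied to the pair $(g,\alpha)$ yields $g = \ideG$, i.e.\ $g_1 = g_2$. There is essentially no obstacle here beyond correctly invoking the cocycle identity to convert the hypothesis on $\varphi(g_i,\alpha)$ into a statement about $\varphi(g_1^{-1}g_2,\alpha)$; the proof is a direct bookkeeping argument.
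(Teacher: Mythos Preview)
Your argument is correct. The paper does not supply its own proof of this proposition; it simply records the statement with a citation to \cite[Proposition~5.6]{EP2}, so there is nothing in the present paper to compare against beyond noting that your reduction to $g = g_1^{-1}g_2$ via the cocycle identity is exactly the standard (and essentially unique) route to this conclusion.
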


\begin{remark}
	Given a $(\Lambda,G,\varphi)$ where $\Lambda$ is a right cancellative category, it may happen that $\Lambda\rtimes^\varphi G$ fails to satisfy right cancellation. Given $(\alpha,a),(\beta,b)$ and $(\gamma,g)$ in $\Lambda\rtimes^\varphi G$ we have that
	$(\alpha,a)(\gamma,g)=(\beta,b)(\gamma,g)$ if and only if $\alpha(a\cdot \gamma)=\beta(b\cdot \gamma)$ and $\varphi(a,\gamma)=\varphi(b,\gamma)$. In particular, the system is  pseudo free  if and only if $\Lambda\rtimes^\varphi G$ is right cancellative.  
	
\end{remark}

\begin{remark} Let $(\Lambda,G,\varphi)$, and let $F=\{(\gamma_1,h_1),\ldots,(\gamma_n,h_n)\}\subseteq \Lambda\rtimes^\varphi G$. Then given $(\alpha,g)\in \Lambda\rtimes^\varphi G$. we have that $F\in \mathsf{FE}(\alpha,g)$ of $\Lambda\rtimes^\varphi G$ if and only if $\{\gamma_1,\ldots,\gamma_n\}\in \mathsf{FE}(\alpha)$ of $\Lambda$. 
\end{remark} 

By the above remark the following results are a direct translation of Theorem \ref{eq_top_free} and \ref{eq_minimal}. 

\begin{proposition}
	Let $(\Lambda,G,\varphi)$ be a category system. If either $\mathcal{G}_{\text{tight}}(\Semi_{\Lambda\rtimes^\varphi  G})$ is Hausdorff or $\hat{\mathcal{E}}_\infty =\hat{\mathcal{E}}_{\text{tight}}$, then the following is equivalent:
	\begin{enumerate}
		\item $\G_{tight}(\Semi_{\Lambda\rtimes^\varphi  G})$ is effective,
		\item Given $(\alpha,a),(\beta,b)\in \Lambda\rtimes^\varphi G$ with $r(\alpha,a)=r(\beta,b)$ and $s(\alpha,a)=s(\beta,b)$, if $(\alpha,a)(\delta,d)\Cap (\beta,b)(\delta,d)$ for every $(\delta,d)\in s((\alpha,a))(\Lambda\rtimes^\varphi G)$ then there exists $F\in\mathsf{FE}(s(\alpha,a))$ such that $(\alpha,a)(\gamma,d)=(\beta,b)(\gamma,d)$ for every $(\gamma,d)\in F$.
		\item Given $\alpha,\beta\in \Lambda$, $a,b\in G$ with $r(\alpha)=r(b)$ and $a^{-1}\cdot s(\alpha)=b^{-1}\cdot s(\beta)$, if $\alpha (a\cdot \delta ) \Cap \beta(b\cdot \delta)$ for every $\delta\in (a^{-1}\cdot s(\alpha))\Lambda$ then there exists $F\in\mathsf{FE}(a^{-1}\cdot s(\alpha))$ such that $\alpha (a\cdot \gamma)=\beta(b\cdot\gamma)$ and $\varphi(a,\gamma)=\varphi(b,\gamma)$ for every $\gamma\in F$.

	\end{enumerate}
\end{proposition}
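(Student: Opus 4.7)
The plan is to invoke Theorem \ref{eq_top_free} directly on the LCSC $\Lambda\rtimes^\varphi G$, which is finitely aligned by the cited \cite[Proposition 3.12]{BKQS}, and then translate the resulting characterization into the language of $\Lambda$, $G$ and $\varphi$. With that, the equivalence (1) $\Leftrightarrow$ (2) is literally condition (3) of Theorem \ref{eq_top_free} applied to the category $\Lambda\rtimes^\varphi G$, so there is nothing to do there.

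For (2) $\Leftrightarrow$ (3) I would set up the dictionary between the two languages. Using Definition \ref{definition_zappa-szed1}, one has $r(\alpha,a)=(r(\alpha),\ideG)$ and $s(\alpha,a)=(a^{-1}\cdot s(\alpha),\ideG)$, so the hypotheses $r(\alpha,a)=r(\beta,b)$ and $s(\alpha,a)=s(\beta,b)$ become, respectively, $r(\alpha)=r(\beta)$ and $a^{-1}\cdot s(\alpha)=b^{-1}\cdot s(\beta)$. The product rule gives $(\alpha,a)(\delta,d)=(\alpha(a\cdot\delta),\varphi(a,\delta)d)$. I would next record the key observation that for any $(\alpha,g)\in\Lambda\rtimes^\varphi G$, the principal right ideal equals $\alpha\Lambda\times G$: indeed, as $(\mu,h)$ ranges over $s(\alpha,g)(\Lambda\rtimes^\varphi G)$, the first coordinate $\alpha(g\cdot \mu)$ sweeps out $\alpha\Lambda$ by the bijectivity of $\mu\mapsto g\cdot\mu$ on the relevant slice, while the second coordinate $\varphi(g,\mu)h$ sweeps out all of $G$. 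Consequently, $\Cap$ in $\Lambda\rtimes^\varphi G$ is controlled entirely by $\Cap$ on the first coordinates.

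With this dictionary, the hypothesis in (2) reads: for every $\delta\in(a^{-1}\cdot s(\alpha))\Lambda$ (and any $d\in G$, which turns out to be irrelevant) one has $\alpha(a\cdot\delta)\Cap\beta(b\cdot\delta)$. By the remark quoted just before the proposition, $F\in\mathsf{FE}(s(\alpha,a))$ in $\Lambda\rtimes^\varphi G$ if and only if the set of first coordinates of $F$ lies in $\mathsf{FE}(a^{-1}\cdot s(\alpha))$ in $\Lambda$. Finally, the conclusion $(\alpha,a)(\gamma,d)=(\beta,b)(\gamma,d)$ breaks into $\alpha(a\cdot\gamma)=\beta(b\cdot\gamma)$ together with $\varphi(a,\gamma)d=\varphi(b,\gamma)d$, and right-cancelling $d$ in $G$ yields $\varphi(a,\gamma)=\varphi(b,\gamma)$. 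Collecting these identifications transforms (2) verbatim into (3).

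The proof is essentially a translation exercise once the dictionary is in place, so I do not expect a genuine obstacle. The only mildly delicate bookkeeping is the equality $(\alpha,g)(\Lambda\rtimes^\varphi G)=\alpha\Lambda\times G$ needed to reduce $\Cap$ in the Zappa-Sz\'ep product to $\Cap$ in $\Lambda$; this uses both that $G$ acts by permutations and that $h\mapsto\varphi(g,\mu)h$ is a bijection of $G$. Everything else is symbol-pushing through Definitions \ref{definition_CategoryCocycle1} and \ref{definition_zappa-szed1}.
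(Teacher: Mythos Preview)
Your proposal is correct and matches the paper's approach: the paper states that this proposition is ``a direct translation of Theorem \ref{eq_top_free}'' using the preceding remark on finite exhaustive sets, and you have spelled out exactly that translation. If anything, you give more detail than the paper does, in particular the explicit verification that $(\alpha,g)(\Lambda\rtimes^\varphi G)=\alpha\Lambda\times G$, which is the crux of why $\Cap$ in the Zappa--Sz\'ep product reduces to $\Cap$ on first coordinates.
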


\begin{proposition} 
		If $(\Lambda,G,\varphi)$ is a category system, then the following statements are equivalent: 
	\begin{enumerate}
		\item $\G_{tight}(\Semi_{\Lambda\rtimes^\varphi  G})$ is minimal.
		\item For every $(\alpha,a),(\beta,b)\in \Lambda\rtimes^\varphi G$ there exists $F\in \mathsf{FE}((\alpha,a))$ such that for each $(\gamma,g)\in F$, $s(\beta,b)(\Lambda\rtimes^\varphi G) s(\gamma,g)\neq \emptyset$.
		\item For every $\alpha,\beta\in \Lambda$ there exists $F\in \mathsf{FE}(\alpha)$ such that for each $\gamma\in F$,  there exist $g\in G$  with $s(\beta)\Lambda  (g\cdot s(\gamma))\neq \emptyset$.

	\end{enumerate}
\end{proposition}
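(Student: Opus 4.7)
The plan is to deduce this directly from Theorem~\ref{eq_minimal} applied to the LCSC $\Lambda\rtimes^\varphi G$, which is finitely aligned by \cite[Proposition 3.12]{BKQS}. That theorem, applied verbatim to $\Lambda\rtimes^\varphi G$, yields $(1)\Leftrightarrow(2)$ with no further work. Hence the entire content of the proposition is the equivalence $(2)\Leftrightarrow(3)$, which amounts to a bookkeeping exercise combining the remark preceding the statement (which identifies $\{(\gamma_i,h_i)\}\in\mathsf{FE}((\alpha,a))$ with $\{\gamma_i\}\in\mathsf{FE}(\alpha)$) with a direct computation of the source and range maps in the Zappa--Sz\'ep product.

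For $(2)\Rightarrow(3)$, given $\alpha,\beta\in\Lambda$, I would apply $(2)$ to the pair $(\alpha,\ideG),(\beta,\ideG)$. This yields $F=\{(\gamma_i,g_i)\}\in\mathsf{FE}((\alpha,\ideG))$, so that $\{\gamma_i\}\in\mathsf{FE}(\alpha)$ by the remark, together with elements $(\delta_i,d_i)\in s(\beta,\ideG)(\Lambda\rtimes^\varphi G)s(\gamma_i,g_i)$. Unpacking the formulas $r(\delta_i,d_i)=(r(\delta_i),\ideG)=(s(\beta),\ideG)$ and $s(\delta_i,d_i)=(d_i^{-1}\cdot s(\delta_i),\ideG)=(g_i^{-1}\cdot s(\gamma_i),\ideG)$ gives $r(\delta_i)=s(\beta)$ and $s(\delta_i)=d_ig_i^{-1}\cdot s(\gamma_i)$, so $\delta_i\in s(\beta)\Lambda(d_ig_i^{-1}\cdot s(\gamma_i))$ and the element $g:=d_ig_i^{-1}\in G$ witnesses $(3)$ at $\gamma_i$.

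For $(3)\Rightarrow(2)$, given $(\alpha,a),(\beta,b)\in\Lambda\rtimes^\varphi G$, I would apply $(3)$ to $\alpha,\beta$ to produce $F_0=\{\gamma_1,\dots,\gamma_n\}\in\mathsf{FE}(\alpha)$ together with, for each $i$, an element $g_i\in G$ and $\mu_i\in s(\beta)\Lambda(g_i\cdot s(\gamma_i))$. By the remark, $F:=\{(\gamma_i,\ideG)\}_{i=1}^n\in\mathsf{FE}((\alpha,a))$ (the $G$-coordinates of the elements of $F$ being irrelevant to membership in $\mathsf{FE}$, so I choose them to be $\ideG$). Setting $\delta_i:=b^{-1}\cdot\mu_i$ I obtain $r(\delta_i)=b^{-1}\cdot s(\beta)$ and $s(\delta_i)=b^{-1}g_i\cdot s(\gamma_i)$, hence $(\delta_i,b^{-1}g_i)$ lies in $s(\beta,b)(\Lambda\rtimes^\varphi G)s(\gamma_i,\ideG)$, which is therefore nonempty, establishing $(2)$.

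The only potential obstacle is to track carefully the source and range formulas $r(\alpha,g)=(r(\alpha),\ideG)$ and $s(\alpha,g)=(g^{-1}\cdot s(\alpha),\ideG)$ in the Zappa--Sz\'ep product, so that the switches between the $G$-twisted and untwisted source objects are handled correctly; once this is done, everything else reduces to choosing the right group elements via multiplication in $G$.
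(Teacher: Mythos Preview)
Your proposal is correct and follows exactly the approach indicated in the paper: the paper simply states that the result is ``a direct translation of Theorem~\ref{eq_minimal}'' via the preceding remark on $\mathsf{FE}$-sets, and you have supplied the routine bookkeeping with the source and range maps in $\Lambda\rtimes^\varphi G$ that the paper leaves implicit. There is nothing to add.
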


Then, by an analog argument to that of Theorem \ref{theorem_simpleLambda}, we have the following result

\begin{theorem}\label{theorem_simpleSystem}
Let $(\Lambda,G,\varphi)$ be a category system such that $\Lambda$ and $G$ are countable. If either $\mathcal{G}_{\text{tight}}(\Semi_{\Lambda\rtimes^\varphi  G})$ is Hausdorff or $\hat{\mathcal{E}}_\infty =\hat{\mathcal{E}}_{\text{tight}}$, then the following statements are equivalent: 
\begin{enumerate}
\item $C^*(\Semi_{\Lambda\rtimes^\varphi  G})$ is simple.
\item For any field $K$, $K\Semi_{\Lambda\rtimes^\varphi  G}$ is simple.
\item The following properties hold:
\begin{enumerate}
\item Given $\alpha,\beta\in \Lambda$, $a,b\in G$ with $r(\alpha)=r(b)$ and $a^{-1}\cdot s(\alpha)=b^{-1}\cdot s(\beta)$, if $\alpha (a\cdot \delta ) \Cap \beta(b\cdot \delta)$ for every $\delta\in (a^{-1}\cdot s(\alpha))\Lambda$ then there exists $F\in\mathsf{FE}(a^{-1}\cdot s(\alpha))$ such that $\alpha (a\cdot \gamma)=\beta(b\cdot\gamma)$ and $\varphi(a,\gamma)=\varphi(b,\gamma)$ for every $\gamma\in F$.
\item For every $\alpha,\beta\in \Lambda$ there exists $F\in \mathsf{FE}(\alpha)$ such that for each $\gamma\in F$,  there exist $g\in G$  with $s(\beta)\Lambda  (g\cdot s(\gamma))\neq \emptyset$.
\end{enumerate}
\end{enumerate}
\end{theorem}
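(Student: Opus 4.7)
The plan is to mirror the proof of Theorem \ref{theorem_simpleLambda} almost verbatim, since every ingredient has already been assembled in this section. The first step is to note that $\Lambda\rtimes^\varphi G$ is itself a countable, finitely aligned LCSC by \cite[Propositions 3.5 and 3.12]{BKQS}, so Theorem \ref{Th: GroupoidRep} applies and gives the identifications
$$C^*(\Semi_{\Lambda\rtimes^\varphi G})\cong C^*(\mathcal{G}_{\text{tight}}(\Semi_{\Lambda\rtimes^\varphi G})),$$
and, for any field $K$,
$$K\Semi_{\Lambda\rtimes^\varphi G}\cong A_K(\mathcal{G}_{\text{tight}}(\Semi_{\Lambda\rtimes^\varphi G})).$$
This reduces the simplicity question for both algebras to a groupoid-theoretic question about $\mathcal{G}_{\text{tight}}(\Semi_{\Lambda\rtimes^\varphi G})$.

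Next, I would invoke the two preceding propositions of this section. The first one asserts that, under either the Hausdorff hypothesis on $\mathcal{G}_{\text{tight}}(\Semi_{\Lambda\rtimes^\varphi G})$ or the equality $\hat{\mathcal{E}}_\infty=\hat{\mathcal{E}}_{\text{tight}}$, condition (3)(a) is equivalent to $\mathcal{G}_{\text{tight}}(\Semi_{\Lambda\rtimes^\varphi G})$ being effective. The second asserts (without extra hypothesis) that condition (3)(b) is equivalent to $\mathcal{G}_{\text{tight}}(\Semi_{\Lambda\rtimes^\varphi G})$ being minimal. Combining them, the standing hypothesis implies that condition (3) of the theorem is equivalent to the tight groupoid being simultaneously effective and minimal.

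To finish, I would apply \cite[Theorem 5.1]{BCFS} for the equivalence $(1)\Leftrightarrow(3)$ and \cite[Theorem 3.5]{St} for the equivalence $(2)\Leftrightarrow(3)$; both results characterize simplicity of the (analytic or algebraic) groupoid algebra of a second-countable étale groupoid by the conjunction of effectiveness and minimality. The Hausdorff branch of the hypothesis feeds directly into both references, while the branch $\hat{\mathcal{E}}_\infty=\hat{\mathcal{E}}_{\text{tight}}$ is the alternative condition that covers the non-Hausdorff case, exactly as used in the proof of Theorem \ref{theorem_simpleLambda}.

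The main obstacle is purely a bookkeeping one: one must check that the preceding two propositions do translate the groupoid-theoretic conditions (effectiveness and minimality) into precisely the explicit cancellative/exhaustive conditions (3)(a) and (3)(b) stated in the theorem, with the Zappa--Sz\'ep product $\Lambda\rtimes^\varphi G$ in place of $\Lambda$. Since that translation has just been done, the theorem follows by composing those equivalences with Theorem \ref{Th: GroupoidRep} and the cited simplicity criteria.
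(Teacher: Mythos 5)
Your proposal is correct and matches the paper's own argument, which is given only as ``by an analog argument to that of Theorem \ref{theorem_simpleLambda}'': reduce via Theorem \ref{Th: GroupoidRep} applied to the countable finitely aligned LCSC $\Lambda\rtimes^\varphi G$, identify conditions (3)(a) and (3)(b) with effectiveness and minimality via the two preceding propositions, and conclude with \cite[Theorem 5.1]{BCFS} and \cite[Theorem 3.5]{St}. No gaps.
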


To end this section, we will have a look on the case of $\Lambda=E^*$, where $E$ is a countable graph. When $G$ is a countable discrete group and $E$ is a countable graph, there is a definition of self-similar graph extending that of \cite{EP2} (see \cite[Definition 2.2]{EPS}). In fact, as shown in \cite[Theorem 3.2]{EPS}, the case of arbitrary graphs can be reduced to the case of row-finite graphs with no sources or sinks up to Morita equivalence (of both algebras and groupoids); in this case, most of the properties enjoyed by the system are analog to these found in the finite case.

In order to fix the relation between $\G_{tight}(\Semi_{G,E})$ and $\G_{tight}(\Semi_{{E^*}\rtimes^\varphi  G})$, we first need to state the relation between $\Semi_{G,E}$ and $\Semi_{{E^*}\rtimes^\varphi  G}$. On one side, we have
$$\Semi_{G,E}=\{(\alpha, g, \beta) : \alpha,\beta\in E^*, g\in G, s(\alpha)=g\cdot s(\beta)\}.$$
On the other side,
$$\Semi_{{E^*}\rtimes^\varphi  G}=\langle \elmap{(\alpha, g)}{(\beta,h)} : \alpha,\beta\in E^*, g,h\in G, g^{-1}\cdot s(\alpha)=h^{-1}\cdot s(\beta)\rangle.$$
Since $(x,g)\in ({{E^*}\rtimes^\varphi  G})^{-1}$ for all $x\in E^0$, $g\in G$, we have that 
$$\elmap{(\alpha, g)}{(\beta,h)} =\elmap{(\alpha, g)}{(h^{-1}\cdot s(\beta),h^{-1})}\elmap{(s(\beta), h)}{(\beta, \ideG)}=\elmap{(\alpha, gh^{-1})}{(\beta,\ideG)}.$$
Moreover, since $E^*$ is singly aligned, then so is ${{E^*}\rtimes^\varphi  G}$ by \cite[Proposition 3.12(ii)]{BKQS}. Thus, by \cite[Theorem 3.2]{DGKMW}, 
$$\Semi_{{E^*}\rtimes^\varphi  G}=\{ \elmap{(\alpha, g)}{(\beta,\ideG)} : \alpha,\beta\in E^*, g\in G, s(\alpha)=g\cdot s(\beta)\}\,.$$
Hence, the map
$$
\begin{array}{cccc}
\pi: & \Semi_{G,E} &  \rightarrow &  \Semi_{{E^*}\rtimes^\varphi  G} \\
 & (\alpha, g, \beta) & \mapsto  &   \elmap{(\alpha, g)}{(\beta,\ideG)}
\end{array}
$$
is a well-defined, onto $\ast$-semigroup homomorphism. Let us characterize when $\pi$ is injective. To this end, take $(\alpha, g, \beta), (\gamma, h, \delta)\in \Semi_{G,E}$ such that
$$ \elmap{(\alpha, g)}{(\beta,\ideG)}=\pi(\alpha, g, \beta)=\pi(\gamma, h, \delta)= \elmap{(\gamma, h)}{(\delta,\ideG)}.$$
Being both equal functions, they must have the same domain, i.e. $(\beta, \ideG)({E^*}\rtimes^\varphi  G)=(\delta, \ideG)({E^*}\rtimes^\varphi  G)$. Since $({{E^*}\rtimes^\varphi  G})^{-1}=E^0\times G$, we conclude that $\beta=\delta$. Moreover, $\tau^{(\alpha, g)}=\tau^{(\gamma, h)}$ on their common domain, so that for every $\lambda \in (g^{-1}\cdot s(\alpha))E^*$ and for every $\ell \in G$ we have
$$(\alpha(g\cdot \lambda), \varphi(g,\lambda)\ell)=\tau^{(\alpha, g)}(\lambda, \ell)=\tau^{(\gamma, h)}(\lambda, \ell)=(\gamma(h\cdot\lambda), \varphi(h,\lambda)\ell).$$
Since the self-similar action of $G$ on $E^*$ preserves lengths of paths, we conclude that $\alpha=\gamma$, and that for every $\lambda \in (g^{-1}\cdot s(\alpha))E^*$ we have $g\cdot \lambda=h\cdot\lambda$ and $\varphi(g,\lambda)=\varphi(h,\lambda)$. Thus, the existence of nontrivial kernel for $\pi$ is equivalent to the existence of $g\in G$, $\alpha \in E^*$ such that for all $\lambda \in s(\alpha)E^*$ satisfies $g\cdot\lambda=\lambda$ and $\varphi(g,\lambda)=\ideG$; in other words, injectivity of $\pi$ is equivalent to the fact that the self-similar action of $G$ on $E^*$ is faithful on vertex-based trees of $E$. Notice that if $(E,G, \varphi)$ is pseudo free, then the above condition is trivially fulfilled, so that $\pi$ will be an isomorphism in this case. Moreover, being ${{E^*}\rtimes^\varphi  G}$ singly aligned, we have that it is right cancellative exactly when $(E,G, \varphi)$ is pseudo free. In this case, not only $\Semi_{G,E}\cong \Semi_{{E^*}\rtimes^\varphi  G}$, but also they are weak semilattices by Proposition \ref{propo1_4_2}, so that their associated tight groupoids are Hausdorff by Corollary \ref{corollary3_4_2}.

Now, we proceed to look at the relation between the corresponding tight groupoids $\G_{tight}(\Semi_{G,E})$ and $\G_{tight}(\Semi_{{E^*}\rtimes^\varphi  G})$. First, notice that the idempotent semilattices of $\Semi_E$, $\Semi_{G,E}$ and $\Semi_{{E^*}\rtimes^\varphi  G}$ coincide, so that the spaces of filters, ultrafilters and tight filters are the same (up to natural isomorphism). Additionally, the partial actions $\Semi_{G,E}\curvearrowright \hat{\mathcal{E}}_0$ and $\Semi_{{E^*}\rtimes^\varphi  G}\curvearrowright \hat{\mathcal{E}}_0$ are $\pi$-equivariant; also, the germ relation is compatible with $\pi$. Thus, $\pi$ induces a continuous, open, onto groupoid homomorphism
$$
\begin{array}{cccc}
\Phi: & \G_{tight}(\Semi_{G,E}) & \rightarrow  & \G_{tight}(\Semi_{{E^*}\rtimes^\varphi  G})  \\
 & [\alpha, g, \beta; \eta] &  \mapsto &   [\elmap{(\alpha, g)}{(\beta, \ideG)}; \eta] .
\end{array}
$$
Using the Morita equivalence reduction \cite[Theorem 3.2]{EPS}, we can assume that $E$ is row-finite with no sources or sinks, whence $\hat{\mathcal{E}}_\infty=\hat{\mathcal{E}}_{\text{tight}}=E^\infty$; let us reduce to this case, in order to simplify the computations. We now will show that $\Phi$ is injective. To this end, let $[\alpha, g, \beta; \eta] \in \ker \Phi$. This means that $\elmap{(\alpha, g)}{(\beta, \ideG)}$ is an idempotent. According to the computations done before, this happens exactly when $\alpha=\beta$ and for every $\lambda \in s(\alpha)E^*$ we have that $g\cdot\lambda=\lambda$ and $\varphi(g,\lambda)=\ideG$. Pick $\lambda$ any initial segment in $\eta\in E^\infty$. Notice that $\lambda \in s(\alpha)E^*$, and thus $(\alpha\lambda, \ideG, \alpha\lambda)\in \alpha\eta$ (seen as a filter), while
$$(\alpha, g, \alpha)\cdot (\alpha\lambda, \ideG, \alpha\lambda)= (\alpha(g\lambda), \varphi (g, \lambda), \alpha\lambda)=(\alpha\lambda, \ideG, \alpha\lambda).$$
Hence, by the germ relation, if $\eta=\lambda\hat{\eta}$, then 
$$[\alpha, g, \alpha; \alpha\eta)]=[\alpha\lambda, \ideG, \alpha\lambda; \alpha\lambda\hat{\eta})]\in \G_{tight}(\Semi_{G,E})^{(0)}.$$
Thus, $\Phi$ is a homeomorphism and an isomorphism of groupoids. This guarantees that, independently of the choice for representing the self-similar graph system $(G,E,\varphi)$, their associated tight groupoids -and hence their algebras- are the same.

\section{Amenability}

Now we are going to study a case where we can deduce amenability of $\G_{tight}(\Semi_{\Lambda\rtimes^\varphi  G})$ assuming that $\G_{tight}(\Semi_{\Lambda})$ and $G$ are amenable. Let $\Lambda$ be a finitely aligned LCSC, and let $\Gamma$ be a subsemigroup of a group $Q$.  

\begin{definition}[{\cite[Definition 6.1]{RW}}]
Let $\Gamma$ be a semigroup with unit element $\ideQ$. A \emph{$\Gamma$-graph} is a LCSC  $\Lambda$ together with a map, called the degree map, $\dmap:\Lambda\to\Gamma$, such that:
\begin{enumerate}
	\item $\dmap(\alpha\beta)=\dmap(\alpha)\dmap(\beta)$ for every $\alpha,\beta\in\Lambda$ with $s(\alpha)=r(\beta)$,
	\item for every $\alpha\in \Lambda$ and $\gamma_1,\gamma_2\in \Gamma$ with $\dmap(\alpha)=\gamma_1\gamma_2$, there are unique $\alpha_1,\alpha_2\in \Lambda$ with $s(\alpha_1)=r(\alpha_2)$, $\dmap(\alpha_i)=\gamma_i$ for $i=1,2$, such that $\alpha=\alpha_1\alpha_2$ (\emph{unique factorization property}).
\end{enumerate}	 

\end{definition}

 Observe that if $\Lambda$ is a $\Gamma$-graph, the unique factorization property   implies  that $\Lambda$ is right and left cancellative category, and does not have inverses.

Given two elements $\gamma_1,\gamma_2\in \Gamma$ 
$$\gamma_1\leq \gamma_2\qquad\text{if and only if}\qquad \gamma_1^{-1}\gamma_2 \in\Gamma\,.$$

\begin{lemma}\label{lemma_equality}
	Let  $\Lambda$ be a $\Gamma$-graph. Let $\alpha,\beta\in \Lambda $ with $ \alpha\Cap \beta$. Then  $\alpha\leq \beta$ if and only if $\dmap(\alpha)\leq \dmap(\beta)$. In particular $\alpha=\beta$ whenever $\dmap(\alpha)=\dmap(\beta)$.
\end{lemma}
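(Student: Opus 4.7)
The forward direction is essentially by definition: if $\alpha\leq \beta$, there exists $\mu\in\Lambda$ with $\beta=\alpha\mu$, and then the compatibility of $\dmap$ with composition yields $\dmap(\alpha)^{-1}\dmap(\beta)=\dmap(\mu)\in\Gamma$, i.e.\ $\dmap(\alpha)\leq\dmap(\beta)$. I would write this in one line.

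The reverse direction is where the unique factorization property of a $\Gamma$-graph does the real work, and my plan is to use it twice. Assume $\dmap(\alpha)\leq \dmap(\beta)$, so $\gamma:=\dmap(\alpha)^{-1}\dmap(\beta)\in\Gamma$. Since $\alpha\Cap\beta$, pick a common extension $\delta=\alpha\mu_1=\beta\mu_2$ with $\mu_1,\mu_2\in\Lambda$. First, apply unique factorization to $\beta$ using $\dmap(\beta)=\dmap(\alpha)\gamma$: this produces unique $\beta_1,\beta_2\in\Lambda$ with $\beta=\beta_1\beta_2$, $\dmap(\beta_1)=\dmap(\alpha)$ and $\dmap(\beta_2)=\gamma$. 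Then $\delta=\beta\mu_2=\beta_1(\beta_2\mu_2)$ is a factorization of $\delta$ whose first factor has degree $\dmap(\alpha)$, exactly like $\delta=\alpha\mu_1$. Apply unique factorization to $\delta$ with degrees $\dmap(\alpha)$ and $\dmap(\alpha)^{-1}\dmap(\delta)$ to conclude $\alpha=\beta_1$, and hence $\beta=\alpha\beta_2$, so $\alpha\leq\beta$.

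For the ``in particular'' clause, if $\dmap(\alpha)=\dmap(\beta)$, the cleanest path is to observe that $\dmap(\alpha)\leq\dmap(\beta)$ and $\dmap(\beta)\leq\dmap(\alpha)$, hence by the equivalence just proved $\alpha\leq\beta$ and $\beta\leq\alpha$. Since $\Lambda$ is a $\Gamma$-graph, it is left cancellative with no inverses, so by Lemma \ref{lemma1_1_1_3}(2) the relation $\leq$ is a partial order, giving $\alpha=\beta$. (Alternatively, one can get the equality in a single step by applying unique factorization directly to the common extension $\delta=\alpha\mu_1=\beta\mu_2$ with degrees $\dmap(\alpha)=\dmap(\beta)$.)

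The main obstacle is purely bookkeeping: one has to make sure the first-factor degree in the two factorizations of $\delta$ really coincides in $\Gamma$, so that unique factorization applies to the same pair of degrees. Once the degrees are aligned, there is no further difficulty, and there is no need to invoke left or right cancellation separately since both are already built into the $\Gamma$-graph axioms through unique factorization.
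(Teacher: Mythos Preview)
Your proof is correct and follows essentially the same approach as the paper: factor $\beta$ using unique factorization so that the first factor has degree $\dmap(\alpha)$, then compare the two factorizations of the common extension to force $\alpha$ to equal that first factor; the ``in particular'' is deduced from antisymmetry of $\leq$ via the no-inverses property. The only cosmetic difference is that the paper picks $\varepsilon\in\alpha\vee\beta$ while you take an arbitrary common extension, which is in fact all that is needed.
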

\begin{proof}
	Let $\alpha,\beta\in \Lambda$, and let  $\varepsilon\in \alpha\vee \beta$, so there are $\delta,\eta\in \Lambda$ such that $\varepsilon=\alpha\delta=\beta\eta$. 
	Assume that $\dmap(\alpha)\leq \dmap(\beta)$. So by the unique factorization property there exists $\gamma,\gamma'\in \Lambda$ such that $\gamma\gamma'=\beta$ and $\dmap(\gamma)=\dmap(\alpha)$. But then 
	$$\dmap(\alpha)\dmap(\delta)=\dmap(\alpha\delta)=\dmap(\gamma\gamma'\eta)=\dmap(\gamma)\dmap(\gamma'\eta)\,.$$
	Thus, by the unique factorization property $\alpha=\gamma$, and hence $\alpha\leq \beta$, as desired.

	Finally, if $\dmap(\alpha)=\dmap(\beta)$ then $\alpha\leq \beta$ and $\beta\leq \alpha$. But since $\Lambda$ has no inverses, it follows that $\alpha=\beta$.
\end{proof}



\begin{definition}
	Let $(\Lambda,G,\varphi)$  be a category system, where $\Lambda$ is a $\Gamma$-graph. We say that $\Gamma$ is \emph{compatible}  with respect to $(\Lambda,G,\varphi)$, if $\dmap(g\cdot \alpha)=\dmap(\alpha)$ for every $g\in G$ and $\alpha\in\Lambda$ (\emph{G-invariant}).  
\end{definition}
 
Let $\Lambda$ be a $\Gamma$-graph compatible with respect to $(\Lambda,G,\varphi)$. Observe that since $\elmap{(\alpha,g)}{(\alpha,g)}=\elmap{(\alpha,\ideG)}{(\alpha,\ideG)}$ for every $(\alpha,g)\in \Lambda\rtimes^\varphi G $,
the set of idempotents  $\Idemp_\Lambda=\Idemp_{\Lambda\rtimes^\varphi G}$ coincide, and hence so does their spaces of tight filters. We will denote by $\FiltT$ the space of tight filters of $\Idemp_\Lambda$ and $\Idemp_{\Lambda\rtimes^\varphi G}$. By Lemma \ref{lemma_compresion},
$$\G_{tight}(\Semi_{\Lambda}) =\{[\elmap{\alpha}{\beta},\xi]: \xi\in \FiltT,\, \alpha,\beta\in\Lambda,\,s(\alpha)=s(\beta),\,\beta\in \Delta_\xi \} 
$$
and
$$
\G_{tight}(\Semi_{\Lambda\rtimes^\varphi G})\hspace{12truecm}
$$
$$
=\{[\elmap{(\alpha,a)}{(\beta,b)},\xi]: \xi\in \FiltT,\,\alpha,\beta\in\Lambda,\,a,b\in G,\,a^{-1}\cdot s(\alpha)=b^{-1}\cdot s(\beta) ,\,\beta\in \Delta_\xi\}\,.
$$

Then, we will think of $\G_{tight}(\Semi_{\Lambda})$ as an open subgroupoid of $\G_{tight}(\Semi_{\Lambda\rtimes^\varphi G})$ via the map $[\elmap{\alpha}{\beta},\xi]\mapsto [\elmap{(\alpha,\ideG)}{(\beta,\ideG)},\xi]$.

The following remark is going to be used repeatedly during the rest of the paper sometimes without mention it. 
\begin{remark}\label{remark_simpli}
 Let $(\alpha,a),(\beta,b)\in \Lambda\rtimes^\varphi G$,  and suppose that $(\alpha,a)\leq(\beta,b)$, then  there exists $(\delta,d)\in \Lambda\rtimes^\varphi G $ with  $r(\delta,d)=s(\alpha,a)$, that is, $r(\delta)=a^{-1}\cdot s(\alpha)$ such that 
 $$(\beta,b)=(\alpha,a)(\delta,d)=(\alpha(a\cdot \delta),\varphi(a,\delta)d)\,.$$
 Whence  $\alpha(a\cdot \delta)= \beta$ and $b=\varphi(a,\delta)d$. Hence, $a\cdot \delta=\sigma^\alpha(\beta)$ by left cancellation, so $\delta=a^{-1}\cdot \sigma^\alpha(\beta)$ and $d=\varphi(a,a^{-1}\cdot\sigma^\alpha(\beta))^{-1}b=\varphi(a^{-1},\sigma^\alpha(\beta))b$ because of the cocycle identity. Therefore,  
 $(\alpha,a)\leq(\beta,b)$ if and only if $\alpha\leq \beta$, and then we have that 
 $$\sigma^{(\alpha,a)}(\beta,b)=(a^{-1}\cdot \sigma^\alpha(\beta),\varphi(a^{-1},\sigma^\alpha(\beta))b)\,.$$
\end{remark}

\begin{lemma}\label{cocyle}
Let $\Lambda$ be a $\Gamma$-graph compatible with respect to $(\Lambda,G,\varphi)$, then the map
$$\dbar:\G_{tight}(\Semi_{\Lambda\rtimes^\varphi G})\to Q\,,\qquad [\elmap{(\alpha,a)}{(\beta,b)},\xi]\mapsto \dmap(\alpha)\dmap(\beta)^{-1}\,,$$
is a well defined continuous groupoid homomorphism. In particular, $\dbar$ restricts to $\G_{tight}(\Semi_{\Lambda})$.
\end{lemma}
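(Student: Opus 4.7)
The plan is to verify three things in order: well-definedness of $\dbar$ on germs, the groupoid homomorphism property, and continuity; the restriction statement to $\G_{tight}(\Semi_\Lambda)$ is then automatic via the embedding $\elmap{\alpha}{\beta} \mapsto \elmap{(\alpha,\ideG)}{(\beta,\ideG)}$ of $\Semi_\Lambda$ into $\Semi_{\Lambda\rtimes^\varphi G}$. Throughout, by Lemma \ref{lemma_simpli} every element of $\G_{tight}(\Semi_{\Lambda\rtimes^\varphi G})$ can be represented as $[\elmap{(\alpha,a)}{(\beta,b)}, \xi]$, so the formula $\dmap(\alpha)\dmap(\beta)^{-1}$ is at least defined on a set of representatives.

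For well-definedness, suppose $[\elmap{(\alpha_1, a_1)}{(\beta_1, b_1)}, \xi] = [\elmap{(\alpha_2, a_2)}{(\beta_2, b_2)}, \xi]$ in $\G_{tight}(\Semi_{\Lambda\rtimes^\varphi G})$; write $s$ and $t$ for the two inverse-semigroup elements. By the germ relation there exists an idempotent $e \in \xi$ with $se = te$, and after replacing $e$ by $e \cdot s^*s \cdot t^*t \in \xi$ one may assume $e \leq \elmap{(\beta_1,\ideG)}{(\beta_1,\ideG)} \cdot \elmap{(\beta_2,\ideG)}{(\beta_2,\ideG)}$. Since $\xi \in \FiltT$ satisfies condition $(*)$ (Lemma \ref{lemma2_2_2}), one can pick a single basic idempotent $f = \elmap{(\gamma,\ideG)}{(\gamma,\ideG)} \in \xi$ with $f \leq e$, which forces $\beta_1, \beta_2 \leq \gamma$. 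Then $sf = tf$ as elements of $\Semi_{\Lambda\rtimes^\varphi G}$, and unfolding both sides via Remark \ref{remark_simpli} and the cocycle identity gives the equality of first coordinates $\alpha_1((a_1 b_1^{-1}) \cdot \sigma^{\beta_1}(\gamma)) = \alpha_2((a_2 b_2^{-1}) \cdot \sigma^{\beta_2}(\gamma))$. Applying $\dmap$, using $G$-invariance and $\dmap(\sigma^{\beta_i}(\gamma)) = \dmap(\beta_i)^{-1}\dmap(\gamma)$, yields $\dmap(\alpha_1)\dmap(\beta_1)^{-1} = \dmap(\alpha_2)\dmap(\beta_2)^{-1}$.

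For the groupoid homomorphism property, take composable germs $[s, \xi] \cdot [t, \eta] = [st, \eta]$ with representatives $s = \elmap{(\alpha_1, a_1)}{(\beta_1, b_1)}$ and $t = \elmap{(\alpha_2, a_2)}{(\beta_2, b_2)}$. Nonvanishing of $st$ forces $\beta_1 \Cap \alpha_2$, and using $(\beta_1, b_1) \vee (\alpha_2, a_2) = (\beta_1 \vee \alpha_2) \times \{\ideG\}$ I expand $st$ as a join, indexed by $\varepsilon \in \beta_1 \vee \alpha_2$, of terms $\elmap{(\alpha_1, a_1)\sigma^{(\beta_1, b_1)}(\varepsilon, \ideG)}{(\beta_2, b_2)\sigma^{(\alpha_2, a_2)}(\varepsilon, \ideG)}$. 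Choosing any summand whose right index lies in $\Delta_\eta$, the first coordinates of its numerator and denominator have degrees $\dmap(\alpha_1)\dmap(\beta_1)^{-1}\dmap(\varepsilon)$ and $\dmap(\beta_2)\dmap(\alpha_2)^{-1}\dmap(\varepsilon)$ respectively, by $G$-invariance together with the decompositions $\dmap(\varepsilon) = \dmap(\beta_1)\dmap(\sigma^{\beta_1}(\varepsilon)) = \dmap(\alpha_2)\dmap(\sigma^{\alpha_2}(\varepsilon))$. Their ratio telescopes to $\dmap(\alpha_1)\dmap(\beta_1)^{-1}\dmap(\alpha_2)\dmap(\beta_2)^{-1} = \dbar([s,\xi])\dbar([t,\eta])$, which by well-definedness computes $\dbar([st, \eta])$.

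For continuity, the sets $\Theta(\elmap{(\alpha, a)}{(\beta, b)}, U)$ with $U \subseteq D_{\elmap{(\beta, \ideG)}{(\beta, \ideG)}}$ open form a basis for the topology on $\G_{tight}(\Semi_{\Lambda\rtimes^\varphi G})$: by Lemma \ref{lemma_simpli} and condition $(*)$, any basic $\Theta(s, U)$ for a general $s = \bigvee_i \elmap{(\alpha_i, a_i)}{(\beta_i, b_i)}$ decomposes into a finite union of such pieces. On each of these $\dbar$ is constantly $\dmap(\alpha)\dmap(\beta)^{-1}$, hence $\dbar$ is locally constant and therefore continuous since $Q$ is discrete. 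I expect the main obstacle to be the well-definedness step: the germ relation is indirect, and one must carefully combine condition $(*)$ with the structure of the idempotent semilattice to reduce the equality $se = te$ to a computation on a single basic idempotent. Once that reduction is done, the $G$-invariance of $\dmap$ makes every group-dependent contribution cancel, leaving only the purely path-theoretic degrees.
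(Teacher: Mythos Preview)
Your argument is correct and broadly parallel to the paper's. The well-definedness step matches the paper almost exactly: both reduce the germ relation to an equality $sf=tf$ on a single basic idempotent $f=\elmap{(\gamma,\ideG)}{(\gamma,\ideG)}$ with $\gamma\in\Delta_\xi$ dominating $\beta_1,\beta_2$, compute the first coordinates via Remark~\ref{remark_simpli}, and cancel the $G$-action using $\dmap(g\cdot\lambda)=\dmap(\lambda)$. Your continuity argument (locally constant on the basic bisections $\Theta(\elmap{(\alpha,a)}{(\beta,b)},U)$) is the same idea as the paper's description of $\dbar^{-1}(g)$ as a union of such sets.

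The one place where you diverge is the homomorphism property. The paper uses the $\Gamma$-graph structure rather heavily here: it invokes Lemma~\ref{lemma_equality} to isolate a unique $\varepsilon\in(\beta\vee\delta)\cap\Delta_\xi$, then appeals to the unique factorization property to produce $\nu_1$ with $\varepsilon=\delta(dc^{-1}\cdot\nu_1)$, thereby reducing to the case where the inner legs coincide ($\beta=\delta$) so that the product is a single $\elmap{}{}$-term. Your route is more direct: you expand $st$ as a join over $\varepsilon\in\beta_1\vee\alpha_2$, pick any summand whose right leg lies in $\Delta_\eta$ (legitimate by Lemma~\ref{lemma_simpli}), and compute degrees on that summand; the $\dmap(\varepsilon)$ factors cancel adjacently, so no commutativity hypothesis on $Q$ is needed. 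This avoids the unique factorization property entirely for this step and is a genuine streamlining. The paper's approach, on the other hand, makes the normalization $\beta=\delta$ explicit, which is reused later in the paper (e.g.\ in the proof of Proposition~\ref{G_cocycle}).
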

\begin{proof}
	First we will prove that $\dbar$ is well defined. Let $[s,\xi]=[t,\xi]$ in $\G_{tight}(\Semi_{\Lambda\rtimes^\varphi G})$, that is, there exists $f\in \xi$ such that $sf=tf$. Without lost of generality we can assume that $s=\elmap{(\alpha,a)}{(\beta,b)}$, $t=\elmap{(\delta,d)}{(\eta,c)}$, and $f=\elmap{(\gamma,\ideG)}{(\gamma,\ideG)}$ with $\alpha,\beta,\gamma\in\Delta_\xi$ such that $\alpha,\beta\leq \gamma$.  Then, by Remark \ref{remark_simpli}, we have that  
	\begin{align*}
sf &= \elmap{(\alpha,a)}{(\beta,b)}\cdot\elmap{(\gamma,\ideG)}{(\gamma,\ideG)} \\ & =\elmap{(\alpha,a)(b^{-1}\cdot \sigma^\beta(\gamma),\varphi(b^{-1}, \sigma^\beta(\gamma)))}{(\gamma,\ideG)} \\
& =\elmap{(\alpha(ab^{-1}\cdot \sigma^\beta(\gamma)),\varphi(ab^{-1}, \sigma^\beta(\gamma)))}{(\gamma,\ideG)}\,,
	\end{align*}
and
\begin{align*}
tf& = \elmap{(\delta,d)}{(\eta,c)}\cdot\elmap{(\gamma,\ideG)}{(\gamma,\ideG)} \\ & =\elmap{(\delta,d)(c^{-1}\cdot \sigma^\eta(\gamma),\varphi(c^{-1}, \sigma^\eta(\gamma)))}{(\gamma,\ideG)} \\
& =\elmap{(\delta(dc^{-1}\cdot \sigma^\eta(\gamma)),\varphi(dc^{-1}, \sigma^\eta(\gamma)))}{(\gamma,\ideG)}\,,
	\end{align*}
	
	But $sf=tf$, so then
		\begin{equation}\label{important_eq}
\alpha(ab^{-1}\cdot \sigma^\beta(\gamma))=\delta(dc^{-1}\cdot \sigma^\eta(\gamma))\qquad\text{and}\qquad\varphi(ab^{-1}, \sigma^\beta(\gamma))=\varphi(dc^{-1}, \sigma^\eta(\gamma)) \,.
		\end{equation}
		
		Therefore, by the $G$-invariance of $\dmap$ we have that 
		\begin{align*}
\dmap(\alpha(ab^{-1}\cdot \sigma^\beta(\gamma)))\dmap(\gamma)^{-1}& =\dmap(\alpha)\dmap(ab^{-1}\cdot \sigma^\beta(\gamma))\dmap(\gamma)^{-1} \\ & =\dmap(\alpha\sigma^\beta(\gamma))\dmap(\gamma)^{-1} \\ & =\dmap(\alpha)\dmap(\beta)^{-1}\dmap(\gamma)\dmap(\gamma)^{-1}\\ & =\dmap(\alpha)\dmap(\beta)^{-1}\,,
		\end{align*}
		is equal to 
		\begin{align*}
\dmap(\delta(dc^{-1}\cdot \sigma^\eta(\varepsilon')))\dmap(\gamma\sigma^\gamma(\varepsilon')\theta)^{-1} &= \dmap(\delta)\d(dc^{-1}\cdot \sigma^\eta(\gamma)\theta)\dmap(\gamma)^{-1} \\ 
& =\dmap(\delta)\dmap(\sigma^\eta(\gamma))\dmap(\gamma)^{-1}\\ & =\dmap(\delta)\dmap(\eta)^{-1}\dmap(\gamma)\dmap(\gamma)^{-1}\\ & =\dmap(\delta)\dmap(\eta)^{-1}\,.
		\end{align*}
		Hence, $\dbar([\elmap{\alpha}{\beta},\xi])=\dmap(\alpha)\dmap(\beta)^{-1}=\dmap(\delta)\dmap(\eta)^{-1}=\dbar([\elmap{\delta}{\eta},\xi])$, so $\dbar$ is well-defined.

		Now, let $[s,\xi],[t,\xi']\in\G_{tight}(\Semi_{\Lambda\rtimes^\varphi G})$ such that $\xi=t\cdot \xi'$. If $s=\elmap{(\alpha,a)}{(\beta,b)}$ and $t=\elmap{(\delta,d)}{(\eta,c)}$, then 
		$[s,\xi]\cdot [t,\xi']=[st,\xi']$. Observe that, since $\beta,\delta\in \Delta_\xi$, by Lemma \ref{lemma_equality} there exists only one element $\varepsilon\in (\beta\vee \delta)\cap\Delta_\xi$. We define $f=\elmap{(\varepsilon,\ideG)}{(\varepsilon,\ideG)}$. We have that $\xi=t\cdot \xi'$, that means 
		\begin{align*}
\Delta_{\xi} & =\{\gamma\in \Lambda: \gamma\leq t(\nu),\,\nu\in \Delta_{\xi'} \}=\{\gamma\in \Lambda: \gamma\leq \elmap{(\delta,d)}{(\eta,c)}(\eta\nu),\,\eta\nu\in \Delta_{\xi'} \} \\
& = \{\gamma\in \Lambda: \gamma\leq \delta(dc^{-1}\cdot \nu),\,\eta\nu\in \Delta_{\xi'} \}\,.
		\end{align*}
		But $\varepsilon\in \Delta_\xi$, that is $\varepsilon\leq \delta(dc^{-1}\cdot \nu)$ for some $\eta\nu\in \Delta_{\xi'}$, and hence $$\dmap(\varepsilon)\leq \dmap(\delta(dc^{-1}\cdot \nu))=\dmap(\delta)\dmap(dc^{-1}\cdot \nu)=\dmap(\delta)\dmap(\nu)\,.$$
		As  $\dmap(\delta)\leq \dmap(\varepsilon)$, we have $\dmap(\delta)\leq \dmap(\varepsilon)\leq \dmap(\delta)\dmap(\nu)$.
	Therefore, there exists $g,h\in\Gamma$ such that $\dmap(\varepsilon)=\dmap(\delta)g$ and $gh=\dmap(\nu)$. Now, by the unique factorization, there exist unique elements $\nu_1,\nu_2\in \Lambda$ such that $\nu_1\nu_2=\nu$ and $\dmap(\nu_1)=g$ and $\dmap(\nu_2)=h$. But $\eta\nu_1\in\Delta_{\xi'}$, so $t(\eta\nu_1)=\delta(dc^{-1}\cdot\nu_1)$, and $\dmap(\delta(dc^{-1}\cdot \nu_1))=\dmap(\delta)g=\dmap(\varepsilon)$. Hence by Lemma \ref{lemma_equality} we have that $\varepsilon=\delta(dc^{-1}\cdot \nu_1)$. 
	
	Then $ft=t\cdot \elmap{(\eta\nu_1,\ideG)}{(\eta\nu_1,\ideG)}$, so  $[s,\xi]=[sf,\xi]$ and $[t,\xi']=[ft,\xi']$. So, we can assume that $\beta=\delta$, and hence $\dbar([s,\xi])=\dmap(\alpha)\dmap(\beta)^{-1}$ and $\dbar([t,\xi'])=\dmap(\beta)\dmap(\eta)^{-1}$. Thus, 
		\begin{align*}
\elmap{(\alpha,a)}{(\beta,b)}\cdot \elmap{(\beta,d)}{(\eta,c)} & =\elmap{(\alpha,a)(b^{-1}\cdot s(\beta),b^{-1}d)}{(\eta,c)} \\
& = \elmap{(\alpha,ab^{-1}d)}{(\eta,c)} \,.
		\end{align*} 
		Therefore, 
		\begin{align*}\label{eq_mult}
\dbar([st,\xi]) & =\dmap(\alpha)\dmap(\eta)^{-1} \\
& =\dmap(\alpha)\dmap(\beta)^{-1}\dmap(\beta)\dmap(\eta)^{-1} \\
 & =\dbar([s,\xi])\dbar([t,\xi'])\,.
		\end{align*}
		Thus, $\dbar$ is a morphism of groupoids.
		
		Finally, given $g\in \Gamma$, we have that $$(\dbar)^{-1}(g)=\bigcup_{\alpha,\beta\in\Lambda,\,\dmap(\alpha)\dmap(\beta)^{-1}=g}\Theta(\elmap{(\alpha,a)}{(\beta,b)}, D_{\elmap{\beta}{\beta}})\,,$$ that is an open. Thus, $\dbar$ is continuous. 
\end{proof}

Let $\dbar:\G_{tight}(\Semi_{\Lambda\rtimes^\varphi G})\to \Gamma$ be the cocycle defined in Lemma \ref{cocyle}, and let us define
$$\HG_{\Lambda\rtimes^\varphi G}:=(\dbar)^{-1}(\ideG)=\{[\elmap{(\alpha,a)}{(\beta,b)},\xi]\in\G_{tight}(\Semi_{\Lambda\rtimes^\varphi G}) : \dmap(\alpha)\dmap(\beta)^{-1}=\ideG \}\,.$$ It is an open subgroupoid of $\G_{tight}(\Semi_{\Lambda\rtimes^\varphi G})$.

Now in order to be able to decompose the groupoid $\HG_{\Lambda\rtimes^\varphi G}$ as a union of more treatable groupoids, we need to impose some conditions on the semigroup $\Gamma$.  

\begin{definition}
	Let $\Gamma\subseteq Q$ be a subsemigroup of a group $Q$ with $\Gamma\cap \Gamma^{-1}=\ideQ$. We say that $\Gamma$ is a \emph{join-semilattice} if given $g_1,g_2\in \Gamma$ 
	$$\inf\{g\in \Gamma: g_1,g_2\leq g\}$$
	exists an it is unique. We will denote it by $g_1\vee g_2$.
\end{definition}

We now assume that $\Gamma$ is a join-semilattice. Then, given $g\in \Gamma$, we define
$$\HG_{\Lambda\rtimes^\varphi G}^{(g)}:=\{[\elmap{(\alpha,a)}{(\beta,b)},\xi]: \dmap(\alpha)=\dmap(\beta)\leq g\}\,.$$
We claim that $\HG_{\Lambda\rtimes^\varphi G}^{(g)}$ is an open subgroupoid of $\HG_{\Lambda\rtimes^\varphi G}$. Let  $[\elmap{(\alpha,a)}{(\beta,b)},\xi], [\elmap{(\delta,d)}{(\eta,c)},\xi']\in \HG^{(g)}_{\Lambda\rtimes^\varphi G}$ two composable elements with $g_1:=\dmap(\beta)=\dmap(\alpha)\leq g$ and $g_2:=\dmap(\delta)=\dmap(\eta)\leq g$. Since $\beta,\delta\in\Delta_\xi$ we have that there exists $\varepsilon\in (\beta\vee \delta)\cap \Delta_{\xi}$, and $g_1,g_2\leq \dmap(\varepsilon)$. Since $\Gamma$ is a join-semilattice we have that $g_1\vee g_2\leq \dmap(\varepsilon)$. Then by the unique factorization property there exists $\varepsilon_1,\varepsilon_2\in \Lambda$ with $\dmap(\varepsilon_1)=g_1\vee g_2$ and $\varepsilon=\varepsilon_1\varepsilon_2$.  Then $\varepsilon_1\in\Delta_\xi$ and hence by Lemma 
\ref{lemma_equality} we have that $\beta,\delta\leq \varepsilon_1$. Then as shown in the proof of Lemma \ref{cocyle} we can find elements $[\elmap{(\alpha',a')}{(\varepsilon,b')},\xi], [\elmap{(\varepsilon,d')}{(\eta',c')},\xi']\in \HG_{\Lambda\rtimes^\varphi G}$ with $[\elmap{(\alpha,a)}{(\beta,b)},\xi]=[\elmap{(\alpha',a')}{(\varepsilon,b')},\xi]$ and $[\elmap{(\delta,d)}{(\eta,c)},\xi']=[\elmap{(\varepsilon,d')}{(\eta',c')},\xi']$, and the product 
$$[\elmap{(\alpha',a')}{(\varepsilon,b')},\xi]\cdot [\elmap{(\varepsilon,d')}{(\eta',c')},\xi']=[\elmap{(\alpha',a'(b')^{-1})}{(\eta',c')},\xi']\in \HG_{\Lambda\rtimes^\varphi G}^{(g_1\vee g_2)}\,.$$ 
Therefore,   $\HG_{\Lambda\rtimes^\varphi G}^{(g_1\vee g_2)}\subseteq \HG_{\Lambda\rtimes^\varphi G}^{(g)}$, as desired.

Moreover, as a consequence of the above computation, given $g_1,g_2\leq g$ we have that $\HG_{\Lambda\rtimes^\varphi G}^{(g_1)}\HG_{\Lambda\rtimes^\varphi G}^{(g_2)}\subseteq \HG_{\Lambda\rtimes^\varphi G}^{(g)}$.  Then, if $\Gamma$ is countable, there exists an ascending sequence of elements $g_1,g_2,\ldots \in \Gamma$ such that for every $g\in \Gamma$ there exists $n\in \NN$ with $g\leq g_n$. Whence,   $\HG_{\Lambda\rtimes^\varphi G}=\bigcup_{i=1}^{\infty}\HG_{\Lambda\rtimes^\varphi G}^{(g_i)}$.

The next step will be to define a cocycle of the groupoids $\HG_{\Lambda\rtimes^\varphi G}^{(g)}$ onto $G$. In order to do that we will need to make the following assumption in the $\Gamma$-graph $\Lambda$.

\begin{definition}\label{def_bigstar}
Let $\Lambda$ be a $\Gamma$-graph. Then  $\Lambda$ satisfies property $(\bigstar)$ if given $F\in \Delta_{tight}$ and $g\in \Gamma$, then there exists a unique $\beta\in F$ with $\dmap(\beta)\leq g$ such that whenever $\alpha\in F$ satisfies $\dmap(\alpha)\leq g$,  we have that $\alpha\leq \beta$. 
\end{definition}

We can give some condition on $\Gamma$ to guarantee that every $\Gamma$-graph satisfies condition $(\bigstar)$.

\begin{proposition}
	Let $\Lambda$ be a $\Gamma$-graph, and assume every bounded ascending sequence of elements of $\Gamma$ stabilizes. Then  $\Lambda$ satisfies property $(\bigstar)$.
\end{proposition}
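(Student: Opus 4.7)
The plan is to show that $S := \{\alpha \in F : \dmap(\alpha) \leq g\}$ admits a (necessarily unique) maximum in $(\Lambda,\leq)$; this maximum is the $\beta$ of the statement, with uniqueness immediate because a $\Gamma$-graph has no inverses, so $\leq$ is antisymmetric (Lemma \ref{lemma1_1_1_3}). I will build the maximum from three ingredients: $S$ is nonempty, $S$ is upward directed, and ascending chains in $S$ stabilize. Nonemptiness is free: the common source vertex $v$ of $F$ lies in $F$ by hereditariness, and since $v \cdot v = v$ forces $\dmap(v)$ to be idempotent in the ambient group $Q$, one has $\dmap(v) = \ideQ \leq g$, so $v \in S$.

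The core step is directedness of $S$, and this is where the delicate work lies. Given $\alpha_1, \alpha_2 \in S$, directedness of $F$ supplies $\gamma \in F$ with $\alpha_1, \alpha_2 \leq \gamma$. Since $\Gamma$ is a join-semilattice (the standing hypothesis of the section) and both $\dmap(\alpha_i)$ are bounded above by $g$ and by $\dmap(\gamma)$, the join $\delta := \dmap(\alpha_1) \vee \dmap(\alpha_2)$ exists in $\Gamma$ and satisfies $\delta \leq g$ and $\delta \leq \dmap(\gamma)$. Applying the unique factorization property of $\Lambda$ to the decomposition $\dmap(\gamma) = \delta \cdot (\delta^{-1}\dmap(\gamma))$ inside $\Gamma$ produces a unique factorization $\gamma = \beta\beta'$ with $\dmap(\beta) = \delta$; in particular $\beta \leq \gamma$. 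Now both $\alpha_i$ and $\beta$ are initial segments of $\gamma$, so $\alpha_i \Cap \beta$, and combining this with $\dmap(\alpha_i) \leq \dmap(\beta)$ Lemma \ref{lemma_equality} yields $\alpha_i \leq \beta$. Hereditariness of $F$ puts $\beta \in F$, so $\beta \in S$ is the desired common upper bound. Without the join-semilattice structure on $\Gamma$ one cannot ensure that a common refinement of $\dmap(\alpha_1), \dmap(\alpha_2)$ staying below $g$ exists inside $\dmap(\gamma)$, so this is the one place in the argument where structural input on $\Gamma$ is genuinely used.

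To finish, observe that every ascending chain $\alpha_1 \leq \alpha_2 \leq \cdots$ in $S$ induces an ascending chain of degrees bounded above by $g$, which stabilizes by hypothesis; Lemma \ref{lemma_equality} promotes this to stabilization of the chain in $\Lambda$ itself, since comparable elements with equal degree are equal. Zorn's lemma then delivers a maximal element $\beta \in S$, and directedness upgrades maximal to maximum: for any $\alpha \in S$, directedness produces $\beta' \in S$ with $\alpha, \beta \leq \beta'$, and maximality of $\beta$ forces $\beta' = \beta$, hence $\alpha \leq \beta$. This completes the proof.
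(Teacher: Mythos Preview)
Your proof is correct and follows essentially the same route as the paper: both arguments establish that $S=F_g$ is upward directed via the join-semilattice structure on $\Gamma$, unique factorization, and Lemma~\ref{lemma_equality}, and then use the stabilization hypothesis on $\Gamma$ to extract a maximum. The only cosmetic difference is packaging---the paper builds a strictly increasing sequence $\alpha_0<\alpha_1<\cdots$ in $F_g$ directly until it is forced to terminate, while you phrase the same termination through Zorn's lemma after noting chains stabilize.
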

\begin{proof}
	We define $F_g:=\{\beta\in F: \dmap(\beta)\leq g \}$. Observe that given $\alpha,\beta\in F_g$ with $\dmap(\beta)\leq \dmap(\alpha)$, then $\dmap(\alpha\vee \beta)=\dmap(\alpha)\vee\dmap(\beta)=\dmap(\alpha)\leq g$, hence $\alpha\vee\beta\in F_g$, and the unique factorization property says that $\alpha=\alpha\vee\beta$, and hence $\beta\leq \alpha$. So it is enough to prove that there exists $\alpha\in F_g$ such that $\dmap(\beta)\leq \dmap(\alpha)$ for every $\beta\in F_g$.

	Let $\alpha_0\in F$, and let $\beta\in F_d$ with $\dmap(\beta)\nleq \dmap(\alpha_0)$. If such $\beta$ does not exists, then we are done. Otherwise,  $\dmap(\alpha_0\vee\beta)=\dmap(\alpha_0)\vee \dmap(\beta)\leq g$, and hence $\alpha_0\vee\beta\in F_g$, with $\dmap(\alpha_0)<\dmap(\alpha_0\vee\beta)$, because if $\dmap(\alpha_0)=\dmap(\alpha_0\vee\beta)$ then $\alpha_0=\alpha_0\vee\beta$ by Lemma \ref{lemma_equality}. Let us define $\alpha_1:=\alpha_0\vee \beta$, so
	 $\dmap(\alpha_0)<\dmap(\alpha_1)$. Now if in this way we could construct an infinite sequence $\alpha_0,\alpha_1,\alpha_2,\ldots\in F_g$ such that $\d(\alpha_i)<\dmap(\alpha_{i+1})$, then this will contradict the hypothesis. Then will be $n$ such that $\dmap(\beta)\leq \dmap(\alpha_n)$ for every $\beta\in F_g$, and so $\alpha:=\alpha_n$, and we are done. 
	  
\end{proof}

\begin{example}
	Every $\mathbb{N}^k$-graph $\Lambda$  satisfies property $(\bigstar)$.
\end{example}

Now we are ready to define the promised cocyle.

\begin{proposition}\label{G_cocycle}
	Let $\Lambda$ be a $\Gamma$-graph compatible with respect to a pseudo free system $(\Lambda,G,\varphi)$, and suppose that $\Lambda$ satisfies property $(\bigstar)$. Then for every $g\in\Gamma$ there exists a continuous groupoid homomorphism
	$$\tmap^{(g)}:\HG_{\Lambda\rtimes^\varphi G}^{(g)}\to G\,.$$
\end{proposition}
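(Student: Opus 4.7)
The plan is to assign to each germ $[\elmap{(\alpha,a)}{(\beta,b)},\xi] \in \HG_{\Lambda\rtimes^\varphi G}^{(g)}$ the element of $G$ that records how the partial bijection $s:=\elmap{(\alpha,a)}{(\beta,b)}$ shifts the group coordinate when evaluated at the unique maximal degree-$\leq g$ segment of $\Delta_\xi$. By property $(\bigstar)$ there is a unique $\varepsilon\in\Delta_\xi$ with $\dmap(\varepsilon)\leq g$ which is $\leq$-maximal among such elements. Since $\beta\in\Delta_\xi$ with $\dmap(\beta)\leq g$, maximality forces $\beta\leq\varepsilon$, so $\varepsilon=\beta\nu$ for $\nu:=\sigma^\beta(\varepsilon)$. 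A direct computation in $\Lambda\rtimes^\varphi G$ yields
$$
s(\varepsilon,\ideG)=\bigl(\alpha(ab^{-1}\cdot\nu),\;\varphi(ab^{-1},\nu)\bigr),
$$
and combining $G$-invariance of $\dmap$, Lemma \ref{lemma_equality}, and the symmetric argument applied to $s^{-1}$, one checks that $\alpha(ab^{-1}\cdot\nu)$ is precisely the maximal element $\varepsilon'$ of $\Delta_{s\cdot\xi}$ of degree $\leq g$ afforded by $(\bigstar)$. I would therefore set $\tmap^{(g)}([s,\xi]):=\varphi(ab^{-1},\nu)$, which by Lemma \ref{lemma_compresion} coincides with the $G$-coordinate of the canonical representative $s'=\elmap{(\varepsilon',h)}{(\varepsilon,\ideG)}$ of the same germ.

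For well-definedness, suppose $[s,\xi]=[t,\xi]$ with $t=\elmap{(\delta,d)}{(\eta,c)}\in\HG_{\Lambda\rtimes^\varphi G}^{(g)}$. There exists $f=\elmap{(\gamma,\ideG)}{(\gamma,\ideG)}\in\xi$ with $sf=tf$, and by directedness of $\Delta_\xi$ we may assume $\gamma\geq\varepsilon$; write $\gamma=\varepsilon\mu$. Evaluating $s(\gamma,\ideG)=t(\gamma,\ideG)$ via the formula above gives the pair of equalities $h_s\cdot\mu=h_t\cdot\mu$ and $\varphi(h_s,\mu)=\varphi(h_t,\mu)$, where $h_s,h_t\in G$ are the candidate values extracted from $s$ and $t$. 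Pseudo-freeness (Proposition \ref{prop_pseudo}) then forces $h_s=h_t$.

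The groupoid homomorphism property follows by direct composition, in the spirit of Lemma \ref{cocyle}: for composable $[s,t\cdot\xi]$ and $[t,\xi]$, write $\varepsilon_\xi$, $\varepsilon_{t\cdot\xi}$, $\varepsilon_{st\cdot\xi}$ for the respective maxima afforded by $(\bigstar)$. From $t(\varepsilon_\xi,\ideG)=(\varepsilon_{t\cdot\xi},h_t)$ and the cocycle-identity computation $s(\varepsilon_{t\cdot\xi},h_t)=(\varepsilon_{st\cdot\xi},h_sh_t)$, I conclude $\tmap^{(g)}([st,\xi])=h_sh_t=\tmap^{(g)}([s,t\cdot\xi])\cdot\tmap^{(g)}([t,\xi])$.

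The main obstacle is continuity: since $G$ is discrete, I need $\tmap^{(g)}$ to be locally constant. Working with the canonical representative $s'=\elmap{(\varepsilon',h)}{(\varepsilon,\ideG)}$, a basic open neighbourhood of $[s,\xi]$ has the form $\Theta(s',U)$ with $U\subseteq D_{\elmap{(\varepsilon,\ideG)}{(\varepsilon,\ideG)}}$ open in $\FiltT$. Evaluating yields $\tmap^{(g)}([s',\xi'])=\varphi(h,\sigma^\varepsilon(\tilde\varepsilon))$, where $\tilde\varepsilon\geq\varepsilon$ is the maximal degree-$\leq g$ element of $\Delta_{\xi'}$. Thus I must shrink $U$ so that $\tilde\varepsilon=\varepsilon$ for every $\xi'\in U$, that is, so that no strict extension $\gamma>\varepsilon$ with $\dmap(\gamma)\leq g$ lies in $\Delta_{\xi'}$. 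The delicate step will be to use finite alignment of $\Lambda$ together with the unique factorization in the $\Gamma$-graph to cut this exclusion down to a finite family of basic open sets $D_{\elmap{(\gamma,\ideG)}{(\gamma,\ideG)}}$, guaranteeing that the resulting sub-neighbourhood is open and that $\tmap^{(g)}$ is constant on it.
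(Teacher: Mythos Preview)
Your approach is essentially the same as the paper's. Both pass to the canonical representative $s'=\elmap{(\varepsilon',h)}{(\varepsilon,\ideG)}$ with $\varepsilon$ the $(\bigstar)$-maximal element of $\Delta_\xi$ of degree $\leq g$, and both define $\tmap^{(g)}$ as the resulting $G$-coordinate; your formula $\varphi(ab^{-1},\sigma^\beta(\varepsilon))$ simply expresses this value directly from an arbitrary representative, and reduces to the paper's $ab^{-1}$ when $\beta=\varepsilon$ since $\varphi(ab^{-1},s(\varepsilon))=ab^{-1}$. The well-definedness argument via pseudo-freeness and the homomorphism computation are identical to the paper's.

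The one place you go further than the paper is continuity. The paper simply asserts that $\tmap^{(g)}$ is continuous at the end of the homomorphism paragraph, whereas you correctly isolate the issue: on a bisection $\Theta(s',U)$ the value $\tmap^{(g)}([s',\xi'])=\varphi(h,\sigma^{\varepsilon}(\tilde\varepsilon))$ depends on the $(\bigstar)$-maximum $\tilde\varepsilon$ of $\Delta_{\xi'}$, which may strictly extend $\varepsilon$. Your proposed fix is to shrink $U$ so as to forbid all strict extensions $\gamma>\varepsilon$ with $\dmap(\gamma)\leq g$. Be aware that finite alignment alone does not obviously bound the number of such $\gamma$ (it controls common extensions of pairs, not extensions of a single element of bounded degree), so you will need an additional finiteness ingredient---for instance a row-finiteness hypothesis on the $\Gamma$-graph, or an argument that the interval $\{p\in\Gamma:\dmap(\varepsilon)\leq p\leq g\}$ together with unique factorisation yields only finitely many relevant $\gamma$. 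The paper does not supply this step either, so your identification of it as ``the delicate step'' is well placed.
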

\begin{proof}
	Let $[s,\xi]\in \HG_{\Lambda\rtimes^\varphi G}^{(g)}$. By property $(\bigstar)$ there exists $\beta\in \Delta_\xi$ such that $\delta\leq\beta$ for every $\delta\in\Delta_\xi$ with $\dmap(\delta)\leq g$. If we define $f=\elmap{(\beta,e)}{(\beta,e)}$, then we have that $[s,\xi]=[sf,\xi]$ whenever $s=\elmap{(\alpha,a)}{(\delta,b)}$ with $\dmap(\delta)\leq g$, and 
	\begin{align*}
sf=\elmap{(\alpha,a)}{(\delta,b)}\elmap{(\beta,e)}{(\beta,e)} & =\elmap{(\alpha,a)(b^{-1}\cdot \sigma^{\delta}(\beta)),\varphi(b^{-1},\sigma^\delta(\beta)))}{(\beta,e)} \\
& = \elmap{(\alpha(ab^{-1}\cdot \sigma^{\delta}(\beta))),\varphi(ab^{-1},\sigma^\delta(\beta)))}{(\beta,e)}\,.
	\end{align*}
	Thus, without lost of generality,  any element $[s, \xi]\in \HG_{\Lambda\rtimes^\varphi G}^{(g)}$ has a representative of the form $[\elmap{(\alpha,a)}{(\beta,b)},\xi]$, where $\beta$ is the unique maximal element in $\Delta_\xi$ satisfying $\dmap(\beta)\leq g$ given by property $(\bigstar)$.
	
	Under this choice of representative, we define $\tmap^{(g)}:\HG_{\Lambda\rtimes^\varphi G}^{(g)}\to G$ by the rule 
	$$\tmap^{(g)}([\elmap{(\alpha,a)}{(\beta,b)},\xi])=ab^{-1}.$$
	
	 Let us check that $\tmap^{(g)}$ is well defined. To this end, let $[s,\xi]$ and $[s',\xi]$ in $\HG_{\Lambda\rtimes^\varphi G}^{(g)}$ with $[s,\xi]=[s',\xi]$. By the above argument, we can assume that $s=\elmap{(\alpha,a)}{(\beta,b)}$ and $s'=\elmap{(\alpha',a')}{(\beta,b')}$. 
	Let $h=\elmap{(\beta\beta',\ideG)}{(\beta\beta',\ideG)}$ for some $\beta'$ such that $\beta\beta'\in \Delta_\xi$ and $sh=s'h$. Then,
$$sh =\elmap{(\alpha(ab^{-1}\cdot\beta'),\varphi(ab^{-1}, \beta'))}{(\beta\beta',\ideG)} \text{ and } s'h	  =\elmap{(\alpha(a'b'^{-1}\cdot \beta'),\varphi(a'b'^{-1}, \beta'))}{(\beta\beta',\ideG)}\,. 
$$
	Therefore we have that
	$$\alpha(ab^{-1}\cdot \beta')=\alpha(a'b'^{-1}\cdot \beta')\qquad\text{and}\qquad \varphi(ab^{-1},\beta')=\varphi(a'b'^{-1}, \beta')\,,$$
	and by left cancellation we have that 
	$$ab^{-1}\cdot \beta'=a'b'^{-1}\cdot \beta'\qquad\text{and}\qquad \varphi(ab^{-1}, \beta')=\varphi(a'b'^{-1}, \beta')\,.$$
	  Hence, by Proposition \ref{prop_pseudo},  $ab^{-1}=a'b'^{-1}$. Thus, $\tmap^{(g)}$ is well-defined.
	  
	  Now, given a composable pair $[s,\xi], [t,\xi']$, we can choose representatives $s=[\elmap{(\alpha,a)}{(\beta,b)},\xi]$ and $t=[\elmap{(\gamma,c)}{(\beta',b')},\xi']$ with $\beta, \beta'$ unique maximal elements in $\Delta_\xi , \Delta_{\xi'}$ (respectively) satisfying $\dmap(\beta), \dmap(\beta')\leq g$ given by property $(\bigstar)$. Since $\xi=t\cdot \xi'$, we have that $\gamma\in \Delta_\xi$, whence $\gamma=\beta\vee \gamma$ by property $(\bigstar)$. Thus, the computation performed in Lemma \ref{cocyle} do not require replace $\beta$ by any element $\delta$ with $\dmap (\delta)> g$, and thus this argument shows that $\tmap^{(g)}$ is a continuous groupoid homomorphism.
	  
\end{proof}

\begin{proposition}
	Let $\Lambda$ be a $\Gamma$-graph compatible with respect to a pseudo free system $(\Lambda,G,\varphi)$, and suppose that $\Lambda$ satisfies property $(\bigstar)$, with $G$ and $Q$ countable amenable groups. Moreover, assume that $\Gamma$ is a join-semilattice.  If the kernel of the map $\dbar:\G_{tight}(\Semi_\Lambda)\to Q$ is amenable, then $\G_{tight}(\Semi_{\Lambda\rtimes^\varphi G})$ is amenable. 
\end{proposition}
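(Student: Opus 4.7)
The plan is to combine the two cocycles $\dbar$ and $\tmap^{(g)}$ introduced in Lemma \ref{cocyle} and Proposition \ref{G_cocycle} with the standard permanence properties of amenable étale groupoids (as developed by Anantharaman-Delaroche and Renault): namely, amenability passes to extensions by amenable groups, to increasing unions of open subgroupoids, and to open subgroupoids. The goal is to peel off the extra group-theoretic data step by step and land on the hypothesis that $\HG_\Lambda := (\dbar)^{-1}(\ideQ)\cap \G_{tight}(\Semi_\Lambda)$ is amenable.

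First I would use $\dbar:\G_{tight}(\Semi_{\Lambda\rtimes^\varphi G})\to Q$, whose kernel is $\HG_{\Lambda\rtimes^\varphi G}$. Since $Q$ is a countable amenable group, it suffices to show $\HG_{\Lambda\rtimes^\varphi G}$ is amenable. Then, using that $Q$ (and therefore $\Gamma$) is countable, pick an increasing cofinal sequence $g_1\leq g_2\leq\cdots$ in $\Gamma$; as shown in the discussion following Lemma \ref{cocyle}, this gives $\HG_{\Lambda\rtimes^\varphi G}=\bigcup_{n\geq 1}\HG^{(g_n)}_{\Lambda\rtimes^\varphi G}$ as an increasing union of open subgroupoids. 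Hence it is enough to prove that each $\HG^{(g)}_{\Lambda\rtimes^\varphi G}$ is amenable.

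For each fixed $g\in\Gamma$, I would apply Proposition \ref{G_cocycle} to obtain the continuous cocycle $\tmap^{(g)}:\HG^{(g)}_{\Lambda\rtimes^\varphi G}\to G$. Since $G$ is countable and amenable, the problem reduces to the amenability of $\KG^{(g)}:=(\tmap^{(g)})^{-1}(\ideG)$. The crucial identification is that $\KG^{(g)}=\HG^{(g)}_\Lambda$, viewed as an open subgroupoid of $\G_{tight}(\Semi_\Lambda)$. Indeed, using the canonical representative $[\elmap{(\alpha,a)}{(\beta,b)},\xi]$ with $\beta$ the unique maximal element of $\Delta_\xi$ of degree $\leq g$ guaranteed by property $(\bigstar)$, the condition $\tmap^{(g)}=\ideG$ forces $a=b$; and then the identity $\elmap{(\alpha,a)}{(\beta,a)}=\elmap{(\alpha,\ideG)}{(\beta,\ideG)}$ places the germ inside the embedded copy of $\Semi_\Lambda$. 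Finally, since $\HG^{(g)}_\Lambda$ is an open subgroupoid of $\HG_\Lambda$, which is amenable by hypothesis, one concludes the amenability of $\KG^{(g)}$, and unwinding the three reductions above yields the amenability of $\G_{tight}(\Semi_{\Lambda\rtimes^\varphi G})$.

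The main obstacle is carrying out the identification $\KG^{(g)}\cong \HG^{(g)}_\Lambda$ rigorously as a topological groupoid isomorphism: this is the place where both pseudo freeness (to guarantee that distinct group labels produce distinct germs, so that $ab^{-1}=\ideG$ forces $a=b$ on the representatives) and property $(\bigstar)$ (to ensure the existence of a well-defined canonical representative, so that $\tmap^{(g)}$ is unambiguous on germs) enter in an essential way. Once this identification is in place, the remaining ingredients are entirely formal consequences of the permanence of amenability for étale groupoids under extensions by amenable (discrete) groups, open subgroupoids, and increasing unions thereof.
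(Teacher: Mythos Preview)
Your proposal is correct and follows essentially the same route as the paper's proof: reduce via the $Q$-valued cocycle $\dbar$ to $\HG_{\Lambda\rtimes^\varphi G}$, write this as the increasing union $\bigcup_n \HG^{(g_n)}_{\Lambda\rtimes^\varphi G}$, then apply the $G$-valued cocycle $\tmap^{(g_n)}$ and identify its kernel with an open subgroupoid of $(\dbar)^{-1}(\ideQ)\subseteq \G_{tight}(\Semi_\Lambda)$, which is amenable by hypothesis. The only cosmetic difference is that you frame the last step as a topological groupoid identification $\KG^{(g)}\cong \HG^{(g)}_\Lambda$, whereas the paper simply computes $(\tmap^{(g_n)})^{-1}(\ideG)=\{[\elmap{\alpha}{\beta},\xi]:\dmap(\alpha)=\dmap(\beta)\leq g_n\}\subseteq (\dbar)^{-1}(\ideQ)$ directly; the references invoked are \cite[Corollary 4.5]{RW} and \cite[Section 5.2(c)]{AR}, exactly the permanence properties you name.
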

\begin{proof}
By \cite[Corollary 4.5]{RW} it is enough to prove that $\HG_{\Lambda\rtimes^\varphi G}$ is an amenable groupoid. As observe above $\HG_{\Lambda\rtimes^\varphi G}=\bigcup_{n=1}^\infty \HG_{\Lambda\rtimes^\varphi G}^{(g_n)}$ where $\HG_{\Lambda\rtimes^\varphi G}^{(g_n)}$ are open subgroupoids with $\HG_{\Lambda\rtimes^\varphi G}^{(g_n)}\subseteq \HG_{\Lambda\rtimes^\varphi G}^{(g_{n+1})}$ and $(\HG_{\Lambda\rtimes^\varphi G}^{(g_n)})^{(0)}= (\HG_{\Lambda\rtimes^\varphi G}^{(g_{n+1})})^{(0)}$. Then by  \cite[Section 5.2(c)]{AR} it is enough to prove that the groupoids  $\HG_{\Lambda\rtimes^\varphi G}^{(g_n)}$ are amenable for every $n$. But now
\begin{align*}
(\tmap^{(g_n)})^{-1}(\ideG) & =\{[\elmap{(\alpha,a)}{(\beta,a)},\xi]\in\HG_{\Lambda\rtimes^\varphi G}^{(g_n)}\} \\
& = \{[\elmap{(\alpha,\ideG)}{(\beta,\ideG)},\xi]\in\HG_{\Lambda\rtimes^\varphi G}^{(g_n)}\} \\
& = \{[\elmap{(\alpha,\ideG)}{(\beta,\ideG)},\xi]\in\G_{\Lambda\rtimes^\varphi G}: \dmap(\alpha)=\dmap(\beta)\leq g_n\}  \\
& = \{[\elmap{\alpha}{\beta},\xi]\in\G_{\Lambda\rtimes^\varphi G}: \dmap(\alpha)=\dmap(\beta)\leq g_n\}  \subseteq (\dbar)^{-1}(\ideQ)\,.
\end{align*}	
Therefore since $(\dbar)^{-1}(\ideQ)$ is amenable by assumption, then $(\tmap^{(g_n)})^{-1}(\ideG)$ is amenable. So using again \cite[Corollary 4.5]{RW} we have that $\HG_{\Lambda\rtimes^\varphi G}^{(g_n)}$ is amenable, as desired.
\end{proof}

Next step will prove to prove that the kernel of the map $\dbar:\G_{tight}(\Semi_\Lambda)\to Q$ is amenable. In order to do that we will prove that the groupoid $\G_{tight}(\Semi_\Lambda)$ is isomorphic to  the semigroup action groupoid of the $\Gamma$-graph $\Lambda$ defined in \cite[Section 5]{RW}. This semigroup action groupoid has also a canonical cocyle $\bar{\mathbf{c}}$ onto $Q$ which kernel is amenable. Now we will prove that the kernel of $\bar{\mathbf{c}}$ is isomorphic to the kernel of $\dbar$. First we introduce the semigroup action groupoid.

\begin{definition}
	Let $X$ be a set and $\Gamma\subseteq Q$ be a semigroup of a group $Q$ containing the identity  $\ideQ$. A \emph{left action of $\Gamma$ on $X$} consists of a subset $\Gamma\star X$ of $\Gamma\times X$ and a map $T:\Gamma\star X\to X$ sending $(g,x)\mapsto g\cdot x$, such that:
	\begin{enumerate}
		\item for all $x\in X$, $(e,x)\in \Gamma\star X$ and $e\cdot x=x$;
		\item for all $(g,h,x)\in \Gamma\times\Gamma\times X$, $(gh,x)\in \Gamma\star X$ if and only if  $(h,x)\in \Gamma\star X$ and $(g,h\cdot x)\in \Gamma\star X$, if this holds, $g\cdot (h\cdot x)=(gh)\cdot x$.
	\end{enumerate}
For all $g\in\Gamma$, we define $U(g):=\{x: (g,x)\in \Gamma\star X\}$ and $V(g)=\{g\cdot x: (g,x)\in\Gamma\star X \}$ and $T_g:U(g)\to V(g)$ the map such that $T_g(x)=g\cdot x$. The triple $(X,\Gamma,T)$ is called a \emph{semigroup action}.
\end{definition}

\begin{definition}
A semigroup action $(X,\Gamma,T)$ is called \emph{directed} if for all $g,h\in \Gamma$ such that $U(g)\cap U(h)\neq\emptyset$ there exists $r\in \Gamma$ with $g,h\leq r$ such that $U(g)\cap U(h)=U(r)$.
\end{definition}

When  $(X,\Gamma,T)$ is a directed semigroup action it is defined the groupoid 
$$\G(X,\Gamma,T)=\{(x,gh^{-1},y)\in X\times \Gamma \times X:\,(g,x),(h,y)\in \Gamma\star X,\, g\cdot x=h\cdot y\}\,.$$
Let $X$ be a locally compact Hausdorff space such that $U(g)$ and $V(g)$ are open subsets of $X$ for every $g\in \Gamma$, and $T_g:U(g)\to V(g)$ is a local homeomorphism. 

Given $g,h\in\Gamma$, $A,B$ subsets of $X$, we define
$$Z(A,g,h,B):=\{(x,gh^{-1},y)\in\G(X,\Gamma,T):x\in A,\,y\in B\text{ and }g\cdot x=h\cdot y\}\,.$$
The family  $\mathcal{B}$ of subsets $Z(A,g,h,B)$, with $A\subseteq U(g)$ and $B\subseteq U(h)$ open subsets, such that  $(T_g)_{|A}$ and $(T_h)_{|B}$ are injective, and $T_g(A)=T_h(B)$, forms a basis for the topology of $\G(X,\Gamma,T)$.  With this topology $\G(X,\Gamma,T)$ is a locally compact étale groupoid.

Now recall by Lemma \ref{lemma_equality}, that given $g\in \Gamma$ and $F\in \Delta^*$, $F\cap \dmap^{-1}(g)$ is either empty or contains just one element. Then we can define the following.

\begin{definition}
 	Let $\Lambda$ be a $\Gamma$-graph. Then 
 	$$\Gamma\star\FiltT=\{(g,\xi)\in \Gamma\times \FiltT: \exists \alpha\in \Delta_\xi\text{ such that }\dmap(\alpha)=g \}\,,$$
 	and given $(g,\xi)\in \Gamma\star\FiltT$, we define $T_g(\xi)=\sigma^\alpha\cdot \xi$, where $\alpha\in \Delta_\xi$ with $\dmap(\alpha)=g$.
 	$$U(g):=\bigcup_{\alpha\in \dmap^{-1}(g)} D_{\elmap{\alpha}{\alpha}}\qquad\text{and}\qquad V(g)=\bigcup_{\alpha\in \dmap^{-1}(g)}\left( \bigcup_{\beta\in \Lambda s(\alpha)} D_{\elmap{\beta}{\beta}}\right) \,.$$
 \end{definition}
 
 Let $\Lambda$ be a $\Gamma$-graph with $\Gamma$ a join-semilattice,  because of the factorization property we have that  $\dmap(\alpha\vee\beta)=\dmap(\alpha)\vee\dmap(\beta)$, and hence 
 $$U(g)\cap U(h)=\bigcup_{\alpha\in \dmap^{-1}(g),\,\beta\in \dmap^{-1}(h),\,\varepsilon\in \alpha\vee \beta} D_{\elmap{\varepsilon}{\varepsilon}}= U(g\vee h)\,.$$
 Thus,  the semigroup action $(\FiltT, \Gamma, T)$ is directed.

 \begin{proposition}
 Let $\Lambda$ be a $\Gamma$-graph with $\Gamma$ being join-semilattice. Then the map 
 $$\Phi:\G_{tight}(\Semi_{\Lambda})\to \G(\FiltT, \Gamma, T)\,,\qquad [\elmap{\alpha}{\beta},\xi]\mapsto (\elmap{\alpha}{\beta}\cdot \xi,\dmap(\alpha)\dmap(\beta)^{-1},\xi)$$
 is an isomorphism of topological groupoids.
 \end{proposition}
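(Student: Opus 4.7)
The plan is to verify that $\Phi$ is (i) well-defined, (ii) a groupoid morphism, (iii) bijective, and (iv) bicontinuous. For (i), if $[\elmap{\alpha}{\beta},\xi]=[\elmap{\alpha'}{\beta'},\xi]$ then the two germs agree on $\xi$ and a fortiori $\elmap{\alpha}{\beta}\cdot\xi=\elmap{\alpha'}{\beta'}\cdot\xi$, while the equality of degrees $\dmap(\alpha)\dmap(\beta)^{-1}=\dmap(\alpha')\dmap(\beta')^{-1}$ is the $G=\{\ideG\}$ specialization of Lemma \ref{cocyle}. That the image lies in $\G(\FiltT,\Gamma,T)$ follows from the computation $T_{\dmap(\alpha)}(\elmap{\alpha}{\beta}\cdot\xi)=\sigma^\alpha\tau^\alpha\sigma^\beta\cdot\xi=\sigma^\beta\cdot\xi=T_{\dmap(\beta)}(\xi)$, using that $\alpha\in\Delta_{\elmap{\alpha}{\beta}\cdot\xi}$ and $\beta\in\Delta_\xi$ supply the required membership in $\Gamma\star\FiltT$.

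For (ii), on a composable pair $([\elmap{\alpha}{\beta},\xi],[\elmap{\gamma}{\delta},\xi'])$ we have $\beta,\gamma\in\Delta_\xi$; Lemma \ref{lemma_equality} supplies a unique $\varepsilon\in(\beta\vee\gamma)\cap\Delta_\xi$, and the compression argument of Lemma \ref{lemma_compresion} (in the spirit of Lemma \ref{cocyle}) lets me replace both representatives by ones whose inner paths coincide with $\varepsilon$; the product then reduces to a single $\elmap{\cdot}{\cdot}$-symbol, and additivity of $\dmap$ under concatenation together with its $G$-invariance yield multiplicativity. Compatibility with inversion is immediate from the formulas. For injectivity, suppose $\Phi([\elmap{\alpha}{\beta},\xi])=\Phi([\elmap{\alpha'}{\beta'},\xi])$; pass to a common inner path $\varepsilon\in(\beta\vee\beta')\cap\Delta_\xi$, so the germs are represented by $\elmap{\alpha\sigma^\beta(\varepsilon)}{\varepsilon}$ and $\elmap{\alpha'\sigma^{\beta'}(\varepsilon)}{\varepsilon}$; the outer paths both lie in $\Delta_{\elmap{\alpha}{\beta}\cdot\xi}$ (a directed set), and equality of degrees forces them equal by Lemma \ref{lemma_equality}, so the germs coincide at $\elmap{\varepsilon}{\varepsilon}\in\xi$.

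For surjectivity, given $(x,gh^{-1},y)\in\G(\FiltT,\Gamma,T)$ with $g\cdot x=h\cdot y$, pick the unique $\alpha\in\Delta_x$, $\beta\in\Delta_y$ of degrees $g,h$ respectively (unique by Lemma \ref{lemma_equality}); the equality $\sigma^\alpha\cdot x=\sigma^\beta\cdot y$ of elements of $\Lambda^*$ forces their common base vertex to satisfy $s(\alpha)=s(\beta)$, hence $\elmap{\alpha}{\beta}\in\Semi_\Lambda$ and $\Phi([\elmap{\alpha}{\beta},y])=(\tau^\alpha\sigma^\alpha\cdot x,gh^{-1},y)=(x,gh^{-1},y)$. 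Bicontinuity is checked on bases: $\Phi(\Theta(\elmap{\alpha}{\beta},U))=Z(\elmap{\alpha}{\beta}\cdot U,\dmap(\alpha),\dmap(\beta),U)$, and any basic open $Z(A,g,h,B)$ of the target is covered by such images once one stratifies $A$ and $B$ by the unique elements of $\Delta$ of degrees $g$ and $h$. I expect the alignment/compression step to be the principal obstacle, and it is here that the $\Gamma$-graph together with the join-semilattice hypothesis on $\Gamma$ become essential, through the uniqueness in Lemma \ref{lemma_equality} of an element of prescribed degree inside a directed subset.
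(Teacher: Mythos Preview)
Your proposal is correct and follows essentially the same route as the paper: well-definedness via Lemma~\ref{cocyle}, the homomorphism property via the compression/alignment argument, bijectivity via the uniqueness in Lemma~\ref{lemma_equality}, and bicontinuity by matching the basic open sets $\Theta(\elmap{\alpha}{\beta},U)$ with the sets $Z(\elmap{\alpha}{\beta}\cdot U,\dmap(\alpha),\dmap(\beta),U)$. The only cosmetic difference is that the paper packages bijectivity by writing down the inverse map $\Phi^{-1}(\xi',gh^{-1},\xi)=[\elmap{\alpha}{\beta},\xi]$ directly (with $\alpha,\beta$ the unique elements of the relevant $\Delta$'s of degrees $g,h$), whereas you separate injectivity and surjectivity; your injectivity argument via a common $\varepsilon$ and Lemma~\ref{lemma_equality} is in fact a bit more explicit than what the paper records.
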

 \begin{proof}
 	First observe that $\Phi$ is well-defined because of  Lemma \ref{cocyle}, and it is then clearly a groupoid homomorphism. That $\Phi$ is a bijection follows from the definition of the inverse $\Phi^{-1}:\G(\FiltT, \Gamma, T)\to \Phi:\G_{tight}(\Semi_{\Lambda})$ by $\Phi^{-1}(\xi,gh^{-1},\xi')=[\elmap{\alpha}{\beta},\xi]$, where $\beta$ is the unique element in $\Delta_\xi$ with $\dmap(\beta)=h$ and $\alpha$ is the unique element in $\Delta_{\xi'}$ with $\dmap(\alpha)=g$ given by Lemma \ref{lemma_equality}.

 	 Now observe that the sets of the form $Z(\elmap{\alpha}{\beta}(D_{\elmap{\beta}{\beta}}),g,h,D_{\elmap{\beta}{\beta}})$ for 	
 	 $g,h\in \Gamma$, $\beta\in \dmap^{-1}(h)$ and $\alpha\in \dmap^{-1}(g)\cap \Lambda s(\beta)$ forms a basis for the topology of $\G(\FiltT, \Gamma, T)$. But $\Phi^{-1}(Z(\elmap{\alpha}{\beta}(D_{\elmap{\beta}{\beta}}),g,h,D_{\elmap{\beta}{\beta}}))=[\elmap{\alpha}{\beta},D_{\elmap{\beta}{\beta}}]$, thus $\Phi$ is continuous and also open.  
 \end{proof}
 
 Given a directed semigroup action $(X,\Gamma,T)$, there exists a natural groupoid homomorphism $\bar{\mathbf{c}}:\G(\FiltT, \Gamma, T)\to \Gamma$ defined by $\bar{\mathbf{c}}(x,gh^{-1},yu)=gh^{-1}$ \cite[Proposition 5.12]{RW}, and it is clear that $\Phi$ intertwines $\bar{\mathbf{c}}$  and $\dbar$, that is, $\bar{\mathbf{c}}\circ \Phi= \dbar$. Therefore $\bar{\mathbf{c}}^{-1}(\ideQ)$ and $\dbar^{-1}(\ideQ)$ are isomorphic as topological groupoids. But in the proof of  \cite[Theorem 5.13]{RW} it is proved that $\bar{\mathbf{c}}^{-1}(\ideQ)$ is amenable, whence $\dbar^{-1}(\ideQ)$ is amenable too.

 \begin{theorem}\label{amenable_final}Let $\Lambda$ be a $\Gamma$-graph compatible with respect to a pseudo free system $(\Lambda,G,\varphi)$, and suppose that $\Lambda$ satisfies property $(\bigstar)$, with $G$ and $Q$ countable amenable groups. Moreover, assume that $\Gamma$ is a join-semilattice. Then $\G_{tight}(\Semi_{\Lambda\rtimes^\varphi G})$ is amenable.
 \end{theorem}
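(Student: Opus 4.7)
The plan is to assemble the pieces established immediately above. The preceding proposition already yields amenability of $\G_{tight}(\Semi_{\Lambda\rtimes^\varphi G})$ conditionally on the amenability of $\dbar^{-1}(\ideQ)$ as a subgroupoid of $\G_{tight}(\Semi_\Lambda)$, so the whole task reduces to verifying this last point.

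First I would invoke the isomorphism $\Phi:\G_{tight}(\Semi_\Lambda)\to \G(\FiltT,\Gamma,T)$ constructed in the proposition just before, and recall that by construction $\bar{\mathbf{c}}\circ \Phi=\dbar$. Consequently, $\Phi$ restricts to a topological isomorphism $\dbar^{-1}(\ideQ)\cong \bar{\mathbf{c}}^{-1}(\ideQ)$ of \'etale groupoids. This legitimately transfers any amenability statement from one side to the other.

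Second, I would appeal to \cite[Theorem 5.13]{RW}, whose proof in fact establishes that the kernel $\bar{\mathbf{c}}^{-1}(\ideQ)$ of the canonical cocycle on the semigroup action groupoid associated to a directed semigroup action (with $Q$ a countable amenable group) is amenable. This is precisely the setting of $(\FiltT,\Gamma,T)$ under our hypotheses, $\Gamma$ being a join-semilattice making the action directed. Transporting the amenability back along $\Phi$, the open subgroupoid $\dbar^{-1}(\ideQ)$ of $\G_{tight}(\Semi_\Lambda)$ is amenable.

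Third, combining this with the previous proposition---whose remaining hypotheses that $G$ and $Q$ are countable amenable, $(\Lambda,G,\varphi)$ is pseudo free, $\Gamma$ is a join-semilattice, and $\Lambda$ satisfies $(\bigstar)$, are all in the statement of the present theorem---we conclude that $\G_{tight}(\Semi_{\Lambda\rtimes^\varphi G})$ is amenable. The real technical difficulty was dispatched earlier, namely in the construction of the cocycles $\dbar$ and $\tmap^{(g)}$ together with the decomposition $\HG_{\Lambda\rtimes^\varphi G}=\bigcup_n\HG_{\Lambda\rtimes^\varphi G}^{(g_n)}$ into nested open subgroupoids whose kernels sit inside $\dbar^{-1}(\ideQ)$; at this stage the theorem is a clean assembly of those pieces and no further obstacle remains.
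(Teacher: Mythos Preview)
Your proposal is correct and matches the paper's approach exactly: the theorem is stated without an explicit proof block precisely because it is a direct assembly of the preceding proposition (conditional amenability given that $\dbar^{-1}(\ideQ)$ is amenable), the isomorphism $\Phi$ intertwining $\dbar$ and $\bar{\mathbf{c}}$, and the appeal to \cite[Theorem 5.13]{RW} for the amenability of $\bar{\mathbf{c}}^{-1}(\ideQ)$. You have identified and ordered these ingredients just as the paper does in the paragraphs immediately preceding the statement.
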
	
 
\section*{Acknowledgments}

Part of this work was done during visits of the second author to the Institutt for Matematiske Fag, Norges Teknisk-Naturvitenskapelige Universitet (Trondheim, Norway), and during an stage of both authors to the ICMAT (Universidad Aut\'onoma de Madrid, Spain) as part of the Thematic Research Program: \emph{Operator Algebras, Groups and Applications to Quantum Information} in 2019, and the work was significantly supported by the research environment and facilities provided there. Both authors thank the centers for their kind hospitality.


\begin{thebibliography}{9}

\bibitem{AR}  C. Anantharaman-Delaroche, J. Renault, \emph{Amenable groupoids.}, Monographies de L'Enseignement Math\'ematique [Monographs of L'Enseignement Math\'ematique], 36. L'Enseignement Math\'ematique, Geneva, 2000. 196 pp. ISBN: 2-940264-01-5 

\bibitem{BKQS} E. B\'edos, S. Kaliszewski, J. Quigg and J. Spielberg, \emph{On finitely aligned left cancellative small categories, Zappa-Sz\'ep products and Exel-Pardo algebras}, Theor. Appl. Categ. \textbf{33} (2018), Paper no. 42, 1346--1406.

\bibitem{BCFS}  J.H. Brown, L.O. Clark, C. Farthing and A. Sims, \emph{Simplicity of algebras associated to \'etale groupoids}, Semigroup Forum \textbf{88} (2014), 433--452.

\bibitem{BPRRW} N. Brownlowe, D. Pask, J. Ramagge, D. Robertson, M.F. Whittaker, \emph{Zappa-Sz\'ep product groupoids and $C^*$-blends}, Semigroup Forum \textbf{94} (2017), 500--519.

\bibitem{DGKMW} A.P. Donsig, J. Gensler, H. King, D. Milan and R. Wdowinski, \emph{On zigzag maps and the path category of an inverse semigroup}. arXiV:1811.04124v1.

\bibitem{DM} A.P. Donsig and D. Milan, \emph{Joins and covers in inverse semigroups and tight $C^*$-algebras}, Bull. Aust. Math. Soc. \textbf{90} (2014), no. 1, 121--133.   

\bibitem{E1} R. Exel, \emph{Inverse semigroup and combinatorial $C^*$-algebras}, Bull. Braz. Math. Soc. \textbf{39}(2) (2008), 191--313.  

\bibitem{E2} R. Exel, \emph{Tight and cover-to-joint representations of semilattices and inverse semigroups}, arXiv:1903.02911v1.

\bibitem{EP} R. Exel and E. Pardo, \emph{The tight groupoid of an inverse semigroup}, Semigroup Forum \textbf{92} (2016), 274--303.   

\bibitem{EP2} R. Exel and E. Pardo, \emph{Self-similar graphs, a unified treatment of Katsura and Nekrashevych $C^*$-algebras}, Adv. Math.  \textbf{306} (2017), 1046--1129.  

\bibitem{EPS} R. Exel, E. Pardo and C. Starling, \emph{$C^*$-algebras of self-similar graphs over arbitrary graphs}, arXiv:1807.01686v1.


\bibitem{L} M.V. Lawson, \emph{Inverse semigroups. The theory of partial symmetries}. World Scientific Publishing Co., Inc., River Edge, NJ, 1998.

\bibitem{RW} J. Renault, D. Williams, \emph{Amenability of groupoids arising from partial semigroup actions and topological higher rank graphs}, Trans. Amer. Math. Soc. \textbf{369} (2017), no. 4, 2255--2283.

\bibitem{S-LMS} J. Spielberg, \emph{$C^*$-algebras for categories of paths associated to the Baumslag-Solitar groups}. J. London Math. Soc. (2) \textbf{86} (2012), 728--754.

\bibitem{S1} J. Spielberg, \emph{Groupoids and $C^*$-algebras for categories of paths}. Trans Amer. Math. Soc. \textbf{366} (2014), no. 11, 5771--5819.

\bibitem{S2} J. Spielberg, \emph{Groupoids and $C^*$-algebras for left cancellative small categories}. Indiana Univ. Math. J. (to appear), arXiv:1712.07720v1.

\bibitem{Steinberg} B. Steinberg, \emph{A groupoid approach to discrete inverse semigroup algebras}. Adv. Math. \textbf{223} (2010), 689--727.

\bibitem{St} B. Steinberg, \emph{Simplicity, primitivity and semiprimitivity of \'etale groupoid algebras with applications to inverse semigroup algebras.}.  J. Pure Appl. Algebra \textbf{220} (2016), no. 3, 1035--1054.
   
\end{thebibliography}
\end{document}